\pdfoutput=1
\newif\ifpersonal
\RequirePackage[l2tabu,orthodox]{nag} 
\documentclass[10pt]{amsart} 
\usepackage{amsmath,amsthm,amssymb,mathrsfs,mathtools,bm,eucal,tensor} 
\usepackage{microtype,lmodern} 
\usepackage[utf8]{inputenc} 
\usepackage[T1]{fontenc} 
\usepackage{enumerate,comment,braket,xspace,tikz-cd,csquotes} 
\usepackage[centering,vscale=0.7,hscale=0.7]{geometry}
\usepackage[hidelinks]{hyperref}
\usepackage[capitalize]{cleveref}

\setcounter{tocdepth}{1}
\allowdisplaybreaks

\numberwithin{equation}{section}
\theoremstyle{plain}
\newtheorem{theorem}[equation]{Theorem}
\newtheorem{lemma}[equation]{Lemma}

\newtheorem*{claim*}{Claim}

\newtheorem{proposition}[equation]{Proposition}
\newtheorem{corollary}[equation]{Corollary}
\theoremstyle{definition}
\newtheorem{definition}[equation]{Definition}
\newtheorem{construction}[equation]{Construction}

\newtheorem{example}[equation]{Example}
\newtheorem{remark}[equation]{Remark}

\makeatletter
\@namedef{subjclassname@2020}{%
	\textup{2020} Mathematics Subject Classification}
\makeatother


\ifpersonal
\newcommand{\personal}[1]{\textcolor[rgb]{0,0,1}{(Personal: #1)}}
\newcommand{\discussion}[1]{\textcolor{violet}{(Discussion: #1)}}
\newcommand{\todo}[1]{\textcolor{red}{(Todo: #1)}}
\else
\newcommand{\personal}[1]{\ignorespaces}
\newcommand{\discussion}[1]{\ignorespaces}
\newcommand{\todo}[1]{\ignorespaces}
\fi


\newcommand{\R}{\mathbb R}

\newcommand{\rB}{\mathrm B}

\newcommand{\rR}{\mathrm R}

\newcommand{\fT}{\mathfrak T}

\newcommand{\fV}{\mathfrak V}
\newcommand{\fX}{\mathfrak X}
\newcommand{\fY}{\mathfrak Y}

\newcommand{\ff}{\mathfrak f}

\newcommand{\cD}{\mathcal D}

\newcommand{\cF}{\mathcal F}

\newcommand{\cG}{\mathcal G}
\newcommand{\cI}{\mathcal I}

\newcommand{\cN}{\mathcal N}
\newcommand{\cO}{\mathcal O}

\newcommand{\cS}{\mathcal S}
\newcommand{\cT}{\mathcal T}

\newcommand{\cX}{\mathcal X}
\newcommand{\cY}{\mathcal Y}

\DeclareFontFamily{U}{BOONDOX-calo}{\skewchar\font=45 }
\DeclareFontShape{U}{BOONDOX-calo}{m}{n}{<-> s*[1.05] BOONDOX-r-calo}{}
\DeclareFontShape{U}{BOONDOX-calo}{b}{n}{<-> s*[1.05] BOONDOX-b-calo}{}
\DeclareMathAlphabet{\mathcalboondox}{U}{BOONDOX-calo}{m}{n}

\newcommand{\bbA}{\mathbb A}

\newcommand{\bbG}{\mathbb G}
\newcommand{\bbL}{\mathbb L}
\newcommand{\bbN}{\mathbb N}
\newcommand{\bbP}{\mathbb P}
\newcommand{\bbT}{\mathbb T}

\newcommand{\bA}{\mathbf A}
\newcommand{\bD}{\mathbf D}
\newcommand{\bP}{\mathbf P}

\newcommand{\bW}{\mathbf W}

\newcommand{\bZ}{\mathbf Z}


\makeatletter
\let\save@mathaccent\mathaccent
\newcommand*\if@single[3]{%
	\setbox0\hbox{${\mathaccent"0362{#1}}^H$}%
	\setbox2\hbox{${\mathaccent"0362{\kern0pt#1}}^H$}%
	\ifdim\ht0=\ht2 #3\else #2\fi
}
\newcommand*\rel@kern[1]{\kern#1\dimexpr\macc@kerna}
\newcommand*\widebar[1]{\@ifnextchar^{{\wide@bar{#1}{0}}}{\wide@bar{#1}{1}}}
\newcommand*\wide@bar[2]{\if@single{#1}{\wide@bar@{#1}{#2}{1}}{\wide@bar@{#1}{#2}{2}}}
\newcommand*\wide@bar@[3]{%
	\begingroup
	\def\mathaccent##1##2{%
		\let\mathaccent\save@mathaccent
		\if#32 \let\macc@nucleus\first@char \fi
		\setbox\z@\hbox{$\macc@style{\macc@nucleus}_{}$}%
		\setbox\tw@\hbox{$\macc@style{\macc@nucleus}{}_{}$}%
		\dimen@\wd\tw@
		\advance\dimen@-\wd\z@
		\divide\dimen@ 3
		\@tempdima\wd\tw@
		\advance\@tempdima-\scriptspace
		\divide\@tempdima 10
		\advance\dimen@-\@tempdima
		\ifdim\dimen@>\z@ \dimen@0pt\fi
		\rel@kern{0.6}\kern-\dimen@
		\if#31
		\overline{\rel@kern{-0.6}\kern\dimen@\macc@nucleus\rel@kern{0.4}\kern\dimen@}%
		\advance\dimen@0.4\dimexpr\macc@kerna
		\let\final@kern#2%
		\ifdim\dimen@<\z@ \let\final@kern1\fi
		\if\final@kern1 \kern-\dimen@\fi
		\else
		\overline{\rel@kern{-0.6}\kern\dimen@#1}%
		\fi
	}%
	\macc@depth\@ne
	\let\math@bgroup\@empty \let\math@egroup\macc@set@skewchar
	\mathsurround\z@ \frozen@everymath{\mathgroup\macc@group\relax}%
	\macc@set@skewchar\relax
	\let\mathaccentV\macc@nested@a
	\if#31
	\macc@nested@a\relax111{#1}%
	\else
	\def\gobble@till@marker##1\endmarker{}%
	\futurelet\first@char\gobble@till@marker#1\endmarker
	\ifcat\noexpand\first@char A\else
	\def\first@char{}%
	\fi
	\macc@nested@a\relax111{\first@char}%
	\fi
	\endgroup
}
\makeatother

\newcommand{\oC}{\widebar C}

\newcommand{\oM}{\widebar M}

\newcommand{\hL}{\widehat L}

\newcommand{\tf}{\widetilde f}

\newcommand{\tp}{\widetilde p}

\newcommand{\ts}{\widetilde s}

\newcommand{\tC}{\widetilde C}

\newcommand{\tU}{\widetilde U}

\newcommand{\tW}{\widetilde W}

\newcommand{\tbf}{\widetilde{\mathbf f}}
\newcommand{\tbC}{\widetilde{\mathbf C}}


\newcommand{\IX}{I_\fX}

\newcommand{\SX}{S_\fX}


\newcommand{\fXs}{\fX_s}


\newcommand{\Sh}{\mathrm{Sh}}

\newcommand{\infcat}{$\infty$-category\xspace}
\newcommand{\infcats}{$\infty$-categories\xspace}

\newcommand{\inftopos}{$\infty$-topos\xspace}

\newcommand{\tauet}{\tau_\mathrm{\acute{e}t}}

\newcommand{\Mod}{\textrm{-}\mathrm{Mod}}

\newcommand{\Coh}{\mathrm{Coh}}
\newcommand{\Cohb}{\mathrm{Coh}^\mathrm{b}}
\newcommand{\Cohh}{\mathrm{Coh}^\heartsuit}

\newcommand{\St}{\mathrm{St}}

\newcommand{\An}{\mathrm{An}}
\newcommand{\Afd}{\mathrm{Afd}}
\newcommand{\Top}{\mathcal T\mathrm{op}}


\newcommand{\dAn}{\mathrm{dAn}}

\newcommand{\dAnk}{\mathrm{dAn}_k}

\newcommand{\cTank}{\cT_{\mathrm{an}}(k)}

\newcommand{\cTetk}{\cT_{\mathrm{\acute{e}t}}(k)}


\newcommand{\RTop}{\tensor*[^\rR]{\Top}{}}

\newcommand{\dAfd}{\mathrm{dAfd}}
\newcommand{\dAfdk}{\mathrm{dAfd}_k}

\newcommand{\trunc}{\mathrm{t}_0}

\newcommand{\CAlg}{\mathrm{CAlg}}

\newcommand{\Cat}{\mathrm{Cat}}


\newcommand{\fib}{\mathrm{fib}}
\newcommand{\DerAn}{\mathrm{Der}\an}
\newcommand{\anL}{\mathbb L\an}

\newcommand{\dAff}{\mathrm{dAff}}

\newcommand{\bfMap}{\mathbf{Map}}

\newcommand{\bfHom}{\mathbf{Hom}}
\newcommand{\dAnSt}{\mathrm{dAnSt}}

\newcommand{\Perf}{\mathrm{Perf}}



\newcommand{\dom}{\mathrm{dom}}
\newcommand{\vdim}{\mathrm{vdim}}

\newcommand{\fM}{\mathfrak M}
\newcommand{\tsigma}{\widetilde{\sigma}}

\newcommand{\st}{\mathrm{st}}

\newcommand{\ex}{\mathrm{ex}}
\newcommand{\stab}{\mathrm{stab}}

\newcommand{\Lin}{\mathrm{Lin}}
\newcommand{\Conv}{\mathrm{Conv}}
\newcommand{\sConv}{\mathrm{sConv}}
\newcommand{\dSt}{\mathrm{dSt}}
\newcommand{\dStlaft}{\mathrm{dSt}^{\mathrm{laft}}}
\newcommand{\vir}{\mathrm{vir}}
\newcommand{\MG}{\mathrm{MG}}
\newcommand{\afd}{\mathrm{afd}}



\newcommand{\kc}{k^\circ}

\newcommand{\an}{^\mathrm{an}}
\newcommand{\alg}{^\mathrm{alg}}

\newcommand{\et}{_\mathrm{\acute{e}t}}
\newcommand{\ev}{\mathrm{ev}}
\newcommand{\mult}{\mathrm{mult}}
\newcommand{\inv}{^{-1}}
\newcommand{\id}{\mathrm{id}}
\newcommand{\gn}{$n$-pointed genus $g$ }

\newcommand{\kanal}{$k$-analytic\xspace}

\newcommand{\red}{\mathrm{red}}

\newcommand{\oMpre}{\oM^\mathrm{pre}}
\newcommand{\oCpre}{\oC^\mathrm{pre}}

\newcommand{\op}{^\mathrm{op}}
\newcommand{\Cech}{\check{\mathcal C}}
\newcommand{\DM}{Deligne-Mumford\xspace}
\providecommand{\abs}[1]{\lvert#1\rvert}

\newcommand*{\longhookrightarrow}{\ensuremath{\lhook\joinrel\relbar\joinrel\rightarrow}}

\usetikzlibrary{decorations.markings} 
\tikzset{
  closed/.style = {decoration = {markings, mark = at position 0.5 with { \node[transform shape, xscale = .8, yscale=.4] {/}; } }, postaction = {decorate} },
  open/.style = {decoration = {markings, mark = at position 0.5 with { \node[transform shape, scale = .7] {$\circ$}; } }, postaction = {decorate} }
}


\DeclareMathOperator{\cofib}{cofib}

\DeclareMathOperator{\Fun}{Fun}

\DeclareMathOperator{\Map}{Map}

\DeclareMathOperator{\rank}{rank}

\DeclareMathOperator{\Sp}{Sp}

\DeclareMathOperator{\Spec}{Spec}

\DeclareMathOperator{\Sym}{Sym}
\DeclareMathOperator{\val}{val}

\DeclareMathOperator*{\colim}{colim}

\DeclareMathOperator*{\cotimes}{\widehat{\otimes}}

\begin{document}
\title{Non-archimedean quantum K-invariants}

\author{Mauro PORTA}
\address{Mauro PORTA, Institut de Recherche Mathématique Avancée, 7 Rue René Descartes, 67000 Strasbourg, France}
\email{porta@math.unistra.fr}

\author{Tony Yue YU}
\address{Tony Yue YU, Department of Mathematics M/C 253-37, California Institute of Technology, 1200 E California Blvd, Pasadena, CA 91125, USA}
\email{yuyuetony@gmail.com}
\date{July 19, 2022}
\subjclass[2020]{Primary 14N35; Secondary 14A30, 14G22, 14D23}
\keywords{Quantum K-invariant, Gromov-Witten, derived geometry, non-archimedean geometry, rigid analytic geometry}

\begin{abstract}
	We construct quantum K-invariants in non-archimedean analytic geometry.
	Contrary to the classical approach in algebraic geometry via perfect obstruction theory, we build on our previous works on the foundations of derived non-archimedean geometry, the representability theorem and Gromov compactness.
	We obtain a list of natural geometric relations between the stacks of stable maps, directly at the derived level, with respect to elementary operations on graphs, namely, products, cutting edges, forgetting tails and contracting edges.
	They imply immediately the corresponding properties of quantum K-invariants.
	The derived approach produces highly intuitive statements and functorial proofs.
	The flexibility of our derived approach to quantum K-invariants allows us to impose not only simple incidence conditions for marked points, but also incidence conditions with multiplicities.
	This leads to a new set of enumerative invariants.
	For the proofs, we further develop the foundations of derived non-archimedean geometry in this paper: we study derived lci morphisms, relative analytification, and deformation to the normal bundle.
	Our motivations come from non-archimedean enumerative geometry and mirror symmetry.

\end{abstract}

\maketitle

\tableofcontents

\section{Introduction}

Let $X$ be a smooth projective complex variety.
Its system of quantum $K$-invariants is the collection of linear maps
\[K_{g,n,\beta}^X\colon K_0(X)^{\otimes n}\longrightarrow K_0(\oM_{g,n}),\]
for all $g,n\in\bbN$ and $\beta\in H_2(X)$, defined by
\[K_{g,n,\beta}^X(a_1\otimes\dots\otimes a_n)\coloneqq\st_*\big(\cO_{\oM_{g,n}(X,\beta)}^\vir\otimes\ev_1^*a_1\otimes\dots\otimes\ev_n^*a_n\big),\]
where $\oM_{g,n}(X,\beta)$ is the moduli stack of $n$-pointed genus $g$ stable maps into $X$ of class $\beta$, $\cO^\vir$ is its virtual structure sheaf, $\st$ is the stabilization of domain, and $\ev_i$ are the evaluation maps (see \cite{Lee_Quantum_K-theory_I}).
Coupling with classes in $K_0(\oM_{g,n})$, we obtain numerical invariants that are non-negative integers, which manifests (in a subtle way) the geometry of curves in $X$.

Quantum K-invariants were first studied by Givental \cite{Givental_On_the_WDVV} in the case $g=0$ and $X$ is convex.
In this case the moduli stack $\oM_{g,n}(X,\beta)$ is smooth with expected dimension, the virtual structure sheaf is thus equal to the structure sheaf.
The virtual structure sheaves in the general case are constructed by Lee \cite{Lee_Quantum_K-theory_I}, using the techniques of perfect obstruction theory and intrinsic normal cone developed by Behrend-Fantechi \cite{Behrend_Intrinsic_normal_cone}.
We refer to \cite{Lee_A_reconstruction_theorem_in_quantum_cohomology,Givental_The_Hirzebruch-Riemann-Roch,Iritani_Reconstruction_and_convergence,Givental_Permutation-equivariant_quantum_K-theory_I,Ruan_The_level_structure} for further developments of quantum K-theory in algebraic geometry, and to \cite{Givental_Quantum_K-theory_on_flag_manifolds,Buch_Quantum_K-theory_of_Grassmannians,Okounkov_Lectures_on_K-theoretic_computations} for more specific examples and applications.

Motivated by the non-archimedean approach to mirror symmetry (see \cite{Kontsevich_Homological_mirror_symmetry,Kontsevich_Affine_structures,Nicaise_Xu_Yu_The_non-archimedean_SYZ_fibration}), in this paper we construct quantum K-invariants in non-archimedean analytic geometry, more precisely, for proper smooth rigid analytic spaces over any discrete valuation field $k$ of residue characteristic zero.
Non-archimedean geometry comes into play naturally as we study degenerations of Calabi-Yau varieties in mirror symmetry.
It has special features in comparison with classical approaches by symplectic geometry and complex geometry.
In particular, we now have a rigorous foundation of non-archimedean SYZ (Strominger-Yau-Zaslow) fibration (see \cite{Nicaise_Xu_Yu_The_non-archimedean_SYZ_fibration}), whose original archimedean version is yet far beyond reach.
More importantly, the enumerative invariants responsible for instanton corrections in the SYZ conjecture \cite{Strominger_Mirror_symmetry_is_T-duality} cannot be well-defined in the archimedean setting; while the non-archimedean enumerative invariants are more promising candidates, as already demonstrated in special cases of log Calabi-Yau varieties in \cite{Yu_Enumeration_of_holomorphic_cylinders_I,Yu_Enumeration_of_holomorphic_cylinders_II,Keel_Yu_The_Frobenius,Hacking_Keel_Yu_Secondary_fan}.
Via formal models of non-archimedean analytic spaces, non-archimedean enumerative invariants are intimately related to logarithmic enumerative invariants of the special fiber (see the works of Abramovich, Chen, Gross, Siebert \cite{Chen_Stable_logarithmic_maps_I,Abramovich_Stable_logarithmic_maps_II,Gross_Logarithmic_Gromov-Witten_invariants,Abramovich_Punctured_logarithmic_maps}).

Our approach to the construction of non-archimedean quantum K-invariants differs from the classical one in algebraic geometry via perfect obstruction theory.
Instead, we build upon our previous works on the foundations of derived non-archimedean geometry \cite{Porta_Yu_Higher_analytic_stacks,Porta_Yu_Derived_non-archimedean_analytic_spaces,Porta_Yu_Representability_theorem,Porta_Yu_Derived_Hom_spaces}.
The results of this paper are supposed to bring powerful techniques from derived geometry into the study of mirror symmetry quantum corrections via non-archimedean enumerative methods along the lines of \cite{Yu_Enumeration_of_holomorphic_cylinders_I,Yu_Enumeration_of_holomorphic_cylinders_II,Keel_Yu_The_Frobenius,Hacking_Keel_Yu_Secondary_fan}.

\medskip

Now let us sketch the results of this paper.
Recall that in order to construct any satisfactory theory of enumerative invariants, there are two main issues to be solved: compactness and transversality (see the excellent expository paper \cite{Pandharipande_13/2_ways} on the two issues for various enumerative theories).
The same applies to non-archimedean quantum K-invariants.

In this case, the issue of compactness is solved in \cite{Yu_Gromov_compactness} via non-archimedean Kähler structures and formal models of non-archimedean stable maps, referred to as the non-archimedean Gromov compactness theorem, which we will review in \cref{sec:Gromov_compactness}.
We treat the issue of transversality in the current paper using derived geometry as explained below.

Fix a smooth rigid $k$-analytic space $X$.

\begin{theorem}[see \cref{thm:stack_of_stable_maps,thm:stack_of_stable_maps_derived_lci}] \label{thmintro:derived_enhancement}
	The moduli stack $\oM_{g,n}(X)$ of $n$-pointed genus $g$ stable maps into $X$ admits a natural derived enhancement $\R\oM_{g,n}(X)$ that is a derived \kanal stack locally of finite presentation and derived lci.
\end{theorem}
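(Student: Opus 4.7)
The plan is to realize $\R\oM_{g,n}(X)$ as an open substack of a derived mapping stack over the moduli of prestable pointed curves, and read off the three claimed properties from foundational results of derived non-archimedean geometry.

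I would start from the smooth Artin stack $\fM_{g,n}$ of $n$-pointed prestable genus $g$ curves, of dimension $3g-3+n$, together with its universal curve $\pi\colon\fC_{g,n}\to\fM_{g,n}$. The relative analytification functor developed in this paper produces a smooth Artin $k$-analytic stack $\fM_{g,n}\an$ and a proper flat morphism $\pi\an\colon\fC_{g,n}\an\to\fM_{g,n}\an$ of relative dimension one. I then form the derived relative mapping stack
\[ \bfMap_{\fM_{g,n}\an}\bigl(\fC_{g,n}\an,\; X\times\fM_{g,n}\an\bigr), \]
whose $S$-points classify families of prestable pointed curves $C\to S$ together with a $k$-analytic map $C\to X$, and define $\R\oM_{g,n}(X)$ as the open substack cut out by the stability condition on the pair $(C,f)$. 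Since stability is open already at the level of classical truncations, this substack is well-defined in the derived setting and recovers the usual $\oM_{g,n}(X)$ after truncation, so the enhancement is natural.

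Representability and local finite presentation then follow from the representability of derived non-archimedean mapping stacks proven in \cite{Porta_Yu_Derived_Hom_spaces}, combined with the general representability theorem of \cite{Porta_Yu_Representability_theorem}; the hypotheses reduce to properness and flatness of $\pi\an$, which are built into the construction, and to local finite presentation of the target, which follows from the smoothness of $X$. For the derived lci property, since $\fM_{g,n}\an$ is smooth it is enough to show that the relative cotangent complex $\bbL_{\R\oM_{g,n}(X)/\fM_{g,n}\an}$ is perfect of Tor-amplitude $[-1,0]$. The analytic version of the mapping-stack cotangent complex formula, transported from \cite{Porta_Yu_Derived_Hom_spaces} to the relative setting, identifies this complex with $(\R\pi\an_*\,\ev^*\bbT_{X})^\vee$, where $\ev$ is the evaluation morphism from the universal curve over $\R\oM_{g,n}(X)$ to $X$. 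Since $\pi\an$ is proper flat of relative dimension one and $\bbT_X$ is a vector bundle by smoothness of $X$, the complex $\R\pi\an_*\,\ev^*\bbT_X$ is perfect of cohomological amplitude $[0,1]$, so its dual has Tor-amplitude $[-1,0]$, which is exactly the derived lci condition.

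The step I expect to be the main obstacle is the last one: installing the mapping-stack cotangent complex formula in the $k$-analytic setting and relatively over the Artin base $\fM_{g,n}\an$. This rests on the foundational material on derived lci morphisms, relative analytification, and analytic deformation theory developed in the rest of the paper, as well as on GAGA-type comparisons between algebraic and analytic deformation theories that justify reducing the formula to the algebraic case where it is well known.
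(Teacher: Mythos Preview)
Your strategy is essentially the paper's: realize $\R\oM_{g,n}(X)$ as the open (stability) substack of the derived mapping stack $\bfMap_{\oMpre_{g,n}}(\oCpre_{g,n},X\times\oMpre_{g,n})$, invoke the representability theorem of \cite{Porta_Yu_Derived_Hom_spaces} to obtain a derived $k$-analytic stack locally of finite presentation, and deduce derived lci from the cotangent complex formula $\anL_{\mathrm{rel}}\simeq p_+(f^*\anL_X)$ together with smoothness of $X$ and the fact that the universal curve has relative dimension one.

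Two corrections, however. First, your detour through relative analytification of the algebraic $\fM_{g,n}$ is unnecessary and misaligned with the paper: the $k$-analytic $\oMpre_{g,n}$ is constructed directly, and the substantive step you omit is \cref{prop:stack_of_stable_curves}, which shows that the \emph{derived} moduli of prestable curves is already underived (equivalently, that $\oMpre_{g,n}$ is smooth). This is what identifies the moduli-theoretic definition of $\R\oM_{g,n}(X)$ with the mapping stack you write down. You sidestep it by \emph{defining} the enhancement as the mapping stack, which works for the statement but obscures why the enhancement is natural.

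Second, your final paragraph misidentifies the obstacle. The analytic mapping-stack cotangent complex formula you need is \cite[Lemma~8.4]{Porta_Yu_Derived_Hom_spaces}, proved directly in derived non-archimedean geometry without any GAGA reduction. Relative analytification and the GAGA-type comparison of formal moduli problems are developed in this paper for an entirely different purpose --- the deformation to the normal bundle behind the \emph{mapping to a point} property --- and play no role in the proof of the present theorem.
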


Now assuming $X$ proper and endowed with a Kähler structure, combining with the non-archimedean Gromov compactness theorem (\cref{thm:Gromov_compactness}), we can immediately obtain the non-archimedean quantum K-invariants
\begin{align*}
K_{g,n,\beta}^X\colon K_0(X)^{\otimes n} &\longrightarrow K_0(\oM_{g,n})\\
a_1\otimes\dots\otimes a_n &\longmapsto \st_*\big(\ev_1^*a_1\otimes\dots\otimes\ev_n^*a_n\big),
\end{align*}
where the maps $\st$ and $\ev_i$ all depart from the derived moduli stack $\R\oM_{g,n}(X,\beta)$.

\medskip

Next we prove that the non-archimedean quantum K-invariants satisfy all the expected properties analogous to the algebraic case.
Instead of manipulating perfect obstruction theories as in \cite{Lee_Quantum_K-theory_I,Behrend_Gromov-Witten_invariants}, our proofs of the properties of the invariants will follow directly from a list of natural and intuitive geometric relations between the derived moduli stacks, which we explain below.

In order to state the geometric relations, instead of working with $n$-pointed genus $g$ stable maps, one has to work with a slight combinatorial refinement called $(\tau,\beta)$-marked stable maps for an A-graph $(\tau,\beta)$, introduced by Behrend-Manin \cite{Behrend_Stacks_of_stable_maps} for the study of Gromov-Witten invariants.
The A-graph $(\tau,\beta)$ imposes degeneration types on the domains of stable maps as well as more refined curve classes (see \cref{sec:stable_maps_associated_to_A-graphs}).
We have the associated moduli stack of $(\tau,\beta)$-marked stable maps, the corresponding derived enhancement, and thus the (more refined) non-archimedean quantum K-invariants $K_{\tau,\beta}^X$.

\begin{theorem}[see \cref{thm:products,thm:cutting_edges_derived_pullback,thm:universal_tau_marked_curve,thm:forgetting_tails,thm:contracting_edges}] \label{thm:geometric_relations}
	Let $S$ be a rigid \kanal space and $X$ a rigid \kanal space smooth over $S$.
	The derived moduli stack $\R\oM(X/S,\tau,\beta)$ of $(\tau,\beta)$-marked stable maps into $X/S$ associated to an A-graph $(\tau,\beta)$ satisfies the following geometric relations with respect to elementary operations on A-graphs:
	
	\medskip\noindent
	(1) \textbf{Products:} Let $(\tau_1, \beta_1)$ and $(\tau_2, \beta_2)$ be two A-graphs.
	We have a canonical equivalence
	\[\R\oM(X/S, \tau_1 \sqcup \tau_2, \beta_1 \sqcup \beta_2) \xrightarrow{\ \sim\ } \R \oM(X/S, \tau_1, \beta_1) \times_S  \R \oM(X/S, \tau_2, \beta_2),\]
	where the projections are given by the natural forgetful maps.

	\medskip\noindent
	(2) \textbf{Cutting edges:} Let $(\sigma,\beta)$ be an A-graph obtained from $(\tau,\beta)$ by cutting an edge $e$ of $\tau$.
	Let $i,j$ be the two tails of $\sigma$ created by the cut.
	We have a derived pullback diagram
	\[ \begin{tikzcd}
	\R \oM(X/S, \tau, \beta) \arrow{r}{c} \arrow{d}{\ev_e} & \R \oM(X/S, \sigma, \beta) \arrow{d}{\ev_i \times \ev_j} \\
	X \arrow{r}{\Delta} & X \times_S X
	\end{tikzcd} \]
	where $\Delta$ is the diagonal map, $\ev_e$ is evaluation at the section $s_e$ corresponding to the edge $e$, and $c$ is induced by cutting the domain curves at $s_e$.
			
	\medskip\noindent
	(3) \textbf{Universal curve\footnote{We mention a slight discrepancy of terminology: our ``universal curve'' property corresponds to the ``forgetting tails'' property of \cite[\S 3.4]{Lee_Quantum_K-theory_I}; while our ``forgetting tails'' property corresponds to the ``fundamental classes'' property of loc.\ cit..
		As the geometry becomes more transparent in our derived approach, we have adapted our terminology accordingly.}:} Let $(\sigma,\beta)$ be an A-graph obtained from $(\tau,\beta)$ by forgetting a tail $t$ attached to a vertex $w$.
	Let $\oCpre_w\to\oMpre_\sigma$ be the universal curve corresponding to $w$.
	We have a derived pullback diagram
	\[ \begin{tikzcd}
	\R \oM( X/S, \tau, \beta ) \arrow{r} \arrow{d} & \R \oM( X/S, \sigma, \beta ) \arrow{d} \\
	\oCpre_w \arrow{r} & \oMpre_\sigma.
	\end{tikzcd} \]
	(See \cref{sec:universal_curve} for the construction of the maps.)
	
	\medskip\noindent
	(4) \textbf{Forgetting tails:}
	The context being as above, we have a derived pullback diagram
	\[ \begin{tikzcd}
	\R \oM(X/S, \tau, \beta) \arrow{r} \arrow{d} & \oM_\tau \times_{\oM_\sigma}  \R \oM(X/S, \sigma, \beta) \arrow{d} \\
	\oCpre_w \arrow{r} & \oM_\tau \times_{\oM_\sigma} \oMpre_\sigma.
	\end{tikzcd} \]
	
	\medskip\noindent
	(5) \textbf{Contracting edges:}
	Let $(\sigma,\beta)$ be an A-graph where $\sigma$ is obtained from a modular graph $\tau$ by contracting an edge (possibly a loop) $e$.
	Let $(\tsigma,\beta)$ be obtained from $(\sigma,\beta)$ by adding $k$ tails.
	Let $(\tau^i_l,\beta^{i_j}_l)$ denote the A-graphs obtained from $(\tau,\beta)$ by replacing $e$ with a chain of $l$ edges then attaching tails and curve classes in a compatible way (see \cref{sec:contracting_edges} for details).
	We have a natural equivalence
	\[\colim_l \coprod_{i,j} \R\oM(X/S,\tau^i_l,\beta^{i_j}_l)\xrightarrow{\ \sim\ }\oM_\tau\times_{\oM_\sigma}\R\oM(X/S,\tsigma,\beta).\]
\end{theorem}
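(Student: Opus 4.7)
The unifying strategy is to exploit \cref{thmintro:derived_enhancement} and realize $\R\oM(X/S,\tau,\beta)$ as a derived mapping stack from a universal $(\tau,\beta)$-prestable curve $\oCpre_\tau\to\oMpre_\tau$ into $X/S$, respecting the combinatorial decoration imposed by $\tau$. Each of the relations (1)--(5) should then be read off from a corresponding identification of universal prestable curves on the source side, together with the compatibility of derived mapping stacks with products, base change, and colimits. This is precisely the advantage of the derived approach advertised in the introduction: no obstruction-theoretic bookkeeping is needed because compatibilities at the level of functors of points automatically carry all higher structure.

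For (1) products, the universal $(\tau_1\sqcup\tau_2,\beta_1\sqcup\beta_2)$-prestable curve is tautologically the disjoint union of the pullbacks of the two individual universal curves to $\oMpre_{\tau_1}\times_S\oMpre_{\tau_2}$, so the derived mapping stack splits as a fibre product over $S$. For (2) cutting edges, the universal $(\tau,\beta)$-prestable curve is obtained from the universal $(\sigma,\beta)$-prestable curve by gluing the two sections $s_i,s_j$ produced by cutting the edge $e$; a morphism out of the glued curve is equivalent to a morphism out of the unglued curve whose evaluations at $s_i,s_j$ agree, i.e.\ a derived fibre product over $\Delta\colon X\to X\times_S X$. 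For (3), adding a tail at $w$ amounts to choosing a section of the universal curve $\oCpre_w\to\oMpre_\sigma$, so the forgetful map identifies $\R\oM(X/S,\tau,\beta)$ with the displayed derived pullback. Case (4) combines (3) with a further base change to $\oM_\tau\times_{\oM_\sigma}$, which precisely records the stability correction needed when the extra tail $t$ alters the stability type of the combinatorial target.

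The main obstacle is (5), contracting edges, since it is the only relation whose target is not a derived fibre product but a colimit, and the index set itself already encodes a nontrivial degeneration pattern. The plan is to apply the theory of deformation to the normal bundle developed in the paper along the closed substack of the universal $(\tsigma,\beta)$-prestable curve cut out by the node corresponding to the contracted edge $e$. This gives a stratified model for $\oM_\tau\times_{\oM_\sigma}\R\oM(X/S,\tsigma,\beta)$ whose strata correspond exactly to smoothings of the node into chains of length $l$ with the $k$ extra tails and the class $\beta$ distributed along the chain in all admissible ways, i.e.\ indexed by $(l,i,j)$. The remaining checks are then: (a) every such smoothing produces a stable map of the required $(\tau^i_l,\beta^{i_j}_l)$-type, for which one uses the universal property of the prestable moduli together with the compatibility of curve classes with specialization, and (b) the assembled colimit recovers the full base change, for which the derived lci property of the moduli stacks established in \cref{thmintro:derived_enhancement} ensures that no hidden higher terms obstruct the passage to the colimit.
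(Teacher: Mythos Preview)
Your sketch for (1), (2), and (4) is essentially the paper's approach: the mapping-stack description of $\R\oM(X/S,\tau,\beta)$ reduces each relation to a corresponding identity among universal curves over $\oMpre_\tau$, and (4) is indeed a diagram chase from (3).

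There is a real gap in (3). You write that ``adding a tail at $w$ amounts to choosing a section of $\oCpre_w\to\oMpre_\sigma$'' as if this were tautological. It is not: the top map $\pi\colon \R\oM(X/S,\tau,\beta)\to\R\oM(X/S,\sigma,\beta)$ is \emph{not} mere forgetting of a section, because forgetting the $t$-section on a $\tau$-stable map generally produces a $\sigma$-\emph{prestable} map, and one must then stabilize. Constructing this stabilization at the derived level is nontrivial (the paper devotes an entire subsection to it, proving a universal property that lets one check stabilization after truncation), and proving the square is a pullback requires a genuine full-faithfulness argument: one adds auxiliary sections to rigidify, compares mapping spaces on a Zariski cover of the curve, and invokes the automorphism-finiteness of stable pointed curves. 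None of this is visible in your outline.

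Your plan for (5) is headed in the wrong direction. Deformation to the normal bundle is not used for contracting edges; in the paper it appears only later, to prove the ``mapping to a point'' K-theory identity via $\bbA^1$-invariance. The actual argument for (5) is a stratification/base-change argument: one first shows, at the underived level, that $\oM_\tau\times_{\oM_\sigma}\oMpre_{\tsigma}$ is a normal-crossing substack of $\oMpre_{\tsigma}$ stratified exactly by the images of the $\oMpre_{\tau_l^i}$, hence is their colimit (this uses smoothness of the stabilization map $\oMpre_\sigma\to\oM_\sigma$ to see the fibre product is underived). Then one checks that each $\coprod_j\R\oM(X/S,\tau_l^i,\beta_l^{i_j})$ is the derived pullback of $\R\oM(X/S,\tsigma,\beta)$ along $\oMpre_{\tau_l^i}\to\oMpre_{\tsigma}$, and invokes universality of gluing along closed immersions under base change to pass the colimit through. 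Your ``deformation to the normal bundle along the node locus'' does not produce this stratification, and the appeal to the derived lci property to rule out ``hidden higher terms'' is not how the colimit comparison is controlled.
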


We would like to point out that in the particular case where $\tau$ is a point, the statement (3) above asserts that the forgetful map
\[ \R \oM_{g,n+1}(X/S) \longrightarrow \R \oM_{g,n}(X/S) \]
is equivalent to the universal curve $\R\oC_{g,n}(X/S)\to\R\oM_{g,n}(X/S)$.
Note that such an intuitive statement in fact incorporates all the information about virtual counts with respect to forgetting a tail, which is classically expressed and proved in terms of pullback properties of perfect obstruction theories and intrinsic normal cones (see \cite[Proposition 8]{Lee_Quantum_K-theory_I}, \cite[Axiom IV]{Behrend_Gromov-Witten_invariants}).

The construction of the forgetful map above is in fact nontrivial.
Intuitively, we simply forget the last marked point then stabilize.
We construct the stabilization étale locally, then we need to exhibit gluing data which are typically cumbersome to build in the derived setting.
We prove a universal property (see \cref{prop:checking_stabilization_on_truncation}), which solves the issue of gluing, as well as the independence of all the choices involved in the construction.

Next we state the properties of non-archimedean quantum K-invariants:

\begin{theorem}[see \cref{prop:mapping_to_a_point,prop:products,prop:cutting_edges,prop:forgetting_tails,prop:contracting_edges}] \label{thmintro:properties_of_quantum_K-invariants}
	Let $X$ be a proper smooth rigid \kanal space equipped with a Kähler structure.
	Let $(\tau,\beta)$ be an A-graph and $T_\tau$ the set of tail vertices.
	The associated quantum K-invariants
	\[K^X_{\tau,\beta}\colon K_0(X)^{\otimes \abs{T_\tau}} \longrightarrow K_0(\oM_\tau)\]
	 satisfy the following properties:
	
	\medskip\noindent
	(1) \textbf{Mapping to a point:}
	Let $(\tau,0)$ be any A-graph where $\beta$ is 0.
	Let $p_1, p_2$ be the projections
	\[\begin{tikzcd}
	X \times \oM_\tau \rar{p_1} \dar{p_2} & X\\
	\oM_\tau & \phantom{X}.
	\end{tikzcd}\]
	For any $a_i\in K_0(X), i\in T_\tau$, we have
	\[K^X_{\tau,0}\big({\textstyle\bigotimes_i} a_i \big) = p_{2*} \Big(p_1^*({\textstyle\bigotimes_i} a_i) \otimes \lambda_{-1}\big( (\bbT\an_X\boxtimes\rR^1\pi_*\cO_{\oC_\tau})^\vee\big)\Big),\]
	where $\pi\colon\oC_\tau\to\oM_\tau$ and $\lambda_{-1}(F)\coloneqq\sum_{i}(-1)^i\wedge^i F$.
	
	\medskip\noindent
	(2) \textbf{Products:}
	Let $(\tau_1, \beta_1)$ and $(\tau_2, \beta_2)$ be two A-graphs.
	For any $a_i\in K_0(X), i\in T_{\tau_1}$ and $b_j\in K_0(X), j\in T_{\tau_2}$, we have
	\[K^X_{\tau_1\sqcup\tau_2,\beta_1\sqcup\beta_2}\big(({\textstyle\bigotimes_i} a_i)\otimes({\textstyle\bigotimes_j} b_j)\big)=K^X_{\tau_1,\beta_1}\big({\textstyle\bigotimes_i} a_i\big)\boxtimes K^X_{\tau_2,\beta_2}\big({\textstyle\bigotimes_j} b_j\big).\]
	
	\medskip\noindent
	(3) \textbf{Cutting edges:}
	Let $(\sigma,\beta)$ be an A-graph obtained from $(\tau,\beta)$ by cutting an edge $e$ of $\tau$.
	Let $v,w$ be the two tails of $\sigma$ created by the cut.
	Consider the following commutative diagram
	\[ \begin{tikzcd}
	\oM_\tau \rar{d} & \oM_\sigma\\
	\R \oM(X, \tau, \beta) \uar{\st_\tau} \arrow{r}{c} \arrow{d}{\ev_e} & \R \oM(X, \sigma, \beta) \uar{\st_\sigma} \arrow{d}{\ev_v \times \ev_w} \\
	X \arrow{r}{\Delta} & X \times_S X.
	\end{tikzcd} \]
	For any $a_i\in K_0(X), i\in T_\tau$, we have
	\[d_* K^X_{\tau,\beta}\big({\textstyle\bigotimes_i} a_i\big)=K^X_{\sigma,\beta}\big(({\textstyle\bigotimes_i} a_i)\otimes\Delta_*\cO_X\big).\]
	
	\medskip\noindent
	(4) \textbf{Forgetting tails:}
	Let $(\sigma,\beta)$ be an A-graph obtained from $(\tau,\beta)$ by forgetting a tail.
	Let $\Phi\colon\oM_\tau\to\oM_\sigma$ be the forgetful map from $\tau$-marked stable curves to $\sigma$-marked stable curves.
	For any $a_i\in K_0(X), i\in T_\sigma$, we have
	\[K^X_{\tau,\beta}\big(({\textstyle\bigotimes_i} a_i)\otimes \cO_X\big)=\Phi^*K^X_{\sigma,\beta}\big({\textstyle\bigotimes_i} a_i\big).\]
	
	\medskip\noindent
	(5) \textbf{Contracting edges:}
	Let $(\sigma,\beta)$ be an A-graph where $\sigma$ is obtained from a modular graph $\tau$ by contracting an edge (possibly a loop) $e$.
	We follow the notations of \cref{sec:contracting_edges}.
	Let $\Phi\colon\oM_\tau\to\oM_\sigma$, $\Psi_l^i\colon\oM_{\tau_l^i}\to\oM_\tau$ and $\Omega\colon\oM_{\tsigma}\to\oM_\sigma$ be the induced maps on the moduli stacks of stable curves.
	For any $a_v\in K_0(X), v\in T_{\tsigma}$, we have
	\[\Phi^*\Omega_* K^X_{\tsigma,\beta}\big({\textstyle\bigotimes_v} a_v\big) = \sum_l (-1)^{l+1} \sum_{i,j} \Psi_{l*}^i K^X_{\tau^i_l,\beta^{i_j}_l}\big({\textstyle\bigotimes_v} a_v\big).\]
\end{theorem}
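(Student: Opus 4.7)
The plan is to deduce each item from the corresponding derived geometric identity of \cref{thm:geometric_relations}, by pushing forward along the stabilizations $\st_*$ and invoking base change and the projection formula on the resulting cartesian squares. All derived moduli stacks in sight are locally of finite presentation and derived lci by \cref{thmintro:derived_enhancement}, so the standard K-theoretic machinery (base change of perfect pushforwards, projection formula, Koszul resolutions) applies.

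\textbf{Items (2), (3), and (4).} For \textbf{products}, the canonical equivalence of \cref{thm:geometric_relations}(1) exchanges evaluations and stabilizations with external products, so the external-product formula for $\st_*$ gives the claim directly. For \textbf{cutting edges}, base change in the cartesian square of \cref{thm:geometric_relations}(2) yields $c_*\cO_{\R\oM(X,\tau,\beta)}=(\ev_v\times\ev_w)^*\Delta_*\cO_X$. Since the remaining evaluations factor as $\ev_i^\tau=\ev_i^\sigma\circ c$, the projection formula rewrites $c_*\bigl(\bigotimes_i\ev_i^{\tau,*}a_i\bigr)$ as $\bigl(\bigotimes_i\ev_i^{\sigma,*}a_i\bigr)\otimes(\ev_v\times\ev_w)^*\Delta_*\cO_X$, and applying $\st_{\sigma*}$ together with $d\circ\st_\tau=\st_\sigma\circ c$ concludes. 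For \textbf{forgetting tails}, the observation $\ev_t^*\cO_X=\cO$ reduces the claim to the base-change identity $\st_{\tau*}F^*(-)=\Phi^*\st_{\sigma*}(-)$, where $F$ is the forgetful map on derived moduli; by \cref{thm:geometric_relations}(4), $F$ factors through $\oM_\tau\times_{\oM_\sigma}\R\oM(X,\sigma,\beta)$ as a derived base change of the universal curve $\oCpre_w\to\oM_\tau\times_{\oM_\sigma}\oMpre_\sigma$, and base change combined with the projection formula in the resulting squares delivers the identity.

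\textbf{Mapping to a point (1).} I would first identify the derived structure of $\R\oM(X,\tau,0)$ explicitly. Since every stable map of total class zero is constant on each component, the truncation is $\oM_\tau\times X$, and a tangent-complex calculation using \cref{thmintro:derived_enhancement} identifies $\R\oM(X,\tau,0)$ as a quasi-smooth derived thickening of $\oM_\tau\times X$ with obstruction bundle $E\coloneqq(\rR^1\pi_*\cO_{\oC_\tau})\boxtimes\bbT\an_X$. The standard Koszul-resolution formula then gives $[\cO^{\vir}_{\R\oM(X,\tau,0)}]=\lambda_{-1}(E^\vee)$ in $K_0(\oM_\tau\times X)$, and pushing forward along $p_1$ with $\bigotimes_i\ev_i^*a_i=p_2^*\bigotimes_i a_i$ yields the stated formula.

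\textbf{Contracting edges (5).} This is the main obstacle. The strategy is to begin with the cartesian square
\[\begin{tikzcd}
\oM_\tau\times_{\oM_\sigma}\R\oM(X,\tsigma,\beta) \arrow{r} \arrow{d} & \R\oM(X,\tsigma,\beta) \arrow{d}{\Omega\circ\st_{\tsigma}} \\
\oM_\tau \arrow{r}{\Phi} & \oM_\sigma
\end{tikzcd}\]
and use base change to rewrite $\Phi^*\Omega_* I^X_{\tsigma,\beta}\bigl(\bigotimes_v a_v\bigr)$ as the pushforward of $\bigotimes_v\ev_v^*a_v$ from $\oM_\tau\times_{\oM_\sigma}\R\oM(X,\tsigma,\beta)$ to $\oM_\tau$. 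The equivalence of \cref{thm:geometric_relations}(5) then transports this to the colimit side; the pieces $\coprod_{i,j}\R\oM(X,\tau_l^i,\beta_l^{i_j})$ are glued along closed embeddings corresponding to further degenerations of the chain, and an inclusion--exclusion / Mayer--Vietoris argument should yield
\[\bigl[\cO_{\colim}\bigr] = \sum_l (-1)^{l+1} \sum_{i,j}\bigl[\cO_{\R\oM(X,\tau_l^i,\beta_l^{i_j})}\bigr]\]
in the K-group of $\oM_\tau\times_{\oM_\sigma}\R\oM(X,\tsigma,\beta)$. Assembling this via pushforward through the stabilizations $\st_{\tau_l^i}$ and the maps $\Psi_l^i$ delivers the right-hand side of the formula. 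The hardest step will be making the colimit-to-alternating-sum translation rigorous at the derived level and verifying its compatibility with the various stabilization pushforwards.
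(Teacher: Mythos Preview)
Your treatment of (2), (3), (4) matches the paper's proofs (Propositions~7.3--7.5): these are exactly the projection-formula and derived base-change manipulations you describe, run against the pullback squares of \cref{thm:geometric_relations}. For (4) the missing ingredient you will need explicitly is that $\Psi_*\cO\simeq\cO$, which comes from the fact that $\rho\colon\oCpre_w\to\oM_\tau\times_{\oM_\sigma}\oMpre_\sigma$ is a fiberwise blow-down of rational curves; the paper records this as part of \cref{thm:forgetting_tails}.

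Your argument for (1), however, has a genuine gap. Two points. First, the identity $\bigotimes_i\ev_i^*a_i=p_2^*\bigotimes_i a_i$ is \emph{false} on the derived enhancement: the paper explicitly notes that, unlike the truncated situation, there is in general no map $\R\oM(X,\tau,0)\to X$ through which all $\ev_i$ factor. So you cannot simply replace $\ev_i^*$ by $p_2^*$ before pushing forward. Second, even granting the tangent-complex computation (which is indeed Lemma~7.2 in the paper), the ``standard Koszul-resolution formula'' you invoke is exactly the statement that requires proof. In algebraic geometry one deduces $[j_*\cO]=\lambda_{-1}(E^\vee)$ from deformation to the normal cone plus $\bbA^1$-invariance of $K$-theory; but $\bbA^1$-invariance of $G$-theory fails in non-archimedean analytic geometry. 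The paper's proof of (1) therefore builds an analytic deformation to the normal bundle over $\bP^1_k$ (via relative derived analytification, \cref{sec:deformation_to_the_normal_bundle}), deforms both $\st$ and the $\ev_i$ over this family, applies GAGA to algebraize the resulting perfect complex on $\oM_\tau\times\bP^1_k$, and only then invokes algebraic $\bbA^1$-invariance to compare the fibers at $0$ and $1$. This is the ``extra effort'' flagged in the introduction and is not a routine Koszul calculation.

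For (5) your outline is right, but the paper's resolution of the step you flag as hardest is worth knowing: the alternating-sum identity for structure sheaves is first established on the \emph{underived} side, i.e.\ for $\oMpre_{\tau_l^i}\to\oM_\tau\times_{\oM_\sigma}\oMpre_{\tsigma}$ (where it is Lee's Lemma~3), and then pulled back to the derived moduli of maps via the derived base-change squares of \cref{lem:contracting_edges_pullback}. This sidesteps any direct Mayer--Vietoris analysis on the derived colimit.
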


The properties (2-5) in \cref{thmintro:properties_of_quantum_K-invariants} follow readily from the corresponding properties in \cref{thm:geometric_relations}, with arguments given in \cref{sec:quantum_K-invariants}.
Nevertheless, the first property ``mapping to a point'' requires extra effort, whose proof is based on the construction of deformation to the normal bundle.
We develop relative derived analytification in \cref{sec:relative_analytification}, which allows us to borrow the algebraic construction of deformation to the normal bundle from Gaitsgory-Rozenblyum \cite{Gaitsgory_Study_II}.
Due to the lack of $\bbA^1$-invariance of G-theory in analytic geometry (see \cite{Kerz_Towards_a_non-archimedean_analytic_analog,Kerz_K-theory_of_non-archimedean_rings_I} for related discussions), we had to extend the construction in \cite{Gaitsgory_Study_II} over a projective line, and then apply the GAGA theorem for stacks \cite{Porta_Yu_Higher_analytic_stacks} to rely on the $\bbA^1$-invariance in algebraic geometry.

Another consequence of our relative derived analytification theory is a GAGA-type result for analytic vs algebraic formal moduli problems, which may deserve independent interest:

\begin{theorem}[see \cref{thm:FMP}]
	Let $X \in \dAfd_k$ be a derived $k$-affinoid space and let $A \coloneqq \Gamma(\cO_X\alg)$.
	There is a canonical equivalence
	\[ \mathrm{FMP}_{/\Spec(A)} \xrightarrow{\ \sim\ } \mathrm{FMP}\an_{/X} . \]
\end{theorem}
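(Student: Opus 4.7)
The plan is to reduce the claimed equivalence to a comparison at the level of augmented small (Artinian) extensions, following Lurie's formalism of formal moduli problems. A formal moduli problem over a base $B$ (whether algebraic or analytic) is by definition a functor out of a suitable \infcat of small augmented extensions of $B$, satisfying Schlessinger-type exactness on pullback squares of square-zero extensions. It therefore suffices to produce an equivalence between the \infcat of augmented small $A$-algebras (algebraic) and the \infcat of augmented small extensions of $\cO_X\an$ (analytic). The relative analytification functor developed in \cref{sec:relative_analytification} supplies a natural candidate, and the problem reduces to showing that it restricts to such an equivalence.

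The crucial step is the case of square-zero extensions: every small object on either side is built as a finite tower of square-zero extensions by almost perfect connective modules in bounded degrees, so an equivalence on square-zero extensions propagates by induction on the tower length. A square-zero extension of $\cO_X$ (resp.\ $A$) by a module $M$ is classified by a derivation $\mathbb{L}_X\an \to M[1]$ (resp.\ $\mathbb{L}_A \to M[1]$). A derived Kiehl / GAGA-type equivalence, available because $X$ is $k$-affinoid, identifies almost perfect connective analytic $\cO_X$-modules with almost perfect connective $A$-modules via global sections; combined with the compatibility of the analytic and algebraic cotangent complexes under analytification, which is part of the foundational package built up in \cref{sec:relative_analytification}, this yields a natural equivalence on square-zero extensions.

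The main obstacle is essential surjectivity: one must verify that every analytic small extension of $X$ actually arises from an algebraic one via relative analytification. Conceptually this is expected, since such an extension has the same underlying reduced space as $X$ (its augmentation ideal being nilpotent), so its analytic algebra structure is rigidly determined by the module data, which is algebraic on the affinoid base. Upgrading this to a functorial equivalence of \infcats, rather than a bijection on isomorphism classes, is the technical heart of the proof and will use the full strength of the relative analytification results. Once this equivalence on small augmented extensions is in hand, the equivalence of FMPs follows formally by restriction and left Kan extension, because relative analytification preserves pullback squares of square-zero extensions and therefore preserves the Schlessinger condition on both sides.
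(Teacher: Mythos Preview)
Your proposal is correct and identifies the right ingredients, but the paper organizes the argument differently, and the comparison is instructive. The paper defines analytic formal moduli problems directly as functors out of $(\mathrm{Nil}\an_{/X})\op$, the \infcat of nil-isomorphisms of finite type to $X$, so the theorem reduces immediately to proving an equivalence of source \infcats $\mathrm{Nil}\an_{/X} \simeq \mathrm{Nil}_{/\Spec(A)}$ (this is \cref{prop:nil-isomorphisms_are_algebraic}), after which one invokes a structural result of Gaitsgory--Rozenblyum. That equivalence is established as an adjoint equivalence: global sections $(-)\alg_X$ is shown to be left adjoint to relative analytification $(-)\an_X$, conservative (via the comparison $\bbL_{B/A} \simeq \anL_{B/A}$ for nil-isomorphisms, \cref{lem:analytic_cotangent_complex_nil-isomorphism}), and with invertible unit (via compatibility of both functors with Postnikov towers).

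Your inductive argument on towers of square-zero extensions is essentially an unpacking of this adjoint-equivalence proof: the Postnikov-tower step in the paper is precisely your induction, and the cotangent-complex and module comparisons you invoke are the inputs to \cref{lem:analytic_cotangent_complex_nil-isomorphism}. The advantage of the paper's packaging is that the adjunction framework handles the ``functorial equivalence of \infcats, rather than a bijection on isomorphism classes'' issue you flag as the technical heart: once you have an adjunction with conservative left adjoint and invertible unit, the higher coherences come for free. Your route would work, but you would have to manage those coherences by hand at each stage of the tower, which is what the adjoint formulation is designed to avoid.
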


The list of Kontsevich-Manin axioms as in \cite[\S 4.3]{Lee_Quantum_K-theory_I} holds also for the non-archimedean quantum K-invariants, which are immediate consequences of \cref{thmintro:properties_of_quantum_K-invariants}.
We do not repeat those axioms here, as the properties above with respect to elementary operations on A-graphs are more fundamental properties of the invariants.

In the final section of this paper, we take advantage of the flexibility of our derived approach to introduce a generalized type of quantum K-invariants that allow not only simple incidence conditions for marked points, but also incidence conditions with multiplicities.
They satisfy a list of properties parallel to \cref{thmintro:properties_of_quantum_K-invariants}.
To the best of our knowledge, such invariants are not yet considered in the literature, even in algebraic geometry.
If the incidence condition at a marked point $p$ is given by a divisor $D\subset X$, then for curves without components mapping into $D$, the multiplicity condition imposes simply an order of tangency at $p$ with respect to $D$.
This is compatible with the ideas of relative Gromov-Witten invariants (\cite{Ionel_Relative_Gromov-Witten_invariants,Li_A_degeneration_formula}), logarithmic Gromov-Witten invariants (\cite{Chen_Stable_logarithmic_maps_I,Abramovich_Stable_logarithmic_maps_II,Gross_Logarithmic_Gromov-Witten_invariants}) as well as logarithmic quantum K-theory (\cite{Chou_The_log_product_formula}).
Nevertheless, the exact relations are intriguing, and the situation becomes more complicated for curves with components mapping into $D$.
Furthermore, our theory with multiplicities applies not only to divisors, but also to arbitrary lci subvarieties.

\bigskip
All of our constructions and statements in \cref{sec:stable_maps,sec:geometry_of_derived_stable_maps,sec:quantum_K-invariants,sec:multiplicities} carry over almost verbatim to algebraic geometry, which provide alternative constructions and proofs of the algebraic quantum K-theory by Lee \cite{Lee_Quantum_K-theory_I}.
We remark that the derived approach towards Gromov-Witten theory in algebraic geometry dates back to Kontsevich \cite{Kontsevich_Enumeration} and Ciocan-Fontanine-Kapranov \cite{Ciocan-Fontanine_Virtual_fundamental_classes} using dg-manifolds, a precursor of derived algebraic geometry.
The derived stack of stable maps are later studied by Toën \cite{Toen_Higher_and_derived_stacks,Toen_Derived_algebraic_geometry,Toen_Operations_on_derived_moduli_spaces_of_branes}, Schürg-Toën-Vezzosi \cite{Schurg_Dervied_algebraic_geometry_determinants} and Mann-Robalo \cite{Mann_Brane_actions,Mann_Gromov-Witten_theory}.
Nevertheless, the geometric relations as in \cref{thm:geometric_relations} have never really been proved, even in algebraic geometry.

We end the introduction with a few remarks comparing our derived approach with the classical approach in algebraic geometry via perfect obstruction theory:
\begin{enumerate}
	\item It is difficult to apply the classical approach in analytic geometry, because the classical approach relies on the global resolution of the perfect obstruction theory, which is an unnatural assumption in analytic geometry.
	\item The derived approach allows us to work relatively over any base space $S$.
	\item As we have seen in \cref{thm:geometric_relations}, the derived approach gives more intuitive and transparent proofs of the geometric properties of quantum K-invariants.
	\item The derived approach completely separates the issue of transversality from the issue of compactness mentioned in the beginning, while in the classical approach, the compactness is involved in the solution of transversality, due to the global resolution assumption.
\end{enumerate}

\bigskip
\paragraph{\textbf{Notations and terminology}}

We refer to \cite{Lurie_HTT,Lurie_Higher_algebra} for the background on $\infty$-categories, to \cite{Bosch_Non-Archimedean_analysis} for the classical theory of rigid analytic spaces.
We refer to \cite{Porta_Yu_Derived_non-archimedean_analytic_spaces} for the framework of derived non-archimedean geometry, to \cite{Porta_Yu_Representability_theorem,Porta_Yu_Derived_Hom_spaces} for the deformation theory and the representability theorem in derived non-archimedean geometry, and to \cite{Porta_Yu_Derived_Hom_spaces} for the construction of derived analytic mapping stacks.
We refer to \cite{Kontsevich_Gromov-Witten_classes,Behrend_Stacks_of_stable_maps,Behrend_Gromov-Witten_invariants,Lee_Quantum_K-theory_I} for the background on Gromov-Witten theory.

The symbol $\cS$ denotes the \infcat of spaces (see \cite[\S1.2.5]{Lurie_HTT}), and $\Cat_\infty$ denotes the \infcat of small \infcats (see \cite[Definition~3.0.0.1]{Lurie_HTT}).
We use homological indexing convention, i.e.\ the differential in chain complexes lowers the degree by 1.

Throughout the paper, we fix a complete non-archimedean field with nontrivial valuation $k$.
For $n\in\bbN$, we denote by $\bA^n_k$ the $k$-analytic $n$-dimensional affine space, by $\bD^n_k$ the $k$-analytic $n$-dimensional closed unit polydisk, and by $\bP^n_k$ the \kanal $n$-dimensional projective space.

We denote by $\An_k$ the category of rigid \kanal spaces, by $\dAnk$ the \infcat of derived \kanal spaces (\cite[Definition 2.5]{Porta_Yu_Derived_non-archimedean_analytic_spaces}), by $\Afd_k$ the category of rigid $k$-affinoid spaces, and by $\dAfd_k$ the \infcat of derived $k$-affinoid spaces (\cite[Definition 7.3]{Porta_Yu_Derived_non-archimedean_analytic_spaces}).
The notion of derived non-archimedean analytic space is based on the theory of pregeometry and structured topos introduced by Lurie \cite{DAG-V}.
We refer to \cite[\S 2]{Porta_Yu_Derived_Hom_spaces} for a quick summary of the definitions, and to \cite[\S 1]{Porta_Yu_Derived_non-archimedean_analytic_spaces} for a heuristic explanation of the ideas behind the abstract definitions.

A derived \kanal space $X$ consists of a pair $(\cX,\cO_X)$, where $\cX$ denotes the underlying \inftopos of étale sheaves of spaces on $X$, and $\cO_X$ denotes the structure sheaf of derived \kanal rings (see \cite[Definition 4.1]{Porta_Yu_Representability_theorem}).
The underlying sheaf of simplicial commutative rings is denoted by $\cO\alg_X$.
For $n\in\bbN$, $\mathrm t_{\leq n}(X)$ denotes the truncation of level $n$, introduced in \cite[\S3.3]{Porta_Yu_Derived_non-archimedean_analytic_spaces}.
It has the same underlying $\infty$-topos $\cX$ as $X$, and its structure sheaf $\cO_{\mathrm t_{\le n}(X)}$ is equal to the internal $n$-truncation $\tau_{\le n}^{\cX}(\cO_X)$ of $\cO_X$ (see \cite[Proposition 5.5.6.18]{Lurie_HTT}).
When $n = 0$, we write $\trunc(X)\coloneqq\mathrm t_{\leq 0}(X)$, which is a classical rigid $k$-analytic space.
	
Given $X \in \dAnk$, $\Coh^+(X)$ denotes the stable $\infty$-category of eventually connective complexes of $\cO_X\alg$-modules with coherent sheaves of homotopy groups, as in \cite[Definition 2.6]{Porta_Yu_Derived_Hom_spaces}.
For a morphism $f \colon X \to Y$, we denote by $\anL_f$ or $\anL_{X/Y}$ the analytic cotangent complex of $f$, introduced in \cite[\S 5.2]{Porta_Yu_Representability_theorem}.
Theorem~1.5 in loc.\ cit.\ gives a summary of main properties of analytic cotangent complexes.
	
We denote by $\St(\dAfd_k)$ the $\infty$-category of stacks over the étale site of derived $k$-affinoid spaces, which are technically $\cS$-valued hypercomplete sheaves.
Such a stack is a derived \kanal stack if it is geometric, in the sense that it has an atlas of derived $k$-affinoid spaces (see \cite[Definition 7.2]{Porta_Yu_Representability_theorem} and \cite[\S 2.3]{Porta_Yu_Higher_analytic_stacks}).
The representability theorem of \cite{Porta_Yu_Representability_theorem} asserts that an analytic moduli functor (in terms of a stack in $\St(\dAfd_k)$) is a derived \kanal stack if and only if it is compatible with Postnikov towers, has a global analytic cotangent complex, and its truncation is a classical rigid \kanal stack.

\bigskip
\paragraph{\textbf{Acknowledgments}}

We are very grateful to Federico Binda, Antoine Chambert-Loir, Benjamin Hennion, Felix Janda, Maxim Kontsevich, Gérard Laumon, Y.P.\ Lee, Jacob Lurie, Etienne Mann, Tony Pantev, Francesco Sala, Carlos Simpson, Georg Tamme, Bertrand To\"en and Gabriele Vezzosi for valuable discussions.
We would like to thank Marco Robalo in particular for many detailed discussions and for his enthusiasm for our work.
The authors would also like to thank each other for the joint effort.
This research was partially conducted during the period when T.Y.\ Yu served as a Clay Research Fellow.
We have also received supports from the National Science Foundation under Grant No.\ 1440140, while we were in residence at the Mathematical Sciences Research Institute in Berkeley, California, and from the Agence Nationale de la Recherce under Grant ANR-17-CE40-0014, while we were at the Université Paris-Sud in Orsay, France.

\section{Derived local complete intersection morphisms} \label{sec:derived_lci}

We fix a complete non-archimedean field $k$ with nontrivial valuation.
In this section we study properties of derived local complete intersection (derived lci for short) morphisms in derived analytic geometry.
We prove that derived lci morphisms have finite tor-amplitude.
We introduce the notion of virtual dimension, and prove that a derived lci \kanal space is underived if and only if its virtual dimension is equal to the dimension of its truncation.

\begin{definition} \label{def:classical_lci}
	A morphism $f\colon X\to Y$ of (underived) \kanal stacks is called:	\begin{enumerate}
		\item a \emph{regular embedding} if it is a closed immersion and locally on $Y$ the ideal defining $X$ can be generated by a regular sequence;
		\item \emph{local complete intersection} (\emph{lci} for short) if locally on $X$ it can be factored as a regular embedding followed by a smooth morphism.
	\end{enumerate}
\end{definition}

\begin{definition} \label{def:derived_lci}
	A morphism $f \colon X \to Y$ of derived \kanal stacks is called \emph{derived local complete intersection} (\emph{derived lci} for short) if its analytic cotangent complex $\anL_{X / Y}$ is perfect and in tor-amplitude $(-\infty,1]$.
\end{definition}

\begin{remark} \label{rem:classical_definition_perfect_complex}
	Let $X$ be a derived \kanal stack.
	Recall from \cite[Definition 7.5]{Porta_Yu_Derived_Hom_spaces} that a coherent sheaf $\cF \in \Coh^+(X)$ is called a perfect complex if there exists a derived $k$-affinoid atlas $\{U_i\}$ of $X$ such that $\Gamma(\cF|_{U_i})$ belongs to the smallest full stable subcategory of $\Gamma(\cO_{U_i}\alg)\Mod$ closed under retracts and containing $\Gamma(\cO_{U_i}\alg)$.
	By \cite[Lemma 7.6]{Porta_Yu_Derived_Hom_spaces}, $\cF \in \Coh^+(X)$ is perfect if and only if it has finite tor-amplitude.
		By \cite[Corollary 5.40]{Porta_Yu_Representability_theorem}, $\anL_{X/Y}$ belongs to $\Coh^+(X)$.
	Hence $\anL_{X/Y}$ is automatically perfect as soon as it has tor-amplitude $(-\infty,1]$.
	\end{remark}

\begin{lemma} \label{lem:lci_underived}
	A morphism $f \colon X \to Y$ of (underived) \kanal stacks is  lci as in \cref{def:classical_lci} if and only if it is derived lci as in \cref{def:derived_lci}.
\end{lemma}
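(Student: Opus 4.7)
The plan is to work étale-locally on both $X$ and $Y$ and reduce the question to well-known commutative-algebraic counterparts. Since both properties are étale-local on source and target, I may assume $X = \Sp A$ and $Y = \Sp B$ are affinoid, with $f$ corresponding to a morphism $B \to A$ of $k$-affinoid algebras.

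For the direction classical lci $\Rightarrow$ derived lci, suppose locally $f$ factors as $X \xrightarrow{i} Z \xrightarrow{p} Y$ with $i$ a regular embedding and $p$ smooth. I would exploit the cotangent cofiber sequence
\[ i^* \anL_{Z/Y} \longrightarrow \anL_{X/Y} \longrightarrow \anL_{X/Z}. \]
Smoothness of $p$ yields that $\anL_{Z/Y}$ is locally free in degree $0$, while for the regular embedding $i$ with ideal $I$ the Koszul resolution of $i_* \cO_X$ by free $\cO_Z$-modules computes $\anL_{X/Z} \simeq (I/I^2)[1]$ as a locally free sheaf in degree $1$. The cofiber sequence then exhibits $\anL_{X/Y}$ as perfect with homology concentrated in degrees $0$ and $1$, whence of tor-amplitude in $[1,-\infty)$.

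For the converse, fix $x \in X$ with image $y$, choose topological generators $a_1,\dots,a_n$ of $A$ as a $B$-algebra near $x$, and use them to construct a factorization $X \xrightarrow{i} Y \times \bA_k^n \xrightarrow{p} Y$ with $i$ a closed immersion cutting out an ideal sheaf $\cI$ and $p$ smooth. Since $\anL_{X/Y}$ and $i^* \anL_{Y \times \bA_k^n / Y} \simeq \cO_X^n$ are both perfect, so is the cofiber $\anL_{X/(Y \times \bA_k^n)}$. The long exact sequence of the cofiber triangle, combined with $\Omega^1_{X/(Y \times \bA_k^n)} = 0$ and the hypothesis $H_i(\anL_{X/Y}) = 0$ for $i > 1$, yields
\[ 0 \longrightarrow H_1(\anL_{X/Y}) \longrightarrow H_1(\anL_{X/(Y \times \bA_k^n)}) \longrightarrow \cO_X^n \longrightarrow \Omega^1_{X/Y} \longrightarrow 0 \]
and $H_i(\anL_{X/(Y \times \bA_k^n)}) = 0$ for $i \neq 1$. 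Hence $\anL_{X/(Y \times \bA_k^n)}$ is perfect and concentrated in a single homological degree, so equivalent to $(\cI/\cI^2)[1]$ with $\cI/\cI^2$ locally free of finite rank.

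The remaining step, which I anticipate as the main obstacle, is the analytic quasi-regularity criterion: a closed immersion of underived \kanal spaces whose analytic cotangent complex is concentrated in degree $1$ with locally free $H_1$ is a regular embedding in the sense of \cref{def:classical_lci}. In the algebraic setting this is classical (SGA~6, Exp.\ VII, or Avramov's theorem in the local form). In the analytic setting I would reduce to it by passing to the Noetherian local rings $\cO_{X,x}$ and $\cO_{Y \times \bA_k^n, i(x)}$, using the compatibility of the analytic cotangent complex with its algebraic counterpart on stalks, from \cite{Porta_Yu_Representability_theorem}. Lifting a basis of $(\cI/\cI^2)_x$ to local sections $f_1,\dots,f_r$ of $\cI$, the standard Koszul-homology criterion shows $(f_1,\dots,f_r)$ is a regular sequence on stalks, and by coherence this persists in a neighborhood, producing the required local factorization.
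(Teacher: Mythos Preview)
Your proposal is correct and follows essentially the same route as the paper: reduce to the affinoid case, factor through a smooth projection to reduce to a closed immersion, identify the analytic cotangent complex with the algebraic one (the paper does this via \cite[Corollary 5.33]{Porta_Yu_Representability_theorem} on global sections, which is the precise form of the compatibility you invoke on stalks), and then appeal to the algebraic criterion (Avramov / SGA~6). The only minor technical point is that in the rigid setting you should factor through $Y \times \bD_k^n$ rather than $Y \times \bA_k^n$ so that the intermediate space remains affinoid and the global-sections comparison applies directly; your topological generators $a_1,\dots,a_n$ land in some polydisk anyway, so this costs nothing.
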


\begin{proof}
	The question being local on both $X$ and $Y$, we may assume that $X$ and $Y$ are both affinoid.
	Write $X = \Sp(B)$ and $Y = \Sp(A)$, and factor $f$ as
	\[ \begin{tikzcd}
		X \arrow[hook]{r}{j} & Y \times \bD_k^n \arrow{r}{p} & Y ,
	\end{tikzcd} \]
	where $j$ is a closed immersion and $p$ is the canonical projection.
	Using \cite[Proposition 5.50]{Porta_Yu_Representability_theorem} and the transitivity fiber sequence of analytic cotangent complex, we see that $f$ is derived lci if and only if $j$ is derived lci.
	We are therefore left to check that $j$ is derived lci if and only if the ideal defining $X = \Sp(B)$ inside $Y \times \bD_k^n = \Sp( A \langle T_1, \ldots, T_n \rangle )$ can be generated by a regular sequence.
	Since $j$ is a closed immersion, \cite[Corollary 5.33]{Porta_Yu_Representability_theorem} implies that there is a canonical equivalence
	\[ \Gamma(X; \anL_{X / Y \times \bD_k^n}) \simeq \mathbb L_{B / A \langle T_1, \ldots, T_n \rangle} , \]
	where the right hand side denotes the algebraic cotangent complex.
	As both $A \langle T_1, \ldots, T_n \rangle$ and $B$ are noetherian, the conclusion now follows from Avramov's second vanishing theorem \cite[Theorem 1.2]{Avramov_Quillen_conjecture}.
\end{proof}

Here are some basic properties of derived lci morphisms:

\begin{lemma} \label{lem:derived_lci_basic_properties}
	\begin{enumerate}
		\item If $f \colon X \to Y$ and $g \colon Y \to Z$ are derived lci morphisms, then so is the composition $g \circ f \colon X \to Z$.
		\item Let
		\[ \begin{tikzcd}
			X' \arrow{d}{f'} \arrow{r} & X \arrow{d}{f} \\
			Y' \arrow{r} & Y
		\end{tikzcd} \]
		be a pullback square of derived \kanal stacks.
		If $f$ is derived lci, then so is $f'$.
		\item Let $f \colon X \to Y$ be a smooth morphism and $s \colon Y \to X$ a section.
		Then $s$ is derived lci.
	\end{enumerate}
\end{lemma}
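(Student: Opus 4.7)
Each of the three parts will follow from a direct manipulation of the transitivity fiber sequence for the analytic cotangent complex, using the fact that both perfectness and the condition of having tor-amplitude in $[1,-\infty)$ are preserved under derived pullback and under cofiber sequences, and that a single suspension shifts tor-amplitude in the expected way. All the underlying stability properties of perfect complexes and of the analytic cotangent complex in derived non-archimedean geometry are available from \cite{Porta_Yu_Representability_theorem, Porta_Yu_Derived_Hom_spaces}.

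For (1), I would write down the transitivity cofiber sequence
\[ f^* \anL_{Y/Z} \longrightarrow \anL_{X/Z} \longrightarrow \anL_{X/Y} \]
associated to $g \circ f$. The hypotheses on $f$ and $g$, combined with the stability of perfectness and tor-amplitude under pullback, show that the outer terms are perfect of tor-amplitude in $[1,-\infty)$; the middle term then inherits both properties. For (2), I would invoke the base change compatibility of the analytic cotangent complex: if $g \colon X' \to X$ denotes the top arrow of the square, then $\anL_{X'/Y'} \simeq g^* \anL_{X/Y}$, and pullback preserves both perfectness and tor-amplitude, which gives the conclusion immediately.

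For (3), I would use $f \circ s = \id_Y$ together with $\anL_{Y/Y} \simeq 0$ in the transitivity sequence to identify
\[ \anL_{Y/X} \simeq s^* \anL_{X/Y}[1] . \]
Smoothness of $f$ ensures that $\anL_{X/Y}$ is a finite locally free sheaf concentrated in degree $0$, so after pulling back along $s$ and shifting by one, $\anL_{Y/X}$ is perfect and of tor-amplitude concentrated in degree $1$, in particular within $[1,-\infty)$. I do not foresee any serious obstacle; the only point meriting attention is in (3), where one must correctly track the effect of the suspension on tor-amplitude under the homological indexing convention fixed in the paper.
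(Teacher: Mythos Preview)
Your proposal is correct and follows essentially the same approach as the paper's proof: the paper proves (1) via the transitivity sequence (\cite[Proposition 5.10]{Porta_Yu_Representability_theorem}), (2) via base change for the analytic cotangent complex (\cite[Proposition 5.12]{Porta_Yu_Representability_theorem}), and (3) via the identification $\anL_s \simeq s^* \anL_f[1]$ together with the characterization of smoothness in terms of $\anL_f$ (\cite[Proposition 5.50]{Porta_Yu_Representability_theorem}). You have simply unpacked what these citations say.
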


\begin{proof}
	Statement (1) follows from the transitivity sequence of analytic cotangent complex \cite[Proposition 5.10]{Porta_Yu_Representability_theorem}.
	Statement (2) follows from \cite[Proposition 5.12]{Porta_Yu_Representability_theorem}.
	For statement (3), the transitivity sequence yields a fiber sequence
	\[ s^* \anL_f \longrightarrow \anL_{\mathrm{id}_Y} \longrightarrow \anL_s . \]
	As $\anL_{\mathrm{id}_Y} \simeq 0$, we obtain $\anL_s \simeq s^* \anL_f[1]$.
	The conclusion now follows from \cite[Proposition 5.50]{Porta_Yu_Representability_theorem}.
\end{proof}

\subsection{The local structure of derived lci morphisms}

Let $X$ be a derived \kanal stack, $\pi \colon E \to X$ a vector bundle, $s \colon X \to E$ a section, and $s_0\colon X\to E$ the zero section.
Let $Z_X(s)$ denote the derived pullback
\[ \begin{tikzcd}
Z_X(s) \arrow{r} \arrow{d}{p} & X \arrow{d}{s} \\
X \arrow{r}{s_0} & E.
\end{tikzcd} \]
Note that its truncation $\trunc(Z_X(s))$ coincides with the usual zero locus of $s$.

\begin{lemma} \label{lem:standard_derived_lci}
	The morphism $p \colon Z_X(s) \to X$ is derived lci and of finite tor-amplitude.
\end{lemma}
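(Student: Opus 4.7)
The plan is to address the two assertions separately. For the derived lci property, I would observe that $s\colon X \to E$ is a section of the smooth morphism $\pi\colon E \to X$, hence derived lci by \cref{lem:derived_lci_basic_properties}(3); the defining square then realizes $p$ as the derived base change of $s$ along $s_0$, so \cref{lem:derived_lci_basic_properties}(2) gives that $p$ is derived lci.

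Finite tor-amplitude is local on $X$, so I would pick a derived affinoid chart $U = \Sp(A) \hookrightarrow X$ over which $E$ trivializes, writing $E|_U \simeq U \times \bA^n_k$. Under this trivialization, $s|_U$ is encoded by functions $f_1, \ldots, f_n \in \Gamma(U, \cO_U\alg)$, while $s_0|_U$ corresponds to the zero tuple. Since the image of $s|_U$ is contained in some relatively compact polydisk $U \times \bD^n_k \hookrightarrow E|_U$, one may replace $\bA^n_k$ by $\bD^n_k$ without changing the derived fibre product. Invoking the compatibility of the underlying algebra functor with derived affinoid pullbacks, the derived ring presenting $Z_X(s)|_U$ is identified with the Koszul dga
\[ A \otimes^L_{A\langle T_1, \ldots, T_n \rangle} A \simeq K(f_1, \ldots, f_n; A), \]
where the two structure maps send $T_i$ to $f_i$ and to $0$, respectively. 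This is a bounded complex of finite free $A$-modules concentrated in degrees $[0, n]$, hence of tor-amplitude contained in $[0, n]$ over $A$. Consequently $p$ has finite tor-amplitude.

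The crucial point—and the only step that requires care—is the identification of the derived analytic pullback with the algebraic Koszul complex, that is, checking that passing to underlying algebras commutes with this particular derived limit in the analytic framework. This should follow from the structural compatibilities for derived affinoids and analytic cotangent complexes from \cite{Porta_Yu_Representability_theorem}, and is the step to verify in detail.
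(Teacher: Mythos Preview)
Your argument for the derived lci part is identical to the paper's.

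For finite tor-amplitude, the paper takes a different route. Rather than computing the derived fibre product directly in the analytic category, the paper observes that (after trivializing $E$ over an affinoid $U = \Sp(A)$) the vector bundle, the two sections, and hence the derived zero locus are all relative analytifications over $A$ of their algebraic counterparts over $\Spec(A)$; see \cref{sec:relative_analytification}. Since relative analytification preserves finite tor-amplitude by \cite[Proposition 4.17]{Porta_Yu_Representability_theorem}, one is reduced to the purely algebraic statement that the derived zero locus of a section of a trivial bundle on $\Spec(A)$ has finite tor-amplitude, which is the Koszul resolution of $s_0^{\mathrm{alg}}$.

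Your direct approach is also correct. The compatibility you flag---that passing to global sections commutes with derived pullbacks of derived affinoids---is exactly \cite[Proposition 4.2]{Porta_Yu_Derived_Hom_spaces}, which the paper invokes elsewhere (e.g.\ in the proof of \cref{prop:stack_derived_analytic_spaces_cartesian}). Once this is in hand, the global sections of $Z_X(s)|_U$ are the derived tensor product you write, and the Koszul resolution of the zero section over $\Gamma(U \times \bD^n_k)$ yields the bound.

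The trade-off: the paper's argument packages the analytic-to-algebraic passage into a single flatness citation, at the cost of a forward reference to \cref{sec:relative_analytification}. Your argument is self-contained within \cref{sec:derived_lci} but requires making precise the Koszul resolution over the derived ring $\Gamma(\cO_{U \times \bD^n_k}\alg)$, which involves a mild unwinding of what ``$A\langle T_1,\ldots,T_n\rangle$'' means when $A$ is not discrete.
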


\begin{proof}
	\Cref{lem:derived_lci_basic_properties}(2) and (3) imply that $p \colon Z_X(s) \to X$ is derived lci.
	Now we prove that it has finite tor-amplitude.
	The question being local on $X$, we can assume $X=\Sp(A)$ to be affinoid.
	Up to shrinking $X$, we can assume that $E$ is free of rank $n$ over $X$.
	Then $E$ is the analytification relative to $A$ of a free rank $n$ vector bundle $E\alg$ on $\Spec A$ (see \cref{sec:relative_analytification});
	moreover, $s_0$ and $s$ are analytifications of algebraic sections $s_0\alg$ and $s\alg$ from $\Spec A$ to $E\alg$.
	Let $Z_X\alg(s\alg)$ be the derived intersection of $s_0\alg$ and $s\alg$.
	Then $Z_X(s)$ is the analytification relative to $A$ of $Z_X\alg(s\alg)$.
	By \cite[Proposition 4.17]{Porta_Yu_Representability_theorem}, we are left to check that $Z_X\alg(s\alg)$ has finite tor-amplitude.
	This follows from direct computation of $Z_X\alg(s\alg)$ using the Koszul resolution of $s_0\alg$.
\end{proof}

\begin{lemma} \label{lem:local_structure_derived_lci_closed_immersion}
	Let $f\colon X\to Y$ be a derived lci closed immersion of derived \kanal stacks.
	Then locally on $Y$, there exists a vector bundle $E$ on $Y$ and a section $s\colon Y\to E$ such that $X$ is equivalent to $Z_Y(s)$.
\end{lemma}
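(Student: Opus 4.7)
The plan is to reduce to the derived affinoid case, extract from $\anL_{X/Y}$ a locally free conormal bundle, lift a basis of it to generators of the classical ideal of $X$ in $Y$, and identify $X$ with the derived zero locus of the resulting section.

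Since the question is local on $Y$, we may take $Y = \Sp(A)$ to be derived affinoid and write $X = \Sp(B)$.  Because $f$ is a closed immersion, $\pi_0(\cO_Y) \to \pi_0(\cO_X)$ is surjective, so the transitivity sequence $f^*\anL_Y \to \anL_X \to \anL_{X/Y}$ forces $\pi_0(\anL_{X/Y}) = 0$, i.e.\ $\anL_{X/Y}$ is $1$-connective.  Combined with the derived lci hypothesis and \cref{rem:classical_definition_perfect_complex}, this implies that $N \coloneqq \anL_{X/Y}[-1]$ is a vector bundle on $X$, and after shrinking $Y$ we may further assume $N$ is free of rank $r$.

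Let $I_0 = \ker(\pi_0 A \to \pi_0 B)$ be the classical ideal of $X$ in $Y$.  Standard cotangent complex comparisons identify $I_0/I_0^2$ with $\pi_1(\anL_{X/Y})$ restricted to the truncation, which by the previous step is free of rank $r$.  Choose $f_1, \ldots, f_r \in I_0$ whose classes span $I_0/I_0^2$; by Nakayama they generate $I_0$ after further shrinking $Y$.  Lifting the $f_i$ to cycles in $A$, they define a section $s \colon Y \to E \coloneqq Y \times \bA^r_k$ of the trivial rank-$r$ vector bundle on $Y$.

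We claim that $X \simeq Z_Y(s)$ over $Y$.  By the universal property of the derived pullback $Z_Y(s)$, a $Y$-morphism $\phi \colon X \to Z_Y(s)$ is the same data as a null-homotopy of the composition $s \circ f \in \Map(X, \bA^r_k) \simeq \Gamma(X; \cO_X)^r$; such a null-homotopy exists because each $f_i|_B$ vanishes in $\pi_0(B)$.  To check that $\phi$ is an equivalence: on truncations both sides are cut out by $(f_1, \ldots, f_r)$, so $\trunc \phi$ is an isomorphism.  Base change together with $\anL_{Y/E} \simeq \cO_Y^r[1]$ (as in the section case of \cref{lem:derived_lci_basic_properties}) gives $\anL_{Z_Y(s)/Y} \simeq \cO_{Z_Y(s)}^r[1]$, and the natural map $\phi^* \anL_{Z_Y(s)/Y} \simeq \cO_X^r[1] \to N[1] \simeq \anL_{X/Y}$ sends the $i$-th standard basis element to the class of $f_i$ in $N$, hence is an equivalence by construction.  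Consequently $\anL_{X/Z_Y(s)} = 0$, which together with the truncation isomorphism implies that $\phi$ is an equivalence of derived \kanal stacks.

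The main delicate point is that the null-homotopy defining $\phi$ is non-canonical: the space of such null-homotopies is a torsor rather than a point.  Fortunately only existence of some equivalence is required, and once any $\phi$ has been chosen the comparison of cotangent complexes is canonical and concludes the argument.
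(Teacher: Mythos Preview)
Your approach mirrors the paper's: pass to derived affinoids, trivialize $N = \anL_{X/Y}[-1]$, lift generators to a section of a trivial bundle on $Y$, and compare via cotangent complexes. The gap is the claimed identification of $I_0/I_0^2$ with $\pi_1(\anL_{X/Y})$ restricted to the truncation: this holds for discrete $A$ and $B$ but fails in the derived setting. Take $Y = \Sp(k)$ and $X = \Sp(k[\epsilon])$ with $\epsilon$ in homological degree $1$; then $A = k \to B = k[\epsilon]$ is a derived lci closed immersion with $N \simeq B$ free of rank $r = 1$, yet $I_0 = \ker(\pi_0 A \to \pi_0 B) = 0$. Your procedure is forced to take $f_1 = 0$, whose class in $N \otimes_B \pi_0(B) \simeq k$ is zero, and the cotangent comparison cannot conclude.

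The same issue undermines your closing remark that ``once any $\phi$ has been chosen the comparison of cotangent complexes is canonical and concludes the argument'': the map $\phi^*\anL_{Z_Y(s)/Y} \to \anL_{X/Y}$ depends on the chosen null-homotopy, and an arbitrary choice need not give an equivalence. In the example, the trivial null-homotopy of $0$ in $B$ yields the non-equivalence $k[\epsilon'] \to k[\epsilon]$, $\epsilon' \mapsto 0$; only a null-homotopy representing a unit in $\pi_1(B) = k$ works. The fix is to choose elements of $\pi_0\bigl(\fib(A \to B)\bigr)$---each of which packages an element of $A$ \emph{together with} a chosen null-homotopy of its image in $B$---that map to a basis of $N \otimes_B \pi_0(B)$ under the natural surjection $\pi_0\bigl(\fib(A \to B)\bigr) \to \pi_1(\mathbb L_{B/A})$; this produces simultaneously the section $s$ and the map $\phi$ with the correct effect on cotangent complexes. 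The paper's proof glosses over the same point: its ``elements $a_1, \ldots, a_n \in \ker(\pi_0(A) \to \pi_0(B))$ whose differentials generate $\mathbb L_{B/A}[-1] \otimes_B \pi_0(B)$'' are insufficient as written, for the same reason.
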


\begin{proof}
	This proof is inspired by \cite[Proposition 2.1.10]{Arinkin_Singular_support}.
	The question being local, we may assume that $Y$ is affinoid.
	Since $f$ is a closed immersion, then $X$ is also affinoid.
	Set
	\[ A \coloneqq \Gamma( Y ; \cO_Y\alg ) \quad , \quad B \coloneqq \Gamma( X ; \cO_X\alg ) . \]
	Observe that
	\[ B \simeq \Gamma(Y ; f_* \cO_X\alg) . \]
	As $f_* \cO_X\alg$ is a coherent sheaf on $Y$, the t-exact equivalence in \cite[Theorem 3.4]{Porta_Yu_Derived_Hom_spaces} implies that the canonical map $A \to B$ is a surjection on $\pi_0$.
	Therefore, \cite[Corollary 5.33]{Porta_Yu_Representability_theorem} provides us with a canonical equivalence
	\[ \Gamma(X ; \anL_{X / Y}) \simeq \mathbb L_{B / A} . \]
	\cite[Theorem 3.4]{Porta_Yu_Derived_Hom_spaces} implies that $\mathbb L_{B / A}$ is perfect and in tor-amplitude $[0,1]$.
	Let
	\[ E \coloneqq \Spec( \Sym_B(\mathbb L_{B / A}[-1]) ) , \]
	and $E\an$ the analytification of $E$ relative to $A$.
	Then locally on $A$ we can find elements $a_1, \ldots, a_n \in \ker( \pi_0(A) \to \pi_0(B) )$ whose differentials generate $\mathbb L_{B/A}[-1] \otimes_B \pi_0(B)$.
	Up to shrinking $Y$, we can lift these elements to a function
	\[ g = (g_1, \ldots, g_n) \colon Y \to \bA^n_k . \]
	Let $Z$ be the derived fiber product
	\[ \begin{tikzcd}
		Z \arrow{r} \arrow{d} & Y \arrow{d}{g} \\
		\Sp(k) \arrow{r}{0} & \bA^n_k .
	\end{tikzcd} \]
	Then $f$ induces a canonical map $X \to Z$, which is a closed immersion and whose relative cotangent complex is zero, in other words $X\xrightarrow{\sim}Z$, completing the proof.
\end{proof}

\begin{proposition} \label{prop:local_structure_derived_lci}
	Let $f \colon X \to Y$ be a morphism of derived \kanal stacks.
	The following are equivalent:
	\begin{enumerate}
		\item the morphism $f$ is derived lci;
		\item locally on $X$ and $Y$ we can factor $f$ as
		\[ \begin{tikzcd}
		X \arrow{r}{j} & W \arrow{r}{p} & Y ,
		\end{tikzcd} \]
		where $j$ is a closed immersion, $p$ is smooth, and there exists a vector bundle $E$ on $W$ and a section $s$ such that $X \simeq Z_W(s)$.
	\end{enumerate}
\end{proposition}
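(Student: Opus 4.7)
The plan is to prove the two implications separately, using the lemmas already established in the section.

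For $(2) \Rightarrow (1)$, this is immediate from the basic properties. The closed immersion $j$ identifies with the projection $Z_W(s) \to W$, which is derived lci by \cref{lem:standard_derived_lci}. Since $p$ is smooth, it is also derived lci (its analytic cotangent complex is a vector bundle concentrated in degree zero). Applying \cref{lem:derived_lci_basic_properties}(1) to the composition $f = p \circ j$ yields that $f$ is derived lci.

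For $(1) \Rightarrow (2)$, the statement is local on both $X$ and $Y$, so I would reduce to the affinoid case $X = \Sp(B)$, $Y = \Sp(A)$. Following the strategy of the proof of \cref{lem:lci_underived}, I would factor $f$ as
\[ X \xrightarrow{\ j\ } W \coloneqq Y \times \bD_k^n \xrightarrow{\ p\ } Y, \]
where $j$ is a closed immersion obtained by lifting a choice of topological generators of $\pi_0(B)$ over $\pi_0(A)$, and $p$ is the canonical projection, which is smooth. The transitivity fiber sequence
\[ j^* \anL_{W/Y} \longrightarrow \anL_{X/Y} \longrightarrow \anL_{X/W} \]
together with the hypothesis that $\anL_{X/Y}$ is perfect in tor-amplitude $[1,-\infty)$ and the fact that $\anL_{W/Y}$ is a free module concentrated in degree zero, forces $\anL_{X/W}$ to be perfect in tor-amplitude $[1,-\infty)$ as well. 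Hence $j$ is a derived lci closed immersion, and \cref{lem:local_structure_derived_lci_closed_immersion} applies to provide locally on $W$ a vector bundle $E$ with section $s$ such that $X \simeq Z_W(s)$.

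The main technical point is the factorization through a smooth morphism in the analytic setting; however, this is already implicit in the proof of \cref{lem:lci_underived}, where the standard argument of choosing affinoid generators of norm at most one to obtain a surjection from $\pi_0(A)\langle T_1, \ldots, T_n \rangle$ onto $\pi_0(B)$ is used. No serious obstacle is expected: the rest of the argument is a mechanical combination of the transitivity sequence and \cref{lem:local_structure_derived_lci_closed_immersion}.
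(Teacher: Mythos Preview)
Your proposal is correct and follows essentially the same approach as the paper's proof. The only cosmetic difference is that for the factorization $X \hookrightarrow Y \times \bD_k^n \to Y$ in the direction $(1) \Rightarrow (2)$, the paper cites \cite[Lemma 5.48]{Porta_Yu_Representability_theorem} directly rather than pointing back to the argument in \cref{lem:lci_underived}; and for $(2) \Rightarrow (1)$ the paper simply invokes \cref{lem:standard_derived_lci} without spelling out the composition step via \cref{lem:derived_lci_basic_properties}(1), which you make explicit.
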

\begin{proof}
	The implication (2) $\Rightarrow$ (1) is the content of \cref{lem:standard_derived_lci}.
	For the converse, observe that locally on $X$ and $Y$ we can factor $f$ as
	\[ \begin{tikzcd}
	X \arrow{r}{j} & Y \times \bD^n_k \arrow{r}{p} & Y ,
	\end{tikzcd} \]
	where $j$ is a closed immersion and $p$ is the canonical projection (see \cite[Lemma 5.48]{Porta_Yu_Representability_theorem}).
		As $p$ is smooth and $f$ is derived lci, the transitivity sequence for the analytic cotangent complex implies that $j$ is derived lci.
	Thus we conclude from \cref{lem:local_structure_derived_lci_closed_immersion}.
\end{proof}

\begin{corollary} \label{cor:derived_lci_finite_tor-amplitude}
	Let $f\colon X\to Y$ be a derived lci morphism of derived \kanal stacks.
	Then $f$ has finite tor-amplitude.
	In particular, if $\cO_Y$ is locally cohomologically bounded, then so is $\cO_X$.
\end{corollary}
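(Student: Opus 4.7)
The plan is to reduce the finite tor-amplitude claim to the local structure result \cref{prop:local_structure_derived_lci} and then invoke \cref{lem:standard_derived_lci} on one factor. Since finite tor-amplitude is a property that may be checked locally on both source and target, I may work in a chart where $f$ factors as $X \xrightarrow{j} W \xrightarrow{q} Y$ with $q$ smooth and $j$ a closed immersion with $X \simeq Z_W(s)$ for some vector bundle $E$ on $W$ and section $s$. The morphism $q$ is smooth hence flat, so it has tor-amplitude $[0,0]$. The morphism $j$ has finite tor-amplitude by \cref{lem:standard_derived_lci}. Finite tor-amplitude is stable under composition: if $j$ has tor-amplitude $[a,b]$ and $q$ has tor-amplitude $[c,d]$, then the associativity of the derived tensor product shows that $f = q \circ j$ has tor-amplitude $[a+c, b+d]$. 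This proves the first claim.

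For the ``in particular'' assertion, I test local cohomological boundedness of $\cO_X$ on an affinoid chart $\Sp(B) \to \Sp(A)$ realizing $f$. Here $\pi_i A = 0$ for $|i|$ outside a bounded range by hypothesis, and $B$ has finite tor-amplitude $[a,b]$ as an $A$-module by the first part. Writing $B \simeq B \otimes_A A$ and applying a standard truncation argument (tensoring with a tor-amplitude $[a,b]$ object shifts the range of nontrivial homotopy groups by a bounded amount), one sees that $\pi_i B$ also vanishes outside a bounded range, so $\cO_X$ is locally cohomologically bounded.

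The main technical point, rather than a real obstacle, is the compatibility of the notion of tor-amplitude for morphisms in the derived analytic setting with composition; this is however purely formal once one expresses tor-amplitude in terms of the algebra of sections on affinoid charts via the t-exact equivalence of \cite[Theorem 3.4]{Porta_Yu_Derived_Hom_spaces}, which reduces both the composition and the cohomological boundedness claim to standard facts about modules over simplicial commutative rings. I therefore expect the whole argument to be short.
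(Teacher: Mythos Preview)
Your proof is correct and follows the same approach as the paper's proof, which is a one-line citation of \cref{prop:local_structure_derived_lci} and \cref{lem:standard_derived_lci}. You have simply unpacked the implicit steps: the smooth factor contributes flatness, the derived zero locus factor has finite tor-amplitude by \cref{lem:standard_derived_lci}, and these compose; the ``in particular'' clause is the routine consequence you describe.
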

\begin{proof}
	This follows from \cref{prop:local_structure_derived_lci} and \cref{lem:standard_derived_lci}.
\end{proof}

\subsection{Virtual dimension}

\begin{definition}
	Let $X = (\cX, \cO_X)$ be a derived lci \kanal space, $x \in \trunc(X)$ a rigid point, and $\iota_x \colon \Sp(\kappa(x)) \to \trunc(X) \hookrightarrow X$ the associated morphism.
	The \emph{virtual dimension of $X$ at $x$}, written $\vdim_x(X)$, is the Euler characteristic of $\iota_x^*( \anL_{X} )$.
\end{definition}

\begin{lemma}
	Let $X$ be a derived \kanal space such that $\trunc(X)$ is connected.
	Let $\cF \in \Perf(X)$ be a perfect complex on $X$.
	Then the Euler characteristic of $\iota_x^*( \cF )$ for any rigid point $x\in\trunc(X)$ is independent of $x$.
\end{lemma}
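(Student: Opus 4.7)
The plan is to reduce to the underived case, prove that the Euler characteristic function is locally constant on rigid points, and then invoke the connectedness hypothesis. First, since every morphism $\iota_x\colon\Sp(\kappa(x))\to X$ factors through the closed immersion $\trunc(X)\hookrightarrow X$, and since pullback preserves perfect complexes, the Euler characteristics $\chi(\iota_x^*\cF)$ depend only on $\cF|_{\trunc(X)}$. I would therefore replace $X$ by its truncation and assume throughout that $X$ is an underived \kanal space.

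Next, it suffices to prove that the function $\chi_\cF\colon x\mapsto \chi(\iota_x^*\cF)$ on rigid points is locally constant, since any locally constant $\Z$-valued function on a connected space is constant. Working on an admissible affinoid chart $U=\Sp(A)$, the characterization recalled in \cref{rem:classical_definition_perfect_complex} combined with the noetherianness of the affinoid algebra $A$ implies that $\cF|_U$ is quasi-isomorphic to a bounded complex $P^\bullet$ of finitely generated projective $A$-modules. For a rigid point $x\in U$ with residue field $\kappa(x)$ one then has
\[ \chi_\cF(x) = \sum_i (-1)^i \dim_{\kappa(x)}\bigl(P^i\otimes_A\kappa(x)\bigr) = \sum_i (-1)^i \mathrm{rk}_x(P^i). \]
Each rank function $x\mapsto \mathrm{rk}_x(P^i)$ is locally constant on $\Spec(A)$, and since rational subdomains of $\Sp(A)$ refine Zariski opens, this transfers to local constancy of $\chi_\cF$ in the admissible topology on $\Sp(A)$. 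Gluing over an affinoid atlas of $X$ yields local constancy globally.

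The main technical point is the representability of a perfect complex over an affinoid algebra by a bounded complex of finitely generated projective modules, which uses both the noetherianness of $A$ and the characterization of perfect complexes via finite tor-amplitude from \cite[Lemma 7.6]{Porta_Yu_Derived_Hom_spaces}. Once this is in hand, the fiberwise Euler characteristic computation, the reduction from the derived to the underived setting, and the final application of connectedness are all routine, and I expect no serious additional obstacle.
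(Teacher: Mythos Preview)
Your proposal is correct and follows essentially the same route as the paper: reduce to the truncation, pass to an affinoid chart, represent the perfect complex by a bounded complex of finitely generated projective modules, and use that the rank of a projective module is locally constant. The only difference is cosmetic: the paper reduces directly to a connected affinoid $\Sp(A)$ and invokes constancy of rank there, whereas you argue local constancy on each affinoid chart and then globalize via the connectedness of $\trunc(X)$; your formulation is arguably cleaner in making the local-to-global step explicit.
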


\begin{proof}
	We may assume that $\trunc(X) = \Sp(A)$ for some affinoid algebra $A$.
	Let $j \colon \trunc(X) \hookrightarrow X$ denote the canonical inclusion.
	Then $j^* \cF$ is a perfect complex on $\Sp(A)$.
	By \cite[Theorem 3.4]{Porta_Yu_Derived_Hom_spaces}, $j^*\cF$ is induced by a perfect complex $M$ in $A\Mod$.
	Since $A$ is discrete, \cite[Tag 0658]{Stacks_project} shows that we can represent $M$ by a bounded complex of projective $A$-modules:
	\[ \cdots \to 0 \to F^n \to F^{n-1} \to \cdots \to F^0 \to 0 \to \cdots .\]
	Then for any rigid point $x \in \Sp(A)$, the perfect complex $\iota_x^*\cF$ is quasi-isomorphic to the complex
	\[ \cdots \to 0 \to F^n \otimes_A \kappa(x) \to F^{n-1} \otimes_A \kappa(x) \to \cdots \to F^0 \otimes_A \kappa(x) \to 0 \to \cdots . \]
	Since each $F^i$ is a projective $A$-module and $\Sp(A)$ is connected, we have
	\[ \rank_A(F^i ) = \rank_{\kappa(x)}(F^i \otimes_A \kappa(x)) . \]
	It follows that
	\[ \chi( \iota_x^* \cF ) = \sum_{i = 0}^n (-1)^i \rank_{\kappa(x)}(F^i \otimes_A \kappa(x)) = \sum_{i = 0}^n (-1)^i \rank_A(F^i) , \]
	and therefore the left hand side is independent of the rigid point $x$.
\end{proof}

\begin{corollary}
	Let $X$ be a derived lci \kanal space such that $\trunc(X)$ is connected.
	Then the virtual dimension of $X$ at any rigid point $x \in \trunc(X)$ is independent of $x$.
\end{corollary}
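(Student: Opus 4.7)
The plan is essentially an immediate combination of the preceding lemma with the definition of virtual dimension, so I expect no real obstacle; the work has already been carried out in the lemma.

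First I would unwind the definition. For a rigid point $x \in \trunc(X)$, the virtual dimension $\vdim_x(X)$ is by definition the Euler characteristic of $\iota_x^* \anL_X$. Thus to conclude it suffices to exhibit $\anL_X$ as an object of $\Perf(X)$ and then invoke the preceding lemma with $\cF \coloneqq \anL_X$, using the hypothesis that $\trunc(X)$ is connected.

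Next I would justify perfectness of $\anL_X$. Interpret the hypothesis ``$X$ is derived lci'' as saying that the structure morphism $X \to \Sp(k)$ is derived lci in the sense of \cref{def:derived_lci}, so that $\anL_{X/\Sp(k)} = \anL_X$ is perfect and in tor-amplitude $[1,-\infty)$. Combined with \cref{rem:classical_definition_perfect_complex}, which ensures that $\anL_X$ lies in $\Coh^+(X)$ and is automatically perfect as soon as it has tor-amplitude bounded above, this gives $\anL_X \in \Perf(X)$.

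With these two inputs assembled, the corollary is immediate: the preceding lemma applied to the perfect complex $\anL_X$ on $X$ yields that $\chi(\iota_x^* \anL_X)$ does not depend on the choice of rigid point $x \in \trunc(X)$, which is precisely the statement that $\vdim_x(X)$ is independent of $x$.
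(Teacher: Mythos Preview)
Your proposal is correct and matches the paper's approach exactly: the corollary is stated in the paper without proof, signaling that it follows immediately from the preceding lemma applied to $\cF = \anL_X$, which is perfect by the derived lci hypothesis. Your unpacking of why $\anL_X \in \Perf(X)$ via \cref{def:derived_lci} and \cref{rem:classical_definition_perfect_complex} is more explicit than the paper bothers to be, but the logic is identical.
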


\begin{lemma} \label{lem:expected_dimension}
	Let $X$ be a derived lci \kanal space.
	Assume that $\vdim_x(X) = \dim_x(\trunc(X))$ for any rigid point $x\in\trunc(X)$.
	Then $\trunc(X)$ is derived lci.
\end{lemma}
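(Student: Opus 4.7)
The plan is to exploit the local structure theorem for derived lci morphisms, namely \cref{prop:local_structure_derived_lci}. The statement being local on $X$, I would first reduce to the model situation in which $X\simeq Z_W(s)$ for some smooth \kanal space $W$ and some section $s\colon W\to E$ of a vector bundle $E$ on $W$. Under this identification, $\trunc(X)$ coincides with the classical zero locus $V(s)\subset W$, and the goal becomes to show that at each rigid point $x$, the ideal of $V(s)$ in $W$ is generated by a regular sequence; by \cref{def:classical_lci} this would exhibit $\trunc(X)\hookrightarrow W$ as a regular embedding into a smooth space, hence $\trunc(X)$ as classically lci, which by \cref{lem:lci_underived} is the same as derived lci.

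Next, I would compute the analytic cotangent complex of $X$ in this local model. Writing $Z_W(s)=W\times^{\mathrm h}_E W$ and using the fact that the cotangent complex of any section of $E\to W$ is equivalent to $E^\vee[1]$, base change and transitivity combine to yield a cofiber sequence
\[ p^*E^\vee \xrightarrow{\ ds\ } p^*\Omega^1_W \longrightarrow \anL_X, \]
with $p\colon X\to W$. Pulling back along a rigid point $\iota_x$ and taking Euler characteristics then gives the expected virtual dimension
\[ \vdim_x(X) = \dim_x(W) - \rank_x(E). \]

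Combined with the hypothesis $\vdim_x(X)=\dim_x(\trunc(X))$, this says that $V(s)$ has codimension exactly $\rank_x(E)$ in $W$ at $x$. Since $W$ is smooth, hence regular and in particular Cohen-Macaulay, and since locally around $x$ the section $s$ is given by $\rank_x(E)$ analytic functions $s_1,\dots,s_r\in\cO_{W,x}\alg$, attaining the maximal possible codimension forces $(s_1,\dots,s_r)$ to be a regular sequence in $\cO_{W,x}\alg$. Hence $\trunc(X)\hookrightarrow W$ is a regular embedding at $x$, completing the proof.

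The step I expect to be the main obstacle is the implication \emph{correct codimension $\Rightarrow$ regular sequence} in the rigid-analytic setting. I would establish it by working inside the noetherian regular local ring $\cO_{W,x}\alg$, where it is a standard consequence of the Cohen-Macaulay property together with Krull's Hauptidealsatz. A minor technical point is to ensure that the local trivialization of $E$ used to produce the functions $s_i$ is compatible with the pointwise computation of $\vdim$ above, and that $\dim_x(W)$ and $\rank_x(E)$ are locally constant, so that the hypothesis made at every rigid point of $\trunc(X)$ propagates to the full local model.
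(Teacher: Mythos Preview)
Your argument is correct, and the overall strategy---reduce locally to an embedding into a smooth space and show the defining ideal is generated by a regular sequence via a codimension count---is the same as the paper's. The packaging, however, is genuinely different.

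The paper embeds $X\hookrightarrow\bD^n_k$ directly and chooses a \emph{minimal} generating set $g_1,\ldots,g_m$ for the ideal $I$ of $\trunc(X)$ at $x$, so $m=\dim_{\kappa(x)}(I/\mathfrak mI)$. It must then establish the inequality $\vdim_x(X)\le n-m$, which requires comparing $\iota_x^*\anL_{X/\bD^n_k}$ with $\iota_x^*\anL_{\trunc(X)/\bD^n_k}$ through the transitivity sequence involving $\anL_{\trunc(X)/X}$, and only then combines this with the Krull bound $\dim_x(\trunc(X))\ge n-m$ to force equality.

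You instead invoke \cref{prop:local_structure_derived_lci} to get the Koszul presentation $X\simeq Z_W(s)$ up front. The number of defining equations is then the rank $r$ of $E$, a quantity already encoded in the derived structure, and your cofiber sequence yields $\vdim_x(X)=\dim_x(W)-r$ as an \emph{equality} immediately. This bypasses the comparison with $\anL_{\trunc(X)}$ entirely. The trade-off is that you use \cref{prop:local_structure_derived_lci} as a black box, whereas the paper's argument is self-contained (not relying on the local structure theorem). Both routes terminate with the same commutative-algebra fact: in the regular (hence Cohen--Macaulay) local ring $\cO_{W,x}\alg$, $r$ elements cutting out height $r$ form a regular sequence.

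One small imprecision worth noting: in the cofiber sequence you write both terms as $p^*$, but the two pullbacks come from the two distinct projections $Z_W(s)\rightrightarrows W$. This does not affect the Euler-characteristic computation, since the ranks agree.
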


\begin{proof}
	The question being local on $X$, by \cite[Lemma 5.48]{Porta_Yu_Representability_theorem}, we can assume that there exists a closed immersion
	\[ f \colon X \hookrightarrow \bD^n_k . \]
	By \cite[Corollaries 5.26 and 5.37]{Porta_Yu_Representability_theorem}, $\anL_{\bD^n_k}$ is free of rank $n$; in particular, it is perfect and in tor-amplitude $0$.
	Therefore, the transitivity fiber sequence
	\[ f^* \anL_{\bD^n_k} \longrightarrow \anL_X \longrightarrow \anL_{X / \bD^n_k} \]
	shows that $\anL_{X / \bD^n_k}$ is perfect and in tor-amplitude $[0,1]$; in other words, $f$ is derived lci.
	Let $g\coloneqq\trunc(f)$, we have a similar transitivity fiber sequence
	\[ g^* \anL_{\bD^n_k} \longrightarrow \anL_{\trunc(X)}\longrightarrow \anL_{\trunc(X)/ \bD^n_k}.\]
	Therefore, in order to prove that $\trunc(X)$ is derived lci, it suffices to prove that $g$ is derived lci.
	
	Set $A \coloneqq \Gamma(X; \cO_X\alg)$ and $B \coloneqq \pi_0(A)$.
	We have $\trunc(X) \simeq \Sp(B)$.
	By \cite[Corollary 5.33]{Porta_Yu_Representability_theorem} and \cite[Theorem 3.4]{Porta_Yu_Derived_Hom_spaces}, it is enough to check that the algebraic cotangent complex $\mathbb L_{B / k \langle T_1, \ldots, T_n \rangle}$ is perfect and in tor-amplitude $[0,1]$.
	By \cite[Corollary 5.40]{Porta_Yu_Representability_theorem}, we only need to check that it is in tor-amplitude $[0,1]$.
	This condition can be checked after base change to the local rings at the maximal ideals of $B$.
	For this, it is enough to check that for every rigid point $x \in \Sp(B)$, the local ring $B_{(x)}$ is cut out by a regular sequence.
	Let
	\[ I \coloneqq \ker\left( k\langle T_1, \ldots, T_n \rangle_{(x)} \to B_{(x)} \right).\]
	Let $\mathfrak n$ be the maximal ideal of $k\langle T_1, \ldots, T_n \rangle_{(x)}$, and $\mathfrak m$ the maximal ideal of $B_{(x)}$.
	Then $I / \mathfrak m I$ is a $\kappa(x)$-vector space of finite dimension.
	Let $m$ denote the dimension.
	Choose elements $g_1, \ldots, g_m \in I$ mapping to a basis of $I / \mathfrak m I$.
	Notice that we can furthermore assume that $I \subset \mathfrak m^2$.
	Then the Nakayama lemma implies that
	\[ I = (g_1, \ldots, g_m) . \]
	We claim that this is a regular sequence.
	Let
	\[ M \coloneqq \mathbb L_{B_{(x)} / k\langle T_1, \ldots, T_n \rangle_{(x)}}.\]
	Then \cite[Corollary 5.33 and Lemma 5.51]{Porta_Yu_Representability_theorem} imply that
	\[ \pi_1\big(M \otimes_{B_{(x)}} \kappa(x)\big) \simeq I / \mathfrak m I \quad , \quad \pi_0\big(M \otimes_{B_{(x)}} \kappa(x)\big) \simeq \mathfrak n / \mathfrak n^2 . \]
		Using the long exact sequence associated to
	\[ j^* \anL_{X / \bD^n_k} \longrightarrow \anL_{\trunc(X) / \bD^n_k} \longrightarrow \anL_{\trunc(X) / X} \]
	we deduce that
	\begin{gather*}
	\dim_{\kappa(x)}\big( \pi_1( \iota_x^* \anL_{X / \bD^n_k} ) \big) \ge \dim_{\kappa(x)}( I / \mathfrak m I ),\\
	\dim_{\kappa(x)}\big( \pi_0( \iota_x^* \anL_{X / \bD^n_k }) \big) = \dim_{\kappa(x)}(\mathfrak n / \mathfrak n^2) .
	\end{gather*}
	Therefore
	\begin{multline*}
	\dim_x(\trunc(X)) = \vdim_x(X) = \dim_{\kappa(x)}\big( \pi_0( \iota_x^* \anL_{X / \bD^n_k }) \big) - \dim_{\kappa(x)}\big( \pi_1( \iota_x^* \anL_{X / \bD^n_k} ) \big)\\
	\le \dim_{\kappa(x)}(\mathfrak n / \mathfrak n^2) - \dim_{\kappa(x)}( I / \mathfrak m I ) = n - m .
	\end{multline*}
	But since $I = (g_1, \ldots, g_m)$, we see that $\dim_x(\trunc(X)) \ge n - m$.
	It follows that $\dim_x(\trunc(X)) = n - m$, hence $(g_1, \ldots, g_m)$ is a regular sequence.
\end{proof}

\begin{proposition}
	Let $X$ be a derived lci derived \kanal space, and $j \colon \trunc(X) \to X$ the canonical closed immersion.
	The following are equivalent:
	\begin{enumerate}
		\item \label{item:X_truncated} $j$ is an equivalence;
		\item \label{item:vanishing_relative_cotangent_complex} the relative analytic cotangent complex $\anL_{\trunc(X)/X}$ vanishes;
		\item \label{item:truncation_quasi_smooth_and_pi_1} $\trunc(X)$ is derived lci and the canonical map $\pi_1( j^* \anL_X ) \to \pi_1( \anL_{\trunc(X)} )$ is an isomorphism;
		\item \label{item:virtual_dimension_equals_dimension} $\vdim_x(X) = \dim_x(\trunc(X))$ for every rigid point $x \in \trunc(X)$.
	\end{enumerate}
\end{proposition}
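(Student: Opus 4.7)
The plan is to prove a cycle of implications $(1) \Rightarrow (2) \Rightarrow (3) \Rightarrow (4) \Rightarrow (2) \Rightarrow (1)$, organised around the transitivity fiber sequence
\[ j^*\anL_X \longrightarrow \anL_{\trunc(X)} \longrightarrow \anL_{\trunc(X)/X} \]
and its long exact sequence on homotopy sheaves.

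The direction $(1) \Rightarrow (2)$ is tautological. For $(2) \Rightarrow (1)$, the map $j$ is a closed immersion inducing an isomorphism on $\pi_0$; passing to a local affinoid model, the vanishing of the relative analytic cotangent complex forces the nilpotent extension $\cO_X \to j_*\cO_{\trunc(X)}$ to split at each stage of the Postnikov tower, the obstructions being classes in $\anL_{\trunc(X)/X}$ that are identically zero. The implication $(2) \Rightarrow (3)$ is immediate: $j^*\anL_X\simeq\anL_{\trunc(X)}$ is then perfect of tor-amplitude $[1,-\infty)$ and the $\pi_1$ comparison map is tautologically an isomorphism. The implication $(3) \Rightarrow (4)$ uses Euler-characteristic additivity along the fiber sequence: the assumption that $\pi_1(j^*\anL_X)\to\pi_1(\anL_{\trunc(X)})$ is an isomorphism (together with the automatic iso on $\pi_0$) gives $\chi(\iota_x^*j^*\anL_X)=\chi(\iota_x^*\anL_{\trunc(X)})$, and the right-hand side equals $\dim_x\trunc(X)$ by the standard formula for underived lci spaces cut out by a regular sequence in a smooth ambient.

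The main obstacle will be $(4) \Rightarrow (2)$. By \cref{lem:expected_dimension}, $\trunc(X)$ is already derived lci. I would then work locally, embedding $X$ as a closed subspace of $\bD^n_k$. The relative cotangent complexes $\anL_{X/\bD^n_k}$ and $\anL_{\trunc(X)/\bD^n_k}$ are both perfect and concentrated in degree $1$ (closed immersions out of a derived lci source), hence locally free sheaves; their rank at every rigid point is $m=n-\dim_x\trunc(X)$---for $\anL_{\trunc(X)/\bD^n_k}$ this is the conormal of the regular sequence constructed in the proof of \cref{lem:expected_dimension}, and for $\anL_{X/\bD^n_k}$ the fiberwise rank equality follows from $\vdim_xX=\dim_x\trunc(X)$ via the same rank analysis. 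The canonical map $j^*\anL_{X/\bD^n_k}\to\anL_{\trunc(X)/\bD^n_k}$ is thus a morphism of locally free sheaves of the same rank on $\trunc(X)$, and it is an isomorphism at every rigid point by chasing the long exact sequence of the transitivity triangle for $\trunc(X)\to X\to\bD^n_k$ and comparing ranks dictated by (4). Nakayama promotes this fiberwise isomorphism to a global one; feeding it back into the transitivity sequence for $\trunc(X)\to X$ yields $\anL_{\trunc(X)/X}\simeq 0$, which is (2).
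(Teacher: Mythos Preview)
Your argument is correct and closely parallels the paper's, with one point that deserves to be made explicit. In $(4)\Rightarrow(2)$ you claim the map $j^*E\to F$ is an isomorphism at each rigid point by ``chasing the long exact sequence \ldots\ and comparing ranks''; but equal fibre dimensions do not by themselves force a linear map to be bijective. What makes this work is that $\anL_{\trunc(X)/X}$ is $2$-connective (the standard connectivity estimate for the cotangent complex of a map that is an isomorphism on $\pi_0$; in the paper this is \cite[Corollary~5.35]{Porta_Yu_Representability_theorem}), so that after applying $\iota_x^*$ the map $(j^*E)_x\to F_x$ is surjective and hence bijective by the rank count from $(4)$. The paper obtains the same surjectivity differently, proving $(4)\Rightarrow(3)$: it represents $j^*\anL_X$ and $\anL_{\trunc(X)}$ as two-term complexes $[E^{-1}\to j^*f^*\Omega\an_{\bD^n_k}]$ and $[F^{-1}\to j^*f^*\Omega\an_{\bD^n_k}]$ and runs a snake-lemma chase to show $E^{-1}\to F^{-1}$ is surjective. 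Your $j^*E,F$ are exactly the paper's $E^{-1},F^{-1}$, so the two approaches converge; invoking $2$-connectivity simply replaces the diagram chase.

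Two small side remarks. The phrase ``concentrated in degree $1$'' for $\anL_{X/\bD^n_k}$ is imprecise when $X$ is not discrete; the correct statement is that $\anL_{X/\bD^n_k}[-1]$ is locally free over $\cO_X$ (perfect, connective, and of tor-amplitude $\le 0$, hence projective---or use \cref{lem:local_structure_derived_lci_closed_immersion}). And your detour $(3)\Rightarrow(4)$ is unnecessary: under $(3)$ both $j^*\anL_X$ and $\anL_{\trunc(X)}$ have tor-amplitude in $[1,0]$ with matching $\pi_0$ and $\pi_1$, so the comparison map is already an equivalence, which is $(2)$ directly---this is the paper's $(3)\Rightarrow(2)$.
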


\begin{proof}
	The equivalence (\ref{item:X_truncated}) $\Leftrightarrow$ (\ref{item:vanishing_relative_cotangent_complex}) follows directly from \cite[Corollary 5.35]{Porta_Yu_Representability_theorem}.
	
	If (\ref{item:vanishing_relative_cotangent_complex}) holds, then the fiber sequence
	\[ j^* \anL_X \to \anL_{\trunc(X)} \to \anL_{\trunc(X)/X} \]
	yields a canonical equivalence $j^* \anL_X \simeq \anL_{\trunc(X)}$.
	In particular, $\trunc(X)$ is derived lci and $\pi_1( j^* \anL_X) \to \pi_1(\anL_{\trunc(X)})$ is an isomorphism.
	So (\ref{item:vanishing_relative_cotangent_complex}) implies (\ref{item:truncation_quasi_smooth_and_pi_1}).
	
	For the converse implication, the above fiber sequence yields for any $i \ge 0$ a long exact sequence
	\[ \pi_{i+1}(j^* \anL_X) \to \pi_{i+1}(\anL_{\trunc(X)}) \to \pi_{i+1}( \anL_{\trunc(X)/X} ) \to \pi_i( j^* \anL_X ) \to \pi_i( \anL_{\trunc(X)} ) . \]
	If $i \ge 1$, the fact that both $X$ and $\trunc(X)$ are derived lci forces
	\[ \pi_{i+1}( \anL_{\trunc(X)/X} ) = 0 . \]
	When $i = 0$, we have $\pi_1(j^* \anL_X) \simeq \pi_1( \anL_{\trunc(X)} )$ by hypothesis; while \cite[Corollary 5.35]{Porta_Yu_Representability_theorem} guarantees that
	\[ \pi_0(j^* \anL_X) \simeq \pi_0(\anL_{\trunc(X)}) . \]
	Therefore we deduce
	\[ \pi_1(\anL_{\trunc(X)/X}) \simeq \pi_0(\anL_{\trunc(X)/X}) \simeq 0.\]
	Hence (\ref{item:truncation_quasi_smooth_and_pi_1}) implies (\ref{item:vanishing_relative_cotangent_complex}).
	
	It follows from \cref{lem:lci_underived} that (\ref{item:X_truncated}) implies (\ref{item:virtual_dimension_equals_dimension}).
	Now we will complete the proof by showing that (\ref{item:virtual_dimension_equals_dimension}) implies (\ref{item:truncation_quasi_smooth_and_pi_1}).
	First of all, $\trunc(X)$ is derived lci by \cref{lem:expected_dimension}.
	In order to show that the map $\pi_1( j^* \anL_X) \to \pi_1(\anL_{\trunc(X)})$ is an isomorphism, we can reason locally on $X$.
	We can thus assume that $X$ is derived affinoid.
	In virtue of \cite[Lemma 6.3]{Porta_Yu_Derived_non-archimedean_analytic_spaces}, we can furthermore suppose that $X$ admits a closed embedding $f \colon X \hookrightarrow \bD^n_k$ for some $n$.
	Using \cite[Theorem 3.4]{Porta_Yu_Derived_Hom_spaces} and \cite[Lemma 5.51]{Porta_Yu_Representability_theorem}, we can represent $j^* \anL_X$ and $\anL_{\trunc(X)}$ respectively as two perfect complexes on $\trunc(X)$ of the form
	\begin{gather*}
	j^* \anL_X \simeq (0 \to E^{-1} \to j^* f^* \Omega\an_{\bD^n_k} \to 0) , \\
	\anL_{\trunc(X)} \simeq (0 \to F^{-1} \to j^* f^* \Omega\an_{\bD^n_k} \to 0 ) .
	\end{gather*}
	Let
	\begin{gather*}
	K_E \coloneqq \ker\big( j^* f^* \Omega\an_{\bD^n_k} \to \pi_0( j^* \anL_X ) \big) , \\
	K_F \coloneqq \ker\big( j^* f^* \Omega\an_{\bD^n_k} \to \pi_0( \anL_{\trunc(X)} ) \big) .
	\end{gather*}
	We have a morphism of short exact sequences
	\[ \begin{tikzcd}
	0 \arrow{r} & K_E \arrow{d} \arrow{r} & j^* f^* \Omega\an_{\bD^n_k} \arrow{r} \arrow[equal]{d} & \pi_0( j^* \anL_X ) \arrow{r} \arrow{d} & 0 \\
	0 \arrow{r} & K_F \arrow{r} & j^* f^* \Omega\an_{\bD^n_k} \arrow{r} & \pi_0( \anL_{\trunc(X)} ) \arrow{r} & 0 .
	\end{tikzcd} \]
	It follows from \cite[Corollary 5.35]{Porta_Yu_Representability_theorem} that the map $\pi_0( j^* \anL_X ) \to \pi_0( \anL_{\trunc(X)} )$ is an isomorphism.
	We conclude that the canonical map $K_E \to K_F$ is also an isomorphism.
	Thus, we can consider the morphism of short exact sequences
	\[ \begin{tikzcd}
	0 \arrow{r} & \pi_1( j^*\anL_X ) \arrow{r} \arrow{d} & E^{-1} \arrow{r} \arrow{d} & K_E \arrow{r} \arrow{d} & 0 \\
	0 \arrow{r} & \pi_1( \anL_{\trunc(X)} ) \arrow{r} & F^{-1} \arrow{r} & K_F \arrow{r} & 0 .
	\end{tikzcd} \]
	Using once again \cite[Corollary 5.35]{Porta_Yu_Representability_theorem} we deduce that the map $\pi_1( j^* \anL_X) \to \pi_1( \anL_{\trunc(X)} )$ is surjective.
	Hence the snake lemma implies that $E^{-1} \to F^{-1}$ is also surjective.
	Finally, since $\vdim(X) = \dim(\trunc(X))$ and $\trunc(X)$ is derived lci, we conclude that $\rank(E^{-1}) = \rank(F^{-1})$.
	In particular, $E^{-1} \to F^{-1}$ is an isomorphism.
	It follows that $\pi_1( j^* \anL_X ) \to \pi_1( \anL_{\trunc(X)} )$ is also an isomorphism.
	In particular, (\ref{item:truncation_quasi_smooth_and_pi_1}) holds, completing the proof.
\end{proof}

\section{The stack of derived \texorpdfstring{$k$}{k}-analytic spaces} \label{sec:the_stack_of_derived_k-analytic_spaces}

In this section, we study the deformation theory of derived \kanal spaces.
We prove that the moduli functor of derived \kanal spaces is cartesian, convergent and has a global analytic cotangent complex.

Let $(\dAfd_k,\tauet)$ be the $\infty$-site of derived $k$-affinoid spaces equipped with the étale topology (see \cite[\S 7]{Porta_Yu_Derived_non-archimedean_analytic_spaces}).
Let
\[ \St( \dAfd_k ) \coloneqq \Sh( \dAfd_k, \tauet )^\wedge\]
denote the \infcat of stacks (i.e.\ hypercomplete sheaves) on this site.
Consider the functor
\[ \St \colon \dAfd_k \op \longrightarrow \Cat_\infty \]
given by
\[ X \mapsto \St( \dAfd_k )_{/X} . \]
The general theory of descent for $\infty$-topoi (see \cite[6.1.3.9]{Lurie_HTT}) implies that the functor $\St$ satisfies descent for the étale topology.
By \cref{cor:stability_flatness}, the functor $\St$ admits a subfunctor $F$ which sends $X \in \dAfd_k$ to the full subcategory $(\dAnk)_{/X}^{\mathrm{flat}}$ of the overcategory $(\dAnk)_{/X}$ spanned by flat morphisms $Y \to X$.
It follows from \cite[Propositions 8.5 and 8.7]{Porta_Yu_Derived_non-archimedean_analytic_spaces} that $F$ also satisfies étale descent.

\begin{lemma} \label{lem:gluing_along_closed_immersion_is_universal}
	Let $X \to S$, $X' \to S$ and $Y \to S$ be flat morphisms of derived \kanal spaces.
	Let $i \colon X \hookrightarrow X'$, $j \colon X \hookrightarrow Y$ be closed immersions, and $Y' \coloneqq Y \amalg_X X'$ the pushout (see \cite[Theorem 6.5]{Porta_Yu_Representability_theorem} for the construction).
	For any $T \in \dAfd_k$ and any map $T \to S$, define
	\[ X_T \coloneqq X \times_S T , \quad X'_T \coloneqq X' \times_S T , \quad Y_T \coloneqq Y \times_S T , \quad Y'_T \coloneqq Y' \times_S T . \]
	Then the canonical map
	\[ X'_T \amalg_{X_T} Y_T \longrightarrow Y'_T \]
	is an equivalence.
	\end{lemma}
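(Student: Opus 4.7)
The question is local on $S$ and on $T$ for the étale topology, so I first reduce to the case where all the spaces involved are derived affinoid: $S = \Sp(R)$, $T = \Sp(D)$, $X = \Sp(A)$, $X' = \Sp(A')$, $Y = \Sp(B)$, with $A, A', B$ flat as derived $R$-algebras, and the closed immersions $j$ and $i$ corresponding to maps $B \to A$ and $A' \to A$ of derived affinoid $R$-algebras that are surjective on $\pi_0$.

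By \cite[Theorem 6.5]{Porta_Yu_Representability_theorem}, the pushout $Y' = Y \amalg_X X'$ is then affinoid, with $\Gamma(Y'; \cO_{Y'}\alg) \simeq B \times_A A'$, the pullback taken in the $\infty$-category of derived affinoid $R$-algebras. Applying the same theorem over $T$ identifies both $Y'_T$ and $X'_T \amalg_{X_T} Y_T$ as derived affinoid, and reduces the claim to showing that the canonical comparison map
\[ (B \hat\otimes_R D) \times_{A \hat\otimes_R D} (A' \hat\otimes_R D) \longrightarrow (B \times_A A') \hat\otimes_R D \]
is an equivalence of derived affinoid $D$-algebras, where $\hat\otimes$ denotes the analytic derived tensor product.

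To prove this, I argue at the level of underlying $R$-modules. The pullback $B \times_A A'$ sits in a fiber sequence $B \times_A A' \to B \oplus A' \to A$ in the stable $\infty$-category of derived $R$-modules, where fiber and cofiber sequences coincide. Applying $\hat\otimes_R D$ preserves this cofiber sequence, and comparing with the analogous fiber sequence defining the left hand side of the comparison map yields the desired equivalence. The main obstacle I anticipate is precisely the comparison between the analytic completed tensor product $\hat\otimes_R D$ and the underlying derived tensor product of $R$-modules: it is exactly the flatness of $A, A', B$ over $R$ (inherited from the flatness hypothesis on $X, X', Y$ over $S$) that ensures the analytic tensor product computes the same fiber sequence, so that the preservation argument goes through.
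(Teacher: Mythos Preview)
Your overall strategy (reduce to affinoid, compare global sections) is reasonable, but there is a genuine gap in the key step, and the role you assign to flatness does not fill it.

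First a minor point: being local on $S$ and $T$ does not by itself make $X,X',Y$ affinoid; you also need to work locally on $Y'$ (equivalently on $X'$ and $Y$, and hence on $X$ via the closed immersions).

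The real issue is the completed tensor product. For derived affinoid spaces, $\Gamma$ does \emph{not} take arbitrary fiber products to algebraic derived tensor products: already classically $k\langle s\rangle\,\hat\otimes_k\,k\langle t\rangle=k\langle s,t\rangle\ne k\langle s\rangle\otimes_k k\langle t\rangle$. The result you implicitly rely on (\cite[Proposition~4.2]{Porta_Yu_Derived_Hom_spaces}) identifies $\Gamma$ of a fiber product with an algebraic tensor product only when one leg is a closed immersion; in your situation $T\to S$ is arbitrary. Your cofiber-sequence argument is valid for the \emph{algebraic} functor $-\otimes_R D$, which preserves colimits and hence, in a stable category, finite limits. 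But you need it for $-\,\hat\otimes_R D$, i.e.\ for $\Gamma((-)\times_S T)$, and there is no reason this is exact on a category large enough to contain $A,A',B,B\times_A A'$. Flatness of $A,A',B$ over $R$ does not make $A\,\hat\otimes_R D$ coincide with $A\otimes_R D$ (same counterexample), so your final sentence does not close the gap; showing that analytic base change preserves this particular fiber sequence is essentially the content of the lemma.

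The paper takes a different route that uses flatness in a concrete way. It first observes that $X_T,X'_T,Y_T$ and $Y'_T$ are all flat over $T$ (the last via \cite[(6.7)]{Porta_Yu_Representability_theorem}), hence also $X'_T\amalg_{X_T}Y_T$; then \cite[Lemma~5.47]{Porta_Yu_Representability_theorem} forces the comparison map to be \emph{strong}, reducing everything to the truncation. On the underived level one is left with checking that
\[
(A'\times_A B)\,\hat\otimes_R R' \;\simeq\; (A'\hat\otimes_R R')\times_{A\hat\otimes_R R'} (B\hat\otimes_R R')
\]
for ordinary affinoid algebras, and this is handled by choosing flat formal models (Bosch--L\"utkebohmert) and invoking the algebraic statement. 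So flatness is used twice---to pass to the truncation, and to get flat formal models---rather than to repair an exactness property of $\hat\otimes$.
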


\begin{proof}
	Notice that the squares
	\[ \begin{tikzcd}
	X_T \arrow{d}{i_T} \arrow{r} & X \arrow{d}{i} \\
	X'_T \arrow{r} & X'
	\end{tikzcd} \quad , \quad \begin{tikzcd}
	X_T \arrow{r} \arrow{d}{j_T} & X \arrow{d}{j} \\
	Y_T \arrow{r} & Y
	\end{tikzcd} \]
	are derived pullback diagrams.
	In particular the morphisms $i_T$ and $j_T$ are closed immersions.
	So by \cite[Theorem 6.5]{Porta_Yu_Representability_theorem}, the pushout $X'_T \amalg_{X_T} Y_T$ exists in the $\infty$-category $\dAnk$ of derived $k$-analytic spaces.
	Since $Y'_T\in\dAnk$, by the universal property of pushout we obtain a canonical morphism
	\[ f \colon Y_T \amalg_{X_T} X'_T \longrightarrow Y'_T . \]
	Now we show that it is an equivalence.
	By \cite[(6.7)]{Porta_Yu_Representability_theorem}, $Y'$ is flat over $S$.
	Using \cref{cor:stability_flatness}, we deduce that $X_T$, $X'_T$ and $Y_T$ are also flat over $T$, and therefore $X'_T \amalg_{X_T} Y_T$ is flat over $T$.
	Moreover, the same result implies that $Y'_T$ is flat over $T$.
	It follows from \cite[Lemma 5.47]{Porta_Yu_Representability_theorem} that the canonical map $f$ is strong.
	We are therefore left to check that $f$ is an equivalence on the truncation.

	Let $R$ be an affinoid algebra, $A$, $A'$ and $B$ affinoid $R$-algebras, $A' \to A$ and $B \to A$ surjective morphisms, and $B' \coloneqq A' \times_A B$.
	Note $B'$ is an affinoid $R$-algebra by \cite[Lemma 6.3]{Porta_Yu_Representability_theorem}.
	Let $R \to R'$ be a morphism of affinoid algebras.
	Then it suffices to check that
	\[ \begin{tikzcd}
	B' \cotimes_R R' \arrow{r} \arrow{d} & B \cotimes_R R' \arrow{d} \\
	A' \cotimes_R R' \arrow{r} & A \cotimes_R R'
	\end{tikzcd} \]
	is pullback in $\CAlg_k$, the \infcat of simplicial commutative $k$-algebras.
	By \cite{Bosch_Formal_and_rigid_geometry_II}, we can choose formal models for the affinoid algebras preserving the flatness.
	Now the conclusion follows directly from the analogous algebraic statement.
\end{proof}

\begin{remark}
	The lemma should hold without the flatness assumption.
	One may prove it by reducing to the algebraic case via derived formal models (see António \cite{Antonio_p-adic_derived_formal_geometry}).
\end{remark}

\begin{lemma} \label{lem:equivalence_derived_affinoid_global_sections}
	Let $f \colon X \to Y$ be a morphism of derived $k$-affinoid spaces.
	Let
	\[ A \coloneqq \Gamma(Y; \cO_Y\alg) , \quad B \coloneqq \Gamma(X; \cO_X\alg) . \]
	Then $f$ is an equivalence if and only if the induced map $A \to B$ is an equivalence in $\CAlg_k$.
\end{lemma}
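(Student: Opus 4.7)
The ``only if'' direction is immediate, since an equivalence of derived $k$-analytic spaces induces an equivalence on global sections. The content is the converse, so the plan is to assume that $\phi \colon A \to B$ is an equivalence in $\CAlg_k$ and show that $f$ is an equivalence of derived $k$-affinoid spaces.

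The strategy is to verify two facts: (i) the truncation $\trunc(f) \colon \trunc(X) \to \trunc(Y)$ is an equivalence of underived $k$-affinoid spaces, and (ii) $f$ is strong, i.e., the canonical maps $\pi_i(\cO_Y\alg) \to \pi_i(f_* \cO_X\alg)$ are isomorphisms of coherent sheaves on $\trunc(Y) \simeq \trunc(X)$ for every $i \ge 0$. Together these imply that $f$ itself is an equivalence, by the same mechanism already used at the end of the proof of \cref{lem:gluing_along_closed_immersion_is_universal}: a strong morphism whose truncation is an equivalence is an equivalence.

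For (i), the equivalence $\phi$ induces an isomorphism $\pi_0(A) \to \pi_0(B)$ of classical $k$-affinoid algebras. Since underived $k$-affinoid spaces are determined, up to equivalence, by their affinoid algebra of global sections, $\trunc(f)$ is an equivalence. For (ii), I would invoke the t-exact equivalence of \cite[Theorem 3.4]{Porta_Yu_Derived_Hom_spaces} between $\Coh^+(X)$ (resp.\ $\Coh^+(Y)$) and the corresponding $\infty$-category of coherent modules over $B$ (resp.\ $A$). Under this identification, $\pi_i(\cO_X\alg)$ corresponds to the $\pi_0(B)$-module $\pi_i(B)$, and $\pi_i(\cO_Y\alg)$ to the $\pi_0(A)$-module $\pi_i(A)$. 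Since $\phi$ is an equivalence in $\CAlg_k$, each $\pi_i(\phi) \colon \pi_i(A) \to \pi_i(B)$ is an isomorphism, which translates, via the module/coherent-sheaf dictionary and the isomorphism on $\pi_0$'s, into the required isomorphisms of structure sheaves.

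The only subtle point is ensuring that the identification of $\pi_i(\cO_X\alg)$ with $\pi_i(B)$ is compatible with the map $f$, so that the isomorphisms $\pi_i(\phi)$ really do give the required comparison of structure sheaves; but this compatibility is built into the t-exact equivalence of \cite[Theorem 3.4]{Porta_Yu_Derived_Hom_spaces}, which is natural in the derived affinoid. Everything else is a straightforward assembly, so I expect this to be a short argument.
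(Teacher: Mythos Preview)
Your proposal is correct and follows essentially the same approach as the paper: reduce to showing that $\trunc(f)$ is an equivalence (from $\pi_0(A)\simeq\pi_0(B)$) and that $f$ is strong, then deduce strongness from the equivalence $A\simeq B$ via the t-exact equivalence of \cite[Theorem 3.4]{Porta_Yu_Derived_Hom_spaces}. The only cosmetic difference is that the paper phrases strongness as $f_0^*\pi_i(\cO_Y\alg)\to\pi_i(\cO_X\alg)$ being an isomorphism, but since $f_0$ is already an isomorphism this is equivalent to your formulation.
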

\begin{proof}
	The ``only if'' direction is obvious.
	Now assume that $A \to B$ is an equivalence.
	Then $\pi_0(A) \to \pi_0(B)$ is an isomorphism, which implies that $f_0 \coloneqq \trunc(f) \colon \trunc(X) \to \trunc(Y)$ is an isomorphism.
	So it suffices to verify that $f$ is strong.
	For this, we only have to check that the canonical map
	\[ f_0^*( \pi_i( \cO_Y\alg ) ) \longrightarrow \pi_i( \cO_X\alg ) \]
	is an equivalence for every $i$.
	As both sides are coherent sheaves on $\trunc(X)$, the conclusion readily follows from the assumption that $A \to B$ is an equivalence and \cite[Theorem 3.4]{Porta_Yu_Derived_Hom_spaces}.
\end{proof}

\begin{proposition} \label{prop:stack_derived_analytic_spaces_cartesian}
	The functor $F \colon \dAfd_k\op \to \Cat_\infty$ is cartesian, in the sense that for any pushout diagram
	\[ \begin{tikzcd}
	X_{01} \arrow[hook]{r}{j_0} \arrow[hook]{d}{j_1} & X_0 \arrow{d} \\
	X_1 \arrow{r} & X
	\end{tikzcd} \]
	in $\dAfd_k$ where $j_0$ and $j_1$ are closed immersions, the canonical functor
	\[ \Psi \colon (\dAnk)_{/X}^{\mathrm{flat}} \longrightarrow (\dAnk)_{/X_1}^{\mathrm{flat}} \times_{ (\dAnk)_{/X_{01}}^{\mathrm{flat}} } ( \dAnk )_{/X_0}^{\mathrm{flat}} \]
	is an equivalence.
\end{proposition}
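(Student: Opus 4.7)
The plan is to prove that $\Psi$ is both essentially surjective and fully faithful, with both statements reduced to the pushout construction of \cite[Theorem 6.5]{Porta_Yu_Representability_theorem} together with \cref{lem:gluing_along_closed_immersion_is_universal}. As a preliminary observation, note that for any $Y \to X$ flat, the base changes $Y_i \coloneqq Y \times_X X_i$ and $Y_{01} \coloneqq Y \times_X X_{01}$ are flat over $X_i$ and $X_{01}$ by \cref{cor:stability_flatness}, and the transition maps $Y_{01} \to Y_i$ are closed immersions since they are base changes of the $j_i$. Dually, given any compatible triple $(Y_0, Y_1, Y_{01})$ of flat objects with equivalences $Y_i \times_{X_i} X_{01} \simeq Y_{01}$, the induced maps $Y_{01} \hookrightarrow Y_i$ are closed immersions by the same argument.

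For essential surjectivity, start with such a compatible triple and form the pushout $Y \coloneqq Y_0 \amalg_{Y_{01}} Y_1$ along the two closed immersions, which exists as a derived \kanal space by \cite[Theorem 6.5]{Porta_Yu_Representability_theorem}. The universal property of pushout produces a canonical map $Y \to X_0 \amalg_{X_{01}} X_1 = X$, and \cite[(6.7)]{Porta_Yu_Representability_theorem} ensures that $Y \to X$ is flat. It then remains to identify $Y \times_X X_i$ with $Y_i$: this is exactly the content of \cref{lem:gluing_along_closed_immersion_is_universal}, which furnishes the equivalences $Y_0 \amalg_{Y_{01}} Y_1 \simeq Y$ upon base change to $X_i$ and to $X_{01}$. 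Hence $\Psi(Y)$ recovers the original triple.

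For fully faithfulness, let $Y, Y' \in (\dAnk)_{/X}^{\mathrm{flat}}$ and write $Y_i, Y_{01}$ and $Y'_i, Y'_{01}$ for their base changes. By \cref{lem:gluing_along_closed_immersion_is_universal} applied to $Y$, we have $Y \simeq Y_0 \amalg_{Y_{01}} Y_1$ in $\dAnk$, so the universal property of pushout gives
\[ \Map_{/X}(Y, Y') \simeq \Map_{/X}(Y_0, Y') \times_{\Map_{/X}(Y_{01}, Y')} \Map_{/X}(Y_1, Y'). \]
Since each factor $Y_\alpha \to X$ factors through $X_\alpha \hookrightarrow X$, adjunction along the pullback $Y'_\alpha = Y' \times_X X_\alpha$ identifies $\Map_{/X}(Y_\alpha, Y') \simeq \Map_{/X_\alpha}(Y_\alpha, Y'_\alpha)$, and assembling these identifications gives exactly the mapping space on the target of $\Psi$. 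The main technical hurdle is therefore concentrated in \cref{lem:gluing_along_closed_immersion_is_universal}, which has already been established; once that lemma is in hand, the two properties follow formally from the universal property of the pushout and stability of flatness and closed immersions under base change.
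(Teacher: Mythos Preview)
Your overall strategy mirrors the paper's: the paper phrases it as constructing a left adjoint $\Phi$ to $\Psi$ via pushout and then checking that the unit and counit are equivalences, which amounts to your essential surjectivity and full faithfulness. The full faithfulness step, resting on the decomposition $Y \simeq Y_0 \amalg_{Y_{01}} Y_1$, matches the paper's treatment of the counit.

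There is, however, a genuine gap in your essential surjectivity step. You assert that the identification $Y \times_X X_i \simeq Y_i$ ``is exactly the content of \cref{lem:gluing_along_closed_immersion_is_universal}'', but that lemma does not apply here: its hypothesis requires the three pieces being glued to be flat over a common base $S$, and the base change is then along some $T \to S$. To recover $Y \times_X X_i$ you would need $S = X$ and $T = X_i$, yet $Y_0, Y_1, Y_{01}$ are flat only over $X_0, X_1, X_{01}$ respectively, not over $X$ --- the inclusions $X_\alpha \hookrightarrow X$ are closed immersions, not flat. Concretely, after passing to global sections in the affinoid case one has $A \simeq A_0 \times_{A_{01}} A_1$ and $B \simeq B_0 \times_{B_{01}} B_1$, and the claim becomes $A_i \otimes_A B \simeq B_i$. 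This does not follow by simply tensoring the fiber sequence defining $B$ with $A_i$: one is left with terms like $A_0 \otimes_A A_0$ and $A_0 \otimes_A A_1$, which are not $A_0$ and $A_{01}$ in general. The paper handles this by reducing to the affinoid case via \cref{lem:equivalence_derived_affinoid_global_sections} and then invoking the Milnor-patching result \cite[Proposition~16.2.2.1]{Lurie_SAG}. Without that input (or an equivalent argument), your essential surjectivity claim is incomplete.
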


\begin{proof}
	We first construct a left adjoint
	\begin{equation} \label{eq:dGeom_cartesian}
		\Phi \colon (\dAnk)_{/X_1}^{\mathrm{flat}} \times_{ (\dAnk)_{/X_{01}}^{\mathrm{flat}} } ( \dAnk )_{/X_0}^{\mathrm{flat}} \longrightarrow (\dAnk)_{/X}^{\mathrm{flat}}
	\end{equation}
	of $\Psi$ as follows.
	Let
	\[ \mathrm{Span} \coloneqq \{ * \leftarrow * \rightarrow *\} \]
	denote the span category.
	Consider the composition of the natural functors
	\[ (\dAnk)_{/X_1} \times_{(\dAnk)_{X_{01}}} (\dAnk)_{/X_0} \xrightarrow{\phi} \Fun( \mathrm{Span}, \dAnk ) \to \Fun( \mathrm{Span}, \RTop(\cTank) ) . \]
	We can informally describe the functor $\phi$ on objects as follows.
	An object in the fiber product on the left is the given of a diagram
	\begin{equation} \label{eq:clutching_descent_data}
		\begin{tikzcd}
			Y_1 \arrow{d} & Y_{01} \arrow{l}[swap]{i_0} \arrow{d} \arrow{r}{i_1} & Y_1 \arrow{d} \\
			X_1 & X_{01} \arrow{l}[swap]{j_1} \arrow{r}{j_1} & X_0
		\end{tikzcd}
	\end{equation}
	where both squares are cartesian.
	Then $\phi$ sends this diagram to the top line $Y_1 \leftarrow Y_{01} \rightarrow Y_0$.
		Notice that $i_0$ and $i_1$ are again closed immersions.
	Therefore, it follows from \cite[Theorem 6.5]{Porta_Yu_Representability_theorem} that we can form the pushout $Y$ in $\RTop(\cTank)$ of
	\[ \begin{tikzcd}
	Y_1 & Y_{01} \arrow{l}[swap]{i_1} \arrow{r}{i_0} & Y_0 ;
	\end{tikzcd} \]
	moreover, the result is a derived \kanal space.
	The universal property of pushout provides a canonical map $Y \to X$, and hence a functor
	\[ (\dAnk)_{/X_1} \times_{(\dAnk)_{/X_{01}}} (\dAnk)_{/X_0} \longrightarrow (\dAnk)_{/X} , \]
	which is left adjoint to the canonical functor
	\[ (\dAnk)_{/X} \longrightarrow (\dAnk)_{/X_1} \times_{(\dAnk)_{/X_{01}}} (\dAnk)_{/X_0} . \]
	We claim that this functor restricts to the full subcategories spanned by flat morphisms.
	Indeed, the claim being local, we can assume that $Y_0$, $Y_1$ and $Y_{01}$ are all derived $k$-affinoid.
	In this case, the conclusion follows from \cite[Theorem 3.4]{Porta_Yu_Derived_Hom_spaces} and \cite[16.2.3.1(4)]{Lurie_SAG}.
	
	Next we claim that both the unit and the counit of this adjunction
	\[ u \colon \id \longrightarrow \Psi \circ \Phi, \quad v \colon \Phi \circ \Psi \longrightarrow \id \]
	are equivalences.
	For $v$, this is a direct consequence of Lemmas \ref{lem:gluing_along_closed_immersion_is_universal}.
	Now we show that $u$ is also an equivalence.
	Consider an object in the fiber product
	\[ (\dAnk)_{/X_1}^{\mathrm{flat}} \times_{ (\dAnk)_{/X_{01}}^{\mathrm{flat}} } ( \dAnk )_{/X_0}^{\mathrm{flat}} \]
	which we depict as the diagram \eqref{eq:clutching_descent_data}.
	Let $Y$ be the pushout of that diagram.
	Then we have to check that the canonical morphisms
	\[ Y_0 \longrightarrow X_0 \times_X Y, \qquad Y_1 \longrightarrow X_1 \times_X Y \]
	are equivalences.
	The question being local, we can assume that $Y$ (and therefore $Y_0$, $Y_1$ and $Y_{01}$) are derived $k$-affinoid spaces.
	Write
	\begin{gather*}
		A \coloneqq \Gamma(X, \cO_X\alg), \quad B \coloneqq \Gamma(Y, \cO_X\alg), \\
		A_i \coloneqq \Gamma(X_i, \cO_{X_i}\alg), \quad B_i \coloneqq \Gamma(Y_i, \cO_{Y_i}\alg) .
	\end{gather*}
	By \cref{lem:equivalence_derived_affinoid_global_sections}, it is enough to check that the canonical map $Y_i \to X_i \times_X Y$ induces an equivalence in $\CAlg_k$ after passing to global sections.
	By \cite[Proposition 4.2]{Porta_Yu_Derived_Hom_spaces}, we can compute the global sections of $X_i \times_X Y$ as the tensor product $A_i \otimes_A B$.
	We are therefore left to show that the canonical map
	\[ A_i \otimes_A B \longrightarrow B_i \]
	is an equivalence.
	Combining diagram (6.7) in the proof of \cite[Theorem 6.5]{Porta_Yu_Representability_theorem} with \cite[Theorem 3.4]{Porta_Yu_Derived_Hom_spaces}, we conclude that the diagram
	\[ \begin{tikzcd}
		B \arrow{r} \arrow{d} & B_1 \arrow{d} \\
		B_0 \arrow{r} & B_{01}
	\end{tikzcd} \]
	is a pullback diagram in $\CAlg_{A/}$, and hence in $A \Mod$.
	The conclusion now follows from \cite[Proposition 16.2.2.1]{Lurie_SAG}.
\end{proof}

\begin{proposition} \label{prop:stack_derived_analytic_spaces_convergent}
	The functor $F \colon \dAfd_k\op \to \Cat_\infty$ is convergent, in the sense that for any $X \in \dAfd_k$ the canonical map
	\[ F( X ) \longrightarrow \lim_{n \in \mathbb N} F( \mathrm t_{\le n} X ) \]
	is an equivalence, where $\mathrm t_{\le n} X$ denote the truncation of level $n$ (see \cite[\S 3.3]{Porta_Yu_Derived_non-archimedean_analytic_spaces}).
\end{proposition}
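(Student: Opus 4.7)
The plan is to build an explicit quasi-inverse
\[ G \colon \lim_n F(\mathrm{t}_{\le n} X) \longrightarrow F(X) \]
to the canonical truncation functor and to check that the unit and counit of the resulting adjunction are equivalences. Since the problem is about a compatible system $\{Y_n \to \mathrm{t}_{\le n} X\}_{n \ge 0}$ of flat morphisms of derived $k$-analytic spaces, and since the transition maps $\mathrm{t}_{\le m} X \hookrightarrow \mathrm{t}_{\le n} X$ for $m \le n$ are closed immersions inducing an isomorphism on truncations, flatness of $Y_n \to \mathrm{t}_{\le n} X$ together with the compatibility equivalences $Y_n \times_{\mathrm{t}_{\le n} X} \mathrm{t}_{\le m} X \simeq Y_m$ forces the underlying $\infty$-topos $\cY$ of $Y_n$ to be independent of $n$ and each transition $Y_m \hookrightarrow Y_n$ to be strong with $\mathrm{t}_{\le m} \cO_{Y_n} \simeq \cO_{Y_m}$. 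This lets us set $\cO_Y \coloneqq \lim_n \cO_{Y_n}$ in the category of $\cTank$-structures on $\cY$ and propose $Y \coloneqq (\cY, \cO_Y)$ with its induced map to $X$.

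The next step is to verify that $Y$ is a derived $k$-analytic space and that $Y \to X$ is flat. Because $F$ satisfies étale descent, I would argue étale-locally and reduce to the situation where every $Y_n$ is derived $k$-affinoid over $\mathrm{t}_{\le n} X$. Passing to global sections, set $A_n \coloneqq \Gamma(\mathrm{t}_{\le n} X; \cO^{\mathrm{alg}})$ and $B_n \coloneqq \Gamma(Y_n; \cO^{\mathrm{alg}})$; Postnikov convergence in $\CAlg_k$ gives $A \coloneqq \Gamma(X; \cO_X\alg) \simeq \lim_n A_n$, and I would put $B \coloneqq \lim_n B_n$. Then $\pi_0(B) = \pi_0(B_0)$ is a classical affinoid algebra and $\pi_i(B) \simeq \pi_i(B_n)$ for $n > i$, hence coherent over $\pi_0(B)$, so $B$ is a derived $k$-affinoid algebra. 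Flatness of $A \to B$ is detected on $\pi_i$ via the formula $\pi_i(B) \simeq \pi_i(A) \otimes_{\pi_0(A)} \pi_0(B)$, which is inherited from the flatness of each $A_n \to B_n$. Invoking \cite[Lemma~5.47]{Porta_Yu_Representability_theorem} (strongness from flatness on truncations) and Lemma~\ref{lem:equivalence_derived_affinoid_global_sections} shows the local candidate $\Sp(B) \to \Sp(A)$ is well defined and recovers the $Y_n$ after truncation.

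Descent then glues these local pieces into the sought-after $Y \to X$ in $(\dAnk)_{/X}^{\mathrm{flat}}$. For the unit $\id \Rightarrow G \circ \Psi$, one observes that for any flat $Y \to X$ the natural map $\cO_Y \to \lim_n \mathrm{t}_{\le n} \cO_Y$ is an equivalence (Postnikov convergence of derived $k$-analytic structure sheaves, known affinoid-locally), so $Y$ is recovered from its truncated system. For the counit $\Psi \circ G \Rightarrow \id$, the construction of $\cO_Y$ as the limit of the $\cO_{Y_n}$ together with $\mathrm{t}_{\le m} \cO_Y \simeq \cO_{Y_m}$ furnishes the required natural equivalence.

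The main obstacle will be verifying that the tower-limit structure sheaf $\cO_Y = \lim_n \cO_{Y_n}$ really defines a derived $k$-analytic space rather than a mere $\cTank$-structured $\infty$-topos: this is the content of the Postnikov convergence of derived $k$-affinoid algebras together with the stability of flatness, flatness across the tower being precisely what guarantees that no spurious Tor terms appear when identifying $\pi_i(B)$ with $\pi_i(B_n)$. Once this is in place, the globalization and bookkeeping of the adjunction are handled by the already established étale descent for $F$ together with Lemma~\ref{lem:equivalence_derived_affinoid_global_sections}.
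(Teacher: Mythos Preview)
Your approach is correct, but it takes a genuinely different route from the paper's proof.

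The paper argues more abstractly via a diagonal trick. It introduces the full subcategories $\dAn_{/X}^{\le m}$ of $m$-truncated derived $k$-analytic spaces over $X$ and invokes the nilcompleteness equivalence
\[ \dAn_{/X} \xrightarrow{\ \sim\ } \lim_{m} \dAn_{/X}^{\le m} \]
as a known fact (this is Postnikov convergence for derived $k$-analytic spaces, applied to the source). It then observes that for $n \ge m$ one has $\dAn_{/\mathrm t_{\le n} X}^{\le m} \simeq \dAn_{/\mathrm t_{\le m} X}^{\le m}$, because any map from an $m$-truncated space to $\mathrm t_{\le n} X$ automatically factors through $\mathrm t_{\le m} X$. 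Combining these two facts in a commutative square of limits gives the result in two lines, without ever constructing an explicit inverse or passing to global sections.

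Your approach, by contrast, builds the quasi-inverse by hand: you fix the underlying $\infty$-topos, take the limit of the structure sheaves, and verify affinoid-locally that the resulting object is a derived $k$-analytic space by analysing $B = \lim_n B_n$ as a derived $k$-affinoid algebra. This is essentially unpacking the nilcompleteness statement that the paper uses as a black box. What your approach buys is self-containment and explicit control over flatness throughout (you actually check $A \to B$ is flat via $\pi_i(B) \simeq \pi_i(A) \otimes_{\pi_0(A)} \pi_0(B)$); the paper's proof, as written, proves convergence for the ambient slice category $\dAn_{/X}$ and leaves the restriction to flat morphisms implicit (it follows from \cref{lem:equivalent_formulation_flatness}(4), flatness being detected on the truncation). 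What the paper's approach buys is brevity: once nilcompleteness is granted, the cofinality observation $\dAn_{/\mathrm t_{\le n} X}^{\le m} \simeq \dAn_{/\mathrm t_{\le m} X}^{\le m}$ finishes the argument immediately, with no local computation or gluing required.
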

\begin{proof}
	Let $m$ be a non-negative integer.
	We say that a derived \kanal space $X = (\cX, \cO_X)$ is $m$-truncated if $\pi_i( \cO_X\alg ) \simeq 0$ for $i > m$.
	Let $\dAn_{/X}^{\le m}$ denote the full subcategory of $\dAn_X^{\le m}$ spanned by $m$-truncated derived \kanal spaces.
	It follows from \cite[Theorem 3.23]{Porta_Yu_Derived_non-archimedean_analytic_spaces} that the natural inclusion $\dAn_{/X}^{\le m} \hookrightarrow \dAn_{/X}^{\le m+1}$ admits a right adjoint $\mathrm t_{\le m}$.
	Observe that the natural morphism
	\begin{equation} \label{eq:nilcompleteness}
		\dAn_{/X} \longrightarrow \lim_{n \in \mathbb N\op} \dAn_{/X}^{\le m} ,
	\end{equation}
	where the transition maps are given by the truncation functors, is an equivalence.
		Therefore, the vertical maps in the following commutative square
	\[ \begin{tikzcd}
		\dAn_{/X} \arrow{r} \arrow{d} & \lim_{n \in \mathbb N\op} \dAn_{/\mathrm t_{\le n}X} \arrow{d} \\
		\lim_{m \in \mathbb N\op} \dAn_{/X}^{\le m} \arrow{r} & \lim_{n \in \mathbb N\op} \lim_{m \in \mathbb N\op} \dAn_{/\mathrm t_{\le n} X}^{\le m} .
	\end{tikzcd} \]
	are equivalences.
	Moreover, for every $n \ge m$ we have a canonical equivalence
	\[ \dAn_{/\mathrm t_{\le n} X}^{\le m} \simeq \dAn_{/ \mathrm t_{\le m} X}^{\le m} . \]
	It follows that the bottom horizontal morphism in the diagram above is also an equivalence, thus completing the proof.
\end{proof}

Consider now the maximal $\infty$-groupoid functor $(-)^\simeq \colon \Cat_\infty \to \cS$.
It is right adjoint to the inclusion $\cS \hookrightarrow \Cat_\infty$, and in particular it commutes with limits.
Let $F^\simeq$ denote the composite functor
\[ \begin{tikzcd}
\dAfd_k\op \arrow{r}{F} & \Cat_\infty \arrow{r}{(-)^\simeq} & \cS .
\end{tikzcd} \]
We deduce that $F^\simeq$ satisfies étale descent and that it is cartesian and convergent.
To compute the analytic cotangent complex, we rely on the following lemma:

\begin{lemma} \label{lem:loop_computation_cotangent_complex}
	Let $G \in \St(\dAfdk)$ and let $x \colon X \to G$ be a morphism with $X \in \dAfdk$.
	Let $\Omega_x G \coloneqq X \times_G X$ and $\delta_x \colon X \to \Omega_x G$ the induced diagonal map.
	Assume that $G$ is infinitesimally cartesian.
	Then $G$ has an analytic cotangent complex at $x$ if and only if $\Omega_x G$ has an analytic cotangent complex at $\delta_x$.
	If this is the case, then there is a natural equivalence
	\[ \anL_{G,x} \simeq \anL_{\Omega_x G, \delta_x}[-1] . \]
\end{lemma}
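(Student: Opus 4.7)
The plan is to reformulate the lemma in terms of the derivation functor that the analytic cotangent complex corepresents, and then use the pullback description $\Omega_x G = X \times_G X$ to translate statements about $\Omega_x G$ at $\delta_x$ into statements about $G$ at $x$. For a stack $Y$ with a point $y \colon T \to Y$ with $T \in \dAfdk$, the existence of an analytic cotangent complex at $y$ means the derivation functor
\[ \mathrm{Der}_Y(y, M) \coloneqq \fib_y\big(Y(T[M]) \to Y(T)\big) \]
on coherent sheaves $M$ on $T$ is corepresentable, with representing object $\anL_{Y,y}$. The infinitesimal cartesianness hypothesis on $G$ is what guarantees that this functor has the right additive/pointed structure and that it propagates from $G$ to the pullback $\Omega_x G$.

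First I would apply the presheaves $Y \mapsto Y(X[M])$ and $Y \mapsto Y(X)$ to the defining pullback square of $\Omega_x G$, and then pass to fibers over the basepoints $\delta_x$, $\id_X$ and $x$ respectively. Since fibers commute with pullbacks in spaces, this yields a natural equivalence
\[ \mathrm{Der}_{\Omega_x G}(\delta_x, M) \simeq \mathrm{Der}_X(\id_X, M) \times_{\mathrm{Der}_G(x, M)} \mathrm{Der}_X(\id_X, M), \]
where both legs into $\mathrm{Der}_G(x, M)$ are induced by post-composition with $x \colon X \to G$. Since $X \in \dAfdk$, its analytic cotangent complex $\anL_X$ exists, so that $\mathrm{Der}_X(\id_X, M) \simeq \Map_{\cO_X}(\anL_X, M)$.

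The ``if and only if'' part then follows directly: if $G$ has a cotangent complex at $x$, then the right hand side becomes a pullback of mapping spaces, corepresented by the pushout $\anL_X \amalg_{\anL_{G,x}} \anL_X$ in coherent sheaves on $X$; conversely, if $\Omega_x G$ has a cotangent complex at $\delta_x$, I would use that $\delta_x$ is a section of $p_0 \colon \Omega_x G \to X$ to split off the canonical $\anL_X$-summand, and then extract $\anL_{G,x}$ as the remaining shifted piece. For the identification $\anL_{G,x} \simeq \anL_{\Omega_x G, \delta_x}[-1]$, I would recognize the pullback above as the based loop space of $\mathrm{Der}_G(x, M)$ at the trivial derivation, giving on representing objects a shift by $[1]$, which is exactly the claimed equivalence.

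The main obstacle I expect is the careful bookkeeping around the section $\delta_x$ and the shift: one must combine the cofiber sequence $\anL_{G,x} \to \anL_X \to \anL_{X/G,x}$ with the splitting induced by $p_0 \circ \delta_x = \id_X$, and verify that the natural equivalence of derivation functors lands exactly on the shifted complex $\anL_{G,x}[1]$, rather than on a formula differing by a trivial $\anL_X$-term; naturality in $x$ and in $G$ must then be confirmed from the universal property of the pullback.
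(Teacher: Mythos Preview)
Your approach is essentially the paper's, unpacked. The paper's proof is three lines: it asserts directly that
\[ \DerAn_{\Omega_x G}(X;\cF) \simeq \Omega\,\DerAn_G(X;\cF), \]
then invokes the infinitesimally cartesian hypothesis (via \cite[Lemma~7.8]{Porta_Yu_Representability_theorem}) to rewrite $\Omega\,\DerAn_G(X;\cF)$ as $\DerAn_G(X;\cF[-1])$, and reads off $\anL_{\Omega_x G,\delta_x} \simeq \anL_{G,x}[1]$ from corepresentability. Your pullback formula
\[ \mathrm{Der}_{\Omega_x G}(\delta_x, M) \simeq \mathrm{Der}_X(\id_X, M) \times_{\mathrm{Der}_G(x, M)} \mathrm{Der}_X(\id_X, M) \]
is exactly what underlies the paper's first asserted equivalence, and your recognition of this as a based loop space is the same identification the paper makes.

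Two differences are worth flagging. First, the paper puts all the weight of the shift on the infinitesimally cartesian hypothesis: that is precisely what licenses the delooping step $\Omega\,\DerAn_G(X;\cF) \simeq \DerAn_G(X;\cF[-1])$, so that looping the derivation space corresponds to shifting the corepresenting object. You mention the hypothesis only as providing ``additive/pointed structure'', but it is doing the essential work here and should be invoked as explicitly as the paper does. Second, your worry about a stray $\anL_X$-term is exactly the gap between your honest pullback formula (with its $\mathrm{Der}_X(\id_X,M)$ factors) and the paper's bald loop-space assertion; the paper does not spell out why those factors may be replaced by points, so on that point your account is more careful than the paper's, not less.
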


\begin{proof}
	For any $\cF \in \Coh^{\ge 1}(X)$, we have
	\[ \DerAn_{\Omega_x G}(X; \cF) \simeq \Omega \DerAn_G(X; \cF) \simeq \Omega \DerAn_G( X; \cF[-1][1] ) . \]
	Since $G$ is infinitesimally cartesian, we can use \cite[Lemma 7.8]{Porta_Yu_Representability_theorem} to rewrite this as
	\[ \DerAn_G(X;\cF[-1]) \simeq \Map_{\Coh^+(X)}( \anL_{G,x}[1], \cF ) . \]
	Therefore, $\anL_{\Omega_x G, \delta_x} \simeq \anL_{G,x}[1]$.
\end{proof}

\begin{proposition} \label{prop:stack_derived_analytic_spaces_cotangent_complex}
	Let $X \in \dAfd_k$ and let $x \colon X \to F^\simeq$ be a point classifying a flat map $p \colon Y \to X$ from a derived \kanal space $Y$.
	If $p$ is proper, has finite coherent cohomological dimension and locally of finite presentation, then $F^\simeq$ admits an analytic cotangent complex at $x$, given by the formula
	\[ \anL_{F,x} \simeq p_+( \anL_{Y/X} )[-1] , \]
	where $p_+( \anL_{Y/X} ) \coloneqq (p_*( (\anL_{Y/X})^\vee ))^\vee$.
	\end{proposition}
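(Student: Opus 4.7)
The plan is to apply \cref{lem:loop_computation_cotangent_complex} to reduce the computation of $\anL_{F^\simeq, x}$ to that of the cotangent complex of the loop space $\Omega_x F^\simeq \coloneqq X \times_{F^\simeq} X$ at the diagonal $\delta_x \colon X \to \Omega_x F^\simeq$. Since $F^\simeq$ is cartesian by \cref{prop:stack_derived_analytic_spaces_cartesian} (and $(-)^\simeq$ commutes with limits), it is in particular infinitesimally cartesian, so the hypothesis of \cref{lem:loop_computation_cotangent_complex} is satisfied.

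The main computation is then that of $\anL_{\Omega_x F^\simeq, \delta_x}$. Unwinding the pullback definition, a $T$-point of $\Omega_x F^\simeq$ for $T \in \dAfdk$ is a datum $(t_1, t_2 \colon T \to X, \phi \colon t_1^* Y \xrightarrow{\sim} t_2^* Y)$ consisting of two maps to $X$ together with an equivalence of the pulled-back flat families; the diagonal $\delta_x$ corresponds to $t_1 = t_2 = \mathrm{id}_X$ and $\phi = \mathrm{id}_Y$. For $\cF \in \Coh^{\geq 1}(X)$, a derivation at $\delta_x$ along $\cF$ then consists of two extensions of $\mathrm{id}_X$ to maps $X[\cF] \to X$ together with an equivalence between the corresponding flat extensions of $Y$ restricting to $\mathrm{id}_Y$. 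Applying the derived deformation theory of flat morphisms in the non-archimedean analytic setting from \cite{Porta_Yu_Representability_theorem}, this derivation space is naturally identified with $\Map_Y(\anL_{Y/X}, p^* \cF)$.

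The second ingredient is the adjunction-style identification
\[ \Map_Y(\anL_{Y/X}, p^* \cF) \simeq \Map_X\bigl(p_+\anL_{Y/X}, \cF\bigr), \]
which follows from the projection formula together with biduality. The hypotheses on $p$ enter precisely here: $p$ being flat and locally of finite presentation ensures that $\anL_{Y/X}$ is perfect on $Y$; $p$ being proper with finite coherent cohomological dimension ensures that $p_* \bbT_{Y/X}$ is perfect on $X$, hence reflexive, so that $p_+\anL_{Y/X} = (p_* \bbT_{Y/X})^\vee$ is well-behaved and the projection formula applies. Combining the two steps gives $\anL_{\Omega_x F^\simeq, \delta_x} \simeq p_+(\anL_{Y/X})$, and \cref{lem:loop_computation_cotangent_complex} provides the final shift
\[ \anL_{F^\simeq, x} \simeq \anL_{\Omega_x F^\simeq, \delta_x}[-1] \simeq p_+(\anL_{Y/X})[-1]. \]

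The principal obstacle is the identification of the derivation space of $\Omega_x F^\simeq$ at $\delta_x$ with $\Map_Y(\anL_{Y/X}, p^* \cF)$. While this identification is standard in derived algebraic geometry, in the non-archimedean analytic setting it relies on careful use of the representability theorem and the compatibility between analytic and algebraic cotangent complexes established in \cite{Porta_Yu_Representability_theorem}. An alternative route is to describe $\Omega_x F^\simeq$ directly as a derived Hom (or isomorphism) stack over $X \times X$ via the theory of \cite{Porta_Yu_Derived_Hom_spaces} and invoke the known cotangent formula for such stacks, but this relies on essentially the same analytic deformation-theoretic inputs.
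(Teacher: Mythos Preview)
Your proposal is correct and follows essentially the same route as the paper: both apply \cref{lem:loop_computation_cotangent_complex} to reduce to the cotangent complex of $G \coloneqq X \times_{F^\simeq} X$ at $\delta_x$, identify the derivations of $G$ with $\Map_{\Coh^+(Y)}(\anL_{Y/X}, p^*\cF)$, and then invoke the $p_+$-adjunction (the paper cites \cite[Proposition~7.11]{Porta_Yu_Derived_Hom_spaces}, which is exactly the projection-formula/biduality statement you describe). The paper carries out the middle identification via an explicit chain of equivalences
\[
\DerAn_G(X;\cF) \simeq \Map_{/X[\cF]}(Y[p^*\cF], Y[p^*\cF]) \times_{\Map_{/X}(Y,Y)} \{\id_Y\} \simeq \DerAn_{Y/X}(Y; p^*\cF),
\]
whereas you summarize this step by appeal to deformation theory; your ``principal obstacle'' paragraph correctly flags this as the place where the actual work lies.
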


\begin{proof}
	Consider the derived \kanal stack
	\[ G \coloneqq X \times_{F^\simeq} X \]
	and let $\delta_x \colon X \to G$ be the diagonal morphism induced by $x$.
	Since $F^\simeq$ is infinitesimally cartesian, \cref{lem:loop_computation_cotangent_complex} shows that $F^\simeq$ has an analytic cotangent complex at $x$ if and only if $G$ has an analytic cotangent complex at $\delta_x$;
	moreover, in this case we have
	\[ \anL_{F^\simeq,x} \simeq \anL_{G, \delta_x}[1] . \]
	Fix $\cF \in \Coh^{\ge 0}(X)$.
	Unraveling the definitions, we have
	\begin{align*}
		\DerAn_G(X; \cF) & \simeq \Map( X[\cF] , G ) \times_{\Map( X, G )} \{x\} \\
		& \simeq \Map_{/X[\cF]}( Y[p^*\cF], Y[p^*\cF] ) \times_{\Map_{/X}( Y, Y )} \{\id_Y\} \\
		& \simeq \Map_{/X}( Y[p^*\cF] , Y ) \times_{\Map_{/X}( Y, Y )} \{\id_Y\} \\
		& \simeq \DerAn_{Y/X}(Y; p^*\cF) \\
		& \simeq \Map_{\Coh^+(Y)}( \anL_{Y/X}, p^* \cF ) .
	\end{align*}
	Since $p \colon Y \to X$ is locally of finite presentation, $\anL_{Y/X}$ is perfect.
	Since $p$ is also proper flat and has finite coherent cohomological dimension, \cite[Proposition 7.11]{Porta_Yu_Derived_Hom_spaces} allows us to rewrite
	\[ \Map_{\Coh^+( Y )}( \anL_{Y/X}, p^* \cF ) \simeq \Map_{\Coh^+( X )}( p_+( \anL_{Y/X} ), \cF ),\]
	completing the proof.
\end{proof}

For application to the moduli stack of curves with marked points, we generalize the previous results concerning deformations of spaces to deformations of morphisms (varying source and target).
Let $T \in \dAfdk$ and let $p \colon X \to T$, $q \colon Y \to T$ be two proper flat derived $k$-analytic spaces over $T$.
Let $f \colon X \to Y$ be a morphism in $\dAn_{/T}$.
Let
\[ \mathrm{Def}_f \colon \dAfd_{T/\!/T}\op \longrightarrow \cS \]
be the moduli functor of deformations of $f$.
In other words, for $T' \in \dAfd_{T/\!/T}$, $\mathrm{Def}_f(T')$ is the space of commutative diagrams
\[ \begin{tikzcd}[column sep=small, row sep=small]
	X \arrow[shift left = 1pt]{rr}{f} \arrow{ddr}[swap]{p} & & Y \arrow[shift left = 2pt]{drrr} \arrow{ddl}{q} \\
	{} & & & X' \arrow{rr}[swap]{f'} \arrow[crossing over,leftarrow]{ulll} \arrow{ddr}[near start, swap]{p'} & & Y' \arrow{ddl}[near start]{q'} \\
	{} & T \arrow{drrr} \\
	{} & & & & T'
\end{tikzcd} \]
where $p'$ and $q'$ are proper and flat and the side squares are pullbacks.

\begin{proposition} \label{prop:deformation_of_morphisms}
	The functor $\mathrm{Def}_f$ is infinitesimally cartesian and convergent.
	Moreover, let $x \colon T \to \mathrm{Def}_f$ be the point corresponding to the trivial deformation.
	If $p$ and $q$ have finite coherent cohomological dimension, then $\mathrm{Def}_f$ admits an analytic cotangent complex at $x$, which fits in the following pushout diagram:
	\[ \begin{tikzcd}
		p_+ f^*( \anL_{Y/T})[-1] \arrow{r} \arrow{d} & q_+ (\anL_{Y/T})[-1] \arrow{d} \\
		p_+ (\anL_{X/T})[-1] \arrow{r} & \anL_{\mathrm{Def}_f, x} .
	\end{tikzcd} \]
\end{proposition}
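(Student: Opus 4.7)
The plan is to realize $\mathrm{Def}_f$ as sitting in a homotopy fiber sequence whose remaining terms are the deformation functors of $X$ and $Y$ together with the tangent-at-$f$ functor of the derived relative mapping stack; all three assertions then follow by formal manipulations. Forgetting $f'$ defines a natural transformation
\[ \pi\colon \mathrm{Def}_f \longrightarrow \mathrm{Def}_X \times \mathrm{Def}_Y, \]
where $\mathrm{Def}_X, \mathrm{Def}_Y \colon \dAfd_{T//T}\op \to \cS$ are the deformation functors of $X/T$ and $Y/T$. These are precisely the fibers of $F^\simeq|_{\dAfd_{T//T}}$ at the points classifying $p$ and $q$, so by \cref{prop:stack_derived_analytic_spaces_cartesian,prop:stack_derived_analytic_spaces_convergent,prop:stack_derived_analytic_spaces_cotangent_complex} both are infinitesimally cartesian, convergent, and carry analytic cotangent complexes $p_+\anL_{X/T}[-1]$ and $q_+\anL_{Y/T}[-1]$ at the trivial deformations.

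Unwinding definitions, the homotopy fiber $\mathrm{Fib}$ of $\pi$ at $(X,Y)$ sends $T'$ to the fiber at $f$ of $\Map_{T'}(X \times_T T', Y \times_T T') \to \Map_T(X,Y)$; equivalently, this is the tangent-at-$f$ functor of the derived relative mapping stack $\mathbf{Map}_T(X,Y)$. The properness and finite coherent cohomological dimension of $p$ and $q$ allow us to invoke the representability theory of relative derived mapping stacks in non-archimedean geometry from \cite{Porta_Yu_Derived_Hom_spaces}: $\mathbf{Map}_T(X,Y)$ is infinitesimally cartesian, convergent, and carries an analytic cotangent complex at $f$ equal to $p_+(f^*\anL_{Y/T})$ (the adjoint formulation of the classical formula $\Map_X(f^*\anL_{Y/T}, p^*(-))$); these properties and this cotangent complex transfer to $\mathrm{Fib}$.

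Since infinitesimal cartesianness and convergence are preserved under homotopy fibers of functors enjoying them, the fiber sequence $\mathrm{Fib} \to \mathrm{Def}_f \to \mathrm{Def}_X \times \mathrm{Def}_Y$ transfers both properties to $\mathrm{Def}_f$. Evaluating the fiber sequence at $T[\cF]$ for $\cF \in \Coh^{\ge 1}(T)$ yields a fiber sequence of spaces of the form $\Map(-,\cF)$, which dualizes to the cofiber sequence $p_+\anL_{X/T}[-1] \oplus q_+\anL_{Y/T}[-1] \to \anL_{\mathrm{Def}_f,x} \to p_+(f^*\anL_{Y/T})$ of cotangent complexes. Rotating this triangle expresses $\anL_{\mathrm{Def}_f,x}$ as the cofiber of
\[ p_+(f^*\anL_{Y/T})[-1] \longrightarrow p_+\anL_{X/T}[-1] \oplus q_+\anL_{Y/T}[-1], \]
i.e.\ as the pushout in the asserted square; the two components of this map are induced respectively by the transitivity map $f^*\anL_{Y/T} \to \anL_{X/T}$ and by the unit $\id \to f_*f^*$ combined with $p_* = q_*\circ f_*$ (dualized to pass from $(-)_*$ to $(-)_+$).

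The main obstacle is the cotangent complex computation for the relative derived mapping stack $\mathbf{Map}_T(X,Y)$ at the point $f$: this is where both hypotheses (properness and finite coherent cohomological dimension of $p$ and $q$) are used in an essential way through the $p_+$-adjoint formalism, analogous to \cite[Proposition 7.11]{Porta_Yu_Derived_Hom_spaces}; once this is in hand, the remaining manipulations of fiber/cofiber sequences are purely formal.
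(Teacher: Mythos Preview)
Your fiber-sequence decomposition is conceptually natural, but the step ``the fiber sequence $\mathrm{Fib} \to \mathrm{Def}_f \to \mathrm{Def}_X \times \mathrm{Def}_Y$ transfers both properties to $\mathrm{Def}_f$'' runs in the wrong direction. Infinitesimal cartesianness and convergence pass to homotopy \emph{fibers}, not to total spaces: knowing the base and a \emph{single} fiber (over the trivial deformation) enjoy these properties does not force the total space to. Concretely, to verify that $\mathrm{Def}_f$ sends a pushout $T_{12} = T_0 \amalg_{T_{01}} T_1$ to a pullback, one must glue maps $X' \to Y'$ over \emph{arbitrary} points $(X',Y')$ of $(\mathrm{Def}_X \times \mathrm{Def}_Y)(T_i)$, not just over the trivial ones; and $(\mathrm{Def}_X \times \mathrm{Def}_Y)(T')$ is in general disconnected. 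What is really needed is that the full functor $F \colon \dAfd_k\op \to \Cat_\infty$ (not merely $F^\simeq$) is cartesian and convergent, so that morphism spaces glue as well as objects; this is exactly the content of \cref{prop:stack_derived_analytic_spaces_cartesian,prop:stack_derived_analytic_spaces_convergent}, and once you invoke them at that level, the first two assertions follow directly without the fiber-sequence detour.

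For the cotangent complex, the paper takes a different route that also avoids a loose end in your argument. Your cofiber sequence does yield a pushout of the asserted shape, but pinning down the connecting map $p_+(f^*\anL_{Y/T})[-1] \to p_+\anL_{X/T}[-1] \oplus q_+\anL_{Y/T}[-1]$ as the pair (transitivity, unit) requires identifying the boundary map $\Omega(\mathrm{Def}_X \times \mathrm{Def}_Y) \to \mathrm{Fib}$ explicitly, which you have asserted rather than checked. The paper instead applies the loop-space trick of \cref{lem:loop_computation_cotangent_complex}: setting $G \coloneqq T \times_{\mathrm{Def}_f} T$, it computes $\DerAn_G(T;\cF)$ directly as the fiber product
\[ \Map(\anL_{X/T}, p^*\cF) \times_{\Map(f^*\anL_{Y/T}, p^*\cF)} \Map(\anL_{Y/T}, q^*\cF), \]
in which the two structure maps are \emph{visibly} precomposition with $f^*\anL_{Y/T} \to \anL_{X/T}$ and pullback along $f$; passing to $p_+$, $q_+$ then yields the pushout square with the maps identified in one stroke.
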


\begin{proof}
	Using Propositions \ref{prop:stack_derived_analytic_spaces_cartesian} and \ref{prop:stack_derived_analytic_spaces_convergent}, we deduce that $\mathrm{Def}_f$ is infinitesimally cartesian and convergent.
	Let $G$ be the fiber product
	\[ G \coloneqq T \times_{\mathrm{Def}_f} T , \]
	and $\delta_x \colon T \to G$ the diagonal morphism.
	Then \cref{lem:loop_computation_cotangent_complex} shows that $\mathrm{Def}_f$ has an analytic cotangent complex at $x$ if and only if $G$ has an analytic cotangent complex at $\delta_x$; in this case they are related by the formula
	\[ \anL_{\mathrm{Def}_f, x} \simeq \anL_{G,\delta_x}[-1] . \]
	Let us compute $\anL_{G,\delta_x}$.
	Let $\cF \in \Coh^{\ge 0}(T)$.
	Unraveling the definitions, we have
	\begin{multline*}
		\DerAn_G(T; \cF)  \simeq \Map_{X/}( X[p^* \cF], X ) \times_{\Map_{X/}( X[p^* \cF], Y )} \Map_{Y/}( Y[q^* \cF], Y ) \\
		\simeq \Map_{\Coh^+(X)}( \anL_{X/T}, p^* \cF ) \times_{\Map_{\Coh^+(X)}(f^* \anL_{Y/T}, p^* \cF)} \Map_{\Coh^+(X)}( \anL_{Y/T}, q^* \cF )  .
	\end{multline*}
	Since $p$ and $q$ are proper, flat and have finite coherent cohomological dimension, we can rewrite the fiber product above as
	\[ \Map_{\Coh^+(X)}( p_+( \anL_{X/T}), \cF ) \times_{\Map_{\Coh^+(X)}( p_+ f^*(\anL_{Y/T} ), \cF)} \Map_{\Coh^+(X)}( q_+( \anL_{Y/T} ), \cF ) . \]
	The conclusion now follows from the Yoneda lemma.
\end{proof}

\section{The derived moduli stack of non-archimedean stable maps} \label{sec:stable_maps}

In this section, we introduce the derived moduli stacks of non-archimedean (pre)stable curves and (pre)stable maps.
We prove that the derived moduli stack of stable curves is equivalent to the embedding of the classical moduli stack (see \cref{prop:stack_of_stable_curves}).
This is not the case for stable maps.
We prove that the derived moduli stack of stable maps is a derived $k$-analytic stack that is derived lci (see Theorems \ref{thm:stack_of_stable_maps}, \ref{thm:stack_of_stable_maps_derived_lci}).

\subsection{The derived stack of (pre)stable curves}

\begin{definition} \label{def:stable_curve}
	Let $T\in\dAn_k$ be a derived \kanal space.
	An \emph{$n$-pointed genus $g$ (pre)stable curve over $T$} is a proper flat morphism $p\colon C \to T$ in $\dAnk$ together with $n$ distinct sections $s_i\colon T\to C$ such that every geometry fiber is an $n$-pointed genus $g$ (pre)stable curve.
		\end{definition}

\begin{remark} \label{rem:stable_curve}
	By definition, given $T\in\dAn_k$, the \infcat of $n$-pointed genus $g$ (pre)stable curves over $T$ is a full subcategory of $\dAn_{T^{\amalg n}/\!/T}$.
\end{remark}

Observe that for any $T'\to T$ in $\dAnk$, the pullback of any $n$-pointed genus $g$ (pre)stable curve over $T$ is an $n$-pointed genus $g$ (pre)stable curve over $T'$.
Hence we obtain the derived stacks
\[ \R\oMpre_{g,n}, \R\oM_{g,n} \colon \dAfd_k\op \longrightarrow \cS\]
of $n$-pointed genus $g$ prestable (resp.\ stable) curves.
We denote
\[\oMpre_{g,n}\coloneqq\trunc(\R\oMpre_{g,n})\xhookrightarrow{\ j\ } \R\oMpre_{g,n}, \quad\oM_{g,n}\coloneqq\trunc(\R\oM_{g,n})\xhookrightarrow{\ i\ }\R\oM_{g,n}.\]
Note that by the flatness condition, if $T$ is underived, any (pre)stable curve over $T$ is also underived.
Hence $\oMpre_{g,n}$ and $\oM_{g,n}$ parametrizes underived (pre)stable curves.

\begin{lemma} \label{lem:derived_prestable_lci}
	Let $T \in \dAn_k$ and let $[p \colon C \to T, (s_i)]$ be an $n$-pointed genus $g$ prestable curve over $T$.
	Then $p$ is derived lci, and smooth outside the nodes.
	Moreover, the sections $s_i$ are derived lci.
\end{lemma}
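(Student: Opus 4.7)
The plan is to argue étale-locally on $C$, splitting into the smooth and nodal loci of the fibers of $p$, and in each case exhibit a derived-local model from which derived lci follows by direct computation of $\anL$ or by base change via \cref{lem:derived_lci_basic_properties}(2). The sections are handled separately using \cref{lem:derived_lci_basic_properties}(3).

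Since derived lci is local on source and target, I would reduce to $T = \Sp(A)$ derived $k$-affinoid and work étale-locally around a rigid point $c \in \trunc(C)$. If $c$ is a smooth point of the fiber of $\trunc(p)$ through it, then by flatness of $p$ and standard infinitesimal lifting (using that $\anL_{\trunc(C)/\trunc(T)}$ is locally free of rank one at $c$), one produces an étale neighborhood $U \to C$ of $c$ and an étale $T$-map $U \to T \times \bD^1_k$; hence $p|_U$ is smooth, in particular derived lci.

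The core of the argument is the nodal case. Around a nodal point $c$ the classical local structure of prestable curves produces an étale neighborhood $U_0 \to \trunc(C)$ of $c$, a function $a_0 \in \pi_0(A)$, and an étale $\trunc(T)$-map
\[
U_0 \longrightarrow \trunc(T) \times_{\bD^1_k} \bD^2_k,
\]
where $\trunc(T) \to \bD^1_k$ is given by $a_0$ and $\bD^2_k \to \bD^1_k$ is the multiplication $(x,y)\mapsto xy$. I would lift $a_0$ to an element $a \in \pi_0(\Gamma(T, \cO_T\alg))$ (possible since $\Gamma(T,\cO_T\alg) \twoheadrightarrow \Gamma(\trunc(T),\cO_{\trunc(T)})$ is surjective on $\pi_0$), thereby defining a derived map $T \to \bD^1_k$, and form the derived fiber product $\tilde U \coloneqq T \times_{\bD^1_k} \bD^2_k$. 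A direct Koszul-style computation (performed algebraically and then analytified via \cite[Proposition 4.17]{Porta_Yu_Representability_theorem}) shows that $\anL_{\bD^2_k/\bD^1_k}$ is perfect of tor-amplitude $[1,0]$, so $\bD^2_k \to \bD^1_k$ is flat and derived lci. Since $p$ is flat, $C$ locally agrees with $\trunc(C) \times_{\trunc(T)} T$; combined with the agreement of the derived and classical fiber products (by the flatness just established), this identifies $\tilde U \to C$ as étale and exhibits $p$ étale locally as a base change of $\bD^2_k \to \bD^1_k$. Applying \cref{lem:derived_lci_basic_properties}(2) concludes the derived lci property.

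For the sections: by \cref{def:stable_curve} each $s_i$ lands in the smooth locus of the fibers of $p$, so by the smooth case above $p$ is smooth in an open neighborhood of $s_i(T) \subset C$, and \cref{lem:derived_lci_basic_properties}(3) shows $s_i$ is derived lci. The main obstacle I anticipate is the nodal step: one must argue that $C$ itself, and not merely its truncation, matches the derived model $T \times_{\bD^1_k} \bD^2_k$. This will rely on the flatness of $p$ together with the principle that a flat derived \kanal space over $T$ is recovered from its truncation by derived base change along $\trunc(T) \to T$, which is ultimately an instance of the descent and cartesian properties developed in \cref{sec:the_stack_of_derived_k-analytic_spaces}.
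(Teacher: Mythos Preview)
Your overall strategy is sound and can be made to work, but it is considerably more laborious than the paper's argument, and one key step in your nodal case is phrased in a way that does not quite make sense.

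The paper's proof avoids all local models. The single observation is that flatness of $p$ forces the square
\[
\begin{tikzcd}
\trunc(C) \arrow{r}{j_C} \arrow{d}{\trunc(p)} & C \arrow{d}{p} \\
\trunc(T) \arrow{r}{j_T} & T
\end{tikzcd}
\]
to be a derived pullback, hence $j_C^* \anL_{C/T} \simeq \anL_{\trunc(C)/\trunc(T)}$. Since $j_C$ is the inclusion of the truncation, tor-amplitude of $\anL_{C/T}$ can be read off after applying $j_C^*$; but $\trunc(p)$ is classically lci (and smooth away from the nodes), so $\anL_{\trunc(C)/\trunc(T)}$ has tor-amplitude $[1,0]$ (resp.\ $[0,0]$ away from the nodes), and the derived statements follow immediately via \cite[Proposition 5.50]{Porta_Yu_Representability_theorem}. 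The sections are then handled exactly as you do, using \cref{lem:derived_lci_basic_properties}(3). In other words, the paper reduces to the truncated case in one line, without ever writing down the nodal model $T \times_{\bD^1_k} \bD^2_k$.

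Regarding your nodal step: the sentence ``since $p$ is flat, $C$ locally agrees with $\trunc(C) \times_{\trunc(T)} T$'' is not well-formed, because there is no map $T \to \trunc(T)$ along which to take this fiber product. What you actually need is to lift the classical \'etale map $U_0 \to \trunc(\tilde U)$ to a derived \'etale map $U \to \tilde U$ over $T$. This can be done: lift $U_0 \to \trunc(C)$ to an \'etale $U \to C$ via topological invariance of the \'etale site, then lift the two coordinate functions $x,y$ on $U_0$ to functions on $U$ (possible since $\bD^2_k$ is smooth), producing a $T$-map $U \to \tilde U$; its relative cotangent complex vanishes on the truncation, hence vanishes. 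This is a correct fix, but it is exactly the kind of deformation-theoretic unwinding that the paper's flatness-and-pullback argument renders unnecessary. Your invocation of the cartesian/convergence results from \cref{sec:the_stack_of_derived_k-analytic_spaces} is not the right tool here; those concern gluing along closed immersions of the base, not recovering a flat space from its truncation.
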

\begin{proof}
	Since $p \colon C \to T$ is flat, the square
	\[ \begin{tikzcd}
	\trunc(C) \arrow{r}{j_C} \arrow{d}{\trunc(p)} & C \arrow{d}{p} \arrow{d} \\
	\trunc(T) \arrow{r}{j_T} & T
	\end{tikzcd} \]
	is a pullback square.
	So we have
	\[ j_C^* \anL_{C /T} \simeq \anL_{\trunc(C) / \trunc(T)} . \]
	Since the map $\trunc(p) \colon \trunc(C) \to \trunc(T)$ is lci, it follows that $\anL_{\trunc(C) / \trunc(T)}$ is perfect and in tor-amplitude $[0,1]$.
	Therefore, the same goes for $\anL_{C / T}$.
	Moreover, $\trunc(p)$ is smooth outside the nodes;
	so by \cite[Proposition 5.50]{Porta_Yu_Representability_theorem}, $\anL_{\trunc(C) / \trunc(T)}$ has tor-amplitude $[0,0]$ outside the nodes.
	Thus, the same goes for $\anL_{C / T}$; and applying the same proposition again, we conclude that $p \colon C \to T$ is smooth outside the nodes.
	
	Let $U \subset C$ be the open complementary of the nodes.
	For every $i = 1, \ldots, n$, we see that $\trunc(s_i) \colon \trunc(T) \to \trunc(C)$ factors through $\trunc(U)$, and hence $s_i$ factor through $U$.
	In other words, the sections $s_i$ are also sections of the smooth map $U \to T$.
	So it follows from \cref{lem:derived_lci_basic_properties}(3) that they are derived lci.
\end{proof}

\begin{lemma} \label{lem:stack_of_prestable_curves}
	The derived stack $\R\oMpre_{g,n}$ is a derived \kanal stack.
\end{lemma}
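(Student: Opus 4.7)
The strategy is to apply the representability theorem for derived non-archimedean analytic stacks from \cite{Porta_Yu_Representability_theorem}, which reduces the claim to verifying five properties of $\R\oMpre_{g,n}$: (i) hyperdescent in the étale topology, (ii) being infinitesimally cartesian, (iii) convergence, (iv) the existence of a global analytic cotangent complex, and (v) that its truncation is a classical $k$-analytic Artin stack locally of finite presentation.

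I would first view $\R\oMpre_{g,n}$ as a subfunctor of a mild variant of $F^\simeq$ from \cref{sec:the_stack_of_derived_k-analytic_spaces}, namely the one parametrising tuples $(p \colon C \to T, s_1, \ldots, s_n) \in (\dAnk)^{\mathrm{flat}}_{T^{\amalg n}//T}$, with $\R\oMpre_{g,n}(T)$ carved out by the condition that the geometric fibers be $n$-pointed genus $g$ prestable curves. The same arguments as in \cref{prop:stack_derived_analytic_spaces_cartesian,prop:stack_derived_analytic_spaces_convergent} yield (i)--(iii) for this variant, and the prestability condition passes to the subfunctor because it is étale local, insensitive to nilpotent thickenings of the base (flat families have unchanged geometric fibers under such thickenings), and testable on truncations.

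For (iv), fix a $T$-point $x$ classifying $[p \colon C \to T, (s_i)]$. Since $p$ is proper flat of relative dimension $1$, it has finite coherent cohomological dimension; and \cref{lem:derived_prestable_lci} ensures $\anL_{C/T}$ is perfect. Combining \cref{prop:stack_derived_analytic_spaces_cotangent_complex} (deformations of $C$ over $T$) with \cref{prop:deformation_of_morphisms} iterated over the sections $s_i$ (which are derived lci by \cref{lem:derived_prestable_lci}) assembles the analytic cotangent complex of $\R\oMpre_{g,n}$ at $x$ as a finite homotopy colimit of perfect complexes. Condition (v) is the classical statement that $\oMpre_{g,n}$ is an Artin $k$-analytic stack locally of finite presentation, obtained by analytifying the algebraic moduli stack of prestable curves of \cite{Behrend_Stacks_of_stable_maps}.

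The main obstacle is the deformation theory in (iv): one must deform simultaneously the curve and all $n$ marked sections. This is overcome by the iterated use of \cref{prop:deformation_of_morphisms}, together with the derived lci property of the sections; once the cotangent complex is in hand, the representability theorem closes the argument.
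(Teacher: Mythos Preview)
Your proposal is correct and follows essentially the same route as the paper: both apply the representability theorem \cite[Theorem 7.1]{Porta_Yu_Representability_theorem} after checking that $\R\oMpre_{g,n}$ is infinitesimally cartesian, convergent, admits a global analytic cotangent complex, and has geometric truncation. The only cosmetic difference is that the paper packages all $n$ sections into a single morphism $T^{\amalg n}\to C$ (cf.\ \cref{rem:stable_curve}) and invokes \cref{prop:deformation_of_morphisms} once to obtain all three deformation-theoretic conditions simultaneously, whereas you unbundle this into \cref{prop:stack_derived_analytic_spaces_cartesian,prop:stack_derived_analytic_spaces_convergent} for the cartesian and convergence properties and then iterate over the sections for the cotangent complex.
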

\begin{proof}
	Using \cref{prop:deformation_of_morphisms}, we deduce that $\R \oMpre_{g,n}$ is infinitesimally cartesian, convergent and admits a global analytic cotangent complex.
	Moreover, its truncation is geometric, so \cite[Theorem 7.1]{Porta_Yu_Representability_theorem} implies that $\R \oMpre_{g,n}$ is a derived \kanal stack.
\end{proof}

\begin{proposition} \label{prop:stack_of_stable_curves}
	The morphisms $j \colon \oMpre_{g,n} \to \R\oMpre_{g,n}$ and $i \colon \oM_{g,n} \to \R\oM_{g,n}$ are equivalences.
\end{proposition}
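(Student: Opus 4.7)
My plan is to establish that $\R\oMpre_{g,n}$ has trivial derived structure; the stable case will then follow because stability is an open condition, so that $\R\oM_{g,n}$ is identified with the open substack of $\R\oMpre_{g,n}\simeq\oMpre_{g,n}$ corresponding to $\oM_{g,n}\subset\oMpre_{g,n}$.

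The first step is to compute the analytic cotangent complex of $\R\oMpre_{g,n}$ at a point $x\colon T\to\R\oMpre_{g,n}$ classifying a prestable curve $p\colon C\to T$ with sections $(s_i)$. I would apply \cref{prop:stack_derived_analytic_spaces_cotangent_complex} to the proper flat family $p$ and \cref{prop:deformation_of_morphisms} to each section $s_i$, expressing $\anL_{\R\oMpre_{g,n},x}$ as an iterated fiber of shifted pushforwards of perfect complexes under $p$ and the $s_i$. Since $\anL_{C/T}$ is perfect of tor-amplitude $[1,0]$ by \cref{lem:derived_prestable_lci}, since $p$ is a proper relative curve and hence has coherent cohomological dimension one, and since each section is a closed immersion, the resulting complex is perfect of tor-amplitude $[1,0]$; in particular it has no contribution in negative homological degrees.

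Using the classical smoothness of $\oMpre_{g,n}$ as an Artin stack (due to Behrend--Manin), whose analytic cotangent complex is computed by the same deformation-theoretic formula on classical test objects, the comparison map $j^*\anL_{\R\oMpre_{g,n}}\to\anL_{\oMpre_{g,n}}$ is an equivalence, so the relative cotangent complex $\anL_{\oMpre_{g,n}/\R\oMpre_{g,n}}$ vanishes. Since $j$ is tautologically an equivalence on truncations, convergence (\cref{prop:stack_derived_analytic_spaces_convergent}) together with a Postnikov-tower lifting argument then shows that $j$ is an equivalence: at each square-zero stage the obstruction and the torsor of lifts are controlled by the vanishing relative cotangent complex. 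The identical reasoning, applied on the open substack of stable curves, yields the equivalence for $i$.

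The main obstacle will be the cotangent complex computation in the first step, particularly combining the contributions of the $n$ sections with that of the family $p$ into a single fiber sequence, since \cref{prop:deformation_of_morphisms} is stated for a single morphism. A cleaner alternative, which I would prefer, is to establish the vanishing of $\anL_{\oMpre_{g,n}/\R\oMpre_{g,n}}$ via obstruction theory directly: any square-zero derived deformation of a prestable curve with marked sections over a classical base extends essentially uniquely, because the obstruction lives in an $R^2\pi_*$ of a coherent sheaf on a relative curve, which vanishes by the coherent cohomological dimension bound for proper analytic curves.
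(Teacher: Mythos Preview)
Your overall strategy---show that $\R\oMpre_{g,n}$ has trivial derived structure by controlling its cotangent complex---is the same as the paper's, but the key step is missing and some of the amplitude claims are confused.

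First, the tor-amplitude claim. You assert that $\anL_{\R\oMpre_{g,n},x}$ lies in tor-amplitude $[1,0]$ because $p$ has coherent cohomological dimension one and $\anL_{C/T}$ is in $[1,0]$. But the formula coming from \cref{prop:deformation_of_morphisms} is
\[
x^*\anL_{\R\oMpre_{g,n}}\;\simeq\;\bigl(p_*\bigl(\bbT\an_{C/T}(-{\textstyle\sum} s_i)\bigr)\bigr)^\vee[-1],
\]
and since $\bbT\an_{C/T}$ sits in amplitude $[0,-1]$, applying $p_*$ for a relative curve gives amplitude $[0,-2]$, whence the cotangent complex is \emph{a priori} in $[1,-1]$, not $[1,0]$. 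More importantly, smoothness of an Artin stack means tor-amplitude $[0,-1]$: the degree $-1$ piece is the stacky part and must be present, while what has to vanish is $\pi_1$. So the crucial claim is $\pi_1(x^*\anL)=0$, equivalently $\pi_{-2}\bigl(p_*\bbT\an_{C/T}(-\sum s_i)\bigr)=0$, and this is \emph{not} a consequence of cohomological dimension alone.

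Second, the ``same deformation-theoretic formula'' step is circular as stated. The comparison map $j^*\anL_{\R\oMpre_{g,n}}\to\anL_{\oMpre_{g,n}}$ is always an isomorphism on $\pi_0$ and a surjection on $\pi_1$; to upgrade it to an equivalence you need precisely the vanishing $\pi_1(j^*\anL_{\R\oMpre_{g,n}})=0$ (since $\oMpre_{g,n}$ is smooth). That is exactly the content you are trying to prove.

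Your ``cleaner alternative'' is in fact the right idea and is what the paper does, but it is subtler than an $\rR^2\pi_*$-vanishing for a sheaf. Since $C\to T$ is only nodal, $\bbT\an_{C/T}$ is a two-term complex with $\pi_{-1}(\bbT\an_{C/T})$ supported on the nodes. The hypercohomology spectral sequence therefore has a potential contribution $\pi_{-1}\bigl(p_*\,\pi_{-1}(\bbT\an_{C/T})\bigr)$ to $\pi_{-2}$, and one must argue separately that this vanishes (it does, because the support is finite over $T$). This is precisely the computation carried out in the paper's proof; without it the obstruction argument is incomplete.
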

\begin{proof}
	As $\oM_{g,n}$ is an open substack in $\oMpre_{g,n}$ and $\R\oM_{g,n}$ is the corresponding open substack in $\R\oMpre_{g,n}$, it is enough to prove that $j$ is an equivalence.
	Since $\oMpre_{g,n}$ is smooth, $j$ is an equivalence if and only if $\R\oMpre_{g,n}$ is smooth.
	Therefore, by \cite[Proposition 5.50]{Porta_Yu_Representability_theorem}, it is enough to check that $\anL_{\R\oMpre_{g,n}}$ is perfect and in tor-amplitude $[-1,0]$.
	By \cref{rem:classical_definition_perfect_complex}, tor-amplitude $[-1,0]$ implies perfectness.
	Hence it is enough to prove that for every (underived) $k$-affinoid space $T \in \Afd_k$ and every map $x \colon T \to \R\oMpre_{g,n}$, one has
	\begin{equation} \label{eq:vanishing_pi_i}
	\pi_i\big( x^* \anL_{\oMpre_{g,n}} \big) \simeq 0 \text{ for every }i\ge 1.
	\end{equation}
	Let $[p \colon C \to T, (s_i)]$ be the $n$-pointed genus $g$ prestable curve classified by $x$.
	Let $\mathbb T_{C/T}\an(-\sum s_i)$ be given by the fiber sequence
	\begin{equation} \label{eq:twisted_tangent_complex}
		\mathbb T_{C/T}\an(-{\textstyle\sum}s_i)\longrightarrow\mathbb T\an_{C/T} \longrightarrow \bigoplus_{i = 1}^n s_{i*} s_i^* \mathbb T\an_{C/T}.
	\end{equation}
		By \cref{prop:deformation_of_morphisms} we have
	\begin{equation} \label{eq:cotangent_complex_Mgn}
	x^* \anL_{\R \oMpre_{g,n}} \simeq \big( p_* \big( \mathbb T\an_{C/T}(-{\textstyle\sum}s_i) \big) \big)^\vee [-1] .
	\end{equation}
	Since $p \colon C \to T$ is a curve, in order to check \eqref{eq:vanishing_pi_i}, it suffices to check that
	\[ \pi_{-2} \big( p_*\big( \mathbb T^{\mathrm{an}}_{C / T}(-{\textstyle\sum}s_i) \big) \big) = 0 . \]
	Applying $p_*$ to the fiber sequence \eqref{eq:twisted_tangent_complex} and passing to the long exact sequence of homotopy groups, we are left to check the vanishings
	\[ \pi_{-2}( p_* \mathbb T\an_{C/T} ) = 0\quad \text{and}\quad\pi_{-2}( p_* s_{i*} s_i^* \mathbb T\an_{C/T} ) = 0\quad\text{for every }i = 1, \ldots, n.\]
	By \cref{lem:derived_prestable_lci}, $p \colon C \to T$ is derived lci, hence $\mathbb T\an_{C/T}$ is in tor-amplitude $[-1,0]$.
	As $T$ is underived, $s_i^* \mathbb T\an_{C/T}$ is in homological amplitude $[-1,0]$.
	Moreover, as $T$ is affinoid, \cite[Theorem 3.4]{Porta_Yu_Derived_Hom_spaces} implies that the second vanishing holds.
	As for the first vanishing, using the hypercohomology spectral sequence, we are left to check that
	\[ \pi_0( p_*( \pi_{-2} \mathbb T^{\mathrm{an}}_{C / T} ) ) \simeq \pi_{-1} ( p_*( \pi_{-1} \mathbb T^{\mathrm{an}}_{C/T} ) ) \simeq 0 . \]
	Since $p \colon C \to T$ is derived lci, $\pi_{-2} \mathbb T^{\mathrm{an}}_{C / T} = 0$.
	Moreover, since $p \colon C \to T$ is smooth outside the nodes, $\pi_{-1}( \mathbb T^{\mathrm{an}}_{C / T} )$ is supported on finitely many sections of $p$.
	As $T$ is affinoid, it follows that $\pi_{-1}( p_*( \pi_{-1} \mathbb T^{\mathrm{an}}_{C / T} ) ) \simeq 0$, completing the proof.
\end{proof}

\begin{corollary} \label{cor:universal_curve}
	Let $\oCpre_{g,n}\to\oMpre_{g,n}$, $\oC_{g,n}\to\oM_{g,n}$, $\R\oCpre_{g,n}\to\R\oMpre_{g,n}$ and $\R\oC_{g,n}\to\R\oM_{g,n}$ be the universal curves.
	We have equivalences
	\[\begin{tikzcd}
	\oCpre_{g,n} \rar{\sim} \dar & \R\oCpre_{g,n}\dar\\
	\oMpre_{g,n} \rar{\sim} & \R\oMpre_{g,n}
	\end{tikzcd}\text{and}\quad
	\begin{tikzcd}
	\oC_{g,n} \rar{\sim} \dar & \R\oC_{g,n}\dar\\
	\oM_{g,n} \rar{\sim} & \R\oM_{g,n}
	\end{tikzcd}.\]
\end{corollary}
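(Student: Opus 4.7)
The plan is to deduce the corollary directly from \cref{prop:stack_of_stable_curves} via a flatness argument, with essentially no additional work. Since $\R\oC_{g,n}$ and $\oC_{g,n}$ are the pullbacks of $\R\oCpre_{g,n}$ and $\oCpre_{g,n}$ respectively along the open immersions picking out stable curves, it suffices to treat the prestable case. \Cref{prop:stack_of_stable_curves} already furnishes $\oMpre_{g,n}\xrightarrow{\sim}\R\oMpre_{g,n}$, so the content is the equivalence of the top row, i.e.\ $\oCpre_{g,n}\xrightarrow{\sim}\R\oCpre_{g,n}$.

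First I would observe that the universal curve $\pi\colon\R\oCpre_{g,n}\to\R\oMpre_{g,n}$, being the universal family of $n$-pointed genus $g$ prestable curves in the sense of \cref{def:stable_curve}, is flat. Since its base is underived by \cref{prop:stack_of_stable_curves}, flatness gives
\[\pi_i(\cO_{\R\oCpre_{g,n}}\alg)\simeq\pi^*\pi_i(\cO_{\R\oMpre_{g,n}}\alg)=0\quad\text{for }i>0,\]
so $\R\oCpre_{g,n}$ is itself underived. In particular, the canonical closed immersion $\trunc(\R\oCpre_{g,n})\hookrightarrow\R\oCpre_{g,n}$ is an equivalence.

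It remains to identify $\trunc(\R\oCpre_{g,n})$ with $\oCpre_{g,n}$. By construction, $\R\oCpre_{g,n}$ is the universal prestable curve, i.e.\ it is classified by the identity map of $\R\oMpre_{g,n}$. Viewing this curve as an underived prestable curve over $\oMpre_{g,n}\simeq\R\oMpre_{g,n}$, the universal property of the classical stack $\oMpre_{g,n}$ classifies it by a morphism $\oMpre_{g,n}\to\oMpre_{g,n}$; tracing through the equivalence of \cref{prop:stack_of_stable_curves}, this morphism corresponds to $\id_{\R\oMpre_{g,n}}$, hence is $\id_{\oMpre_{g,n}}$. Therefore $\R\oCpre_{g,n}$ is the pullback of $\oCpre_{g,n}$ along the identity, giving the desired compatible equivalence of squares.

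The only ingredient beyond \cref{prop:stack_of_stable_curves} is that a flat morphism with underived base has underived source, which is a standard property of flatness in derived analytic geometry. I do not anticipate any obstacle.
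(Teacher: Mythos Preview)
Your proposal is correct and follows essentially the same approach as the paper: both use that the universal curve is flat and that a flat morphism with underived target has underived source, reducing everything to \cref{prop:stack_of_stable_curves}. The paper's proof is simply more terse (three lines), while you spell out the identification of the truncation with the classical universal curve in more detail.
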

\begin{proof}
	By definition, the vertical maps are all flat.
	By \cref{prop:stack_of_stable_curves}, the stacks $\R\oMpre_{g,n}$ and $\R\oM_{g,n}$ are underived.
	Hence the stacks $\R\oCpre_{g,n}$ and $\R\oC_{g,n}$ are also underived, from which the conclusion follows.
\end{proof}

\subsection{The derived stack of (pre)stable maps}

Fix a rigid \kanal space $S$ and a rigid \kanal space $X$ smooth over $S$.

\begin{definition} \label{def:stable_map}
	Let $T\in\dAn_{/S}$ be a derived \kanal space over $S$.
	An \emph{\gn prestable map into $X/S$ over $T$} consists of an \gn prestable curve $[C\to T, (s_i)]$ over $T$ and an $S$-map $f\colon C\to X$.
	It is called \emph{stable} if every geometric fiber $[C_t, (s_i(t)), f_t\colon C_t\to X]$ is a stable map, in the sense that its automorphism group is a finite analytic group.
\end{definition}

\begin{remark}
	An equivalent characterization for $[C_t, (s_i(t)), f_t\colon C_t\to X]$ to be stable is that every irreducible component of $C_t$ of genus 0 (resp.\ 1) which maps to a point must have at least 3 (resp.\ 1) special points on its normalization, where by special point, we mean the preimage of either a marked point or a node by the normalization map.
		We refer to \cite{Kontsevich_Enumeration,Fulton_Notes_on_stable_maps} for the theory of stable maps in algebraic geometry.
\end{remark}

\begin{remark} \label{rem:stable_map}
	By definition, given $T\in\dAn_{/S}$, the \infcat of \gn (pre)stable maps into $X/S$ over $T$ is a full subcategory of $\dAn_{T^{\amalg n}/\!/T\times_S X}$.
\end{remark}

Observe that for any $T'\to T$ in $\dAnk$, the pullback of any \gn (pre)stable map into $X/S$ over $T$ is an \gn (pre)stable map into $X/S$ over $T'$.
Hence we obtain the derived stacks
\[\R\oMpre_{g,n}(X/S),\R\oM_{g,n}(X/S)\colon\dAfd_k\op\longrightarrow\cS\]
of \gn prestable (resp.\ stable) maps into $X/S$.
More formally, we define
\[\R\oMpre_{g,n}(X/S) \coloneqq \bfMap_{\R\oMpre_{g,n}\times S}(\R\oCpre_{g,n}\times S,X\times\R\oMpre_{g,n}),\]
and $\R\oM_{g,n}(X/S)$ is the open substack of $\R\oMpre_{g,n}(X/S)$ consisting of stable maps.
We denote
\[\oMpre_{g,n}(X/S)\coloneqq\trunc(\R\oMpre_{g,n}(X/S)), \quad\oM_{g,n}(X/S)\coloneqq\trunc(\R\oM_{g,n}(X/S)).\]
Note that by the flatness condition, if $T$ is underived, any (pre)stable map into $X/S$ over $T$ is also underived.
Hence $\oMpre_{g,n}(X/S)$ and $\oM_{g,n}(X/S)$ parametrize underived (pre)stable maps into $X/S$.

\begin{theorem} \label{thm:stack_of_stable_maps}
		The derived stacks $\R\oMpre_{g,n}(X/S)$ and $\R\oM_{g,n}(X/S)$ are derived \kanal stacks locally of finite presentation over $S$.
\end{theorem}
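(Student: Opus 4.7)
The plan is to recognise $\R\oMpre_{g,n}(X/S)$ as a relative derived mapping stack and then invoke the representability of derived mapping spaces from \cite{Porta_Yu_Derived_Hom_spaces}. Using the identification $\R\oMpre_{g,n}\simeq\oMpre_{g,n}$ of \cref{prop:stack_of_stable_curves} together with the universal prestable curve $\pi\colon\R\oCpre_{g,n}\to\R\oMpre_{g,n}$ provided by \cref{cor:universal_curve}, one unwinds \cref{rem:stable_map} to obtain a natural equivalence of derived stacks over $S$:
\[
\R\oMpre_{g,n}(X/S)\;\simeq\;\bfMap_{\R\oMpre_{g,n}\times S}\bigl(\R\oCpre_{g,n}\times S,\;X\times\R\oMpre_{g,n}\bigr),
\]
where on $T$-points the correspondence sends a prestable map $[C\to T,(s_i),f]$ to its classifying map $T\to\R\oMpre_{g,n}\times S$ together with the induced morphism of $T$-families $\R\oCpre_{g,n}\times_{\R\oMpre_{g,n}}T\simeq C\to X\times_S T$.

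Next I would apply the main theorem of \cite{Porta_Yu_Derived_Hom_spaces}: the source $\R\oCpre_{g,n}\times S\to\R\oMpre_{g,n}\times S$ is proper, flat, and of finite presentation (being a family of prestable curves), while the target $X\times\R\oMpre_{g,n}\to\R\oMpre_{g,n}\times S$ is smooth and of finite presentation since $X\to S$ is so. These are precisely the hypotheses required to conclude that the mapping stack is a derived \kanal stack locally of finite presentation over $\R\oMpre_{g,n}\times S$, and hence over $S$. As an alternative (or a sanity check) one may invoke the representability criterion \cite[Theorem 7.1]{Porta_Yu_Representability_theorem} directly: infinitesimal Cartesianity and convergence follow from (a variant of) \cref{prop:deformation_of_morphisms} applied after fixing the target $X$; the existence of a global analytic cotangent complex at a point $[p\colon C\to T,(s_i),f]$ is obtained from the same analysis, giving a perfect complex built out of $p_*\bigl(\mathbb T\an_{C/T}(-\textstyle\sum s_i)\bigr)$ and $p_*f^*\mathbb T\an_{X/S}$ on $T$; geometricity of the truncation $\oMpre_{g,n}(X/S)$ is the classical statement on the rigid-analytic moduli of prestable maps.

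For the stable locus, stability is an open condition on geometric fibers, so $\R\oM_{g,n}(X/S)$ is an open substack of $\R\oMpre_{g,n}(X/S)$ and inherits the same structure of a derived \kanal stack locally of finite presentation over $S$. The principal obstacle I anticipate is making the mapping-stack identification rigorous in families over the higher derived stack $\R\oMpre_{g,n}\times S$ rather than over a derived $k$-affinoid base: this requires working étale-locally on an atlas of the base and patching via the descent results of \cref{sec:the_stack_of_derived_k-analytic_spaces}, in particular combining \cref{prop:stack_derived_analytic_spaces_cartesian} with \cref{prop:stack_derived_analytic_spaces_convergent} and \cref{prop:stack_derived_analytic_spaces_cotangent_complex} to glue the local derived mapping stacks into a global object.
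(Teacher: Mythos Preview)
Your approach is essentially the same as the paper's: identify $\R\oMpre_{g,n}(X/S)$ with the relative mapping stack, use \cref{cor:universal_curve} to replace the base and source by their underived counterparts $\oMpre_{g,n}$ and $\oCpre_{g,n}$, and then invoke \cite[Theorem~1.1]{Porta_Yu_Derived_Hom_spaces}; stability is open, giving the result for $\R\oM_{g,n}(X/S)$. Your ``principal obstacle'' is not one: once you have passed via \cref{cor:universal_curve} to the mapping stack $\bfMap_{\oMpre_{g,n}\times S}(\oCpre_{g,n}\times S,\,X\times\oMpre_{g,n})$ over the \emph{underived} smooth base $\oMpre_{g,n}\times S$, the representability theorem of \cite{Porta_Yu_Derived_Hom_spaces} applies directly and no additional gluing argument is needed.
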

\begin{proof}
	By the definition of $\R\oMpre_{g,n}(X/S)$ and \cref{cor:universal_curve}, we have
	\[\R\oMpre_{g,n}(X/S) =\bfMap_{\R\oMpre_{g,n}\times S}(\R\oCpre_{g,n}\times S, X\times\R\oMpre_{g,n})\simeq\bfMap_{\oMpre_{g,n}\times S}(\oCpre_{g,n}\times S, X\times\oMpre_{g,n}).\]
	By \cite[Theorem 1.1]{Porta_Yu_Derived_Hom_spaces}, the right hand side is a derived \kanal stack locally of finite presentation over $S$.
	We conclude that $\R\oMpre_{g,n}(X/S)$ is a derived \kanal stack locally of finite presentation over $S$.
	Since the stability condition is an open condition, the inclusion
	\[\R\oM_{g,n}(X/S)\hookrightarrow\R\oMpre_{g,n}(X/S)\]
	is Zariski open.
	We deduce that $\R\oM_{g,n}(X/S)$ is also a derived \kanal stack locally of finite presentation over $S$.
\end{proof}

\begin{remark}
	If $S$ is an algebraic variety over $k$ and $X$ is an algebraic variety over $S$, then by \cite{Holstein_Analytification_of_mapping_stacks}, we have
	\[\R\oMpre_{g,n}(X/S)\an\xrightarrow{\ \sim\ }\R\oMpre_{g,n}(X\an/S\an),\]
	and hence
	\[\R\oM_{g,n}(X/S)\an\xrightarrow{\ \sim\ }\R\oM_{g,n}(X\an/S\an).\]
\end{remark}

\begin{theorem} \label{thm:stack_of_stable_maps_derived_lci}
	The derived \kanal stacks $\R\oMpre_{g,n}(X/S)$ and $\R\oM_{g,n}(X/S)$ are derived lci over $S$.
\end{theorem}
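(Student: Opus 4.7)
The plan is to apply the transitivity fiber sequence of the analytic cotangent complex to the forgetful morphism $p\colon\mathcal{M}\coloneqq\R\oMpre_{g,n}(X/S)\to\mathcal{N}\coloneqq\R\oMpre_{g,n}\times S$ that remembers only the underlying prestable curve, and then to bound the tor-amplitude of each piece separately. Since a Zariski open immersion preserves the property of being derived lci, the statement for $\R\oM_{g,n}(X/S)$ will follow immediately from the statement for $\R\oMpre_{g,n}(X/S)$ via the open immersion $\R\oM_{g,n}(X/S)\hookrightarrow\R\oMpre_{g,n}(X/S)$ exhibited in the proof of \cref{thm:stack_of_stable_maps}.

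First I would observe that $\anL_{\mathcal{N}/S}$ is the pullback along the projection of $\anL_{\R\oMpre_{g,n}}$, and by \cref{prop:stack_of_stable_curves} the derived stack $\R\oMpre_{g,n}$ coincides with the classical smooth Deligne-Mumford stack $\oMpre_{g,n}$. Hence $\anL_{\mathcal{N}/S}$ is a vector bundle, in particular perfect and of tor-amplitude $[0,0]$. Next, using the identification
\[\mathcal{M}\simeq\bfMap_{\mathcal{N}}(\R\oCpre_{g,n}\times S,\,X\times\R\oMpre_{g,n})\]
extracted from the proof of \cref{thm:stack_of_stable_maps}, I would compute $\anL_{\mathcal{M}/\mathcal{N}}$ via the formula for the cotangent complex of a mapping stack, i.e.\ a relative version of \cref{prop:deformation_of_morphisms} in which the source curve and the target are fixed by the base $\mathcal{N}$ and only the map is deformed. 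Letting $\pi\colon\mathcal{C}\to\mathcal{M}$ denote the universal curve and $f\colon\mathcal{C}\to X$ the universal map, this yields
\[\anL_{\mathcal{M}/\mathcal{N}}\simeq\pi_+\bigl(f^*\anL_{X/S}\bigr).\]
Since $X\to S$ is smooth, $f^*\anL_{X/S}$ is a vector bundle on $\mathcal{C}$; and because $\pi$ is a proper flat family of curves, $\pi_*$ raises cohomological amplitude by at most one, so unwinding the definition $\pi_+(-)=(\pi_*((-)^\vee))^\vee$ shows that $\pi_+(f^*\anL_{X/S})$ is perfect and of tor-amplitude $[1,0]$.

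Feeding these two bounds into the transitivity fiber sequence
\[p^*\anL_{\mathcal{N}/S}\longrightarrow\anL_{\mathcal{M}/S}\longrightarrow\anL_{\mathcal{M}/\mathcal{N}}\]
gives that $\anL_{\mathcal{M}/S}$ is perfect and of tor-amplitude contained in $[1,-\infty)$, which is precisely the condition for $\mathcal{M}$ to be derived lci over $S$. The main obstacle is the precise identification of $\anL_{\mathcal{M}/\mathcal{N}}$ with $\pi_+(f^*\anL_{X/S})$; this requires extracting from \cref{prop:deformation_of_morphisms} the relative situation where only the morphism is deformed while the source curve and the target are already pinned down by the base $\mathcal{N}$, and checking that the mapping-stack cotangent-complex formula is compatible with the universal-curve description used in \cref{thm:stack_of_stable_maps}. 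Once this identification is granted, the remainder of the argument is a straightforward tor-amplitude count exploiting the smoothness of $X/S$ and the fact that $\pi$ is a curve.
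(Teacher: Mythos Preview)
Your proposal is correct and follows essentially the same route as the paper: both factor through the forgetful map to $\oMpre_{g,n}\times S$, identify $\anL_{\mathcal{M}/\mathcal{N}}\simeq \pi_+(f^*\anL_{X/S})$ via the mapping-stack cotangent formula (the paper cites this as \cite[Lemma 8.4]{Porta_Yu_Derived_Hom_spaces} rather than deriving it from \cref{prop:deformation_of_morphisms}), and then bound tor-amplitudes using smoothness of $X/S$ and the relative-curve property of $\pi$. One small correction that does not affect the argument: $\oMpre_{g,n}$ is a smooth \emph{Artin} stack, not Deligne--Mumford (prestable curves can have positive-dimensional automorphism groups), so $\anL_{\mathcal{N}/S}$ has tor-amplitude $[0,-1]$ rather than being a vector bundle in degree $0$; since $[0,-1]\subset[1,-\infty)$, the transitivity-sequence bound still goes through unchanged.
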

\begin{proof}
	Since $\R\oM_{g,n}(X/S)\hookrightarrow\R\oMpre_{g,n}(X/S)$ is Zariski open, it suffices to prove that $\R\oMpre_{g,n}(X/S)$ is derived lci over $S$.
	
	We start by computing explicitly the relative analytic cotangent complex of $\R \oMpre_{g,n}(X/S)$ over $S$.
	Let $T \in \dAn_{/S}$ and $x\colon T\to\R\oMpre_{g,n}(X/S)$.
	Recall $\oMpre_{g,n}\simeq\R\oMpre_{g,n}$ by \cref{prop:stack_of_stable_curves}.
	Let $y\colon T\to\oMpre_{g,n}\times S$ be the composition of $x$ with the map $\R\oMpre_{g,n}(X/S)\to\oMpre_{g,n}\times S$ taking domain curves.
	Let
	\[ \begin{tikzcd}
		C \arrow{d}{p} \arrow{r}{f} & X \\
		T \arrow[bend left = 30pt]{u}{s_i}
	\end{tikzcd} \]
	be the $n$-pointed genus $g$ stable map classified by $x$.
	Then \cite[Lemma 8.4]{Porta_Yu_Derived_Hom_spaces} implies that we have a canonical equivalence
	\[ x^* \anL_{\R \oMpre_{g,n}(X/S) / (\oMpre_{g,n}\times S)} \simeq p_+( f^* \anL_{X/S} ) . \]
	The transitivity fiber sequence for the analytic cotangent complex now yields
	\[ y^* \anL_{\oMpre_{g,n}\times S} \longrightarrow x^* \anL_{\R \oMpre_{g,n}(X/S)/S} \longrightarrow x^* \anL_{\R \oMpre_{g,n}(X/S) / (\oMpre_{g,n}\times S)} . \]
	By \eqref{eq:cotangent_complex_Mgn}, we obtain
	\[ x^* \mathbb T\an_{\R \oMpre_{g,n}(X/S)/S} \simeq p_* \left( \cofib\left( \mathbb T\an_{C/T}(-{\textstyle\sum}s_i) \to f^* \mathbb T\an_{X/S} \right) \right) . \]
	At this point, in order to check that $\R \oMpre_{g,n}(X/S)$ is derived lci over $S$, it suffices to verify that for every $x \colon T \to \R \oMpre_{g,n}(X/S)$ with $T$ underived over $S$, we have
	\[ \pi_j\left( x^* \mathbb T\an_{\R \oMpre_{g,n}(X/S)/S} \right) \simeq0\quad\text{for every }j\le -2.\]
	By \cref{lem:derived_prestable_lci}, the map $p \colon C \to T$ and the sections $s_i \colon T \to C$ are all derived lci.
	Therefore, the fiber sequence \eqref{eq:twisted_tangent_complex} implies that $\mathbb T\an_{C/T}( - \sum s_i )$ is in homological amplitude $[-1,0]$.
	As $p \colon C \to T$ is a relative curve, we have
	\[ \pi_j \left( p_* \left( \mathbb T\an_{C/T}(-{\textstyle\sum}s_i) \right) \right) \simeq 0\quad\text{for every }j\le -3.\]
		Moreover, since $X$ is smooth over $S$, $f^* \mathbb T\an_{X/S}$ is concentrated in degree $0$, thus $\pi_j( p_* f^* \mathbb T\an_{X/S} ) \simeq 0$ for every $j\le -2$, whence the conclusion follows.
\end{proof}

\subsection{Stable maps associated to A-graphs} \label{sec:stable_maps_associated_to_A-graphs}

The stack of stable maps has a natural refinement indexed by combinatorial data which we describe below:

\begin{definition}
	A \emph{cycle} on a rigid \kanal space $X$ is a finite formal integral linear combination of closed irreducible reduced analytic subspaces.
	Two cycles $Z$ and $Z'$ on $X$ are called \emph{analytically equivalent} if there is a connected smooth rigid \kanal curve $T$ and a cycle $V$ on $X\times T$ flat over $T$, such that
	\[ [V_t] - [V_{t'}] = [Z] - [Z'] \]
	for two rigid points $t, t'$ on $T$.
	Let $A(X)$ denote the group of one-dimensional cycles on $X$ modulo analytic equivalence.
\end{definition}

For every class $\beta\in A(X)$, we denote by $\oM_{g,n}(X/S,\beta)$ the connected component of $\oM_{g,n}(X/S)$ consisting of stable maps such that the fundamental class of the domain curve maps to $\beta$, and we define $\R\oM_{g,n}(X/S,\beta)$ similarly.

\begin{definition}[see \cref{fig:modular_graph}, {\cite[Definition 1.5]{Behrend_Stacks_of_stable_maps}}] \label{def:A-graph}
	A \emph{modular graph} consists of the following data:
	\begin{enumerate}
		\item A finite graph $\tau$ (we allow multiple edges and loops).
		\item A subset $T_\tau$ of 1-valent vertices of $\tau$ called \emph{tail vertices}, such that each edge $e$ of $\tau$ contains at most one tail vertex;
		an edge containing a tail vertex is called a \emph{tail};
		we denote by $V_\tau$ the set of non-tail vertices of $\tau$, and by $E_\tau$ the set of non-tail edges of $\tau$.
		The tails are numbered, i.e.\ a bijection between $T_\tau$ and $\{1,\dots,\abs{T_\tau}\}$ is fixed.
		\item For each $v\in V_\tau$, a non-negative integer $g(v)$ called \emph{genus}.
	\end{enumerate}
	\begin{figure}[!ht]
		\centering
		\setlength{\unitlength}{0.6\textwidth}
		\begin{picture} (1,0.234)
			\put(0,0){\includegraphics[width=\unitlength, page=1]{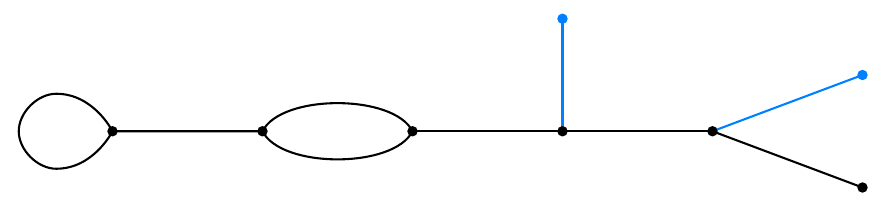}}
		\end{picture}
		\caption{A modular graph with two tails (colored blue).}
		\label{fig:modular_graph}
	\end{figure}
	For every non-tail edge $e$, we choose a point $p$ in the middle and will consider the two \emph{half-edges} incident to $p$.
	Given $v\in V_\tau$, we denote by $E_v$ the set of tails and half-edges connected to $v$;
	we call the cardinality of $E_v$ the \emph{valence} of $v$, and denote it by $\val(v)$.
	The \emph{genus} $g(\tau)$ of the modular graph $\tau$ is the genus of the underlying graph plus the sum of all $g(v)$.
	An A-graph is a pair $(\tau,\beta)$ consisting of a modular graph $\tau$ and a map $\beta\colon V_\tau\to A(X)$.
	
	For the simplicity of exposition, we will restrict to stable modular graphs, i.e.\ $2g(v)+\val(v)\ge 3$ for every vertex $v$.
	Operations on A-graphs with unstable underlying modular graphs are more complicated, and we refer to \cite{Behrend_Stacks_of_stable_maps} for details.
\end{definition}

\begin{definition}
	Let $\tau$ be a modular graph and $T \in \dAnk$ a derived \kanal space.
	A \emph{$\tau$-marked derived \kanal space}  $[X_v, (s_{v,e})_{e \in E_v}]_{v \in V_\tau}$ over $T$ is a collection of separated derived \kanal spaces $X_v$ over $T$ equipped with sections $s_{v,e} \colon T \to X_v$ for every $e \in E_v$.
	
	Note that $\tau$-marked derived \kanal spaces over $T$ form naturally an $\infty$-category $\dAn^\tau_T$, which is a full subcategory of the product $\prod_{v \in V_\tau} \dAn_{T^{\amalg \val(v)} /\!/ T}$.
\end{definition}

\begin{construction} \label{const:tau_gluing}
	Let $\tau$ be a modular graph and let $[X_v, (s_{v,e})_{e\in E_\tau}]_{v\in V_\tau}$ be a $\tau$-marked derived \kanal space over $T$.
	Since each $X_v$ is separated over $T$, the sections $s_{v,e} \colon T \to X_v$ are closed immersions.
	For each non-tail edge $e$ of $\tau$, let $e_1$ and $e_2$ be the two corresponding half-edges, and $v_1$, $v_2$ the two endpoints ($v_1 = v_2$ if $e$ is a loop).
	Using \cite[Theorem 6.5]{Porta_Yu_Representability_theorem}, we glue these sections and obtain a new derived \kanal space over $T$ (see \cref{fig:glug_sections}).
		The universal property of the gluing induces a functor
	\[ \gamma_\tau \colon \dAn_{/T}^\tau \longrightarrow \dAn_{/T} . \]
	We refer to $\gamma_\tau$ as the \emph{$\tau$-gluing functor}.
	\begin{figure}[!ht]
	\centering
	\setlength{\unitlength}{0.5\textwidth}
	\begin{picture} (1,0.767)
		\put(0,0){\includegraphics[width=\unitlength, page=1]{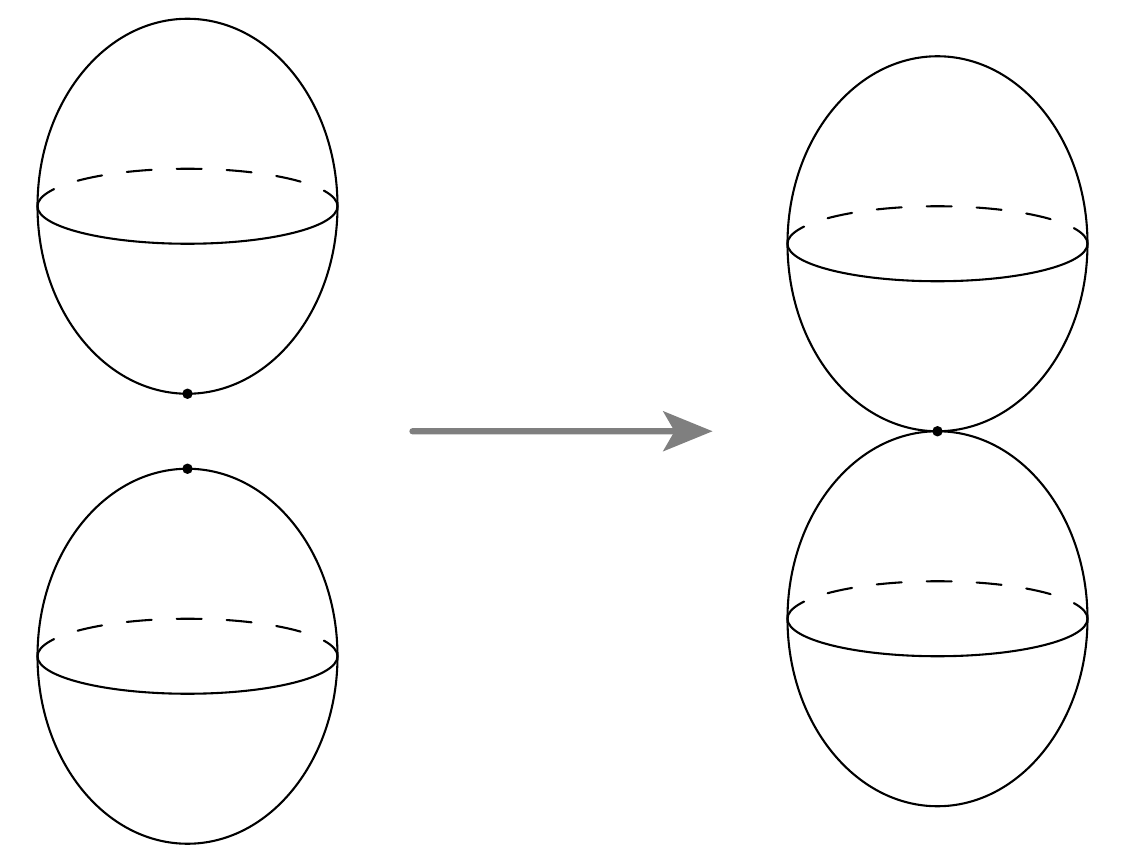}}
		\put(-0.03,0.7){$X_{v_1}$}
		\put(-0.03,0.3){$X_{v_2}$}
		\put(0.12,0.44){$s_{v_1,e}$}
		\put(0.12,0.31){$s_{v_2,e}$}
	\end{picture}
	\caption{Glue sections.}
	\label{fig:glug_sections}
	\end{figure}
\end{construction}

\begin{definition}
	Given an $n$-pointed genus $g$ prestable curve $[C,(s_i)]$ over an algebraically closed field, let $\Gamma$ be the dual graph of $C$.
	For each vertex $v$ of $\Gamma$, let $g(v)$ be the genus of the corresponding irreducible component $C_v$ of $C$.
	For each marked point $s_i$, we add a vertex $v_i$ to $\Gamma$, and an edge $e_i$ connecting $v_i$ and the vertex of $\Gamma$ corresponding to the irreducible component of $C$ containing $s_i$.
	Then we obtain a modular graph $\tau$ with $T_\tau=\{v_i\}$, which we call the modular graph associated to $[C,(s_i)]$.
\end{definition}

\begin{definition} \label{def:graph-marked_stable_curve}
	Let $\tau$ be a modular graph and $T \in \dAnk$ a derived \kanal space.
	A \emph{$\tau$-marked (pre)stable curve over $T$} consists of a $\tau$-marked derived \kanal space $[C_v, (s_{v,e})_{e \in E_v}]_{v \in V_\tau}$ over $T$ such that for every $v \in V_\tau$, $[C_v, (s_{v,e})_{e \in E_v}]$ is a $\val(v)$-pointed genus $g(v)$ (pre)stable curve over $T$.
\end{definition}

\begin{definition} \label{def:gluing}
	Let $\tau$ be a modular graph and $T \in \dAnk$ a derived \kanal space.
	Let $[C_v, (s_{v,e})]_{v \in V_\tau}$ be a $\tau$-marked (pre)stable curve over $T$.
	Let $C$ be its $\tau$-gluing (see \cref{const:tau_gluing}).
	It inherits sections $s_e$ for every non-tail edge $e$ of $\tau$ and $s_i$ for every tail edge $i$ of $\tau$.
	We call $[C, (s_i)_{i \in T_\tau}, (s_e)_{e \in E_\tau}]$ the \emph{glued curve} associated to the $\tau$-marked (pre)stable curve $[C_v, (s_{v,e})_{e \in E_v}]_{v \in V_\tau}$.
	Note that the arithmetic genus of $C$ is equal to the genus of $\tau$.
	Given any geometric point $t\in T$, consider the geometric fiber $[C_t,(s_i(t))_{i\in T_\tau}, (s_e(t))_{e\in E_\tau}]$.
	By construction, every $s_i(t)$ lies in the smooth locus of $C_t$, and every $s_e(t)$ is a node of $C_t$.
	We also remark that the modular graph associated to the pointed (pre)stable curve $[C_t,(s_i(t))_{i\in T_\tau}]$ is not $\tau$ in general, but it admits a contraction to $\tau$.
	\end{definition}

\begin{remark} \label{rem:product_decomposition_M_tau}
	Let $\oMpre_\tau$ and $\R\oMpre_\tau$ denote respectively the underived and the derived stack of $\tau$-marked prestable curves.
	Let $\oM_\tau$ and $\R\oM_\tau$ denote respectively the underived and the derived stack of $\tau$-marked stable curves.
	By definition, we have natural equivalences
	\begin{align*}
	&\oMpre_\tau\simeq\prod_{v\in V_\tau}\oMpre_{g(v),\val(v)}, &&\oM_\tau\simeq\prod_{v\in V_\tau}\oM_{g(v),\val(v)},\\
	&\R\oMpre_\tau\simeq\prod_{v\in V_\tau}\R\oMpre_{g(v),\val(v)}, &&\R\oM_\tau\simeq\prod_{v\in V_\tau}\R\oM_{g(v),\val(v)}.
	\end{align*}
	It follows that the moduli stacks above do not change when we cut an edge into two tails (see \cref{sec:cutting_edges}).

	Let $\oCpre_\tau$, $\R\oCpre_\tau$, $\oC_\tau$ and $\R\oC_\tau$ be the universal glued curves over the moduli stacks $\oMpre_\tau$, $\R\oMpre_\tau$, $\oM_\tau$ and $\R\oM_\tau$ respectively.
	They are well-defined by \cref{lem:gluing_along_closed_immersion_is_universal}.
	Then by \cref{cor:universal_curve}, we obtain natural equivalences
	\[\begin{tikzcd}
	\oCpre_\tau \rar{\sim} \dar & \R\oCpre_\tau\dar\\
	\oMpre_\tau \rar{\sim} & \R\oMpre_\tau
	\end{tikzcd}\text{and}\quad
	\begin{tikzcd}
	\oC_\tau \rar{\sim} \dar & \R\oC_\tau\dar\\
	\oM_\tau \rar{\sim} & \R\oM_\tau
	\end{tikzcd}.\]
\end{remark}

\begin{definition} \label{def:graph-marked_stable_map}
	Let $(\tau,\beta)$ be an A-graph and let $T\in\dAn_{/S}$ be a derived \kanal space over $S$.
	A \emph{$(\tau,\beta)$-marked prestable map into $X/S$ over $T$} consists of a $\tau$-marked prestable curve $[C_v,(s_{v,e})_{e\in E_v}]_{v\in V_\tau}$ over $T$ and an $S$-map $f$ from the associated glued curve $[C,(s_i)_{i\in T_\tau},(s_e)_{e\in E_\tau}]$ to $X$ such that for every geometric point $t\in T$ and every $v\in V_\tau$, the composite map $C_{v,t}\to C_t\xrightarrow{f_t} X$ has class $\beta(v)$.
	It is called \emph{stable} if every geometric fiber $[C_t,(s_i(t))_{i\in T_\tau},f_t]$ is a stable map.
	
	Note that $(\tau,\beta)$-marked (pre)stable maps into $X/S$ over $T$ form naturally an $\infty$-category, which is a full subcategory of the fiber product
	\[ \dAn_{/ T \times_S X} \times_{\dAn_{/T}} \dAn_{/T}^\tau , \]
	where the functor $\dAn_{/T}^\tau \to \dAn_{/T}$ is the $\tau$-gluing functor $\gamma_\tau$ of \cref{const:tau_gluing}.
\end{definition}

\begin{remark} \label{rem:RMXtaubeta}
	Let $\oMpre(X/S,\tau,\beta)$ and $\R\oMpre(X/S,\tau,\beta)$ denote respectively the underived and the derived stack of $(\tau,\beta)$-marked prestable maps into $X/S$.
	Let $\oM(X/S,\tau,\beta)$ and $\R\oM(X/S,\tau,\beta)$ denote respectively the underived and the derived stack of $(\tau,\beta)$-marked stable maps into $X/S$.
		As in the proof \cref{thm:stack_of_stable_maps}, we have Zariski open embeddings
	\begin{gather*}
	\oM(X/S,\tau,\beta)\longhookrightarrow\oMpre(X/S,\tau,\beta)\simeq\trunc\bfMap_{\oMpre_\tau\times S}(\oCpre_\tau\times S, X\times\oMpre_\tau),\\
	\R\oM(X/S,\tau,\beta)\longhookrightarrow\R\oMpre(X/S,\tau,\beta)\simeq\bfMap_{\oMpre_\tau\times S}(\oCpre_\tau\times S, X\times\oMpre_\tau).
	\end{gather*}
	Hence $\oM(X/S,\tau,\beta)$ is a rigid \kanal stack over $S$, and $\R\oM(X/S,\tau,\beta)$ is a derived \kanal stack over $S$.
	Similar to \cref{thm:stack_of_stable_maps_derived_lci}, the derived \kanal stack $\R\oM(X/S,\tau,\beta)$ is moreover derived lci over $S$.
	
	Let $\oCpre(X/S,\tau,\beta)$, $\R\oCpre(X/S,\tau,\beta)$, $\oC(X/S,\tau,\beta)$ and $\R\oC(X/S,\tau,\beta)$ be the universal glued curves over the moduli stacks $\oMpre(X/S,\tau,\beta)$, $\R\oMpre(X/S,\tau,\beta)$, $\oM(X/S,\tau,\beta)$ and $\R\oM(X/S,\tau,\beta)$ respectively; in other words, they are respectively the pullbacks of $\oCpre_\tau$, $\R\oCpre_\tau$, $\oC_\tau$ and $\R\oC_\tau$ along the maps taking domains.	
\end{remark}

\subsection{The stabilization map} \label{sec:stabilization_map}

The goal of this subsection is to construct the stabilization of prestable maps in families
\[ \stab \colon \R \oMpre(X/S, \tau, \beta) \longrightarrow \R \oM(X/S, \tau, \beta).\]
We begin by constructing the stabilizing contraction in the sense of the following definition:

\begin{definition} \label{def:stabilization}
	Let $T \in \dAn_{/S}$ be a derived \kanal space over $S$.
	Let $\mathbf f \coloneqq ([C_v,\allowbreak (s_{v,e})_{e \in E_v}]_{v \in V_\tau},\allowbreak f)$ and $\mathbf f' \coloneqq ([C'_v, (s'_{v,e})_{e \in E_v}]_{v \in V_\tau}, f')$ be two $\tau$-marked prestable maps into $X/S$ over $T$.
	We say that a morphism $p \colon \mathbf f \to \mathbf f'$ is a \emph{stabilizing contraction} if $\mathbf f'$ is stable and for every other $\tau$-marked stable map $\mathbf f''$ the morphism
	\[ \Map_{/T}( \mathbf f', \mathbf f'' ) \longrightarrow \Map_{/T}( \mathbf f, \mathbf f'' ) \]
	is an equivalence.
	Here the mapping spaces are computed in the $\infty$-category of $\tau$-marked prestable maps into $X/S$ over $T$ (see \cref{def:graph-marked_stable_map}).
\end{definition}

\subsubsection{A criterion for stabilization}

Before constructing the stabilizing contraction, we describe a criterion for checking its universal property.

\begin{lemma} \label{lem:derived_factorization_lemma}
		Let $Y\in\dAn_{/T}$ be a derived \kanal space over $T$, and $p\colon C\to C'$ a morphism in $\dAn_{/T}$.
	Assume the following:
	\begin{enumerate}
		\item $p$ is proper and has finite coherent cohomological dimension;
		\item both $C$ and $C'$ are proper and flat over $T$;
		\item the canonical map
		\[ \cO_{C'}\alg \longrightarrow p_* \cO_C\alg \]
		is an equivalence;
		\item $Y \to T$ is flat, separated and locally of finite presentation.
	\end{enumerate}
	Then $\bfMap_{/T}(C, Y)$ and $\bfMap_{/T}(C', Y)$ are derived \kanal stacks locally of finite presentation over $T$, and the induced map
	\begin{equation} \label{eq:deforming_maps}
	\pi \colon \bfMap_{/T}(C', Y) \longrightarrow \bfMap_{/T}(C, Y)
	\end{equation}
	is étale.
	In particular the diagram
	\[ \begin{tikzcd}
	\Map_{/T}( C', Y ) \arrow{r} \arrow{d} & \Map_{/ \trunc(T)}( C_0', \trunc(Y) ) \arrow{d} \\
	\Map_{/T}( C, Y ) \arrow{r} & \Map_{/\trunc(T)}( C_0, \trunc(Y) )
	\end{tikzcd} \]
	is a pullback square.
	\end{lemma}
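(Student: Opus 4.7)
The plan is to reduce the statement to a computation of the relative analytic cotangent complex of $\pi$, and then deduce the pullback square from the general equivalence between étale morphisms over a derived $k$-analytic stack and étale morphisms over its truncation.

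First, I would verify the representability of the two mapping stacks. Since $C$ and $C'$ are proper and flat over $T$, and $Y \to T$ is flat, separated and locally of finite presentation, the representability theorem for derived mapping stacks from \cite{Porta_Yu_Derived_Hom_spaces} (as already invoked in the proof of \cref{thm:stack_of_stable_maps}) shows that $\bfMap_{/T}(C, Y)$ and $\bfMap_{/T}(C', Y)$ are derived \kanal stacks locally of finite presentation over $T$.

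The main step is to show that the relative analytic cotangent complex $\anL_{\pi}$ vanishes, which, together with local finite presentation, will imply that $\pi$ is étale. Let $p_C \colon C \to T$ and $p_{C'} \colon C' \to T$ be the structure maps; thus $p_C = p_{C'} \circ p$. Let $f' \colon C' \to Y$ be a $T$-point of $\bfMap_{/T}(C', Y)$, and set $f \coloneqq f' \circ p$. By the cotangent complex formula for derived analytic mapping stacks (see \cite[Lemma 8.4]{Porta_Yu_Derived_Hom_spaces}), we have
\[ \anL_{\bfMap_{/T}(C, Y)/T, f} \simeq p_{C, +}( f^* \anL_{Y/T} ), \qquad \anL_{\bfMap_{/T}(C', Y)/T, f'} \simeq p_{C', +}( (f')^* \anL_{Y/T} ), \]
where $p_{C, +}(\cF) \coloneqq (p_{C, *}(\cF^\vee))^\vee$, and similarly for $p_{C', +}$. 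Using $f^* \anL_{Y/T} \simeq p^* (f')^* \anL_{Y/T}$, the identity $p_{C, *} = p_{C', *} \circ p_*$, the projection formula (valid by the properness and finite coherent cohomological dimension of $p$, combined with the perfectness of $(f')^* \anL_{Y/T}$ coming from $Y \to T$ being locally of finite presentation), and finally assumption (3) which identifies $p_* \cO_C\alg \simeq \cO_{C'}\alg$, I get
\[ p_{C, *}( p^* (f')^* \mathbb T\an_{Y/T} ) \simeq p_{C', *}\bigl( (f')^* \mathbb T\an_{Y/T} \otimes p_* \cO_C\alg \bigr) \simeq p_{C', *}( (f')^* \mathbb T\an_{Y/T} ) . \]
Dualizing, the canonical map $\pi^* \anL_{\bfMap_{/T}(C, Y)/T} \to \anL_{\bfMap_{/T}(C', Y)/T}$ is an equivalence at $f'$, hence $\anL_\pi \simeq 0$. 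Combined with the fact that $\pi$ is locally of finite presentation over $T$, I conclude that $\pi$ is étale (this is the main obstacle, but the adjunction/projection-formula calculation above takes care of it cleanly).

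For the pullback square, I use the fact that in derived \kanal geometry, étale morphisms over a derived stack $M$ are equivalent (via truncation) to étale morphisms over $\trunc(M)$. Since $\pi$ is étale, this yields a canonical equivalence
\[ \bfMap_{/T}(C', Y) \xrightarrow{\ \sim\ } \bfMap_{/T}(C, Y) \times_{\trunc \bfMap_{/T}(C, Y)} \trunc \bfMap_{/T}(C', Y) . \]
It remains to identify $\trunc \bfMap_{/T}(C, Y) \simeq \bfMap_{/\trunc(T)}(C_0, \trunc(Y))$, and similarly for $C'$; this follows from the flatness of $C \to T$ (so that $\trunc(C \times_T T'_0) \simeq C_0 \times_{\trunc T} T'_0$ for any classical $T'_0 \to \trunc T$) and from the fact that any map from a classical space to $Y$ factors through $\trunc(Y)$. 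Taking $T$-points of both sides of the displayed equivalence and using $\Map(T, \trunc M) \simeq \Map(\trunc T, \trunc M)$ yields the required pullback square.
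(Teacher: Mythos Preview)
Your proof is correct and follows essentially the same route as the paper: representability via the derived mapping-stack results of \cite{Porta_Yu_Derived_Hom_spaces}, vanishing of $\anL_\pi$ via \cite[Lemma 8.4]{Porta_Yu_Derived_Hom_spaces} combined with the projection formula and assumption (3), and then the pullback square as a formal consequence of \'etaleness. One small imprecision: you compute the cotangent complex only at a $T$-point $f'\colon C'\to Y$, but such a section need not exist, and in any case $\anL_\pi$ must be shown to vanish at every point of $\bfMap_{/T}(C',Y)$. The paper handles this by working at an arbitrary $U$-point $u\colon U\to\bfMap_{/T}(C',Y)$ with $U$ affinoid, replacing $C,C'$ by $C_U,C'_U$; your projection-formula computation goes through verbatim in that generality once you note (via derived base change for the proper map $p$) that assumption (3) is stable under base change along $U\to T$.
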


\begin{proof}
	Let $U\in\dAn_{/T}$, and assume that it is underived.
	Since $C$ and $Y$ are flat over $T$, we have
	\begin{align*}
	\bfMap_{/T}( C , Y )(U) & \simeq \Map_{/T}( C \times_T U, Y \times_T U ) \\
	& \simeq \Map_{/\trunc(T)}( \trunc(C) \times_{\trunc(T)} U, \trunc(Y) \times_T U ) \\
	& \simeq \bfMap_{/\trunc(T)}( \trunc(C) , \trunc(Y))(U) .
	\end{align*}
	In other words, we have a canonical equivalence
	\[ \trunc( \bfMap_{/T}( C, Y ) ) \simeq \trunc( \bfMap_{/\trunc(T)}( \trunc(C), \trunc(Y) ) . \]
	Therefore, by \cite[Proposition 5.3.3]{Conrad_Spreading-out}, the truncation of $\bfMap_{/T}( C, Y)$ is representable.
	Then it follows from \cite[Proposition 8.5]{Porta_Yu_Derived_Hom_spaces} that $\bfMap_{/T}(C, Y)$ and $\bfMap_{/T}(C', Y)$ are derived \kanal stacks locally of finite presentation over $T$.
	
	In order to prove that the map \eqref{eq:deforming_maps} is étale, it is enough to show that the relative analytic cotangent complex of is zero.
		Denote $F \coloneqq \bfMap_{/T}( C, Y)$ and $F' \coloneqq \bfMap_{/T}(C', Y)$.
	Then it is enough to check that for any (underived) $k$-affinoid space $U$ over $T$ and any $u \colon U \to F'$, the map
	\[ u^* \pi^* \anL_{F/T} \longrightarrow u^* \anL_{F'/T} \]
	is an equivalence.
	Write $C_U \coloneqq U \times_T C$ and $C'_U \coloneqq U \times_T C'$.
	Let $f \colon C_U \to Y$ and $f' \colon C'_U \to Y$ be the morphisms corresponding to $\pi \circ u$ and $u$, respectively.
	Let
	\[ q_U \colon C_U \to U , \quad q_U' \colon C'_U \to U , \quad p_U \colon C_U \to C'_U  \]
	be the induced morphisms.
	Then $f \simeq f' \circ p_U$ and $q_U \simeq q'_U \circ p_U$.
	Using \cite[Lemma 8.4]{Porta_Yu_Derived_Hom_spaces} we can rewrite
	\[ u^* \bbT\an_{F / T} \simeq q_{U*}( f^*( \bbT\an_Y ) ) , \qquad u^* \pi^* \bbT\an_{F /T} \simeq q'_{U*}( f^{\prime *}( \bbT\an_Y ) ) . \]
	Since $p_U \colon C_U \to C_U'$ is proper, by assumption (3) and the projection formula (\cite[Theorem 1.4]{Porta_Yu_Derived_Hom_spaces}), we have
	\[ q_{U*}( f^*( \bbT\an_Y ) ) \simeq q'_{U*}( p_{U*}( p_U^*( f^{\prime *} \bbT\an_Y ) ) ) \simeq q'_{U*}( p_{U*} \cO_{C_U}\alg \otimes f^{\prime *} \bbT\an_Y ) \simeq q'_{U*}( f^{\prime*}( \bbT\an_Y ) ),\]
	completing the proof.
\end{proof}

\begin{proposition} \label{prop:checking_stabilization_on_truncation}
	Let $T \in \dAn_{/S}$ be a derived \kanal space over $S$.
	Let $\mathbf f = ( [C_v,\allowbreak (s_{v,e})_{e \in E_v}]_{v \in V_\tau}, f )$ and $\mathbf f' = ( [C_v', (s_{v,e}')_{e \in E_v}]_{v \in V_\tau}, f' )$ be two $\tau$-marked prestable maps into $X/S$ over $T$ and let $p \colon \mathbf f \to \mathbf f'$ be a morphism between them.
	Assume that:
	\begin{enumerate}
		\item the canonical map $\cO_{C'_v}\alg \to p_* \cO_{C_v}\alg$ is an equivalence;
		\item the truncation $p_0 \colon \mathbf f_0 \to \mathbf f'_0$ is a stabilizing contraction.
	\end{enumerate}
	Then $p$ is a stabilizing contraction as well.
\end{proposition}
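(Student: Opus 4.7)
The plan is to reduce the universal property to its analog at the truncation via \cref{lem:derived_factorization_lemma}, applied vertex by vertex. First, since stability of a prestable map is a condition on geometric fibers of the underlying (pre)stable map, which are determined by the truncation, assumption (2) implies that $\mathbf f'$ is stable (as $\mathbf f'_0$ is). It remains to verify that for every $\tau$-marked stable map $\mathbf f'' = ([C''_v, (s''_{v,e})]_{v\in V_\tau}, f'')$ into $X/S$ over $T$, the precomposition map
\[ \Map_{/T}(\mathbf f', \mathbf f'') \longrightarrow \Map_{/T}(\mathbf f, \mathbf f'') \]
is an equivalence.

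Unraveling \cref{def:graph-marked_stable_map}, a morphism $\mathbf f' \to \mathbf f''$ consists of a collection of morphisms $g_v\colon C'_v \to C''_v$ over $T$, one for each $v \in V_\tau$, satisfying $g_v \circ s'_{v,e} = s''_{v,e}$ for every $e \in E_v$ and $f''_v \circ g_v = f'_v$, where $f'_v \colon C'_v \to X$ is induced by $f'$. Hence $\Map_{/T}(\mathbf f', \mathbf f'') \simeq \prod_{v \in V_\tau} M_v$, where $M_v$ is the fiber of the natural map
\[ \Map_{/T}(C'_v, C''_v) \longrightarrow \Map_{/T}(C'_v, X \times_S T) \times \prod_{e \in E_v} \Map_{/T}(T, C''_v) \]
at the point $(f'_v, (s''_{v,e}))$. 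Analogously $\Map_{/T}(\mathbf f, \mathbf f'') \simeq \prod_v N_v$ with the fiber taken at $(f_v, (s''_{v,e}))$. The relations $f_v = f'_v \circ p_v$ and $p_v \circ s_{v,e} = s'_{v,e}$ (which hold because $p$ is a morphism of $\tau$-marked prestable maps) ensure that precomposition with $p_v$ defines a map $M_v \to N_v$ whose product over $v$ is the map under investigation.

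Fix $v \in V_\tau$. The morphism $p_v\colon C_v \to C'_v$ is proper between proper flat $T$-curves (so has finite coherent cohomological dimension); both $C_v$ and $C'_v$ are proper and flat over $T$; and $\cO_{C'_v}\alg \simeq p_{v*} \cO_{C_v}\alg$ by assumption (1). Moreover, both $Y = C''_v$ (as part of the prestable map $\mathbf f''$) and $Y = X \times_S T$ (since $X$ is smooth over $S$) are flat, separated and locally of finite presentation over $T$. Applying \cref{lem:derived_factorization_lemma} with each of these choices of $Y$ yields pullback squares comparing $\Map_{/T}(C'_v, Y) \to \Map_{/T}(C_v, Y)$ with the corresponding map over $\trunc(T)$. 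Since $\Map_{/T}(T, C''_v)$ depends only on $C''_v$, forming the fiber products at $(f'_v, (s''_{v,e}))$ and $(f_v, (s''_{v,e}))$ assembles these into a pullback square
\[ \begin{tikzcd}
M_v \arrow{r} \arrow{d} & M_{v,0} \arrow{d} \\
N_v \arrow{r} & N_{v,0}
\end{tikzcd} \]
in which the right column computes the analogous mapping spaces for $\mathbf f'_0 \to \mathbf f''_0$ and $\mathbf f_0 \to \mathbf f''_0$. Assumption (2) makes the right vertical map an equivalence, so the left is too; taking products over $v$ completes the proof. The main technical point will be the diagram chase needed to confirm that the fiber-product decompositions defining $M_v$ and $N_v$ commute correctly with the Cartesian squares furnished by \cref{lem:derived_factorization_lemma}; this is a routine but careful check exploiting the compatibilities that $p$ intertwines the structural data (sections and maps to $X$) on both sides.
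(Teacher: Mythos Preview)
Your overall strategy—reduce to truncations via the pullback squares supplied by \cref{lem:derived_factorization_lemma}—is exactly the right one, and is what the paper does. But the specific decomposition you use has a gap.

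The claimed equivalence $\Map_{/T}(\mathbf f', \mathbf f'') \simeq \prod_{v\in V_\tau} M_v$ is not correct when $\tau$ has non-tail edges. By \cref{def:graph-marked_stable_map}, the \infcat of $(\tau,\beta)$-marked prestable maps sits inside the fiber product $\dAn_{/T\times_S X}\times_{\dAn_{/T}}\dAn_{/T}^\tau$, so
\[
\Map_{/T}(\mathbf f',\mathbf f'')\ \simeq\ \Map_{/T\times_S X}(C',C'')\ \times_{\Map_{/T}(C',C'')}\ \Map_{\dAn_{/T}^\tau}(\mathbf C',\mathbf C'').
\]
The right-hand factor genuinely splits as $\prod_v \Map_{T^{\amalg\val(v)}//T}(C'_v,C''_v)$, but the left-hand factor does not decompose vertexwise: the map $f'\colon C'\to X$ lives on the \emph{glued} curve $C'=\gamma_\tau(\mathbf C')$, which is a pushout along the nodal sections, so specifying $f'$ involves, for every non-tail edge $e$, a homotopy in $\Map_{/T}(T,X\times_S T)$ identifying the two restrictions at the node. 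Consequently the condition ``$f''\circ g=f'$ over $T\times_S X$'' carries, beyond the collection of homotopies $\alpha_v\colon f''_v\circ g_v\simeq f'_v$, a further $2$-cell at each non-tail edge matching the edge homotopies of $f'$ and $f''\circ g$. Your product $\prod_v M_v$ records the $\alpha_v$ and the section compatibilities $\beta_{v,e}$, but not these edge $2$-cells; since $\Map_{/S}(T,X)$ is not discrete for derived $T$, those spaces of $2$-cells need not be contractible, and the map $\Map_{/T}(\mathbf f',\mathbf f'')\to\prod_v M_v$ is not an equivalence.

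The paper avoids this by keeping the glued curves in play for the $X$-compatibility. It checks three separate pullback squares: one for $\Map_{/T}(C',C'')\to\Map_{/T}(C,C'')$, one for $\Map_{/T\times_S X}(C',C'')\to\Map_{/T\times_S X}(C,C'')$ (both via \cref{lem:derived_factorization_lemma} applied to the glued map $p\colon C\to C'$; the hypothesis $\cO_{C'}\alg\simeq p_*\cO_C\alg$ follows from assumption (1) since $p_*$ commutes with the fiber product of structure sheaves describing the pushout), and one for $\Map_{\dAn_{/T}^\tau}(\mathbf C',\mathbf C'')\to\Map_{\dAn_{/T}^\tau}(\mathbf C,\mathbf C'')$, which \emph{does} reduce to a product over $v$ and is handled exactly as you propose. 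Taking the fiber product of these three pullback squares gives the desired one. Your argument is easily repaired along these lines; the only real change is to resist pushing the $X$-compatibility down to the individual vertices.
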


\begin{proof}
	It follows from the definition that $\mathbf f'$ is stable if and only if its truncation $\mathbf f'_0$ is stable.
	Let now $\mathbf f''$ be any other $\tau$-marked stable map into $X/S$ over $T$.
	Write $T_0 \coloneqq \trunc(T)$.
	By assumption (2), it is enough to prove that the diagram
	\begin{equation} \label{eq:stabilization_truncation}
		\begin{tikzcd}
			\Map_{/T}( \mathbf f', \mathbf f'' ) \arrow{r} \arrow{d} & \Map_{/T_0}( \mathbf f'_0, \mathbf f''_0 ) \arrow{d} \\
			\Map_{/T}( \mathbf f, \mathbf f'' ) \arrow{r} & \Map_{/T_0}( \mathbf f_0, \mathbf f''_0 )
		\end{tikzcd}
	\end{equation}
	is a pullback square.
	Write
	\begin{align*}
		&\mathbf f = ( [C_v, (s_{v,e})_{e \in E_v}]_{v \in V_\tau}, f ), &&\mathbf C \coloneqq [C_v, (s_{v,e})_{e \in E_v}]_{v \in V_\tau} , \\
		&\mathbf f_0 = ( [(C_v)_0, ((s_{v,e})_0)_{e \in E_v}]_{v \in V_\tau}, f_0 ), &&\mathbf C_0 \coloneqq [(C_v)_0, ((s_{v,e})_0)_{e \in E_v}]_{v \in V_\tau} .
	\end{align*}
	Let $[C, (s_i)_{i \in T_\tau}, (s_e)_{e \in E_\tau}]$ and $[C_0, ( (s_i)_0 )_{i \in T_\tau}, ((s_e)_0)_{e \in E_\tau}]$ be the associated glued curves.
	We introduce analogous notations for $\mathbf f'$ and $\mathbf f''$.
	By \cref{def:graph-marked_stable_map}, we have
	\[ \Map_{/T}( \mathbf f, \mathbf f'' ) \simeq \Map_{/T \times_S X}( C, C'' ) \times_{\Map_{/T}(C, C'')} \Map_{\dAn_{/T}^\tau}( \mathbf C, \mathbf C'' ) . \]
	The other mapping spaces appearing in diagram \eqref{eq:stabilization_truncation} have analogous descriptions.
	We are therefore left to check that the diagrams
	\begin{equation} \label{eq:stabilization_truncation_I}
		\begin{tikzcd}
			\Map_{/ T}( C', C'' ) \arrow{r} \arrow{d} & \Map_{/T_0}( C'_0, C''_0 ) \arrow{d} \\
			\Map_{/ T}( C, C'' ) \arrow{r} & \Map_{/ T_0}( C_0, C''_0 )
		\end{tikzcd} ,
	\end{equation}
	\begin{equation}\label{eq:stabilization_truncation_II}
		\begin{tikzcd}
			\Map_{/ T \times_S X}(C', C'') \arrow{r} \arrow{d} & \Map_{/ T_0 \times_S X}( C'_0, C''_0 ) \arrow{d} \\
			\Map_{/ T \times_S X}( C, C'' ) \arrow{r} & \Map_{/T_0 \times_S X}( C_0, C''_0 )
		\end{tikzcd},
	\end{equation}
	\begin{equation}\label{eq:stabilization_truncation_III}
		\begin{tikzcd}
			\Map_{\dAn_{/T}^\tau}( \mathbf C', \mathbf C'' ) \arrow{r} \arrow{d} & \Map_{\dAn_{/T_0}^\tau}( \mathbf C'_0, \mathbf C''_0 ) \arrow{d} \\
			\Map_{\dAn_{/T}^\tau}( \mathbf C, \mathbf C'' ) \arrow{r} & \Map_{\dAn_{/T_0}^\tau}( \mathbf C_0, \mathbf C''_0 )
		\end{tikzcd}
	\end{equation}
	are pullback squares.
	
	For diagram \eqref{eq:stabilization_truncation_I}, the statement follows directly from \cref{lem:derived_factorization_lemma}.
	For diagram \eqref{eq:stabilization_truncation_II}, we rewrite
	\[ \Map_{/ T \times_S X}( C, C'' ) \simeq \fib\big( \Map_{/T}( C, C'' ) \to \Map_{/T}( C, T \times_S X ) \big) , \]
	where the fiber is taken at the given map $f \colon C \to T \times_S X$.
	The other mapping spaces appearing in diagram \eqref{eq:stabilization_truncation_II} admit analogous descriptions.
	Applying \cref{lem:derived_factorization_lemma} twice, we deduce that diagram \eqref{eq:stabilization_truncation_II} is also a pullback.
	
	We are left to prove that diagram \eqref{eq:stabilization_truncation_III} is a pullback.
	Unraveling the definition of the $\infty$-category $\dAn_{/T}^\tau$, we can rewrite
	\[ \Map_{\dAn_{/T}^\tau}( \mathbf C, \mathbf C'' ) \simeq \prod_{v \in V_\tau} \bfMap_{T^{\amalg \val(v)} /\!/ T}( C_v, C''_v ) . \]
	Fix $v \in V_\tau$ and let $n \coloneqq \val(v)$.
	It is then enough to prove that the diagram
	\begin{equation} \label{eq:stabilization_truncation_IV}
		\begin{tikzcd}
			\Map_{T^{\amalg n} /\!/ T}( C'_v, C''_v ) \arrow{r} \arrow{d} & \Map_{T_0^{\amalg n} /\!/ T_0}( (C'_v)_0, (C''_v)_0 ) \arrow{d} \\
			\Map_{T^{\amalg n} /\!/ T}( C_v, C''_v ) \arrow{r} & \Map_{T_0^{\amalg n} /\!/ T_0}( (C_v)_0, (C''_v)_0 )
		\end{tikzcd}
	\end{equation}
	is a pullback.
	Consider the diagram
	\[ \begin{tikzcd}
		\Map_{T^{\amalg n} /\!/ T}( C', C'' ) \arrow{r} \arrow{d} & \Map_{T^{\amalg n} /\!/ T}( C, C'') \arrow{d} \arrow{r} & * \arrow{d} \\
		\Map_{/T}( C', C'') \arrow{r} & \Map_{/T}(C, C'') \arrow{r} & \Map_{/T}(T^{\amalg n }, C'') , 
	\end{tikzcd} \]
	where the right vertical arrow selects the sections of $C''_v$.
	The outer and the right squares are pullbacks by definition.
	It follows that the left square is also a pullback.
	Similarly,
	\[ \begin{tikzcd}
		\Map_{T_0^{\amalg n} /\!/ T_0}( (C_v')_0, (C''_v)_0) \arrow{r} \arrow{d} & \Map_{/T_0}( (C'_v)_0, (C_0'')_v) \arrow{d} \\
		\Map_{T_0^{\amalg n} /\!/ T_0}( (C_v)_0, (C''_v)_0 ) \arrow{r} & \Map_{/T_0}( (C_v)_0, (C_v'')_0 )
	\end{tikzcd} \]
	is a also pullback square.
	Therefore, in order to prove that diagram \eqref{eq:stabilization_truncation_IV} is a pullback it is enough to prove that
		\[ \begin{tikzcd}
		\Map_{/T}( C'_v, C_v'') \arrow{r} \arrow{d} & \Map_{/T_0}((C_v')_0, (C_v'')_0) \arrow{d} \\
		\Map_{/T}(C_v, C_v'') \arrow{r} & \Map_{/T_0}((C_v)_0, (C_v'')_0) 
	\end{tikzcd} \]
	is a pullback square.
	This follows from \cref{lem:derived_factorization_lemma}, completing the proof.
\end{proof}

We remark that it is enough to check assumption (1) of \cref{prop:checking_stabilization_on_truncation} at the level of truncations, by the following lemma:

\begin{lemma} \label{lem:checking_pushforward_on_truncation}
	Let $T \in \dAnk$, and let $p \colon C \to C'$ be a morphism in $\dAn_{/T}$.
	Assume that:
	\begin{enumerate}
		\item $p \colon C \to C'$ is proper and has finite coherent cohomological dimension;
		\item both $C$ and $C'$ are flat over $T$;
		\item let $p_0 \colon C_0 \to C'_0$ denote the truncation of $p$, the canonical map
		\[ \cO_{C'_0}\alg \longrightarrow p_{0*} \cO_{C_0}\alg \]
		is an equivalence.
	\end{enumerate}
	Then the canonical map
	\[ \cO_{C'}\alg \longrightarrow p_* \cO_C\alg \]
	is an equivalence as well.
\end{lemma}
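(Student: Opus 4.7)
The plan is to set up a Postnikov induction over the base $T$ and reduce to hypothesis (3). Write $T_n \coloneqq \mathrm{t}_{\le n} T$, $C_n \coloneqq C \times_T T_n$ and $C'_n \coloneqq C' \times_T T_n$, with induced map $p_n \colon C_n \to C'_n$. Flatness of $C$ and $C'$ over $T$ gives canonical equivalences $\cO_{C_n}\alg \simeq \mathrm{t}_{\le n} \cO_C\alg$ and $\cO_{C'_n}\alg \simeq \mathrm{t}_{\le n}\cO_{C'}\alg$; combined with Postnikov convergence of the derived structure sheaves, one has $\cO_C\alg \simeq \lim_n \cO_{C_n}\alg$ and $\cO_{C'}\alg \simeq \lim_n \cO_{C'_n}\alg$. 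Since $p_*$ commutes with limits, it suffices to show that
\[ \cO_{C'_n}\alg \longrightarrow p_{n *} \cO_{C_n}\alg \]
is an equivalence for every $n \ge 0$. The case $n = 0$ is precisely hypothesis (3); the finite coherent cohomological dimension of $p$ will be needed in the inductive step below to invoke the projection formula.

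For the step from $n$ to $n+1$, I would pull back the Postnikov fiber sequence
\[ \pi_{n+1}(\cO_T\alg)[n+1] \longrightarrow \cO_{T_{n+1}}\alg \longrightarrow \cO_{T_n}\alg \]
along the two flat structure morphisms to obtain compatible fiber sequences
\[ F \longrightarrow \cO_{C_{n+1}}\alg \longrightarrow \cO_{C_n}\alg \quad \text{and} \quad F' \longrightarrow \cO_{C'_{n+1}}\alg \longrightarrow \cO_{C'_n}\alg , \]
with $F$ and $F'$ concentrated in degree $n+1$ and satisfying $F \simeq p_{n+1}^* F'$. Applying $p_{n+1 *}$ to the first fiber sequence produces a morphism of fiber sequences into the second, whose right-hand term is an equivalence by the inductive hypothesis. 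Thus the middle term will be an equivalence as soon as the left-hand morphism $F' \to p_{n+1 *} F$ is.

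Because $F$ is supported on $C_0$ as a module over $\cO_{C_0}\alg$ and the restriction of $p_{n+1}$ to the truncation is exactly $p_0$, this morphism identifies with the unit $G' \to p_{0 *} p_0^* G'$, where $G'$ is the $\cO_{C'_0}\alg$-module underlying $F'$. The projection formula for $p_0$---which is proper of finite coherent cohomological dimension, inherited from $p$---yields
\[ p_{0 *} p_0^* G' \simeq G' \otimes_{\cO_{C'_0}\alg} p_{0 *} \cO_{C_0}\alg \simeq G' \otimes_{\cO_{C'_0}\alg} \cO_{C'_0}\alg \simeq G' , \]
where hypothesis (3) is used in the middle equivalence. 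This completes the induction.

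The main subtlety will lie in the careful bookkeeping of the identifications: matching the truncations $\mathrm{t}_{\le n} \cO_C\alg$ with the structure sheaves of the base changes $C_n$ (via flatness), and verifying that the pushforwards of sheaves supported on the truncated loci factor through $p_0$ as expected. Each of these is a direct consequence of the foundational results on flatness, Postnikov convergence, and the projection formula recalled in the earlier sections of the paper.
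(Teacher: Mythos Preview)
Your Postnikov induction is correct, and the inductive step via the projection formula for $p_0$ works as you describe. However, the paper takes a much shorter route that bypasses the induction entirely.

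The paper's argument is a one-step Nakayama reduction. Since $p$ is proper with finite coherent cohomological dimension, $p_*\cO_C\alg$ lies in $\Coh^+(C')$. A morphism $\alpha$ in $\Coh^+(C')$ is an equivalence if and only if its pullback $j^*\alpha$ along the closed immersion $j\colon C'_0 \hookrightarrow C'$ is an equivalence (derived Nakayama). By flatness of $C$ and $C'$ over $T$, the square with $C_0 \to C$ over $C'_0 \to C'$ is a pullback, so derived base change identifies $j^*(p_*\cO_C\alg)$ with $p_{0*}\cO_{C_0}\alg$, and $j^*\alpha$ with the map in hypothesis~(3). Done.

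Comparing the two: your approach unwinds the Nakayama step into an explicit tower, proving the equivalence one Postnikov layer at a time. This is more elementary in that it avoids invoking a detection principle for $\Coh^+$, but it is also longer and requires more bookkeeping (matching $p_{n*}$ with $p_*$ on the common underlying topos, and the base-change identification $p^* j'_* \simeq j_* p_0^*$ that you use implicitly when reducing to $p_0$). In fact, the heart of your inductive step---showing $F' \to p_* p^* F'$ is an equivalence by recognising $F'$ as pushed forward from $C'_0$ and then applying base change plus hypothesis~(3)---is exactly the same computation the paper performs once, globally, via $j^*$. So your induction is really the Postnikov filtration of the paper's single Nakayama argument; both are valid, but the paper's version isolates the conceptual point more cleanly.
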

\begin{proof}
	Since both $C$ and $C'$ are flat over $T$, the diagram
	\[ \begin{tikzcd}
	C_0 \arrow{r} \arrow{d} & C \arrow{d} \\
	C_0' \arrow{r}{j} & C'
	\end{tikzcd} \]
	is a pullback square.
	Since $p$ is proper and has finite coherent cohomological dimension, we see that $p_* \cO_C\alg$ belongs to $\Coh^+( C' )$.
	Therefore, the map $\alpha \colon \cO_{C'}\alg \to p_* \cO_C\alg$ is an equivalence if and only if $j^*(\alpha)$ is an equivalence.
	Since $p$ is proper, by derived base change (\cite[Theorem 1.5]{Porta_Yu_Derived_Hom_spaces})), $j^*(\alpha)$ is canonically identified with the map $\cO_{C'_0}\alg \to p_{0*} \cO_{C_0}\alg$.
	The conclusion follows.
\end{proof}

\subsubsection{Construction of stabilization} \label{sec:construction_of_stabilization}

Now let us construct the stabilizing contraction in \cref{def:stabilization} for any $\tau$-marked prestable map $\mathbf f \coloneqq ([C_v, (s_{v,e})_{e \in E_v}]_{v \in V_\tau}, f)$ into $X/S$ over $T$.
We first reason étale locally on $T$.
Let $t\in T$ be a geometric point and let $([(C_v)_t, (s_{v,e}(t))_{e \in E_v}]_{v \in V_\tau}, f_t)$ denote the associated stable map.
Up to replacing $T$ by a connected étale neighborhood of $t$, we can assume that for every geometric point $u = ([(C_v)_u, (s_{v,e}(u))_{e \in E_v}]_{v \in V_\tau}, f_u)\in T_0\coloneqq\trunc(T)$ and every $v\in V_\tau$, the following conditions are satisfied:
\begin{enumerate}
	\item $(C_v)_u$ is a partial smoothing of $(C_v)_t$, in the sense that the deformation from $(C_v)_t$ to $(C_v)_u$ does not create any new nodes.
	\item Let $D$ be an irreducible component of $(C_v)_u$ specializing to a union $E$ of irreducible components of $(C_v)_t$.
	If $f_t$ is not constant on one component of $E$, then $f_u$ is not constant on $D$.
\end{enumerate}
To every irreducible component of $(C_v)_t$ on which $f_t$ is not constant, we add three new marked points away from the nodes and the existing marked points.
In this way we obtain a new $\tau$-marked prestable map $([(C_v)_t, (s_{v,e}(t),p_{v,j}(t))]_{v \in V_\tau}, f_t)$.
By \cite[Proposition 5.50(3)]{Porta_Yu_Representability_theorem}, up to shrinking $T$, we can extend the marked points $p_{v,j}(t)$ to disjoint sections $p_{v,j}$ of $C_v \to T$, away from the nodes and the existing sections.
Using the equivalence $\oMpre_\tau \simeq \R \oMpre_\tau$, we apply stabilization for $\tau$-marked prestable curves (see \cite[0E8A]{Stacks_project}) to the family $[C_v, (s_{v,e},p_{v,j})]_{v \in V_\tau}$, and obtain a $\tau$-marked stable curve $[C'_v,(s'_{v,e},p'_{v,j})]_{v \in V_\tau}$ together with stabilization maps $c_v \colon C_v \to C'_v$.
This induces a stabilization map of glued curves $c\colon C\to C'$, and let $c_0\colon C_0\to C'_0$ be its truncation.
By construction, on every geometric fiber, the map $c_0$ contracts only irreducible components on which $f_0$ is constant.
So the map $f_0 \colon C_0 \to X$ factors as $C_0 \xrightarrow{c_0} C'_0\xrightarrow{f'_0} X$.
Moreover, since only rational components are contracted, the natural map
\[ \cO_{C_0'}\alg \longrightarrow c_{0*} \cO_{C_0}\alg \]
is an equivalence.
Using \cref{lem:checking_pushforward_on_truncation}, we deduce that the natural map
\begin{equation} \label{eq:pushforward_structures_sheaf}
	\cO_{C'}\alg \longrightarrow c_* \cO_C\alg
\end{equation}
is also an equivalence.
Therefore \cref{lem:derived_factorization_lemma} implies that the map $f_0' \colon C'_0 \to X$ can be lifted in a unique (up to a contractible space of choices) way to a map $f' \colon C' \to X$ making the diagram
\[ \begin{tikzcd}
	C \arrow{r}{f} \arrow{d}{c} & X \arrow{d} \\
	C' \arrow{ur}[swap]{f'} \arrow{r} & S
\end{tikzcd} \]
commutative.
Forgetting the new marked points, we obtain a $\tau$-marked prestable map
\[ \mathbf f' \coloneqq ( [C'_v, (s'_{v,e})_{e \in E_v}]_{v \in V_\tau}, f') \]
into $X/S$ over $T$, together with a morphism
\[ c \colon \mathbf f \longrightarrow \mathbf f' \]
We claim that $c$ is a stabilizing contraction.
Since the canonical map \eqref{eq:pushforward_structures_sheaf} is an equivalence, \cref{prop:checking_stabilization_on_truncation} implies that it is enough to show that the truncation $c_0 \colon \mathbf f_0 \to \mathbf f'_0$ is a stabilizing contraction.
This follows from the uniqueness statement in \cite[Tag 08EA]{Stacks_project}.

In order to glue this construction over $T$, it is enough to show that it is compatible with base change.
In other words, we have to prove that if $c \colon \mathbf f \to \mathbf f'$ is the stabilizing contraction over $T$ constructed above and $T' \to T$ is any morphism, then the induced morphism
\[ c \times_T T' \colon \mathbf f \times_T T' \longrightarrow \mathbf f' \times_T T' \]
is also a stabilizing contraction.
Using once again \cref{prop:checking_stabilization_on_truncation}, we are left to prove the same statement at the level of truncation, where it follows from \cite[Tag 08EB]{Stacks_project}.
At this point, we obtain the following proposition, the goal of \cref{sec:stabilization_map}.

\begin{proposition} \label{prop:stabilization_morphism}
	There is a morphism of derived \kanal stacks
	\[ \stab \colon \R \oMpre(X/S, \tau, \beta) \longrightarrow \R \oM(X/S, \tau, \beta) \]
	which sends every $\tau$-marked prestable map $\mathbf f$ over any $T \in \dAn_{/S}$ to the stabilization $\mathbf f'$ of $\mathbf f$ in the sense of \cref{def:stabilization}.
\end{proposition}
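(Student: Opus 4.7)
The plan is to construct the stabilization morphism by gluing an étale-local construction, whose key technical input is the reduction to the classical (underived) setting via Proposition 4.16 together with the fact that $\oMpre_\tau \simeq \R\oMpre_\tau$ (Remark 4.10). Since $\R\oMpre(X/S,\tau,\beta)$ is a derived $k$-analytic stack, to define a morphism out of it, it is enough to specify a natural transformation of functors on $\dAn_{/S}$, and by étale descent of both source and target (Theorem 4.5 combined with $F$ being an étale sheaf, Section 3) it suffices to give such an assignment locally in the étale topology on $T$, together with canonical isomorphisms on overlaps witnessing the descent datum. Both tasks will be carried out uniformly by appealing to Proposition 4.16.

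First I would construct the stabilization étale locally on $T$. Fix a geometric point $t\in T$ and, after passing to a connected étale neighbourhood, arrange that the fibres of $C_v\to T$ are partial smoothings of the fibre over $t$ and that no new contracted components appear at generic points (the two numbered conditions in Section 4.4.2). Add three disjoint marked sections $p_{v,j}$ on each component of each $C_v$ where $f$ is not constant, using Proposition 5.50(3) of the representability paper to extend fibrewise choices. The enlarged datum $[C_v,(s_{v,e},p_{v,j})]$ is a $\tau$-marked prestable curve, hence by $\oMpre_\tau\simeq\R\oMpre_\tau$ the classical stabilization of $\tau$-marked curves (Stacks Project, Tag 0E8A) produces a $\tau$-marked stable curve $[C'_v,(s'_{v,e},p'_{v,j})]$ together with contractions $c_v\colon C_v\to C'_v$. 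Gluing yields $c\colon C\to C'$ and, after discarding the auxiliary sections, a $\tau$-marked stable curve $\mathbf{C}'$.

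The next step is to lift the map $f\colon C\to X$ to a map $f'\colon C'\to X$. Only rational components are contracted, so on truncations $\cO_{C_0'}\alg\to c_{0*}\cO_{C_0}\alg$ is an equivalence; Lemma 4.17 upgrades this to the same statement for $c$ itself. Now Lemma 4.15 applied to $c$ and $Y=T\times_S X\to T$ shows that the pullback
\[ \Map_{/T}(C',T\times_S X)\longrightarrow \Map_{/T}(C,T\times_S X)\times_{\Map_{/T_0}(C_0,T_0\times_S X)}\Map_{/T_0}(C'_0,T_0\times_S X) \]
is an equivalence. On the truncation, $f_0$ does factor through $c_0$ (by construction it contracts only components on which $f_0$ is constant), so a unique (up to contractible space of choices) lift $f'\colon C'\to X$ exists. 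This produces the stabilization datum $\mathbf{f}'$ and a morphism $c\colon\mathbf{f}\to\mathbf{f}'$; Proposition 4.16, together with the classical uniqueness (Tag 08EA) applied to $c_0$, shows $c$ is a stabilizing contraction in the sense of Definition 4.11.

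Finally, I would verify naturality in $T$ and descent. For any morphism $T''\to T'\to T$ with $T'\to T$ étale, base-changing the stabilizing contraction just built over $T'$ yields, at the level of truncations, the classical base-changed stabilization by Tag 08EB of the Stacks Project, and then Proposition 4.16 implies the base change $c\times_T T''$ is itself a stabilizing contraction. Uniqueness of stabilizing contractions (a formal consequence of the universal property in Definition 4.11) then supplies the canonical gluing isomorphisms on étale overlaps, and these satisfy the cocycle condition again by uniqueness. This yields a well-defined natural transformation $\stab$ of sheaves on $\dAfd_k$, hence a morphism of derived $k$-analytic stacks, completing the proof. The main obstacle is conceptual rather than computational: one must be careful that the cofinal step of the construction, namely the lift of $f$ to $C'$, really is unique up to contractible ambiguity in the derived sense, and it is precisely Lemma 4.15 combined with the fact that the contraction is derived-cohomologically trivial (Lemma 4.17) that makes this go through.
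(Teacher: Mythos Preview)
Your proposal is correct and follows essentially the same route as the paper: the étale-local construction via auxiliary marked points and the equivalence $\oMpre_\tau\simeq\R\oMpre_\tau$, the lift of $f$ to $C'$ using \cref{lem:checking_pushforward_on_truncation} and \cref{lem:derived_factorization_lemma}, verification of the universal property via \cref{prop:checking_stabilization_on_truncation} together with Tag 08EA, and gluing via base-change compatibility from Tag 08EB plus \cref{prop:checking_stabilization_on_truncation} again. This is exactly the paper's argument in \S\ref{sec:construction_of_stabilization}, and your identification of the key technical inputs is accurate.
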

\begin{proof}
	This follows from the construction of stabilizing contraction and its compatibility with base change.
\end{proof}

\section{Geometry of derived stable maps} \label{sec:geometry_of_derived_stable_maps}

In this section, we prove a list of geometric relations between derived moduli stacks of \kanal stable maps with respect to elementary operations on $A$-graphs, namely, products, cutting edges, universal curve, forgetting tails and contracting edges.
These relations are very natural and intuitive, and help replace the study of compatibilities of perfect obstruction theories in the classical approach (cf.\ \cite{Behrend_Gromov-Witten_invariants,Lee_Quantum_K-theory_I}).
They will give rise to the geometric properties of quantum K-invariants in \cref{sec:quantum_K-invariants}.

We fix a rigid \kanal space $S$, and a rigid \kanal space $X$ smooth over $S$.
For any $A$-graph $(\tau,\beta)$, let $\R\oM(X/S,\tau,\beta)$ denote the derived stack of $(\tau,\beta)$-marked stable maps into $X/S$ as in \cref{rem:RMXtaubeta}.
We prove in the following a list of geometric properties of $\R\oM(X/S,\tau,\beta)$ with respect to operations on $(\tau,\beta)$.

\subsection{Products} \label{sec:products}

Let $(\tau_1, \beta_1)$ and $(\tau_2, \beta_2)$ be two A-graphs.
Let $(\tau_1 \sqcup \tau_2, \beta_1 \sqcup \beta_2)$ be their disjoint union.

\begin{theorem} \label{thm:products}
	We have a canonical equivalence
	\[\R\oM(X/S, \tau_1 \sqcup \tau_2, \beta_1 \sqcup \beta_2) \xrightarrow{\ \sim\ } \R \oM(X/S, \tau_1, \beta_1) \times_S  \R \oM(X/S, \tau_2, \beta_2),\]
	where the projections are given by the natural forgetful maps.
\end{theorem}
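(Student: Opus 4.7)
The statement is essentially a tautological unpacking of the definition once we observe that the disjoint union of A-graphs has no edges connecting a vertex of $\tau_1$ to a vertex of $\tau_2$, so the $(\tau_1\sqcup\tau_2)$-gluing is literally the disjoint union of the two individual gluings. The plan is therefore to match the functors of points of the two derived stacks by unfolding \cref{def:graph-marked_stable_map}, and then invoke the Yoneda lemma together with \cref{thm:stack_of_stable_maps} (and the version for A-graphs in \cref{rem:RMXtaubeta}) which guarantees that both sides are derived \kanal stacks.

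First, observe that $V_{\tau_1\sqcup\tau_2}=V_{\tau_1}\sqcup V_{\tau_2}$, $E_{\tau_1\sqcup\tau_2}=E_{\tau_1}\sqcup E_{\tau_2}$, and for every $v\in V_{\tau_i}$ the set $E_v$ is the same as the one computed inside $\tau_i$. This produces an obvious equivalence of $\infty$-categories $\dAn_{/T}^{\tau_1\sqcup\tau_2}\simeq \dAn_{/T}^{\tau_1}\times\dAn_{/T}^{\tau_2}$, and the $(\tau_1\sqcup\tau_2)$-gluing functor of \cref{const:tau_gluing} fits into a commutative square
\[\begin{tikzcd}
\dAn_{/T}^{\tau_1\sqcup\tau_2}\rar{\sim}\dar{\gamma_{\tau_1\sqcup\tau_2}} & \dAn_{/T}^{\tau_1}\times \dAn_{/T}^{\tau_2}\dar{\gamma_{\tau_1}\times\gamma_{\tau_2}}\\
\dAn_{/T}\rar{\sqcup} & \dAn_{/T}\times\dAn_{/T}
\end{tikzcd}\]
because gluing along the (empty) set of edges between $V_{\tau_1}$ and $V_{\tau_2}$ is trivial. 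In particular, if $\mathbf C_i$ are $\tau_i$-marked prestable curves over $T$ with glued curves $C_i$, then the glued curve of $(\mathbf C_1,\mathbf C_2)$ is $C_1\sqcup C_2$.

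Next, given $T\in\dAn_{/S}$ a rigid \kanal space, the mapping space from a coproduct factors as a product:
\[\Map_{/T\times_S X}(C_1\sqcup C_2,\,T\times_S X)\simeq \Map_{/T\times_S X}(C_1,T\times_S X)\times \Map_{/T\times_S X}(C_2,T\times_S X).\]
Combining this with the equivalence of $\infty$-categories above and unfolding \cref{def:graph-marked_stable_map}, we obtain a natural equivalence between the $\infty$-category of $(\tau_1\sqcup\tau_2)$-marked prestable maps into $X/S$ over $T$ and the product of the corresponding $\infty$-categories for $\tau_1$ and $\tau_2$. The curve-class condition matches up vertex by vertex, so the refinement to the A-graphs $(\tau_i,\beta_i)$ is immediate.

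Finally, stability is a geometric-fiber condition applied to the glued curve, and since the glued curve of $(\tau_1\sqcup\tau_2,\beta_1\sqcup\beta_2)$ is the disjoint union $C_1\sqcup C_2$, instability on any geometric fiber of the total map would necessarily be instability of one of the two components, and conversely. Hence the stability condition also decomposes as a product. Passing to the underlying $\infty$-groupoids and sheafifying, we deduce a natural equivalence of functors
\[\R\oM(X/S,\tau_1\sqcup\tau_2,\beta_1\sqcup\beta_2)(T)\xrightarrow{\ \sim\ }\R\oM(X/S,\tau_1,\beta_1)(T)\times_{S(T)}\R\oM(X/S,\tau_2,\beta_2)(T),\]
which is manifestly compatible with the forgetful projections. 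Since both sides are derived \kanal stacks by \cref{rem:RMXtaubeta}, Yoneda concludes the proof. The only step that requires slight care, rather than being a true obstacle, is verifying that the decomposition of the stability condition survives base change to arbitrary derived test objects $T$, which is immediate from the fact that geometric fibers of $C_1\sqcup C_2\to T$ are disjoint unions of geometric fibers of $C_1\to T$ and $C_2\to T$.
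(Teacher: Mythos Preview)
Your argument is correct and rests on the same core observation as the paper: the glued curve associated to $\tau_1\sqcup\tau_2$ is the disjoint union of the two individual glued curves, so a map out of it factors as a pair of maps. The packaging, however, differs. You unfold \cref{def:graph-marked_stable_map} directly and match the functors of points over an arbitrary test object $T$, checking the curve-class and stability conditions vertex by vertex. The paper instead passes through the mapping-stack description of \cref{rem:RMXtaubeta}: it first decomposes $\oMpre_{\tau_1\sqcup\tau_2}\simeq\oMpre_{\tau_1}\times\oMpre_{\tau_2}$ and $\oCpre_{\tau_1\sqcup\tau_2}\simeq p_1^*\oCpre_{\tau_1}\amalg p_2^*\oCpre_{\tau_2}$, then uses that $\bfMap$ from a coproduct is a fiber product to split the ambient prestable mapping stack, and finally restricts to the stable locus via the open-immersion comparison at the truncated level (invoking the equivalence of \'etale sites from \cite[Lemma 7.16]{Porta_Yu_Derived_non-archimedean_analytic_spaces}). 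Your route is a bit more elementary and self-contained; the paper's route makes the factorization through the prestable mapping stacks explicit, which ties in cleanly with how the other geometric relations (cutting edges, contracting edges) are proved later in \cref{sec:geometry_of_derived_stable_maps}. One small slip: you write ``$T\in\dAn_{/S}$ a rigid \kanal space'' where you mean a derived one, but since your argument is uniform in $T$ this is harmless.
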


\begin{proof}
	The forgetful maps
	\[ p_i \colon \oMpre_{\tau_1 \sqcup \tau_2} \longrightarrow \oMpre_{\tau_i} , \qquad i = 1,2 \]
	exhibit $\oMpre_{\tau_1 \sqcup \tau_2}$ as the product of $\oMpre_{\tau_1}$ and $\oMpre_{\tau_2}$.
	Let $p_i^* \oCpre_{\tau_i}$ be the pullback of the universal curve $\oCpre_{\tau_i} \to \oMpre_{\tau_i}$ along $p_i$.
	Then we have a canonical equivalence
	\[ \oCpre_{\tau_1 \sqcup \tau_2} \simeq p_1^* \oCpre_{\tau_1} \amalg p_2^* \oCpre_{\tau_2}.\]
	Note that the disjoint union on the right is also a disjoint union of (representable) stacks.
	So $\bfMap_{\oMpre_{\tau_1 \sqcup \tau_2}\times S}( \oCpre_{\tau_1 \sqcup \tau_2}\times S , X \times \oMpre_{\tau_1 \sqcup \tau_2} )$ is canonically equivalent to
	\[ \bfMap_{\oMpre_{\tau_1 \sqcup \tau_2}\times S}( p_1^* \oCpre_{\tau_1}\times S, X \times \oMpre_{\tau_1 \sqcup \tau_2} ) \times_{\oMpre_{\tau_1 \sqcup \tau_2}\times S} \bfMap_{\oMpre_{\tau_1 \sqcup \tau_2}\times S}( p_2^* \oCpre_{\tau_2}\times S, X \times \oMpre_{\tau_1\sqcup \tau_2} ) . \]
	As $\oMpre_{\tau_1 \sqcup \tau_2} \simeq \oMpre_{\tau_1} \times \oMpre_{\tau_2}$, we have
	\[ \bfMap_{\oMpre_{\tau_1 \sqcup \tau_2}\times S}( p_1^*\oCpre_{\tau_1}\times S, X \times \oMpre_{\tau_1 \sqcup \tau_2} ) \simeq \bfMap_{\oMpre_{\tau_1}\times S}( \oCpre_{\tau_1}\times S, X \times \oMpre_{\tau_1}) \times \oMpre_{\tau_2},\]
	and similarly
	\[ \bfMap_{\oMpre_{\tau_1 \sqcup \tau_2}\times S}( p_2^*\oCpre_{\tau_2}\times S, X \times \oMpre_{\tau_1 \sqcup \tau_2} ) \simeq \bfMap_{\oMpre_{\tau_2}\times S}( \oCpre_{\tau_2}\times S, X \times \oMpre_{\tau_2}) \times \oMpre_{\tau_1}.\]
	Therefore, $\bfMap_{\oMpre_{\tau_1 \sqcup \tau_2}\times S}( \oCpre_{\tau_1 \sqcup \tau_2}\times S, X \times \oMpre_{\tau_1 \sqcup \tau_2} )$ becomes canonically equivalent to
	\[ \bfMap_{\oMpre_{\tau_1}\times S}( \oCpre_{\tau_1}\times S, X \times \oMpre_{\tau_1} ) \times_S \bfMap_{\oMpre_{\tau_2}\times S}(\oCpre_{\tau_2}\times S, X \times \oMpre_{\tau_2}) . \]
		Note that on the level of truncations, the inclusions
	\begin{gather*}
	\oM(X/S,\tau_1,\beta_1)\longhookrightarrow\trunc\bfMap_{\oMpre_{\tau_1}\times S}(\oCpre_{\tau_1}\times S, X\times\oMpre_{\tau_1}),\\
	\oM(X/S,\tau_2,\beta_2)\longhookrightarrow\trunc\bfMap_{\oMpre_{\tau_2}\times S}(\oCpre_{\tau_2}\times S, X\times\oMpre_{\tau_2}),\\
	\oM(X/S,\tau_1\sqcup\tau_2,\beta_1\sqcup\tau_2)\longhookrightarrow\trunc\bfMap_{\oMpre_{\tau_1\sqcup\tau_2}\times S}(\oCpre_{\tau_1\sqcup\tau_2}\times S, X\times\oMpre_{\tau_1\sqcup\tau_2})
	\end{gather*}
	are all open, and we have a canonical isomorphism
	\[ \oM(X/S, \tau_1 \sqcup \tau_2, \beta_1 \sqcup \beta_2) \simeq \oM(X/S, \tau_1, \beta_1) \times_S \oM(X/S, \tau_2, \beta_2). \]
	Therefore, the conclusion follows from the equivalence of sites of \cite[Lemma 7.16]{Porta_Yu_Derived_non-archimedean_analytic_spaces}.
\end{proof}

\subsection{Cutting edges} \label{sec:cutting_edges}

Let $(\sigma,\beta)$ be an A-graph obtained from $(\tau,\beta)$ by cutting an edge $e$ of $\tau$ (see \cref{fig:cutting_edges}).
\begin{figure}[!ht]
	\centering
	\setlength{\unitlength}{0.5\textwidth}
	\begin{picture} (1,0.543)
		\put(0,0){\includegraphics[width=\unitlength, page=2]{images/modular_graph}}
		\put(-0.1,0.42){$\tau:$}
		\put(-0.1,0.10){$\sigma:$}
		\put(0.4,0.44){$e$}
		\put(0.27,0.13){$i$}
		\put(0.62,0.13){$j$}
	\end{picture}
	\caption{We cut the edge $e$, and replace the two half-edges by two tails.
	All tails are colored in blue.}
	\label{fig:cutting_edges}
\end{figure}
Let $i,j$ be the two tails of $\sigma$ created by the cut.
The edge $e$ gives a section $s_e$ of the universal curve $\oCpre_\tau\to\oMpre_\tau$.
Each geometric fiber of $s_e$ gives the node corresponding to $e$.

\begin{theorem} \label{thm:cutting_edges_derived_pullback}
	We have a derived pullback diagram
	\[ \begin{tikzcd}
		\R \oM(X/S, \tau, \beta) \arrow{r}{c} \arrow{d}{\ev_e} & \R \oM(X/S, \sigma, \beta) \arrow{d}{\ev_i \times \ev_j} \\
		X \arrow{r}{\Delta} & X \times_S X
	\end{tikzcd} \]
	where $\Delta$ is the diagonal map, $\ev_e$ is evaluation at the section $s_e$, and $c$ is induced by cutting the domain curves at $s_e$.
\end{theorem}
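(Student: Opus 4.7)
The plan is to realize the cutting operation as a self-gluing (pushout) at the level of the universal glued curve, and then invoke the mapping-stack description of \cref{rem:RMXtaubeta}. The key combinatorial observation is that cutting the edge $e$ preserves the vertex set ($V_\sigma = V_\tau$) and the valence of each vertex: the two half-edges of $e$ merely change status from being part of a non-tail edge to being the tails $i, j$ of $\sigma$. Consequently $\tau$- and $\sigma$-marked prestable curves carry exactly the same underlying data $[C_v, (s_{v,e'})_{e' \in E_v}]_{v \in V_\tau}$, yielding a canonical equivalence $\oMpre_\tau \simeq \oMpre_\sigma \eqqcolon B$. The universal glued curves differ only in whether the sections coming from the half-edges of $e$ are identified: in $\oCpre_\tau$ they are glued along $B$ to form the node $s_e$, while in $\oCpre_\sigma$ they remain separate sections $s_i, s_j$. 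Using the gluing theorem \cite[Theorem 6.5]{Porta_Yu_Representability_theorem}, this upgrades to a pushout
\[ \oCpre_\tau \simeq \oCpre_\sigma \amalg_{B \amalg B} B \]
in derived \kanal spaces over $B$, where $B \amalg B \to \oCpre_\sigma$ is the closed immersion $s_i \amalg s_j$ and $B \amalg B \to B$ is the fold. The accompanying surjection $\oCpre_\sigma \to \oCpre_\tau$ is the partial normalization at the node $s_e$, and precomposition with it yields the cutting map $c$.

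By the same mechanism that underlies \cref{lem:gluing_along_closed_immersion_is_universal}, together with the flatness of the universal curves over $B$, this pushout is preserved under arbitrary base change, in particular after the further base change by $S$. Applying the derived mapping-stack functor $\bfMap_{B \times S}(-, X \times B)$ to the pushout therefore produces a pullback. Representability of sections gives
\[ \bfMap_{B \times S}(B \times S, X \times B) \simeq X \times B, \qquad \bfMap_{B \times S}((B \amalg B) \times S, X \times B) \simeq (X \times_S X) \times B, \]
with connecting morphism identified as $\Delta \times \id_B$. Combining with the mapping-stack presentation in \cref{rem:RMXtaubeta} and cancelling the trivial $B$-factor, we obtain
\[ \R \oMpre(X/S, \tau, \beta) \simeq \R \oMpre(X/S, \sigma, \beta) \times_{X \times_S X} X, \]
with right vertical $(\ev_i, \ev_j)$, bottom $\Delta$, and top map $c$. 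The curve class decoration is preserved componentwise because $V_\tau = V_\sigma$ and the restrictions $f|_{C_v}$ agree on both sides.

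Finally, the equivalence restricts to the stable loci: on each geometric fiber, the special points contributed by the node $s_e$ in $C_\tau$ match exactly those contributed by the marked sections $s_i, s_j$ in $C_\sigma$, so the stability conditions of $(\tau,\beta)$- and $(\sigma,\beta)$-marked stable maps coincide. The principal subtlety in executing this plan lies in verifying that the self-gluing description of $\oCpre_\tau$ is stable under the base changes required to evaluate the mapping stack; this is a variant of \cref{lem:gluing_along_closed_immersion_is_universal} which must handle the non-closed-immersion leg of the pushout. Once this is established, the remainder of the argument is a formal manipulation of the universal properties of pushout and mapping stacks.
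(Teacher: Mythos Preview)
Your proof is correct and follows essentially the same route as the paper's: identify $\oMpre_\tau \simeq \oMpre_\sigma \eqqcolon B$, exhibit $\oCpre_\tau$ as the pushout $\oCpre_\sigma \amalg_{B \amalg B} B$, apply the mapping-stack functor to turn this into the pullback square \eqref{eq:cutting_edges_II}, and then restrict to the stable loci using that both inclusions into the ambient mapping stacks are Zariski open (so the restriction is checked at the truncated level, where it amounts to your observation that the special-point count is preserved). The paper simply cites \cref{lem:gluing_along_closed_immersion_is_universal} for the base-change stability of the pushout without remarking on the fold leg, so your explicit flagging of that point is, if anything, more scrupulous than the published argument.
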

\begin{proof}[Proof of \cref{thm:cutting_edges_derived_pullback}]
	In this case, by \cref{rem:product_decomposition_M_tau}, we have naturally $\oMpre_\tau\simeq\oMpre_\sigma$, which we denote by $\oMpre$ for simplicity.
	
	Let $s_i, s_j \colon \oMpre \to \oCpre_\sigma$ denote the two sections corresponding to the tails $i$ and $j$.
	By \cref{rem:product_decomposition_M_tau}, we have a derived pushout square
	\[ \begin{tikzcd}
		\oMpre \amalg \oMpre \arrow{r}{s_i\sqcup s_j} \arrow{d} & \oCpre_\sigma \arrow{d} \\
		\oMpre \arrow{r}{s_e} & \oCpre_\tau .
	\end{tikzcd} \]
	Then for any $T \in \dAn_{/S}$ and $T \to \oMpre$, by \cref{lem:gluing_along_closed_immersion_is_universal}, the diagram above remains a pushout after base change to $T$.
	Therefore, we deduce that
	\begin{equation} \label{eq:cutting_edges_II}
	\begin{tikzcd}
	\bfMap_{\oMpre\times S}( \oCpre_\tau\times S , X \times \oMpre ) \arrow{r} \arrow{d} & \bfMap_{\oMpre\times S}( \oCpre_\sigma\times S, X \times \oMpre ) \arrow{d}{\ev_i \times \ev_j} \\
	X \arrow{r}{\Delta} & X \times_S X
	\end{tikzcd}
	\end{equation}
	is a derived pullback square.
	
	In order to complete the proof, it is enough to prove that the diagram
	\[ \begin{tikzcd}
	\R \oM(X/S, \tau, \beta) \arrow{r} \arrow{d} & \R\oM(X/S, \sigma, \beta) \arrow{d} \\
	\bfMap_{\oMpre}(\oCpre_\tau, X \times \oMpre) \arrow{r} & \bfMap_{\oMpre}( \oCpre_\sigma, X \times \oMpre )
	\end{tikzcd} \]
	is a derived pullback square as well.
	By \cref{rem:RMXtaubeta}, the two vertical maps are Zariski open immersions, so it is enough to check that the diagram above is a pullback after passing to the truncations.
	At the truncated level, it follows directly from the definitions.
\end{proof}

\subsection{Universal curve} \label{sec:universal_curve}

Let $(\sigma,\beta)$ be an A-graph obtained from $(\tau,\beta)$ by forgetting a tail.
By forgetting the marked point associated to the tail, we obtain a map of derived \kanal stacks
\[ \R \oMpre(X/S, \tau, \beta) \longrightarrow \R \oMpre(X/S, \sigma, \beta) . \]
Let $\pi \colon \R \oM(X/S, \tau, \beta) \to \R \oM(X/S, \sigma, \beta)$ be the composition
\[ \begin{tikzcd}
	\R \oM(X/S, \tau, \beta) \to \R \oMpre(X/S, \tau, \beta) \to \R \oMpre(X/S, \sigma, \beta) \xrightarrow{\stab} \R \oM(X/S, \sigma, \beta) ,
\end{tikzcd} \]
where $\stab$ is the stabilization map in \cref{prop:stabilization_morphism}.

Assume that the forgotten tail was attached to the vertex $w$ of $\sigma$.
Let $\oCpre_w \to \oMpre_\sigma$ be the universal curve corresponding to the vertex $w$, in other words, $\oCpre_w$ is defined by the following pullback diagram:
\[ \begin{tikzcd}
	\oCpre_w \arrow{r} \arrow{d} & \oMpre_\sigma \arrow{d} \\
	\oCpre_{g(w), \val(w)} \arrow{r} & \oMpre_{g(w), \val(w)} .
\end{tikzcd} \]
For any derived $k$-analytic stack $F$ and any map $f\colon  F \to \oMpre_\sigma$, we have a fiber sequence
\[ \Map\big( F, F \times_{\oMpre_{g(w), \val(w)}} \oCpre_{g(w), \val(w)} \big) \longrightarrow \Map( F , \oCpre_w ) \longrightarrow \Map( F, \oMpre_\sigma) . \]
In other words, the space of maps $F \to \oCpre_w$ over $\oMpre_\sigma$ is equivalent to the space of sections of the prestable curve over $F$ corresponding to the vertex $w$.
Taking $F = \R \oM(X/S, \tau, \beta)$ and the map $f$ to be the composition
\[ \R \oM(X/S, \tau, \beta) \xrightarrow{\ \pi\ } \R \oM(X/S, \sigma, \beta) \longrightarrow \oMpre_\sigma , \]
and using the section associated to the forgotten tail in $\tau$, we obtain a canonical map
\[ \lambda\colon\R \oM(X/S, \tau, \beta) \longrightarrow \oCpre_w . \]
We can now state the main result of this subsection:

\begin{theorem} \label{thm:universal_tau_marked_curve}
	The diagram
	\[ \begin{tikzcd}
		\R \oM( X/S, \tau, \beta ) \arrow{r}{\pi} \arrow{d}{\lambda} & \R \oM( X/S, \sigma, \beta ) \arrow{d} \\
		\oCpre_w \arrow{r} & \oMpre_\sigma
	\end{tikzcd} \]
	is a derived pullback square.
\end{theorem}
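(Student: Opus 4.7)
The plan is to construct the canonical comparison map
\[ \Phi = (\pi, \lambda) \colon \R\oM(X/S, \tau, \beta) \longrightarrow P \coloneqq \R\oM(X/S, \sigma, \beta) \times_{\oMpre_\sigma} \oCpre_w \]
and prove it is an equivalence. Both sides are derived \kanal stacks locally of finite presentation over $S$: the left by \cref{thm:stack_of_stable_maps}, the right by base change along a smooth morphism. My strategy is to verify (a) the truncation $\trunc(\Phi)$ is an equivalence of underived stacks, and (b) the relative analytic cotangent complex $\anL_\Phi$ vanishes; together these force $\Phi$ to be an equivalence of derived \kanal stacks.

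For (a), this reduces to the classical Behrend-Manin identification \cite{Behrend_Stacks_of_stable_maps}, adapted to the \narch analytic setting. On $T$-points, an object of $P$ is a pair $(\mathbf f', p)$ with $\mathbf f' = ([C_v'], f')$ a $\sigma$-marked stable map and $p \colon T \to C_w'$ a section. The inverse sends such a pair to a $\tau$-marked stable map $\mathbf f = ([C_v], f)$: on the open locus where $p$ avoids existing marked points and nodes, we add $p$ as the new marked point on $C_w = C_w'$ with $f = f'$; otherwise we insert a $\bP^1_k$-bubble at $p$ carrying the new marked point and the colliding features, with $f$ constant on the bubble. This construction is performed étale locally on $T$ following the template of \cref{sec:construction_of_stabilization} and glues by the same base-change arguments.

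For (b), I would compute the tangent complexes at a point $\mathbf f = ([C_v], f)$ over an underived $T \in \Afd_{/S}$, with glued curve $p \colon C \to T$ and sections $s_i$ ($i \in T_\tau$), $s_e$ ($e \in E_\tau$). Following the analysis behind \cref{thm:stack_of_stable_maps_derived_lci} extended to the $\tau$-marked setting of \cref{rem:RMXtaubeta}, one has
\[ \mathbf f^* \bbT\an_{\R\oM(X/S,\tau,\beta)/S} \simeq p_*\cofib\bigl(\bbT\an_{C/T}\bigl(-{\textstyle\sum_{i\in T_\tau}} s_i - {\textstyle\sum_{e\in E_\tau}} s_e\bigr) \longrightarrow f^*\bbT\an_{X/S}\bigr). \]
Let $t_0 \in T_\tau \setminus T_\sigma$ denote the extra tail attached to $w$. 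The fiber sequence $\bbT\an_{C/T}(-\text{all}) \to \bbT\an_{C/T}(-\text{all except } s_{t_0}) \to s_{t_0 *} s_{t_0}^* \bbT\an_{C/T}$ together with the identity $p_* s_{t_0 *} s_{t_0}^* \bbT\an_{C/T} \simeq s_{t_0}^* \bbT\an_{C_w/T}$ identifies the fiber of $\bbT_\pi$ at $\mathbf f$ with $s_{t_0}^* \bbT\an_{C_w/T}$. On the other hand, the derived pullback fiber sequence for $P$ identifies its relative tangent over $\R\oM(X/S,\sigma,\beta)$ with the pullback of $\bbT\an_{\oCpre_w/\oMpre_\sigma}$, which evaluated at the section $\lambda(\mathbf f)$ is also $s_{t_0}^* \bbT\an_{C_w/T}$. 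Hence $\anL_\Phi \simeq 0$.

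The main obstacle will be reconciling the stabilization implicit in $\pi$ with the naive ``add a marked point'' picture: $\pi$ does not simply forget the $\tau$-tail but also stabilizes, so the underlying $\sigma$-marked curve of $\pi(\mathbf f)$ differs from $C$ by a contraction $c \colon C \to C'$ of unstable rational components. To handle this uniformly I would invoke \cref{prop:checking_stabilization_on_truncation} and \cref{lem:derived_factorization_lemma}: the relation $\cO_{C'}\alg \simeq c_* \cO_C\alg$ established during the construction of $\stab$ in \cref{sec:construction_of_stabilization} allows the tangent complex comparison in (b) to be carried out indifferently on $C$ or on $C'$, and guarantees that the inverse constructed in (a) is compatible with base change in families. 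Combined with the equivalence on truncations, the vanishing of $\anL_\Phi$ then yields the theorem.
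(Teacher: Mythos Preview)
Your strategy---verify that $\trunc(\Phi)$ is an equivalence and that $\anL_\Phi \simeq 0$---is sound in principle and genuinely different from the paper's route. The paper does not compute cotangent complexes at all; it argues directly at the level of $T$-points, proving that for every $T$ the functor of $\infty$-groupoids $\R\oM(X/S,\tau,\beta)(T) \to \R\oC_w(X/S,\sigma,\beta)(T)$ is fully faithful by a rigidification trick. Concretely, given two $\tau$-marked stable maps $\mathbf f, \tbf$ and an equivalence $\alpha$ between their images, the paper adds auxiliary sections on an open locus to force the underlying pointed curves to be stable, then shows the relevant fiber $\Map^\alpha_{/T}(\mathbf f,\tbf)$ agrees with $\Map^\alpha_{/T}(\mathbf C^\ex, \tbC^\ex)$ for stable pointed curves, which is contractible by Knudsen. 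Essential surjectivity is then local and reduces, via $\oM_\tau \simeq \R\oM_\tau$, to the classical Behrend--Manin argument.

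The gap in your approach is in step (b), precisely at the point you flag. Your fiber sequence $\bbT\an_{C/T}(-\text{all}) \to \bbT\an_{C/T}(-\text{all except } s_{t_0}) \to s_{t_0*} s_{t_0}^*\bbT\an_{C/T}$ compares two twists \emph{on the same curve $C$}, whereas $\bbT_\pi$ compares $\bbT_{\R\oM_\tau(X)}$ (computed on $C$) with $\pi^*\bbT_{\R\oM_\sigma(X)}$ (computed on the stabilized curve $C'$). The projection formula via $c_*\cO_C \simeq \cO_{C'}$ handles the $f^*\bbT\an_{X/S}$ term, but it does not apply to $\bbT\an_{C/T}$, which is not pulled back from $C'$; one has $c_*\bbT\an_{C/T} \not\simeq \bbT\an_{C'/T}$ in general. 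What you would actually need is an identification of the fiber of $c_*\bbT\an_{C/T}(-\sigma\text{-marks}) \to \bbT\an_{C'/T}(-\sigma\text{-marks})$ and a verification that, after pushing forward and combining with the $\bbT_X$-term, the induced map $d\Phi$ is an isomorphism onto $\lambda^*\bbT\an_{\oCpre_w/\oMpre_\sigma}$---not merely that source and target have the same rank. This is an analogue of the analysis in \cref{lem:stab_is_smooth} but for stable maps rather than curves, and it is substantially more work than your sketch suggests. The paper's mapping-space argument avoids this computation entirely, at the cost of the somewhat ad hoc open-cover-and-extra-sections maneuver; your approach would be more systematic but requires a careful infinitesimal analysis of $\stab$ that you have not supplied.
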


\begin{proof}
		The diagram commutes by construction.
	Write
	\[ \R \oC_w(X/S, \sigma, \beta) \coloneqq \oCpre_w \times_{\oMpre_\sigma} \R \oM(X/S, \sigma, \beta) . \]
	We need to prove that the natural map
	\[ \R \oM( X/S, \tau, \beta ) \longrightarrow \R \oC_w(X/S, \sigma, \beta)\]
	is an equivalence.
	Let $T \in \dAn_{/S}$ be any derived \kanal space over $S$.
	We first show that the functor of $\infty$-groupoids
	\begin{equation} \label{eq:universal_curve_functor_of_points}
		\R \oM(X/S, \tau, \beta)(T) \longrightarrow \R \oC_w(X/S, \sigma, \beta)(T)
	\end{equation}
	is fully faithful.
	Let $\mathbf f = ([C_v, (s_{v,e})]_{v \in V_\tau}, f)$ and $\tbf = ([\tC_v, (\ts_{v,e})]_{v \in V_\tau}, \tf)$ be two $\tau$-marked stable maps into $X/S$ over $T$.
	Let $\mathbf f' = ([C_v', (s'_{v,e})]_{v \in V_\sigma}, f')$ and $\tbf' = ([\tC'_v, (\ts'_{v,e})]_{v \in V_\sigma}, \tf')$ be their images via $\pi$.
	Let $s'\colon T\to C'_v$ and $\ts'\colon T\to\tC'_v$ be the images of the extra sections corresponding to the forgotten tail.
	So the pairs $(\mathbf f', s')$ and $(\tbf', \ts')$ are the images of $\mathbf f$ and $\tbf$ in $\R \oC_w(X/S, \sigma, \beta)$ under the map \eqref{eq:universal_curve_functor_of_points}.
	Given any $\alpha \colon (\mathbf f', s') \to (\tbf', \ts')$, denote by $\Map_{/T}^\alpha( \mathbf f, \tbf )$ the fiber of
	\[ \Map_{/T}(\mathbf f, \tbf) \longrightarrow \Map_{/T}((\mathbf f', s'), (\tbf', \ts')) \]
	at $\alpha$.
	In order to prove the full faithfulness of \eqref{eq:universal_curve_functor_of_points}, it is enough to prove that for every equivalence $\alpha \colon (\mathbf f', s') \to (\tbf', \ts')$, the space $\Map_{/T}^\alpha( \mathbf f, \tbf')$ is contractible.
	This question is local on $T$, so we are free to replace $T$ by an étale covering.
	Let $U_v'$ be the complement in $C_v'$ of all the sections and all the nodes, and let
	\[ U_v \coloneqq C_v \times_{C_v'} U_v' . \]
	Define similarly $\tU_v'$ and $\tU_v$.
	Note that the equivalence $\alpha$ restricts to an equivalence $U_v' \simeq \tU_v'$.
	Moreover, the canonical morphisms $U_v \to U_v'$ and $\tU_v \to \tU_v'$ are also equivalences.
	It follows that $\alpha$ induces an equivalence $\beta \colon U_v \simeq \tU_v$.
	Up to replacing $T$ by an étale covering, we can add extra sections $(p_{v,j}')$ to $U_v'$ in such that $\mathbf C^{\prime \ex} \coloneqq [C_v', (s'_{v,e}, p_{v,j}')]_{v \in V_\tau}$ is a stable pointed curve.
	Via the equivalence $U_v \simeq U_v'$, we lift the sections $p_{v,j}'$ of $U_v'$ to sections $p_{v,j}$ of $U_v$.
	Write
	\[ \mathbf C^\ex \coloneqq [C_v, (s_{v,e}, p_{v,j})]_{v \in V_\tau} \]
	and
	\[ \mathbf f^\ex \coloneqq ( \mathbf C^\ex, f ) , \qquad \mathbf f^{\prime \ex} \coloneqq ( \mathbf C^{\prime \ex}, f' ) . \]
	Then the map $\mathbf f \to \mathbf f'$ induces a map $\mathbf f^\ex \to \mathbf f^{\prime \ex}$, and the underlying map $\mathbf C^\ex \to \mathbf C^{\prime \ex}$
	is a stabilizing contraction of pointed curves (see \cref{sec:construction_of_stabilization}).
	Using the equivalences $\alpha$ and $\beta$, we add compatible sections $\tp_{v,j}$ and $\tp'_{v,j}$ to $\tU_v$ and $\tU'_v$.
	We define similar notations $\tbf^\ex$, $\tbf^{\prime \ex}$, $\widetilde{\mathbf C}^\ex$, $\widetilde{\mathbf C}^{\prime \ex}$.
	As above, the map $\tbf\to\tbf'$ induces a map $\tbf^\ex\to\tbf'^\ex$, and the underlying map $\tbC^\ex\to\tbC'^\ex$ is a stabilizing contraction of pointed curves.
	Write
	\begin{align*}
	&\mathbf f_U \coloneqq ( [U_v]_{v \in V_\tau}, f ), & \mathbf f^\ex_U \coloneqq ([U_v, (p_{v,j})]_{v \in V_\tau}, f) , \\
	&\tbf_U \coloneqq ( [\tU_v]_{v \in V_\tau},\tf ), & \tbf^\ex_U \coloneqq ([\tU_v, (p_{v,j})]_{v \in V_\tau},\tf).
	\end{align*}
	Let $W_v$ denote the complement in $C_v$ of the sections $p_{v,j}$, and define similarly $\tW_v$.
	Set
	\begin{align*}
	\mathbf f_W \coloneqq \mathbf f^\ex_W \coloneqq ( [W_v, (s_{v,e})]_{v \in V_\tau}, f ),\\
	\tbf_W \coloneqq \tbf^\ex_W \coloneqq ( [\tW_v, (\ts_{v,e})]_{v \in V_\tau}, \tf ).
	\end{align*}
	We also introduce the notations $\mathbf f_{U\cap W}$, $\mathbf f^\ex_{U\cap W}$, $\tbf_{U\cap W}$ and $\tbf^\ex_{U\cap W}$ in the same way as above.
	
	Observe that the diagrams
	\[ \begin{tikzcd}
		\Map_{/T}^\alpha( \mathbf f, \tbf ) \arrow{r} \arrow{d} & \Map_{/T}^\alpha( \mathbf f_W, \tbf_W ) \arrow{d} \\
		\Map_{/T}^\alpha( \mathbf f_U, \tbf_U ) \arrow{r} & \Map_{/T}^\alpha( \mathbf f_{U \cap W}, \tbf_{U \cap W} )
	\end{tikzcd} \]
	and
	\[ \begin{tikzcd}
		\Map_{/T}^\alpha( \mathbf f^\ex, \tbf^\ex ) \arrow{r} \arrow{d} & \Map_{/T}^\alpha( \mathbf f^\ex_W, \tbf^\ex_W ) \arrow{d} \\
		\Map_{/T}^\alpha( \mathbf f^\ex_U, \tbf^\ex_U ) \arrow{r} & \Map_{/T}^\alpha( \mathbf f^\ex_{U \cap W}, \tbf^\ex_{U \cap W} )
	\end{tikzcd} \]
	are pullback squares.
	Unraveling the definitions, we see that the natural maps
	\begin{gather*}
		\Map_{/T}^\alpha( \mathbf f^\ex_U, \tbf^\ex_U ) \longrightarrow \Map_{/T}^\alpha( \mathbf f_U, \tbf_U ) \\
		\Map_{/T}^\alpha( \mathbf f^\ex_W, \tbf^\ex_W ) \longrightarrow \Map_{/T}^\alpha( \mathbf f_W, \tbf_W ) \\
		\Map_{/T}^\alpha( \mathbf f^\ex_{U \cap W}, \tbf^\ex_{U \cap W} ) \longrightarrow \Map_{/T}^\alpha( \mathbf f_{U \cap W}, \tbf_{U \cap W} )
	\end{gather*}
	are equivalences.
	It follows that the induced map
	\[ \Map_{/T}^\alpha( \mathbf f^\ex, \tbf^\ex ) \longrightarrow \Map_{/T}^\alpha( \mathbf f, \tbf )  \]
	is an equivalence as well.
	By \cite[Corollary 2.3.2.5]{Lurie_HTT}, the forgetful map
	\[ \Map_{/T}^\alpha( \mathbf f^\ex, \tbf^\ex ) \longrightarrow \Map_{/T}^\alpha( \mathbf C^\ex, \widetilde{\mathbf C}^\ex ) \]
	is an equivalence.
		As $\mathbf C^\ex$ and $\tbC^\ex$ are stable pointed curves, it follows from \cite[Corollary 2.6]{Knudsen_The_projectivity_II} that $\Map_{/T}^{\alpha}( \mathbf C^\ex, \widetilde{\mathbf C}^\ex )$ is contractible.
		We deduce that $\Map_{/T}^\alpha( \mathbf f, \mathbf f' )$ is contractible, and conclude the full faithfulness of the functor  of $\infty$-groupoids \eqref{eq:universal_curve_functor_of_points}.
	
	Now it remains to prove that the functor \eqref{eq:universal_curve_functor_of_points} is also essentially surjective.
	Since this functor is fully faithful, we can reason locally on $T$.
	At this point, using the equivalence $\oM_\tau \simeq \R \oM_\tau$, the same proof of \cite[Proposition 4.5]{Behrend_Stacks_of_stable_maps} applies.
\end{proof}

\begin{corollary} \label{cor:universal_curve_forgetting_tail}
	The forgetful map
	\[ \R \oM_{g,n+1}(X/S) \longrightarrow \R \oM_{g,n}(X/S) \]
	canonically factors through the universal curve $\R \oC_{g,n}(X/S)$, and the resulting morphism
	\[ \R \oM_{g,n+1}(X/S) \longrightarrow \R \oC_{g,n}(X/S) \]
	is an equivalence.
\end{corollary}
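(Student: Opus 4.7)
The plan is simply to specialize Theorem \ref{thm:universal_tau_marked_curve} to the simplest possible choice of A-graphs. Take $\tau$ to be the A-graph consisting of a single vertex $w$ of genus $g$ equipped with $n+1$ tails, and let $\sigma$ be the A-graph obtained from $\tau$ by forgetting one of those tails, so that $\sigma$ has the same single vertex $w$ carrying only $n$ tails. Fix a class $\beta \in A(X)$, and regard $\tau, \sigma$ as A-graphs with the constant function $\beta$ assigned to $w$.

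Under these identifications, the derived moduli stacks entering Theorem \ref{thm:universal_tau_marked_curve} unwind as follows: $\R\oM(X/S, \tau, \beta) \simeq \R\oM_{g,n+1}(X/S, \beta)$, $\R\oM(X/S, \sigma, \beta) \simeq \R\oM_{g,n}(X/S, \beta)$, while $\oMpre_\sigma \simeq \oMpre_{g,n}$ and the universal curve $\oCpre_w \to \oMpre_\sigma$ at the vertex $w$ coincides with the universal curve $\oCpre_{g,n} \to \oMpre_{g,n}$. Moreover, the forgetful map $\pi$ constructed before Theorem \ref{thm:universal_tau_marked_curve} becomes the standard forgetful morphism $\R\oM_{g,n+1}(X/S, \beta) \to \R\oM_{g,n}(X/S, \beta)$ appearing in the statement of the corollary, and the map $\lambda$ records the position of the forgotten $(n+1)$-st marked point.

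Feeding these identifications into Theorem \ref{thm:universal_tau_marked_curve} yields a derived pullback square
\[
\begin{tikzcd}
\R\oM_{g,n+1}(X/S, \beta) \arrow{r} \arrow{d} & \R\oM_{g,n}(X/S, \beta) \arrow{d} \\
\oCpre_{g,n} \arrow{r} & \oMpre_{g,n}.
\end{tikzcd}
\]
By the construction of the universal curve recalled in Remark \ref{rem:RMXtaubeta}, the right column pulled back along $\oCpre_{g,n} \to \oMpre_{g,n}$ is precisely $\R\oC_{g,n}(X/S, \beta)$. Hence the factorization of the forgetful map through $\R\oC_{g,n}(X/S, \beta)$ is automatic, and the resulting comparison $\R\oM_{g,n+1}(X/S, \beta) \to \R\oC_{g,n}(X/S, \beta)$ is an equivalence. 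Taking the disjoint union over $\beta \in A(X)$ and over the connected components of $\oM_{g,n}(X/S)$ yields the statement.

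All genuine content sits in Theorem \ref{thm:universal_tau_marked_curve}; the only task here is bookkeeping of the A-graph data, so no real obstacle arises. The one point worth a brief sentence is to check that the canonical map produced by the universal property of the pullback coincides with the forgetful-then-stabilize composition used to define $\pi$, which is immediate from the constructions of $\pi$ and $\lambda$ preceding Theorem \ref{thm:universal_tau_marked_curve}.
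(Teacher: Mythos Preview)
Your proposal is correct and follows essentially the same approach as the paper: specialize \cref{thm:universal_tau_marked_curve} to the case where $\tau$ is a single vertex of genus $g$ with $n+1$ tails, and then invoke the definition of $\R\oC_{g,n}(X/S)$ as the pullback $\oCpre_{g,n} \times_{\oMpre_{g,n}} \R\oM_{g,n}(X/S)$. The paper's proof is more terse, but the content is the same.
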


\begin{proof}
	By definition, $\R \oC_{g,n}(X/S)$ fits in the pullback diagram
	\[ \begin{tikzcd}
		\R \oC_{g,n}(X/S) \arrow{r} \arrow{d} & \R \oM_{g,n}(X/S) \arrow{d} \\
		\oCpre_{g,n} \arrow{r} & \oMpre_{g,n} .
	\end{tikzcd} \]
	So the corollary follows directly from \cref{thm:universal_tau_marked_curve}.
\end{proof}

\subsection{Forgetting tails} \label{sec:forgetting_tails}

Let $(\sigma,\beta)$ be an A-graph obtained from $(\tau,\beta)$ by forgetting a tail attached to a vertex $w$ of $\tau$.
We follow the notations of \cref{sec:universal_curve}.
Consider the commutative diagram
\[ \begin{tikzcd}
	\R \oM(X/S, \tau, \beta) \arrow{r}{\pi} \arrow{d} & \R \oM(X/S, \sigma, \beta) \arrow{d} \\
	\oM_\tau \arrow{r}{\Phi} & \oM_\sigma ,
\end{tikzcd} \]
which induces a map
\[\Psi \colon \R \oM(X/S, \tau, \beta) \longrightarrow \oM_\tau \times_{\oM_\sigma} \R \oM(X/S, \sigma, \beta) .  \]

\begin{theorem} \label{thm:forgetting_tails}
	The diagram
	\[ \begin{tikzcd}
	\R \oM(X/S, \tau, \beta) \arrow{r}{\Psi} \arrow{d}{\lambda} & \oM_\tau \times_{\oM_\sigma}  \R \oM(X/S, \sigma, \beta) \arrow{d} \\
	\oCpre_w \arrow{r}{\rho} & \oM_\tau \times_{\oM_\sigma} \oMpre_\sigma
	\end{tikzcd} \]
	is a derived pullback square.
	Moreover, the canonical map
	\[\cO\alg_{\oM_\tau \times_{\oM_\sigma}  \R \oM(X/S, \sigma, \beta)}\longrightarrow\Psi_*\big(\cO\alg_{\R\oM(X/S,\tau,\beta)}\big)\]
	is an equivalence.
\end{theorem}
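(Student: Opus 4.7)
The plan is to deduce both assertions from \cref{thm:universal_tau_marked_curve} by formal manipulations of derived Cartesian squares, together with a classical analysis of the fibers of the map $\rho$.

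First, for the derived pullback property, I would invoke the pasting law for derived Cartesian squares. The square in the theorem statement pastes on the right with the tautologically derived Cartesian square
\[
\begin{tikzcd}
\oM_\tau \times_{\oM_\sigma} \R\oM(X/S,\sigma,\beta) \arrow{r} \arrow{d} & \R\oM(X/S,\sigma,\beta) \arrow{d} \\
\oM_\tau \times_{\oM_\sigma} \oMpre_\sigma \arrow{r} & \oMpre_\sigma
\end{tikzcd}
\]
(both horizontal maps being the projections onto the second factor) to form an outer rectangle. One checks directly that this outer rectangle coincides with the derived Cartesian square established in \cref{thm:universal_tau_marked_curve}. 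Since both the right square and the outer rectangle are derived Cartesian, the pasting law implies that the left square is derived Cartesian, which is the first claim.

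For the equivalence on structure sheaves, I would apply derived proper base change (\cite[Theorem 1.5]{Porta_Yu_Derived_Hom_spaces}) to the Cartesian square just established. The map $\rho \colon \oCpre_w \to \oM_\tau \times_{\oM_\sigma} \oMpre_\sigma$ is proper, being a morphism of proper relative curves. Proper base change then gives
\[
\Psi_* \cO_{\R\oM(X/S,\tau,\beta)} \;\simeq\; \Psi_* \lambda^* \cO_{\oCpre_w} \;\simeq\; p^* \rho_* \cO_{\oCpre_w},
\]
where $p$ denotes the right vertical map of the square, so it suffices to prove the equivalence $\rho_*\cO_{\oCpre_w} \simeq \cO_{\oM_\tau \times_{\oM_\sigma} \oMpre_\sigma}$.

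This remaining assertion concerns classical stacks, and one can invoke \cref{lem:checking_pushforward_on_truncation} to further legitimize working at the truncated level. On points, $\rho$ sends a pair $(C_\sigma, p)$ consisting of a $\sigma$-marked prestable curve and a point on its $w$-component to the pair $\big((C_\sigma^{\mathrm{st}}, \bar p),\, C_\sigma\big)$, where $C_\sigma^{\mathrm{st}}$ is Knudsen's stabilization and $\bar p$ is the image of $p$. The geometric fibers of $\rho$ are therefore either reduced points (when $\bar p$ lies on the stable part of $C_\sigma$) or connected trees of $\mathbb{P}^1$s (when $\bar p$ is the image of a subtree of rational components contracted by stabilization, over which $p$ is free to vary). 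Since $H^i(\mathbb P^1, \cO) = 0$ for $i \geq 1$ and the fibers are geometrically connected, cohomology and base change yields the desired equivalence.

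The main obstacle is the last step: the fiberwise geometric analysis of the stabilization map $\rho$ and the verification of the vanishing of higher direct images. Every other step is purely formal, relying only on the pasting law for derived Cartesian squares and proper base change.
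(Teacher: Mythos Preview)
Your proposal is correct and follows essentially the same strategy as the paper's proof: both deduce the Cartesian property from \cref{thm:universal_tau_marked_curve} via the pasting law for pullback squares, and both establish the structure sheaf equivalence via derived base change reducing to the fact that $\rho$ contracts only rational trees. The paper phrases the latter more tersely as ``$\rho$ is a blow-down of rational curves (fiberwise over $\oMpre_\sigma$),'' but your fiberwise analysis spells out exactly the same content.
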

\begin{proof}
	Consider the diagram
	\[ \begin{tikzcd}
		\R \oM(X/S, \tau, \beta) \arrow{r}{\Psi} \arrow{d}{\lambda} & \oM_\tau \times_{\oM_\sigma}  \R \oM(X/S, \sigma, \beta) \arrow{r} \arrow{d}{\mu} & \R \oM(X/S, \sigma, \beta) \arrow{d} \\
		\oCpre_w \arrow{r}{\rho} & \oM_\tau \times_{\oM_\sigma} \oMpre_\sigma \arrow{d} \arrow{r} & \oMpre_\sigma \arrow{d} \\ {} & \oM_\tau \arrow{r}{\Phi} & \oM_\sigma .
	\end{tikzcd} \]
	The right outer rectangle and the right lower square are pullbacks by definition.
	Hence the right upper square is also a pullback.
	The top outer rectangle is a pullback square in virtue of \cref{thm:universal_tau_marked_curve}.
	It follows that the left upper square is a pullback, which shows the first statement in the proposition.
	
	Now by derived base-change (\cite[Theorem 6.8]{Porta_Yu_Derived_Hom_spaces}), the canonical map
	\[ \mu^*\rho_*\big( \cO\alg_{\oCpre_w} \big) \longrightarrow\Psi_*\lambda^* \big( \cO\alg_{\oCpre_w} \big) \]
	is an equivalence.
	Moreover, since $\rho$ is a blow-down of rational curves (fiberwise over $\oMpre_\sigma$),
	we have
	\[\cO\alg_{\oM_\tau\times_{\oM_\sigma}\oMpre_\sigma}\xrightarrow{\ \sim\ }\rho_* \big( \cO\alg_{\oCpre_w} \big).\]
	Since pullbacks of structure sheaves are always structure sheaves, we conclude that the canonical map
	\[ \cO\alg_{\oM_\tau \times_{\oM_\sigma}  \R \oM(X/S, \sigma, \beta)}\longrightarrow\Psi_* \big(\cO\alg_{\R\oM(X/S,\tau,\beta)} \big) \]
	is an equivalence, completing the proof.
\end{proof}

\subsection{Contracting edges} \label{sec:contracting_edges}

Let $\sigma$ be a modular graph obtained from $\tau$ by contracting an edge (possibly a loop) $e$, and $v_e \in V_\sigma$ the vertex of $\sigma$ corresponding to the contracted edge $e$.
Let $\tsigma$ be a modular graph obtained from $\sigma$ by adding $k$ tails, and $k_e$ the number of extra tails attached to $v_e$.

The main result here is \cref{thm:contracting_edges}.
In order to rigorously construct the colimit diagram in \eqref{eq:contracting_edge}, we start with the following:

\begin{construction} \label{const:contracting_edges_modular_graph}
	For each integer $l \ge 1$, let $\tau_l$ denote the modular graph obtained from $\tau$ by replacing $e$ with a chain $\gamma_l$ of $l$ edges and $l+1$ vertices such that all the $(l-1)$ new vertices have genus $0$.
	Let $c_l \colon \tau_l \to \sigma$ be the map contracting $\gamma_l$.
		For every map $i \colon \{1, \ldots, k_e\} \to V_{\gamma_l}$,
	let $\tau_l^i$ be the graph obtained from $\tau_l$ by adding $k$ extra tails in the unique way compatible with $i$, $c_l$ and $\tsigma$ (see \cref{fig:contracting_edges}).
	\begin{figure}[!ht]
	\centering
	\setlength{\unitlength}{0.7\textwidth}
	\begin{picture} (1,0.523)
	\put(0,0){\includegraphics[width=\unitlength, page=3]{images/modular_graph}}
	\put(0.15,0.47){$e$}
	\end{picture}
	\caption{The modular graphs $\tau$, $\tau_l$, $\sigma$, $\tau_l^i$ and $\tsigma$ respectively, where $l=2$, $k_e=2$.
		All tails are colored in blue.}
	\label{fig:contracting_edges}
	\end{figure}
	Let $c_l^i \colon \tau_l^i \to \sigma$ be the composition of forgetting tails $\tau_l^i \to \tau_l$ with the contraction $c_l \colon \tau_l \to \sigma$.
	Let $\MG$ be the category of modular graphs whose morphisms are compositions of forgetting tails and contractions.
	Let $\cD$ be the full subcategory of $\MG_{/\sigma}$ spanned by the morphisms $c_l^i \colon \tau_l^i \to \sigma$.
	Define
	\[ \Lambda \coloneqq \Lambda_{\tau \to \sigma, \tsigma} \coloneqq \cD_{/\tau \to \sigma} . \]
		An object in $\Lambda$ consists of an integer $l \ge 1$, a function $i \colon \{1, \ldots, k_e\} \to V_{\gamma_l}$ and a map $a \colon \tau_l^i \to \tau$ in $\MG$ making the diagram
	\[ \begin{tikzcd}
		\tau_l^i \arrow{d}[swap]{a} \arrow{dr}{c_l^i} \\
		\tau \arrow{r} & \sigma
	\end{tikzcd} \]
	commutative.
	Note that the commutative diagram above can be extended in a unique way to
	\[ \begin{tikzcd}
		\tau_l^i \arrow{d}[swap]{a} \arrow{r} & \tsigma \arrow{d} \\
		\tau \arrow{r} & \sigma .
	\end{tikzcd} \]
	\end{construction}

\begin{construction} \label{const:contracting_edges_colimit_domain}
	We functorially attach to every object $(a \colon \tau_l^i \to \tau)$ in $\Lambda$ the natural commutative diagram
	\begin{equation} \label{eq:trick_avoiding_full_functoriality}
	\begin{tikzcd}
	\oMpre_{\tau_l^i} \arrow{r} \arrow{d} & \oMpre_{\tsigma} \arrow{d} \\
	\oMpre_\tau \arrow{r} & \oMpre_\sigma .
	\end{tikzcd}
	\end{equation}
		Combining the universal properties of stabilization (see \cref{def:stabilization}) and of gluing along closed immersions, the diagram
	\begin{equation} \label{eq:commutativity_between_stabilization_and_contracting_edges}
	\begin{tikzcd}
	\oMpre_\tau \arrow{r} \arrow{d} & \oMpre_\sigma \arrow{d} \\
	\oM_\tau \arrow{r} & \oM_\sigma
	\end{tikzcd}
	\end{equation}
	canonically commutes.
	Therefore we obtain a canonical map
	\begin{equation} \label{eq:contracting_edges_colimit_domain}
	\colim_{(a\colon\tau_l^i\to\tau) \in \Lambda} \oMpre_{\tau_l^i} \longrightarrow \oM_\tau \times_{\oM_\sigma} \oMpre_{\tsigma}.
	\end{equation}
\end{construction}

\begin{lemma} \label{lem:contracting_edges_colim}
	The canonical map \eqref{eq:contracting_edges_colimit_domain} is an equivalence in $\dAnk$.
\end{lemma}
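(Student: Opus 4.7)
The plan is to reduce to an explicit local picture and then exhibit both sides as classifying the same geometric data, upgrading a pointwise bijection to a stack equivalence via a cotangent-complex calculation. By \cref{prop:stack_of_stable_curves} and \cref{rem:product_decomposition_M_tau}, every stack in \eqref{eq:contracting_edges_colimit_domain} is underived, so I can work with classical $k$-analytic stacks throughout. Using the product decomposition of $\oMpre_\sigma$ and the fact that both sides are pulled back from the factor indexed by $v_e$, I would further reduce to the case where $\tau$ has a single non-tail edge $e$ and $k=k_e$; only the local picture at the vertex $v_e\in V_\sigma$ created by contracting $e$ is then at stake.

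In this local setting, a $T$-point of the right-hand side consists of a $\tau$-marked stable curve $\mathbf C_\tau$ and a $\tsigma$-marked prestable curve $\mathbf D_{\tsigma}$ over $T$, together with a specified identification of their $\sigma$-stabilizations. Étale locally on $T$, above the node $s_e$ of this common $\sigma$-stabilization, the glued $\tsigma$-curve develops a chain $\gamma$ of rational components of some length $l\ge 1$, and the $k_e$ extra tails attached to $v_e$ are distributed along the vertices of $\gamma$ via some function $i\colon\{1,\dots,k_e\}\to V_{\gamma}$. This is precisely the combinatorial data of an object $(a\colon\tau_l^i\to\tau)\in\Lambda$ (see \cref{const:contracting_edges_modular_graph}), and the remaining geometric data is a $T$-point of $\oMpre_{\tau_l^i}$, producing the pointwise inverse of \eqref{eq:contracting_edges_colimit_domain}.

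To promote this bijection to an equivalence of stacks I would verify two complementary points. First, each individual map $\oMpre_{\tau_l^i}\to\oM_\tau\times_{\oM_\sigma}\oMpre_{\tsigma}$ is étale; this can be checked by a direct cotangent-complex computation using \eqref{eq:cotangent_complex_Mgn} together with the transitivity sequence, since passing between $\tau_l^i$-marked and $\tsigma$-marked curves only involves insertion/contraction of $2$-pointed rational bridges, an operation that is formally smooth of relative dimension zero. Second, the transition morphisms in $\Lambda$, which by construction are compositions of forgetful maps and edge-contractions among the $\tau_l^i$'s, correspond precisely to the étale identifications needed to glue the charts $\{\oMpre_{\tau_l^i}\}$ into a cover of the fiber product; that we take a colimit rather than a coproduct is essential, since two charts have overlapping images exactly when they are dominated in $\Lambda$ by a common further degeneration.

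The main obstacle will be combining these two pieces into a genuine equivalence, i.e.\ showing that the diagram $\Lambda$ encodes exactly the gluing data of the resulting étale cover. Concretely, for each geometric $T$-point landing on a chain of length $l$ with distribution $i$, I need to show that the slice category $\Lambda_{/(\tau_l^i\to\tau)}$ is contractible, so that the colimit in $\dAnk$ reduces at that stalk to a colimit over a category with a terminal object. This is a purely combinatorial statement in the graph category $\MG$ about morphisms $\tau_{l'}^{i'}\to\tau_l^i$ (essentially classifying where further empty rational bridges may be inserted), but it requires some care when $e$ is a loop in $\tau$, where the automorphism swapping the two half-edges at $v_e$ must be tracked through the allowed factorizations.
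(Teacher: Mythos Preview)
Your central claim—that each individual map $\oMpre_{\tau_l^i}\to\oM_\tau\times_{\oM_\sigma}\oMpre_{\tsigma}$ is étale—is false, and the whole étale-cover-plus-contractible-slice strategy collapses with it. The composite $\oMpre_{\tau_l^i}\to\oMpre_{\tsigma}$ is a clutching map: it glues the pieces of a $\tau_l^i$-curve along the $l$ edges of the chain $\gamma_l$, producing a $\tsigma$-prestable curve with $l$ distinguished nodes. This is a closed immersion onto a codimension-$l$ boundary stratum of $\oMpre_{\tsigma}$, not an étale map. Your justification that ``insertion/contraction of $2$-pointed rational bridges is formally smooth of relative dimension zero'' is exactly backwards: inserting a rational bridge is a codimension-one nodal degeneration. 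Consequently, even for $l=1$ the map $\oMpre_{\tau_1^i}\to\oM_\tau\times_{\oM_\sigma}\oMpre_{\tsigma}$ is a closed immersion onto one irreducible component of the target, and these components genuinely intersect (in the $l\ge 2$ strata), so no disjoint-étale-chart picture is available.

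The paper's argument proceeds along an entirely different line. After checking (via \cref{lem:stab_is_smooth}) that $\oMpre_{\tsigma}\to\oM_\sigma$ is flat, the fiber product $\oM_\tau\times_{\oM_\sigma}\oMpre_{\tsigma}$ is recognized as an underived normal crossing substack of $\oMpre_{\tsigma}$, with the images of the $\oMpre_{\tau_l^i}$ giving its stratification by codimension $l$. After an étale base change making the divisor simple normal crossing, the colimit over $\Lambda$ is then identified with the iterated pushout along closed immersions that rebuilds a simple normal crossing divisor from its strata, which is handled inductively by \cite[Theorem 6.5]{Porta_Yu_Representability_theorem}. The diagram $\Lambda$ encodes the intersection combinatorics of the strata, not gluing data for an étale atlas; the relevant universal property is that of pushouts along closed immersions, not étale descent.
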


We will prove the following lemma first:

\begin{lemma} \label{lem:stab_is_smooth}
	For any modular graph $\tau$, the stabilization map
	\[ \stab \colon \oMpre_\tau \longrightarrow \oM_\tau \]
	is smooth.
\end{lemma}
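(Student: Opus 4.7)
The plan is to reduce the statement to showing that $\stab \colon \oMpre_{g,n} \to \oM_{g,n}$ is smooth for every $g$ and $n$. This reduction follows from \cref{rem:product_decomposition_M_tau}, which gives compatible product decompositions $\oMpre_\tau \simeq \prod_{v \in V_\tau} \oMpre_{g(v),\val(v)}$ and $\oM_\tau \simeq \prod_{v \in V_\tau} \oM_{g(v),\val(v)}$ with respect to which the stabilization map splits as a product. Since smoothness is étale-local on source and target, I would then work in an étale neighborhood of a fixed geometric point $[C, (s_i)]$ of $\oMpre_{g,n}$ whose stabilization is a stable pointed curve $[\oC, (s_i)]$.

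The strategy is to exhibit $\stab$, étale-locally near $[C]$, as a composition of smooth fibrations. The prestable curve $C$ is obtained from $\oC$ by grafting finitely many trees of unstable rational components at smooth points or nodes of $\oC$. Starting from a smooth étale chart $U \to \oM_{g,n}$ with universal stable curve $\oC_U \to U$, I would build an étale neighborhood of $[C]$ in $\oMpre_{g,n}$ by iteratively inserting $\mathbb P^1$-bubbles. Each insertion amounts to choosing an attachment point, which lies either in the smooth locus of $\oC_U \to U$ (smooth over $U$) or along one of the finitely many nodal sections (étale over $U$), together with the prestable structure of the inserted bubble, whose moduli is itself smooth. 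Composing finitely many such smooth steps yields the smoothness of $\stab$ in that neighborhood.

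The main obstacle is the bookkeeping of bubbles attached at nodes: such a bubble tree effectively resolves the node and introduces new combinatorial data, so the iterative construction must track the changing dual graph carefully. I would resolve this by induction on the total number of unstable components, using as base cases the smoothness of the universal curve (see \cref{cor:universal_curve}) and the smoothness of $\oM_{0,n}$ for $n \ge 3$. Alternatively, via derived deformation theory, one can compute the relative analytic cotangent complex $\anL_{\stab}$ at $[C]$ using \cref{prop:deformation_of_morphisms} and identify it, up to a shift, with the cohomology of the relative tangent complex $\bbT\an_{C/\oC}(-\sum s_i)$; since $C \to \oC$ blows down trees of rational curves whose attachments to $\oC$ involve only finitely many special points, this cohomology is concentrated in degree $0$ and locally free, from which smoothness follows by \cite[Proposition 5.50]{Porta_Yu_Representability_theorem}.
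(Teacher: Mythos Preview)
Your product reduction matches the paper. Your Approach A (iterative bubble insertion) is a valid classical strategy and is genuinely different from the paper's argument; it can be made rigorous, though you correctly flag that the \'etale-local charts near nodes require care.

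The paper takes your Approach B, but your sketch has a gap. The identification of $\bbT\an_{\stab}$ with $p_* \bbT\an_{C/\oC}(-\sum s_i)[1]$ is morally right but imprecise: marked points may lie on contracted components, so the twists on $C$ and on $\oC$ do not match up na\"ively. The paper instead writes $x^* \bbT\an_{\stab} \simeq q_*(\fib(\gamma))[1]$ with $\gamma \colon c_* \bbT\an_{C/T}(-\sum s_i) \to \bbT\an_{C'/T}(-\sum s'_i)$, and decomposes $\gamma$ via a map $\alpha$ on the tangent complexes and a map $\beta$ on their restrictions to the sections. More importantly, your claim that the cohomology is ``concentrated in degree $0$'' is precisely the statement that needs proof. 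Your observation that $c$ contracts rational trees with finitely many attachment points shows only that $\fib(\gamma)$ has punctual support on $C'$, which kills the $H^1$-contribution to $q_*$. The remaining vanishing $\pi_{-2}(\fib(\gamma)) = 0$ is equivalent to surjectivity of
\[ \pi_{-1}\bigl(c_* \bbT\an_{C/T}(-{\textstyle\sum} s_i)\bigr) \longrightarrow \pi_{-1}\bigl(\bbT\an_{C'/T}(-{\textstyle\sum} s'_i)\bigr), \]
and the paper establishes this by identifying both sides with direct sums of skyscrapers at the respective nodes and noting that every node of $C'$ is the image of a node of $C$ under $c$. This node-lifting step is the actual content of the argument, and your sketch does not supply it.
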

\begin{proof}
	Since both $\oMpre_\tau$ and $\oM_\tau$ split as products as in \cref{rem:product_decomposition_M_tau} and the stabilization map preserves the product, it is enough to prove the lemma in the case when $\tau$ has a single non-tail vertex.
	Fix $T \in \dAfdk$ and $x \colon T \to \oMpre_\tau$.
	Let $[p \colon C \to T,(s_i)]$ be the associated prestable pointed curve over $T$.
	Let $y \coloneqq \stab \circ x \colon T \to \oM_\tau$, $[q\colon C'\to T, (s'_i)]$ the associated prestable pointed curve, and $c\colon C\to C'$ the stabilizing contraction.
	Then we have a fiber sequence
	\begin{equation} \label{eq:tangent_stabilization_map}
	x^* \mathbb T\an_{\stab} \longrightarrow x^* \mathbb T\an_{\oMpre_\tau} \longrightarrow y^* \mathbb T\an_{\oM_\tau} .
	\end{equation}
	To prove that the map $\stab$ is smooth, it is enough to show that $\pi_i( x^* \mathbb T\an_{\stab} ) \simeq 0$ for every $i < 0$.
	By \eqref{eq:cotangent_complex_Mgn}, we have
	\[ x^* \mathbb T\an_{\oMpre_\tau} \simeq p_*( \mathbb T\an_{C / T}( - {\textstyle\sum} s_i ) )[1] \quad \text{and} \quad y^* \mathbb T\an_{\oM_\tau} \simeq q_*( \mathbb T\an_{C' / T}(- {\textstyle\sum} s_i') )[1] . \]
	By \cref{lem:checking_pushforward_on_truncation}, we have a canonical equivalence $c_*( \cO_C\alg ) \simeq \cO\alg_{C'}$.
	So by the projection formula, the unit transformation
	\[ \mathbb T\an_{C'/T} \longrightarrow c_* c^* \mathbb T\an_{C'/T} \]
	is also an equivalence.
	In particular, the canonical map $\phi \colon \mathbb T\an_{C/T} \to c^* \mathbb T\an_{C'/T}$ induces a morphism
	\[ \alpha \colon c_* \mathbb T\an_{C/T} \longrightarrow \mathbb T\an_{C'/T} . \]
	On the other hand, $\phi$ also induces a morphism
	\[ \beta \colon \bigoplus c_* s_{i*}s_i^* \mathbb T\an_{C/T} \longrightarrow \bigoplus s'_{i*} s_i^{\prime *} \mathbb T\an_{C'/T} . \]
	The morphisms $\alpha$ and $\beta$ fit in the following commutative diagram
	\[ \begin{tikzcd}
	c_* \mathbb T\an_{C/T} \arrow{r} \arrow{d}{\alpha} & \bigoplus c_* s_{i*} s_i^* \mathbb T\an_{C/T} \arrow{d}{\beta} \\
	\mathbb T\an_{C'/T} \arrow{r} & \bigoplus s'_{i*} s_*^{\prime *} \mathbb T\an_{C'/T}.
	\end{tikzcd} \]
		We complete the diagram as follows:
	\[ \begin{tikzcd}
	\fib(\gamma) \arrow{r} \arrow{d} & \fib(\alpha) \arrow{r} \arrow{d} & \fib(\beta) \arrow{d} \\
	c_* \mathbb T\an_{C/T}(- {\textstyle\sum} s_i) \arrow{r} \arrow{d}{\gamma} & c_* \mathbb T\an_{C/T} \arrow{d}{\alpha} \arrow{r} & \bigoplus c_* s_{i*} s_i^* \mathbb T\an_{C/T} \arrow{d}{\beta} \\
	\mathbb T_{C'/T}\an( - {\textstyle\sum} s_i' ) \arrow{r} & \mathbb T\an_{C'/T} \arrow{r} & \bigoplus s'_{i*} s_i^{\prime*} \mathbb T\an_{C'/T}.
	\end{tikzcd} \]
	Comparing the fiber sequence \eqref{eq:tangent_stabilization_map} with the leftmost column of the above diagram, we obtain a canonical identification
	\[ q_*( \fib(\gamma) )[1] \simeq x^* \mathbb T\an_{\stab} . \]
	Therefore, in order to prove that $\pi_i( x^* \mathbb T\an_{\stab} ) \simeq 0$ for $i < 0$, it suffices to prove that
	\[ \pi_i( q_*( \fib(\gamma))) = 0, \quad i \le -2 . \]
	
	The fiber sequence \eqref{eq:tangent_stabilization_map} immediately implies that this vanishing holds for $i \le -3$.
	We can therefore check that $\pi_{-2}( q_*( \fib(\gamma) ) ) = 0$ on the geometric points of $T$.
	In other words, we are left to prove the vanishing when $T$ is the spectrum of an algebraically closed field.
	We now make the following observations:
	\begin{enumerate}
		\item The sheaf $\fib(\alpha)$ is supported on finitely many points of $C'$.
		Indeed, applying $c_*$ to the fiber sequence $\mathbb T\an_{C/C'} \to \mathbb T\an_{C} \to c^* \mathbb T\an_{C'}$, we obtain an equivalence $\fib(\alpha) \simeq c_* \mathbb T\an_{C/C'}$, and the right hand side is supported on finitely many points of $C'$.
		\item The sheaf $\fib(\beta)$ is supported on finitely many points of $C'$.
			\end{enumerate}
	These two observations imply that
	\[ \fib(\gamma) \simeq \fib\big( \fib(\alpha) \to \fib(\beta) \big) \]
	is also supported on finitely many points of $C'$.
	Hence, in order to obtain the vanishing $\pi_{-2}(q_*(\fib(\gamma)))\allowbreak \simeq 0$, it suffices to show that $\pi_{-2}( \fib(\gamma) ) \simeq 0$.
	As $\pi_{-2}( c_* \mathbb T\an_{C}( - \sum s_i ) )\simeq 0$,  we are then left to prove that the natural map
	\[\pi_{-1}( c_* \mathbb T\an_{C}( - {\textstyle\sum} s_i) ) \longrightarrow \pi_{-1}( \mathbb T\an_{C'}( - {\textstyle\sum} s_i') ).\]
	is surjective.
	We have
	\begin{multline*}
	\pi_{-1}( c_* \mathbb T\an_{C}( - {\textstyle\sum} s_i) )
	\twoheadrightarrow\pi_0\big(c_*\pi_{-1}(\bbT\an_C(-{\textstyle\sum} s_i))\big)
	\simeq c_*\Big(\bigoplus_{\nu\text{ node of }C}i_{\nu*}\cO_\nu\Big)\\
	\twoheadrightarrow \bigoplus_{\nu'\text{ node of }C'}i_{\nu'*}\cO_{\nu'}
	\simeq\pi_{-1}( \mathbb T\an_{C'}( - {\textstyle\sum} s_i') ),
	\end{multline*}
	where $i_\nu$ and $i_{\nu'}$ denote the inclusion of points.
		This completes the proof.
\end{proof}

\begin{proof}[Proof of \cref{lem:contracting_edges_colim}]
	The map $\oMpre_{\tsigma} \to \oM_\sigma$ factors as
	\[ \oMpre_{\tsigma} \longrightarrow \oMpre_\sigma \longrightarrow \oM_\sigma , \]
	where the first map is forgetting tails at the level of prestable curves, and the second map is stabilization.
	The first map is flat because forgetting each tail is isomorphic to the projection from the universal curve.
	The second map is flat by \cref{lem:stab_is_smooth}.
	So $\oMpre_{\tsigma} \to \oM_\sigma$ is flat, and therefore the fiber product $\oM_\tau \times_{\oM_\sigma} \oMpre_{\tsigma}$ is underived.
	By the construction of gluing along closed immersions (see \cite[(6.7)]{Porta_Yu_Representability_theorem}), the colimit is also underived.
	Therefore, it is enough to prove the lemma at the underived level.
	Observe that the normal crossing substack
	\[\oM_\tau\times_{\oM_\sigma}\oMpre_{\tsigma}\subset\oMpre_{\tsigma}\]
	is stratified by the images of
	\[\coprod_{(a\colon\tau_l^i\to\tau) \in \Lambda} \oMpre_{\tau_l^i},\]
	and the stratum given by $\oMpre_{\tau_l^i}$ has codimension $l$.
	Note it is enough to prove the lemma étale locally on $\oMpre_{\tsigma}$.
	After an étale base change, $\oM_\tau \times_{\oM_\sigma} \oMpre_{\tsigma}\subset\oMpre_{\tsigma}$ becomes a simple normal crossing divisor.
	At this point, the conclusion follows by induction from \cite[Theorem 6.5]{Porta_Yu_Representability_theorem}.
\end{proof}

Next we upgrade \cref{const:contracting_edges_modular_graph} to take into account curve classes.

\begin{construction}
	Notation as in \cref{const:contracting_edges_modular_graph}.
	Given $\beta \colon V_\sigma \to A(X)$, we define a category $\Lambda_\beta$ as follows.
	An object of $\Lambda_\beta$ consists of an object $(a\colon\tau_l^i\to\tau)$ of $\Lambda$ and a map $\beta^{i_j}_l\colon V_{\tau_l^i}\to A(X)$ that adds up to $\beta$ under the contraction $c_l^i \colon \tau_l^i \to \sigma$.
	A morphism in $\Lambda_\beta$ is a morphism between the underlying objects in $\Lambda$ that is compatible with the curve classes.
	Note we have a forgetful functor $\Lambda_\beta \to \Lambda$.
	(In the special case when $\sigma$ has a single non-tail vertex, $g(\sigma)=0$ and $k=0$, the category $\Lambda_\beta$ is equivalent to the one considered in \cite[\S 4.2]{Mann_Brane_actions}.)
\end{construction}

Using the description via mapping stacks as in \cref{rem:RMXtaubeta}, we can functorially attach to every object $(a\colon\tau_l^i\to\tau, \beta^{i_j}_l) \in \Lambda_\beta$ a commutative diagram
\begin{equation} \label{eq:Lambda_beta}
\begin{tikzcd}
	\R\oM(X/S, \tau_l^i, \beta_l^{i_j}) \arrow{r} \arrow{d} & \R\oM(X/S, \tsigma, \beta) \arrow{d} \\
	\oMpre_{\tau_l^i} \arrow{r} & \oMpre_{\tsigma} .
\end{tikzcd}
\end{equation}
Combining with diagrams \eqref{eq:trick_avoiding_full_functoriality} and \eqref{eq:commutativity_between_stabilization_and_contracting_edges}, we obtain a natural map
\[ p^{i_j}_l \colon \R\oM(X/S, \tau_l^i, \beta_l^{i_j}) \longrightarrow \oM_\tau \times_{\oM_\sigma} \R\oM(X/S,\tsigma,\beta).\]
By the functoriality of diagrams \eqref{eq:Lambda_beta} and \eqref{eq:trick_avoiding_full_functoriality}, we obtain a canonical map
\begin{equation} \label{eq:contracting_edge}
\colim_{(a\colon\tau_l^i\to\tau) \in \Lambda} \coprod_j \R\oM(X/S, \tau_l^i, \beta_l^{i_j}) \longrightarrow \oM_\tau \times_{\oM_\sigma} \R\oM(X/S,\tsigma,\beta) .
\end{equation}

\begin{theorem} \label{thm:contracting_edges}
	The canonical map \eqref{eq:contracting_edge} is an equivalence in $\dAnk$.
	Moreover, we have
	\[\sum_l (-1)^{l+1} \sum_{i,j} p_{l*}^{i_j}\big[\cO_{\R\oM(X/S,\tau^i_l,\beta^{i_j}_l)}\big] = \big[\cO_{\oM_\tau\times_{\oM_\sigma}\R\oM(X/S,\tsigma,\beta)}\big] \]
	in $K_0\big( \Cohb \big( \oM_\tau\times_{\oM_\sigma}\R\oM(X/S,\tsigma,\beta) \big) \big)$.
\end{theorem}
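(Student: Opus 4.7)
The plan is to deduce both statements from \cref{lem:contracting_edges_colim} combined with the mapping-stack description of derived stable map moduli from \cref{rem:RMXtaubeta}. Let $U \coloneqq \oM_\tau \times_{\oM_\sigma} \oMpre_{\tsigma}$, so that $U \simeq \colim_{\Lambda} \oMpre_{\tau_l^i}$ in $\dAnk$ by \cref{lem:contracting_edges_colim}.

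The key geometric observation is that the universal glued curve depends only on the underlying nodal curve, not on its combinatorial decomposition into vertex components. Concretely, for each $(\tau_l^i \to \tau) \in \Lambda$, the natural map
\[ \oCpre_{\tau_l^i} \longrightarrow \oCpre_{\tsigma} \times_{\oMpre_{\tsigma}} \oMpre_{\tau_l^i} \]
is an equivalence (both sides parametrize the same total nodal curves with the same sets of sections), and similarly $\oCpre_U \simeq \oCpre_{\tsigma} \times_{\oMpre_{\tsigma}} U$. Combined with the mapping-stack description from \cref{rem:RMXtaubeta}, this shows that $\R\oMpre(X/S, \tau_l^i)$ (resp.\ the analogous object over $U$) is the derived pullback of the single mapping stack $\bfMap_{\oMpre_{\tsigma}\times S}(\oCpre_{\tsigma}\times S, X\times\oMpre_{\tsigma})$ along $\oMpre_{\tau_l^i} \to \oMpre_{\tsigma}$ (resp.\ $U \to \oMpre_{\tsigma}$). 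Because the colimit of \cref{lem:contracting_edges_colim} is computed as an iterated pushout along closed immersions and such pushouts are universal by \cref{lem:gluing_along_closed_immersion_is_universal}, pullback of the mapping stack commutes with this colimit. Restricting to $(\tau_l^i, \beta_l^{i_j})$-stable maps—an open condition which decomposes into the coproduct $\coprod_j$ over curve-class decompositions summing to $\beta$—yields the first equivalence.

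For the K-theory identity, I use that étale-locally on $\oMpre_{\tsigma}$ the closed substack $U \subset \oMpre_{\tsigma}$ is cut out by a simple normal crossing divisor whose codimension-$l$ strata are precisely the images of $\oMpre_{\tau_l^i}$ (implicit in the proof of \cref{lem:contracting_edges_colim}). The associated Koszul / Mayer--Vietoris resolutions give the inclusion-exclusion identity
\[ \big[\cO_U\big] = \sum_{l \ge 1} (-1)^{l+1} \sum_{i} \big[\cO_{\oMpre_{\tau_l^i}}\big] \]
in $K_0(\Cohb(\oMpre_{\tsigma}))$. Applying derived base change along $\R\oM(X/S,\tsigma,\beta) \to \oMpre_{\tsigma}$—which by part 1 sends $\cO_{\oMpre_{\tau_l^i}}$ to $\bigoplus_j \cO_{\R\oM(X/S,\tau_l^i,\beta_l^{i_j})}$ and $\cO_U$ to $\cO_{\oM_\tau\times_{\oM_\sigma}\R\oM(X/S,\tsigma,\beta)}$—translates this into the required identity after pushforward by the $p_l^{i_j}$.

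The main obstacle is establishing the pullback-compatibility of the universal glued curves across $\Lambda$ and ensuring that derived pullback interacts well with the colimit, so that the K-theoretic Koszul resolution descends to the derived setting. Both rely on \cref{lem:gluing_along_closed_immersion_is_universal} together with the inductive normal-crossings argument already used in the proof of \cref{lem:contracting_edges_colim}.
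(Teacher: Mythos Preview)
Your proposal is correct and follows essentially the same route as the paper. The paper packages the pullback identification $\coprod_j \R\oM(X/S,\tau_l^i,\beta_l^{i_j}) \simeq \R\oM(X/S,\tsigma,\beta) \times_{\oMpre_{\tsigma}} \oMpre_{\tau_l^i}$ as a separate lemma (\cref{lem:contracting_edges_pullback}) and cites \cite[Lemma~3]{Lee_Quantum_K-theory_I} for the inclusion--exclusion identity on $K_0(\oMpre_{\tsigma})$, whereas you argue both inline via the universal-curve identification and the Koszul resolution for SNC divisors; but the ingredients and logical structure are the same. One small point of bookkeeping: the paper pulls back along $d\colon \oM_\tau\times_{\oM_\sigma}\R\oM(X/S,\tsigma,\beta)\to \oM_\tau\times_{\oM_\sigma}\oMpre_{\tsigma}$ rather than along $\R\oM(X/S,\tsigma,\beta)\to\oMpre_{\tsigma}$, so that the resulting identity lands directly in the correct $K_0$ group without needing to appeal to support considerations.
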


\begin{lemma} \label{lem:contracting_edges_pullback}
	For every $(a\colon\tau_l^i\to\tau)\in\Lambda$, the diagram
	\[\begin{tikzcd}
	\coprod_j \R\oM(X/S,\tau^i_l,\beta^{i_j}_l)\arrow{r}\arrow{d}&\R\oM(X/S,\tsigma,\beta)\arrow{d}\\
	\oMpre_{\tau_l^i}\arrow{r}&\oMpre_{\tsigma}
	\end{tikzcd}\]
	is a pullback.
\end{lemma}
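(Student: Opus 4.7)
The plan is to exhibit both sides of the purported pullback square as Zariski open substacks of a common derived mapping stack over $\oMpre_{\tau_l^i} \times S$. First I would establish the base-change identification of universal glued curves. Since the glued curve associated to a graph-marked prestable curve is intrinsic to the underlying abstract pointed curve and does not depend on the combinatorial grouping imposed by the graph, and since gluing along closed immersions commutes with arbitrary derived base change by \cref{lem:gluing_along_closed_immersion_is_universal}, the natural map
\[ \oCpre_{\tau_l^i} \longrightarrow \oCpre_{\tsigma} \times_{\oMpre_{\tsigma}} \oMpre_{\tau_l^i} \]
is an equivalence. Combined with the base-change property of derived mapping stacks (\cite[Lemma 8.4]{Porta_Yu_Derived_Hom_spaces}), this yields an equivalence
\[ \bfMap_{\oMpre_{\tau_l^i} \times S}(\oCpre_{\tau_l^i} \times S,\, X \times \oMpre_{\tau_l^i}) \simeq \oMpre_{\tau_l^i} \times_{\oMpre_{\tsigma}} \bfMap_{\oMpre_{\tsigma} \times S}(\oCpre_{\tsigma} \times S,\, X \times \oMpre_{\tsigma}). \]

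Via this identification, both sides of the diagram in the lemma can be realized as open substacks of the left-hand mapping stack above, using \cref{rem:RMXtaubeta}. The right-hand side $\oMpre_{\tau_l^i} \times_{\oMpre_{\tsigma}} \R\oM(X/S,\tsigma,\beta)$ is cut out by the $\tsigma$-stability condition together with the prescription of the total curve class $\beta$ per $\tsigma$-vertex. The left-hand side $\coprod_j \R\oM(X/S,\tau_l^i,\beta_l^{i_j})$ is a disjoint union of open substacks cut out by $\tau_l^i$-stability together with the finer class prescription $\beta_l^{i_j}$ per $\tau_l^i$-vertex.

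The final step is to compare these two families of open conditions. Since $\tau_l^i \to \tsigma$ contracts only non-tail edges (the extra $k$ tails being attached compatibly), the two graphs carry the same set of tails; hence the marked points on the glued curve coincide, and $\tsigma$-stability and $\tau_l^i$-stability impose identical conditions on geometric fibers. For the class condition, the curve class of the restriction of the universal map to any irreducible component of the domain is a well-defined element of $A(X)$ which is locally constant on the base, so the constraint that the total class along the chain $\gamma_l$ equals $\beta(v_e)$ decomposes, over any connected base, as the disjoint union over all admissible splittings $j$ of the per-component prescriptions $\beta_l^{i_j}$. This yields the required equivalence.

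I expect the main obstacle to be making the first step—the base-change identification of universal glued curves in the derived setting—fully rigorous; however, this should follow directly from the pushout characterization of the glued curve in combination with \cref{lem:gluing_along_closed_immersion_is_universal}, and the remaining combinatorial comparison of open conditions is then formal.
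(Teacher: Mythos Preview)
Your proposal is correct and follows essentially the same route as the paper: first identify the universal glued curves under base change (the paper does this via \cref{rem:product_decomposition_M_tau} and flatness rather than \cref{lem:gluing_along_closed_immersion_is_universal}) to obtain a pullback of mapping stacks, then compare the Zariski-open stability and curve-class conditions---where the paper simply reduces to truncations and says it ``follows directly from the definitions,'' which is exactly the combinatorial comparison you spell out. One minor point: the base-change property of derived mapping stacks is elementary (a consequence of the universal property) and not the content of \cite[Lemma 8.4]{Porta_Yu_Derived_Hom_spaces}, which computes the relative cotangent complex.
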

\begin{proof}
	By \cref{rem:product_decomposition_M_tau}, we have an underived pullback square
	\[\begin{tikzcd}
	\oCpre_{\tau^i_l} \rar \dar & \oCpre_{\tsigma} \dar\\
	\oMpre_{\tau^i_l} \rar & \oMpre_{\tsigma}.
	\end{tikzcd}\]
	It is also a derived pullback square as the vertical maps are flat.
	Hence we obtain a derived pullback square
	\[\begin{tikzcd}
	\bfMap_{\oMpre_{\tau^i_l}\times S}( \oCpre_{\tau^i_l}\times S , X \times \oMpre_{\tau^i_l}) \arrow{r} \arrow{d} & \bfMap_{\oMpre_{\tsigma}\times S}( \oCpre_{\tsigma}\times S, X \times \oMpre_{\tsigma}) \arrow{d} \\
	\oMpre_{\tau^i_l} \arrow{r} & \oMpre_{\tsigma}.
	\end{tikzcd}\]
	Now it is enough to prove that the diagram
	\[ \begin{tikzcd}
	\coprod_j\R \oM(X/S, \tau^i_l,\beta^{i_j}_l) \arrow{r} \arrow{d} & \R\oM(X/S, \tsigma, \beta) \arrow{d} \\
	\bfMap_{\oMpre_{\tau^i_l}\times S}( \oCpre_{\tau^i_l}\times S , X \times \oMpre_{\tau^i_l}) \rar & \bfMap_{\oMpre_{\tsigma}\times S}( \oCpre_{\tsigma}\times S, X \times \oMpre_{\tsigma})
	\end{tikzcd} \]
	is a pullback square.
	By \cref{rem:RMXtaubeta}, the two vertical maps are Zariski open immersions, so it is enough to check the pullback property after passing to the truncations.
	At the truncated level, it follows directly from the definitions.
\end{proof}

\begin{proof}[Proof of \cref{thm:contracting_edges}]
	The first statement follows combining Lemmas \ref{lem:gluing_along_closed_immersion_is_universal}, \ref{lem:contracting_edges_colim} and \ref{lem:contracting_edges_pullback}.
	For the second one, consider the diagram
	\[ \begin{tikzcd}
		\displaystyle \colim_{(a\colon\tau_l^i\to\tau) \in \Lambda} \coprod_j \R \oM(X/S, \tau_l^i, \beta_l^{i_j}) \arrow{r} \arrow{d}{d_l^i} & \oM_\tau \times_{\oM_\sigma} \R \oM(X/S, \tsigma, \beta) \arrow{d}{d} \\
		\displaystyle \colim_{(a\colon\tau_l^i\to\tau) \in \Lambda} \oMpre_{\tau_l^i} \arrow{r} &  \oM_\tau \times_{\oM_\sigma} \oMpre_{\tsigma} ,
	\end{tikzcd} \]
	where $d \colon \R \oM(X/S, \tsigma, \beta) \to \oMpre_{\tsigma}$ and $d_l^i \colon \coprod_j \R \oM(X/S, \tau_l^i, \beta_l^{i_j}) \to \oMpre_{\tau_l^i}$ are the maps induced by taking domain.
	It follows from \cite[Lemma 3]{Lee_Quantum_K-theory_I}, that
	\[ \big[\cO_{\oM_\tau \times_{\oM_\sigma} \oMpre_{\tsigma}} \big] = \sum_l (-1)^{l+1} \sum_i q_{l*}^i \big[ \cO_{\oMpre_{\tau_l^i}} \big] \]
	in $K_0(\oMpre_{\tsigma})$,
		where
	\[ q_l^i \colon \oMpre_{\tau_l^i} \longrightarrow \oM_\tau \times_{\oM_\sigma} \oMpre_{\tsigma}. \]
	Let
	\[p^{i}_l \colon \coprod_j \R\oM(X/S, \tau_l^i, \beta_l^{i_j}) \longrightarrow \oM_\tau \times_{\oM_\sigma} \R\oM(X/S,\tsigma,\beta)\]
	be induced from all $p_l^{i_j}$.
	We have
	\begin{align*}
		\big[ \cO_{\oM_\tau \times_{\oM_\sigma} \R \oM(X/S, \tsigma, \beta)} \big] & = d^* \big[ \cO_{\oM_\tau \times_{\oM_\sigma} \oMpre_{\tsigma}} \big] \\
		& = \sum_l (-1)^{l+1} \sum_i d^* q_{l*}^i \big[ \cO_{\oMpre_{\tau_l^i}} \big] \\
		& = \sum_l (-1)^{l+1} \sum_{i} p_{l*}^{i} d_l^{i*} \big[ \cO_{\oMpre_{\tau_l^i}} \big] \\
		& = \sum_l (-1)^{l+1} \sum_{i,j} p_{l*}^{i_j} \big[ \cO_{\R\oMpre(X/S, \tau_l^i, \beta_l^{i_j})} \big] ,
	\end{align*}
	where the third equality follows from the derived base change theorem (see \cite[Theorem 1.5]{Porta_Yu_Derived_Hom_spaces}) and \cref{lem:contracting_edges_pullback}.
\end{proof}

\section{Deformation to the normal bundle} \label{sec:deformation_to_the_normal_bundle}

In this section we construct the deformation to the normal bundle of the inclusion $\trunc(X) \hookrightarrow X$ for any derived \kanal space $X$.
This is worked out in derived algebraic geometry by Gaitsgory-Rozenblyum in \cite[\S 9.2]{Gaitsgory_Study_II}.
Although it is possible to give a parallel treatment in the analytic setting, it is more straightforward to obtain this deformation via analytification relative to $X$ of its algebraic counterpart.
For this reason, we begin this section by discussing analytification relative to a derived \kanal space.

\subsection{Relative derived analytification} \label{sec:relative_analytification}

In \cite[\S 3]{Porta_Yu_Representability_theorem} we introduced and studied the derived analytification functor in derived non-archimedean geometry in the absolute case.
Here we extend the construction to the relative setting, first over any derived $k$-affinoid space.

Let $\dAff_k^{\mathrm{aft}}$ be the $\infty$-category of derived affine $k$-schemes almost of finite presentation, and $j \colon \dAff_k^{\mathrm{aft}} \hookrightarrow \dAff_k$ the inclusion into the \infcat of all derived affine $k$-schemes.
Equipping both sides with the étale topology, $j$ becomes a continuous and cocontinuous morphism of sites (see \cite[\S 2.4]{Porta_Yu_Higher_analytic_stacks} for the terminology).
We denote
\[ \dStlaft_k \coloneqq \Sh( \dAff_k^{\mathrm{aft}}, \tauet )^\wedge , \qquad \dSt_k \coloneqq \Sh(\dAff_k, \tauet)^\wedge,\]
the \infcats of stacks (i.e.\ hypercomplete sheaves) on these sites.
We refer to objects in $\dStlaft_k$ as \emph{derived stacks locally almost of finite type}.
Left Kan extension along $j$ induces a fully faithful functor
\[ j_s \colon \dStlaft_k \longhookrightarrow \dSt_k , \]
which has a right adjoint $j^s$.

Recall from \cref{sec:the_stack_of_derived_k-analytic_spaces} that
\[ \St(\dAfd_k) \coloneqq \Sh(\dAfd_k, \tauet)^\wedge . \]
The derived analytification functor (in the absolute case) introduced in \cite[\S 3]{Porta_Yu_Representability_theorem} induces a functor
\[ (-)\an \colon \dAff^{\mathrm{aft}}_k \longrightarrow \St(\dAfd_k) . \]
Left Kan extension along $\dAff^{\mathrm{aft}}_k \hookrightarrow \dStlaft_k$ induces a functor
\[ (-)^{\mathrm{an}, \mathrm{laft}} \colon \dStlaft_k \longrightarrow \St(\dAfd_k) . \]
On the other hand, \emph{right} Kan extension along $j \colon \dAff^{\mathrm{aft}}_k \hookrightarrow \dAff_k$ induces a functor
\[ (-)\an \colon \dAff_k \longrightarrow \St(\dAfd_k) . \]

\begin{example} \label{eg:analytification_affines_not_aft}
	Let $\Spec(A) \in \dAff_k$ be a derived affine $k$-scheme, not necessarily almost of finite type.
	Then
	\[ \Spec(A)\an \xrightarrow{\ \sim\ } \lim_{B \to A} \Spec(B)\an , \]
	where the limit ranges over all the morphisms $B \to A$ with $B$ almost of finite presentation.
\end{example}

\begin{lemma}
	Let $U \to V$ be an étale covering in $\dAff_k$ and $U_\bullet \colon \mathbf \Delta\op \to \St(\dAfd_k)$ its \v{C}ech nerve.
	Let $|U\an_\bullet|$ denote the colimit of the simplicial diagram $U\an_\bullet$.
	Then the canonical map
	\[ | U_\bullet\an | \longrightarrow V\an \]
	is an equivalence in $\St(\dAfd_k)$.
\end{lemma}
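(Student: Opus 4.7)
The plan is to reduce to the case in which $V$ and $U$ lie in $\dAff_k^{\mathrm{aft}}$, and there to invoke hypercomplete étale descent in $\St(\dAfd_k)$.

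First I would spread out. Write $V = \Spec(A)$ and let $I_V$ denote the filtered category of aft approximations $B \to A$; by \cref{eg:analytification_affines_not_aft}, $V\an \simeq \lim_{I_V} \Spec(B)\an$ in $\St(\dAfd_k)$. Since an étale morphism of derived affines is locally of finite presentation, standard spreading out produces a cofinal subcategory $I_V^0 \subset I_V$ together with, for each $(B \to A) \in I_V^0$, an étale morphism $U_B \to \Spec(B)$ in $\dAff_k^{\mathrm{aft}}$ whose base change along $V \to \Spec(B)$ reproduces $U \to V$, functorially in $I_V^0$. Compatibility of derived tensor products with filtered colimits on the source then identifies the Čech nerve $U_\bullet$ levelwise with $\lim_{I_V^0} U_{B,\bullet}$, where $U_{B,\bullet}$ denotes the Čech nerve of $U_B \to \Spec(B)$ in $\dAff_k^{\mathrm{aft}}$.

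Next I would settle the aft case: for each $(B \to A) \in I_V^0$, the canonical map $|U_{B,\bullet}\an| \to \Spec(B)\an$ is an equivalence in $\St(\dAfd_k)$. Indeed, derived analytification on $\dAff_k^{\mathrm{aft}}$ sends étale morphisms to étale morphisms of derived \kanal spaces and commutes with the relevant fiber products (see \cite[\S 3]{Porta_Yu_Representability_theorem}), so $U_{B,\bullet}\an$ is the Čech nerve of a surjective étale morphism of derived \kanal spaces, and the claim becomes hypercomplete étale descent in $\St(\dAfd_k)$ (compare \cite[Propositions 8.5 and 8.7]{Porta_Yu_Derived_non-archimedean_analytic_spaces}).

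Finally I would interchange the geometric realization with the filtered limit. The transition morphisms in $I_V^0$ are Cartesian over each face map of the Čech nerve, so the $U_{B,\bullet}\an$ assemble into a compatible family of effective étale groupoid objects whose filtered limit $U_\bullet\an$ is again an effective étale groupoid object; its realization is therefore computed as the filtered limit of the individual realizations. Combining the three steps yields
\[ |U_\bullet\an| \simeq \lim_{I_V^0} |U_{B,\bullet}\an| \simeq \lim_{I_V^0} \Spec(B)\an \simeq V\an. \]
The hard part is precisely this last interchange, since filtered limits do not in general commute with geometric realizations in an $\infty$-topos; effectivity of the groupoids $U_{B,\bullet}\an$ together with the fact that all face maps in sight are (filtered limits of) étale morphisms of derived \kanal spaces is what makes the interchange valid here.
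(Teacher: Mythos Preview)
Your interchange step contains a genuine gap. You correctly identify that commuting $|{-}|$ with the filtered limit is the crux, but the justification you give---effectivity of the groupoids $U_{B,\bullet}\an$ together with étale face maps---does not prove it. Effectivity in an $\infty$-topos tells you only that $U_\bullet\an$ is the \v{C}ech nerve of $U\an \to |U_\bullet\an|$; it says nothing about identifying $|U_\bullet\an|$ with $\lim_B |U_{B,\bullet}\an|$. Unwinding your argument, what is actually required is that $U\an \to V\an$ be an effective epimorphism in $\St(\dAfd_k)$: once you know $U_\bullet\an$ is the \v{C}ech nerve of $U\an \to V\an$ (which follows since limits commute with limits), effectivity gives $|U_\bullet\an| \simeq V\an$ if and only if that map is an effective epi. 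But you never establish this, and it does not follow from each $U_B\an \to \Spec(B)\an$ being an effective epi, since filtered limits of effective epimorphisms need not be effective epimorphisms. The étale hypothesis on the face maps does not supply the missing step either.

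The paper's proof avoids the interchange entirely by using a \emph{single} aft model rather than the whole filtered system. By \cite[Remark 2.2.8]{DAG-V} one can find $V' \in \dAff_k^{\mathrm{aft}}$ and an étale cover $U' \to V'$ with $U \simeq U' \times_{V'} V$. Since analytification on $\dAff_k^{\mathrm{aft}}$ is a morphism of sites, $|(U'_\bullet)\an| \to (V')\an$ is an equivalence. A short cofinality check (only $V$ is non-aft) gives $U_n\an \simeq (U'_n)\an \times_{(V')\an} V\an$ levelwise. Then universality of colimits in the $\infty$-topos $\St(\dAfd_k)$ yields directly that
\[ |U_\bullet\an| \simeq |(U'_\bullet)\an| \times_{(V')\an} V\an \simeq (V')\an \times_{(V')\an} V\an \simeq V\an . \]
Equivalently: $U\an \to V\an$ is an effective epi because it is the pullback of the effective epi $(U')\an \to (V')\an$ along $V\an \to (V')\an$. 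This is precisely the missing ingredient in your argument, and it requires only one spreading-out step.
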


\begin{proof}
	It follows from \cite[Remark 2.2.8]{DAG-V} that we can assume there exists $V' \in \dAff_k^{\mathrm{aft}}$ and an étale covering $U' \to V'$ such that
	\[ \begin{tikzcd}
	U \arrow{r} \arrow{d} & V \arrow{d} \\
	U' \arrow{r} & V'
	\end{tikzcd} \]
	is a pullback square.
	Let $U'_\bullet$ be the \v{C}ech nerve of $U' \to V'$.
	Since
	\[ (-)\an \colon \dAff_k^{\mathrm{aft}} \longrightarrow \dAnk \]
	is a morphism of sites, we see that
	\[ | (U_\bullet')\an | \longrightarrow (V')\an \]
	is an equivalence in $\St(\dAfd_k)$.
	Since colimits are stable under pullbacks in $\St(\dAfd_k)$, the diagram
	\[ \begin{tikzcd}
	{| U_\bullet\an |} \arrow{r} \arrow{d} & V \arrow{d} \\
	{| (U_\bullet')\an |} \arrow{r} & (V')\an
	\end{tikzcd} \]
	is a pullback square.
	It follows that the top horizontal morphism is an equivalence, completing the proof.
\end{proof}

The above lemma guarantees that left Kan extension along $(-)\an \colon \dAff_k \to \St(\dAfd_k)$ induces a colimit preserving functor
\[ (-)\an \colon \dSt_k \longrightarrow \St(\dAfd_k) . \]
The following lemma ensures that there is no ambiguity when analytifying a derived stack locally almost of finite type:

\begin{lemma}
	For any $F \in \dStlaft_k$, there is a canonical equivalence
	\[ F^{\mathrm{an}, \mathrm{laft}} \simeq ( j_s(F) )\an . \]
\end{lemma}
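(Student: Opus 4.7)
The plan is to observe that both sides of the claimed equivalence define colimit-preserving functors $\dStlaft_k \to \St(\dAfd_k)$, so it suffices to compare them on representable objects. The functor $F \mapsto F^{\mathrm{an},\mathrm{laft}}$ preserves colimits by construction, since it is defined as a left Kan extension. For the composite $F \mapsto (j_s F)\an$, the functor $j_s$ is a left adjoint (hence colimit-preserving), and the analytification $(-)\an \colon \dSt_k \to \St(\dAfd_k)$ was itself obtained as a colimit-preserving left Kan extension; both steps therefore preserve colimits.

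Since every object of $\dStlaft_k$ is a colimit of representables coming from $\dAff_k^{\mathrm{aft}}$, it suffices to produce a natural equivalence when $F$ is represented by some $U \in \dAff_k^{\mathrm{aft}}$. First, the universal property of left Kan extension along the (hyper)sheafified Yoneda embedding gives $F^{\mathrm{an},\mathrm{laft}} \simeq U\an$. Second, because $j$ is fully faithful and a continuous, cocontinuous morphism of sites, the left adjoint $j_s$ sends the representable associated to $U$ to the representable in $\dSt_k$ associated to $j(U) \in \dAff_k$. Third, evaluating $(-)\an \colon \dSt_k \to \St(\dAfd_k)$, defined as a left Kan extension, on the representable associated to $j(U)$ returns the value of the original functor $(-)\an \colon \dAff_k \to \St(\dAfd_k)$ on $j(U)$; and because the latter was defined as a right Kan extension along the fully faithful $j$, its restriction to $\dAff_k^{\mathrm{aft}}$ agrees with the original analytification functor, so this value is again $U\an$.

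Chaining these three natural equivalences yields $F^{\mathrm{an},\mathrm{laft}} \simeq (j_s F)\an$ for representable $F$, and the general case follows by colimit-preservation on both sides. I expect the main point requiring care to be the third step: to guarantee that the left Kan extension $(-)\an \colon \dSt_k \to \St(\dAfd_k)$ restricts correctly on representables, one must know that the analytification functor on $\dAff_k$ already carries \v{C}ech nerves of étale hypercoverings to colimit diagrams in $\St(\dAfd_k)$, which is precisely the content of the preceding lemma. This ensures that the sheafification and hypercompletion implicit in the definition of $\dSt_k$ do not alter the restriction of $(-)\an$ to $\dAff_k$, so the comparison on representables goes through unobstructed.
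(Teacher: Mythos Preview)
Your proposal is correct and follows essentially the same approach as the paper: reduce to representables by colimit-preservation on both sides, then invoke the full faithfulness of $\dAff_k^{\mathrm{aft}} \hookrightarrow \dAff_k$ and $\dAff_k^{\mathrm{aft}} \hookrightarrow \dStlaft_k$ to identify both sides with $U\an$. The paper's proof is a terse two-sentence version of your argument; your more detailed unpacking of the third step (and its dependence on the preceding \v{C}ech-nerve lemma) is accurate and makes explicit what the paper leaves implicit.
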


\begin{proof}
	As both $j_s \colon \dStlaft_k \to \dSt_k$ and $(-)\an \colon \dSt_k \to \St(\dAfd_k)$ commute with colimits, it is enough to check that the two constructions agree when $F \in \dAff_k^{\mathrm{aft}}$.
	In this case, the result follows from the full faithfulness of the functors $\dAff_k^{\mathrm{aft}} \hookrightarrow \dAff_k$ and $\dAff_k^{\mathrm{aft}} \to \dStlaft_k$.
\end{proof}

Fix now a derived $k$-affinoid space $X \in \dAfd_k$.
Let $A \coloneqq \Gamma(\cO_X\alg)$.
For any $\Spec(B) \in \dAff_k^{\mathrm{afp}}$, we have
\begin{align*}
\Map_{\dSt_k}( \Spec(A), \Spec(B) ) & \simeq \Map_{\RTop(\cTetk)}( \Spec(A), \Spec(B) ) \\
& \simeq \Map_{\RTop(\cTetk)}( X\alg, \Spec(B) ) \\
& \simeq \Map_{\dAnk}( X, \Spec(B)\an )
\end{align*}
When $\Spec(B) \in \dAff_k$, we get
\begin{align*}
\Map_{\dSt_k}( \Spec(A), \Spec(B) ) & \simeq \lim_{\Spec(B') \to \Spec(B)} \Map_{\dSt_k}( \Spec(A), \Spec(B') ) \\
& \simeq \lim_{\Spec(B') \to \Spec(B)} \Map_{\St(\dAfd_k)}( X, \Spec(B')\an ) \\
& \simeq \Map_{\St(\dAfd_k)}( X, \Spec(B)\an ) ,
\end{align*}
where the colimits range over all the morphisms $\Spec(B') \to \Spec(B)$ with $B'$ almost of finite presentation.
In particular, taking $B = A$, we obtain a canonical morphism
\[ \eta_X \colon X \to \Spec(A)\an . \]
Pulling back along $\eta_X$ gives a functor
\[ \eta_X^* \colon \dAnSt_{/\Spec(A)\an \times Y\an} \longrightarrow \dAnSt_{/X \times Y\an} . \]
At this point, we can give the following definition.

\begin{definition}
	Let $X \in \dAfd_k$ be a derived $k$-affinoid space and let $A \coloneqq \Gamma(\cO_X\alg)$.
	The functor $(-)\an_X$ of \emph{analytification relative to $X$} is the composition
	\[ \dSt_{/\Spec(A)} \xrightarrow{(-)\an} \dAnSt_{/\Spec(A)\an} \xrightarrow{\ \eta_X^*\ } \dAnSt_{/X},\]
\end{definition}

\begin{lemma} \label{lem:relative_analytification}
	Let $X \in \dAfd_k$ be a derived $k$-affinoid space, $A \coloneqq \Gamma(\cO_X\alg)$ and $S \coloneqq \Spec(A)$.
	The following hold:
	\begin{enumerate}
		\item \label{lem:relative_analytification:identity} The analytification of $S$ relative to $X$ coincides with $X$ itself.
		
		\item \label{lem:relative_analytification:affine_space} The analytification of $\bbA^n_A$ relative to $X$ coincides with $\bA^n_X$.
		
		\item \label{lem:relative_analytification:truncations} The analytification of $\mathrm t_{\le n} S$ relative to $X$ coincides with $\mathrm t_{\le n} X$.
		
		\item  \label{lem:relative_analytification:reduced_part} The analytification of $S_\red$ relative to $X$ coincides with $X_\red$.
	\end{enumerate}
\end{lemma}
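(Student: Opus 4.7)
The plan is to verify each part by unwinding the definition of the relative analytification $(-)\an_X = \eta_X^* \circ (-)\an$ and exploiting the compatibility of absolute analytification with finite limits. Part (1) is immediate: $\eta_X^*(S\an) \simeq X$, since pulling back $\mathrm{id}_{S\an}$ along $\eta_X \colon X \to S\an$ returns $X$ with its own identity.

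For (2), I would factor $\bA^n_A \simeq \Spec(A) \times_k \bA^n_k$ in $\dAff_k$. The absolute analytification functor $(-)\an \colon \dAff_k \to \St(\dAfd_k)$ is constructed as a right Kan extension along the fully faithful embedding $\dAff_k^{\mathrm{aft}} \hookrightarrow \dAff_k$, and hence preserves finite limits; combined with the fact that $(-)\an$ already preserves products on $\dAff_k^{\mathrm{aft}}$ as a consequence of the right adjointness of the analytic spectrum functor of \cite{Porta_Yu_Representability_theorem}, this yields $\bA^n_A\an \simeq S\an \times (\bA^n_k)\an$. Pulling back along $\eta_X$ then gives $X \times (\bA^n_k)\an \simeq \bA^n_X$, as desired.

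For (3) and (4), I would rely on the fact that both $n$-truncation and reduction commute with absolute analytification, a compatibility which follows from the construction of the analytic spectrum in \cite{Porta_Yu_Representability_theorem} together with the resulting identification of underlying $\infty$-topoi. The canonical closed immersions $\mathrm{t}_{\le n}(S) \hookrightarrow S$ and $S_\red \hookrightarrow S$ therefore analytify to closed immersions $\mathrm{t}_{\le n}(S\an) \hookrightarrow S\an$ and $(S\an)_\red \hookrightarrow S\an$; pulling these back along $\eta_X$ produces the closed immersions $\mathrm{t}_{\le n}(X) \hookrightarrow X$ and $X_\red \hookrightarrow X$, since $\eta_X$ induces equivalences at the level of truncations and reductions (both sides realise the same underlying classical object, only refined by additional derived data). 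The main obstacle is ensuring that the right Kan extension in \cref{eg:analytification_affines_not_aft} interacts correctly with products in (2); once this bookkeeping is settled, (3) and (4) follow from standard base-change compatibilities along $\eta_X$.
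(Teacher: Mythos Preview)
Your treatment of (1) and (2) is essentially the paper's argument and is fine.

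For (3) and (4), however, there is a genuine gap. The compatibility of absolute analytification with $n$-truncation and reduction that you invoke from \cite{Porta_Yu_Representability_theorem} is established there only for objects \emph{almost of finite type}. But $S = \Spec(A)$ is typically \emph{not} almost of finite type over $k$ (affinoid algebras rarely are), so $S\an$ is not produced by the analytic spectrum construction of loc.\ cit.; it is defined only through the right Kan extension of \cref{eg:analytification_affines_not_aft}, namely $S\an \simeq \lim_{S' \in (\dAff_k^{\mathrm{aft}})_{S/}} (S')\an$. In particular, $S\an$ is merely an object of $\St(\dAfd_k)$ with no a priori structure sheaf, so the expression ``$\mathrm t_{\le n}(S\an)$'' has no obvious meaning, and your appeal to ``identification of underlying $\infty$-topoi'' does not apply. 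The statement you need is rather that $(\mathrm t_{\le n} S)\an$, again computed as a limit but now over $(\dAff_k^{\mathrm{aft}})_{\mathrm t_{\le n} S/}$, can be compared with the original limit over $(\dAff_k^{\mathrm{aft}})_{S/}$. This requires showing that the truncation functor
\[ (\dAff_k^{\mathrm{aft}})_{S/} \longrightarrow (\dAff_k^{\mathrm{aft},\le n})_{\mathrm t_{\le n} S/} \]
is final, which is not formal: the paper proves it using \cite[7.2.4.26]{Lurie_Higher_algebra}.

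Even after this, your final step ``pulling back along $\eta_X$ produces $\mathrm t_{\le n}(X)$ since $\eta_X$ induces equivalences at the level of truncations'' is circular: that is precisely the content of (3). The paper instead checks the universal property directly, computing $\Map_{\St(\dAfd_k)}(U, (\mathrm t_{\le n} S)\an_X)$ for $n$-truncated $U \in \dAfd_k$ by passing through the limit description, using the aft compatibility of analytification with truncation termwise, and then invoking the universal property of $\mathrm t_{\le n}$ to strip the truncation. The same scheme handles (4). Your sketch is missing both the finality argument and this mapping-space verification; without them the argument does not go through.
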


\begin{proof}
	(\ref{lem:relative_analytification:identity}) follows directly from the definition.
	Let us prove (\ref{lem:relative_analytification:affine_space}).
	Since $(-)\an \colon \dSt_k \to \St(\dAfd_k)$ commutes with finite limits, we have
	\[ (\bbA^n_A)\an \simeq ( \bbA^n_k)\an \times S\an \simeq \bA^n_k \times S\an . \]
	Thus we deduce that
	\[ (\bbA^n_A)\an_X \simeq \bA^n_X . \]
	
	For (\ref{lem:relative_analytification:truncations}), we start by considering the composite functor
	\[ (\dAff_k^{\mathrm{aft}})_{S/} \to (\dAff_k^{\mathrm{aft},\le n})_{\mathrm t_{\le n} S/} \to (\dAff_k^{\mathrm{aft}})_{\mathrm t_{\le n} S/} , \]
	where $(\dAff_k^{\mathrm{aft}, \le n})_{\mathrm t_{\le n} S/}$ denotes the full subcategory of $(\dAff_k^{\mathrm{aft}})_{\mathrm t_{\le n} S/}$ spanned by $n$-truncated derived affine schemes almost of finite type.
	The universal property of the truncation shows that the second functor is final.
	
	Since the functor $\mathrm t_{\le n}(-)$ commutes with limits, the equivalence
	\[ S \simeq \lim_{S' \in (\dAff_k^{\mathrm{aft}})_{S/}} S' , \]
	implies
	\[ \mathrm t_{\le n} S \simeq \lim_{S' \in (\dAff_k^{\mathrm{aft}})_{S/}} \mathrm t_{\le n} S' . \]
	For any $n$-truncated derived affine scheme almost of finite type $T \in \dAff_k^{\mathrm{aft}}$, \cite[7.2.4.26]{Lurie_Higher_algebra} shows that the canonical map
	\[ \colim_{S' \in (\dAff_k^{\mathrm{aft}})_{S/}} \Map_{\dAff_k}( \mathrm t_{\le n} S', T ) \longrightarrow \Map_{\dAff_k}( \mathrm t_{\le n } S , T ) \]
	is an equivalence.
	This implies that $(\dAff_k^{\mathrm{aft}})_{S/} \to (\dAff_k^{\mathrm{aft}, \le n})_{\mathrm t_{\le n} S/}$ is final as well.
	
	Combining with \cref{eg:analytification_affines_not_aft}, we deduce that
	\[ (\mathrm t_{\le n} S)\an \simeq \lim_{S' \in (\dAff^{\mathrm{aft}}_k)_{S/}} (\mathrm t_{\le n} S')\an . \]
	At this point, for any $n$-truncated $k$-affinoid space $U \in \dAfd_k$, we have
	\begin{align*}
	\Map_{\St(\dAfd_k)}( U, (\mathrm t_{\le n} S)\an ) & \simeq \lim_{S' \in (\dAff_k^{\mathrm{aft}})_{S/}} \Map_{\St(\dAfd_k)}( U,  ( \mathrm t_{\le n} S' )\an ) \\
	& \simeq \lim_{S' \in (\dAff_k^{\mathrm{aft}})_{S/}} \Map_{\St(\dAfd_k)}( U, \mathrm t_{\le n}( S^{\prime \mathrm{an}} ) ) \\
	& \simeq \lim_{S' \in (\dAff_k^{\mathrm{aft}})_{S/}} \Map_{\St(\dAfd_k)}( U, S^{\prime \mathrm{an}} ) \\
	& \simeq \Map_{\St(\dAfd_k)}( U, S\an ) .
	\end{align*}
	The second equivalence follows from the fact that $B$ is almost of finite type and the flatness of the analytification (see \cite[Corollary 5.15]{Porta_Derived_complex_analytic_geometry_I} and \cite[Proposition 4.17]{Porta_Yu_Representability_theorem}).
	The third equivalence follows from the universal property of the $n$-truncation.
	This implies that
	\[ \Map_{\St(\dAfd_k)}( U, ( \mathrm t_{\le n} S)\an_X ) \simeq \Map_{\St(\dAfd_k)}( U, X ) . \]
	Therefore, by the universal property of the $n$-truncation again, we conclude that $(\mathrm t_{\le n} S)\an_X$ coincides with $\mathrm t_{\le n}(X)$.
	
	Finally, (\ref{lem:relative_analytification:reduced_part}) follows along the same lines of the proof of (\ref{lem:relative_analytification:truncations}).
\end{proof}

\begin{proposition}
	Let $X \in \dAfd_k$ be a derived $k$-affinoid space, $A \coloneqq \Gamma(\cO_X\alg)$, and $Y$ a derived algebraic stack locally almost of finite type over $\Spec(A)$.
	Then $Y\an_X$ is a derived \kanal stack.
	\end{proposition}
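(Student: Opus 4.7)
The plan is to argue by descent along a smooth atlas. Since $Y$ is derived algebraic and locally almost of finite type over $\Spec(A)$, we may choose a smooth atlas $p\colon U \to Y$ with $U = \coprod_i \Spec(B_i)$ and each $B_i$ an $A$-algebra almost of finite type. The relative analytification functor $(-)\an_X = \eta_X^* \circ (-)\an$ preserves colimits, so the induced morphism $p\an_X\colon U\an_X \to Y\an_X$ presents $Y\an_X$ as the geometric realization of its \v{C}ech nerve.

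The first step is to show that each $\Spec(B_i)\an_X$ is a derived \kanal space. When $B_i$ is of finite presentation, one locally writes it as a quotient of a polynomial $A$-algebra; combined with \cref{lem:relative_analytification}(\ref{lem:relative_analytification:affine_space}) and the preservation of closed immersions under analytification and pullback, this exhibits $\Spec(B_i)\an_X$ as a closed derived subspace of $\bA^n_X$, which is a derived \kanal space by the constructions in \cite[\S 6]{Porta_Yu_Representability_theorem}. The almost-of-finite-type case reduces to the finite-presentation case via the inverse-limit description in \cref{eg:analytification_affines_not_aft} together with the truncation compatibility in \cref{lem:relative_analytification}(\ref{lem:relative_analytification:truncations}). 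Next, I would verify that $p\an_X$ is a smooth effective epimorphism: smoothness of the analytification of a smooth morphism follows from the compatibility of the analytic cotangent complex with analytification combined with the smoothness criterion in \cite[Proposition 5.50]{Porta_Yu_Representability_theorem}, and both smoothness and effective epimorphism in the étale topology are stable under pullback along $\eta_X$.

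The conclusion that $Y\an_X$ is a derived \kanal stack then follows directly from the inductive definition of derived \kanal stacks in terms of iterated smooth atlases. The main obstacle will be handling the almost-of-finite-type (as opposed to almost-of-finite-presentation) case carefully: one must verify that the right-Kan-extension definition of analytification for a non-aft affine, upon being pulled back along $\eta_X$, yields an honest derived \kanal space rather than merely a pro-object. This should follow from the noetherianity of $k$-affinoid algebras together with the compatibilities established in \cref{lem:relative_analytification}, which ensure that only finitely many nonzero homotopy sheaves need to be tracked and that the limit stabilizes at each truncation level.
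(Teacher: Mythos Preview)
Your proposal is correct and follows essentially the same approach as the paper: induction on the geometric level via a smooth atlas, reducing to the affine case and invoking the compatibilities of \cref{lem:relative_analytification} for affine space and truncations. The paper's proof is a one-line reference to this induction (pointing to \cite[Proposition 3.7]{Porta_Yu_Representability_theorem} for the template), whereas you have unpacked the steps explicitly; your terminological worry at the end about ``almost of finite type'' versus ``almost of finite presentation'' is moot here since the base $A$ is noetherian.
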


\begin{proof}
	It follows by induction on the geometric level combining \cref{lem:relative_analytification}(\ref{lem:relative_analytification:affine_space}-\ref{lem:relative_analytification:truncations}) as in the proof of \cite[Proposition 3.7]{Porta_Yu_Representability_theorem}.
\end{proof}

We can further extend the definition of relative analytification over global (i.e.\ non-affinoid) bases as follows:
Given $X \in \dAnk$ any derived \kanal space, let $X\et^\afd$ denote the full subcategory of the étale site $X\et$ spanned by étale morphisms $U \to X$ where $U$ is derived $k$-affinoid.
For every $U \in X\et^\afd$, write $A_U \coloneqq \Gamma(\cO_U\alg) $.
Then the derived analytification functors
\[ (-)\an_U \colon \dSt_{/ \Spec(A_U)} \longrightarrow \dAnSt_{/ U} \]
glue together to provide
\[ (-)\an_X \colon \lim_{U \in X\et^\afd} \dSt_{/\Spec(A_U)} \longrightarrow \lim_{U \in X\et^\afd} \dAnSt_{/U} \simeq \dAnSt_{/X} , \]
where the last equivalence is a consequence of the descent theory for $\infty$-topoi, see \cite[6.1.3.9]{Lurie_HTT}.

\subsection{Formal moduli problems in derived analytic geometry}

As an application of the relative derived analytification introduced above, we prove here that all formal moduli problems over a given derived $k$-affinoid space are algebraic, see \cref{thm:FMP}.

\begin{definition}
	For any derived \kanal space $X \in \dAnk$, we define $X_\red \coloneqq \trunc(X)_\red$, called the reduced \kanal space associated to $X$.
\end{definition}

\begin{definition}
	A morphism $Y \to X$ in $\dAnk$ is called a \emph{nil-isomorphism} if the induced morphism $Y_\red \to X_\red$ is an isomorphism.
	We denote by $\mathrm{Nil}\an_{/X}$ the full subcategory of $(\dAnk)_{/X}^{\mathrm{ft}}$ spanned by nil-isomorphisms $Y \to X$ of finite type.
	\end{definition}

Let $X$ be a derived $k$-affinoid space.
If $Y \in \dAnk$ is a derived \kanal space and $Y \to X$ is a nil-isomorphism, then $Y$ is also a derived $k$-affinoid space.
Moreover, we have the following basic result:

\begin{lemma} \label{lem:analytic_cotangent_complex_nil-isomorphism}
	Let $Y \to X$ be a nil-isomorphism of derived $k$-affinoid spaces.
	Let $A \coloneqq \Gamma(\cO_X\alg)$ and $B \coloneqq \Gamma(\cO_Y\alg)$.
	Then:
	\begin{enumerate}
		\item The morphism $A \to B$ is a nil-isomorphism almost of finite type.
		\item Let $\anL_{B/A} \coloneqq \Gamma(\anL_{Y/X})$; the canonical morphism
		\[ \bbL_{B/A} \longrightarrow \anL_{B/A} \]
		is an equivalence.
		\item If $Y \to X$ is of finite type, then so is $A \to B$.
	\end{enumerate}
\end{lemma}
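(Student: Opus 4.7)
My plan for the three parts is as follows.

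For (1), I would first translate the nil-isomorphism hypothesis into the algebraic statement that $\pi_0(A)_\red \simeq \pi_0(B)_\red$ is an isomorphism. Since both $\pi_0(A)$ and $\pi_0(B)$ are classical $k$-affinoid algebras (hence Noetherian) and share the same reduction, every element of $\pi_0(B)$ differs from an element in the image of $\pi_0(A)$ by a nilpotent; the nilradical of $\pi_0(B)$ being finitely generated then forces $\pi_0(B)$ to be finite as a $\pi_0(A)$-module, and in particular of finite presentation. For higher homotopy, I would invoke the structure theorem for derived $k$-affinoid algebras (\cite[Theorem 3.4]{Porta_Yu_Derived_Hom_spaces}) to conclude each $\pi_i(B)$ is coherent over $\pi_0(B)$, hence coherent over $\pi_0(A)$ via the finite extension—yielding that $A \to B$ is almost of finite type.

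For (2), my approach is to leverage \cite[Corollary 5.33]{Porta_Yu_Representability_theorem}, which already establishes the algebraic-versus-analytic cotangent complex comparison for closed immersions of derived $k$-affinoid spaces. For a general nil-iso $Y \to X$, I would either (a) factor through a strict closed immersion (e.g.\ via a Postnikov-tower argument reducing successively to square-zero extensions) and invoke the transitivity triangles
\[ \anL_{A/k} \otimes_A B \longrightarrow \anL_{B/k} \longrightarrow \anL_{B/A} \]
together with its algebraic counterpart, or (b) work directly with the universal property, observing that derivations from $B$ into a coherent $B$-module $M$ computed analytically or algebraically coincide, since in the nil-iso setting the relevant completions are with respect to nilpotent ideals and hence trivial. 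Either route yields the desired equivalence $\bbL_{B/A} \simeq \anL_{B/A}$.

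For (3), I would use that finite-type morphisms of derived $k$-affinoid spaces correspond to almost-of-finite-presentation maps on the underlying algebras, via the comparison between analytic and algebraic derived $k$-geometry developed in \cite[\S 5]{Porta_Yu_Representability_theorem}; and almost of finite presentation implies almost of finite type. The main obstacle will be part (2): precisely controlling the difference between analytic and algebraic cotangent complexes in the nil-iso setting requires either a clean factorization reducing to the closed immersion case or a careful direct verification of the Postnikov/square-zero reduction. Parts (1) and (3) are essentially bookkeeping arguments using the known structure theory of derived $k$-affinoid algebras.
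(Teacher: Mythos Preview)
Your treatment of part (1) is essentially the paper's argument, and is fine: the reduction to $\pi_0$ via the derived Hilbert basis theorem (or equivalently your direct argument that $\pi_0(B)$ is a finite $\pi_0(A)$-module, using that the nilradical is finitely generated and nilpotent) is exactly what is needed.

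Part (2) is where there is a genuine gap. Neither of your proposed routes is workable as stated. Route (a) presupposes that $Y \to X$ can be factored as a tower of closed immersions, but a nil-isomorphism need not be a closed immersion: $\pi_0(A) \to \pi_0(B)$ is not surjective in general (only the induced map on reductions is an isomorphism), so there is no obvious Postnikov-style factorization through maps to which \cite[Corollary 5.33]{Porta_Yu_Representability_theorem} applies. Route (b) is too vague to evaluate; ``completions with respect to nilpotent ideals are trivial'' does not by itself compare analytic and algebraic derivations. The paper's argument avoids this entirely by a different trick: rather than factoring $Y \to X$, one pulls back along $i \colon \Spec(B_\red) \to \Spec(B)$. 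Since both $Y_\red \to Y$ and $Y_\red \simeq X_\red \to X$ \emph{are} closed immersions, Corollary 5.33 gives $\bbL_{B_\red/A} \simeq \anL_{B_\red/A}$ and $\bbL_{B_\red/B} \simeq \anL_{B_\red/B}$. The transitivity fiber sequences for the algebraic and analytic cotangent complexes then force $i^*\bbL_{B/A} \simeq i^*\anL_{B/A}$, and Nakayama (both sides being almost perfect over the noetherian ring $B$) upgrades this to $\bbL_{B/A} \simeq \anL_{B/A}$. This is the missing idea.

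Your account of part (3) is also confused. You write that finite-type analytic morphisms correspond to almost-of-finite-presentation algebra maps, but that is what (1) already gives; the content of (3) is the upgrade from \emph{almost} of finite type to finite type. The paper's point is that ``finite type'' on the analytic side forces $\anL_{Y/X}$ to be perfect, and then (2) transfers perfectness to $\bbL_{B/A}$, which (together with (1)) is exactly what finite type means for $A \to B$. So (3) really does follow immediately from (2), but not for the reason you give.
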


\begin{proof}	
	It follows from \cite[Theorem 3.4]{Porta_Yu_Derived_Hom_spaces} that $A_\red$ and $B_\red$ are the global sections of $X_\red$ and $Y_\red$, respectively.
	This implies that the morphism $A \to B$ is a nil-isomorphism.
	Let us show that it is also almost of finite type.
	Since both $A$ and $B$ are noetherian in the derived sense (see \cite[7.2.4.30]{Lurie_Higher_algebra}), the derived Hilbert basis theorem \cite[7.2.4.31]{Lurie_Higher_algebra} implies that $A \to B$ is almost of finite presentation if and only if $\pi_0(A) \to \pi_0(B)$ is of finite presentation.
	Since $A_\red \to B_\red$ is an isomorphism, we can find a map
	\[ \phi \colon \pi_0(A)[X_1, \ldots, X_n] \longrightarrow \pi_0(B) \]
	which becomes surjective after composing with $\pi_0(B) \to \pi_0(B)_\red = B_\red$.
	Since $\pi_0(B)$ is noetherian, its nilradical $\cN \mathrm{il}_{\pi_0(B)}$ is generated by finitely many elements $b_1, \ldots, b_m \in \pi_0(B)$.
	We now consider the extension of $\phi$
	\[ \pi_0(A)[X_1, \ldots, X_n, Y_1, \ldots, Y_m] \longrightarrow \pi_0(A) \]
	which sends $Y_i$ to $b_i$.
	This map is surjective, and hence $\pi_0(B)$ is of finite type as $\pi_0(A)$-algebra.
	
	We now prove (2).
	Let $i \colon \Spec(B_\red) \to \Spec(B)$ be the natural inclusion.
	Since $A \to B$ is almost of finite type, \cite[7.4.3.18]{Lurie_Higher_algebra} implies that $\bbL_{B/A}$ is almost of finite presentation.
	Using \cite[Corollary 5.40]{Porta_Yu_Representability_theorem} and \cite[Theorem 3.4]{Porta_Yu_Derived_Hom_spaces}, we deduce that $\anL_{B/A}$ is almost of finite presentation too.
	Therefore, by Nakayama's lemma, it is enough to check that $i^* \bbL_{B/A} \to i^* \anL_{B/A}$ is an equivalence.
	Consider the following diagram:
	\[ \begin{tikzcd}
		i^* \bbL_{B/A} \arrow{r} \arrow{d} & \bbL_{B_\red/A} \arrow{r} \arrow{d} & \bbL_{B_\red / B} \arrow{d} \\
		i^* \anL_{B/A} \arrow{r} & \anL_{B_\red / A} \arrow{r} & \anL_{B_\red / B} .
	\end{tikzcd} \]
	Since $Y \to X$ is a nil-isomorphism, the map $Y_\red \simeq X_\red \to X$ is a closed immersion.
	Therefore, \cite[Corollary 5.33]{Porta_Yu_Representability_theorem} implies that both the middle and right vertical arrows are equivalences.
	As the horizontal lines are fiber sequences, it follows that the left vertical arrow is also an equivalence.
	
	Finally, (3) follows immediately from (2).
\end{proof}

Let $X \in \dAfd_k$ be a derived $k$-affinoid space and let $A \coloneqq \Gamma(\cO_X\alg)$.
Denote by $\mathrm{Nil}_{/\Spec(A)}$ the $\infty$-category of nil-isomorphisms of finite type $\Spec(B) \to \Spec(A)$.
By the above lemma, taking global sections and applying $\Spec$, we obtain a canonical functor
\[ (-)\alg_X \colon \mathrm{Nil}\an_{/X} \longrightarrow \mathrm{Nil}_{/\Spec(A)} . \]
The following result is a generalization of \cite[Proposition 8.2]{Porta_Derived_complex_analytic_geometry_I}.

\begin{proposition} \label{prop:nil-isomorphisms_are_algebraic}
	Let $X \in \dAfd_k$ be a derived $k$-affinoid space and let $A \coloneqq \Gamma(\cO_X\alg)$.
	The functor $(-)\alg_X \colon \mathrm{Nil}\an_{/X} \to \mathrm{Nil}_{/\Spec(A)}$ is an equivalence.
\end{proposition}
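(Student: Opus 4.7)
The plan is to prove that the relative derived analytification functor $(-)\an_X$ restricts to a two-sided inverse of $(-)\alg_X$. The essential structural input is that any object of $\mathrm{Nil}_{/\Spec(A)}$ can be built from $\Spec(A_\red)$ by a finite (in each homotopical degree) tower of square-zero extensions. Since $\pi_0(B)$ is noetherian with $B_\red \simeq A_\red$, the ideal $I = \ker(\pi_0(B) \to B_\red)$ is nilpotent, giving a finite filtration $\pi_0(B) \twoheadrightarrow \pi_0(B)/I^2 \twoheadrightarrow \cdots \twoheadrightarrow A_\red$ of square-zero extensions by discrete modules. This is combined with the Postnikov tower $B \to \cdots \to \mathrm t_{\le n}B \to \cdots \to \pi_0(B)$ whose successive stages are square-zero extensions by the shifted homotopy modules. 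An analogous tower exists for objects of $\mathrm{Nil}\an_{/X}$ starting from $X_\red$, by the same cotangent complex argument as in \cref{lem:analytic_cotangent_complex_nil-isomorphism}.

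First I would show that $(-)\an_X$ carries $\mathrm{Nil}_{/\Spec(A)}$ into $\mathrm{Nil}\an_{/X}$. The base case $(\Spec A_\red)\an_X \simeq X_\red$ is part of \cref{lem:relative_analytification}. For the inductive step, a square-zero extension is a pullback square in $\CAlg_k$ classified by a derivation into a connective module; since $(-)\an_X = \eta_X^* \circ (-)\an$ with $(-)\an\colon \dAff_k \to \St(\dAfd_k)$ preserving finite limits (as a right Kan extension along $\dAff_k^{\mathrm{aft}} \hookrightarrow \dAff_k$) and pullback along $\eta_X$ preserving limits, this square is sent to the corresponding analytic pullback. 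Using the identification of algebraic and analytic cotangent complexes along nil-isomorphisms in \cref{lem:analytic_cotangent_complex_nil-isomorphism}(2), the classifying derivation transports correctly, and the analytification of the module produces the intended analytic square-zero extension. Inducting up the tower yields that $(\Spec B)\an_X$ is a derived $k$-affinoid space with a nil-isomorphism of finite type to $X$.

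The final step is to check that the natural unit $Y \to ((Y\alg_X))\an_X$ and counit $(Z\an_X)\alg_X \to Z$ are equivalences. Both hold on the reduced stratum (both sides recover $X_\red$ or $\Spec(A_\red)$) and are compatible with the square-zero pullback squares analyzed above, so induction along the tower again gives the conclusion. The main obstacle is the coherent identification, at each stage of the tower, between the algebraic derivation classifying a square-zero extension and its image under $(-)\an_X$: the pointwise cotangent complex equivalence from \cref{lem:analytic_cotangent_complex_nil-isomorphism}(2) must be upgraded to a natural transformation between the relevant derivation functors so that the induction runs functorially in the base and in the tower. Once this coherence is in place, everything else reduces to repeated application of \cref{lem:relative_analytification} and the fact that global sections along the equivalence $\Gamma(\cO_{(-)\an_X}\alg) \simeq \Gamma(\cO_{(-)})$ over each square-zero layer recover the starting derived commutative ring.
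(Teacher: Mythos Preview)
Your inductive tower approach is correct in outline and shares the underlying idea with the paper (both ultimately reduce to Postnikov compatibility), but the paper's argument is more economical and sidesteps exactly the obstacle you flag. Rather than verifying both natural transformations by climbing the square-zero tower and tracking classifying derivations through $(-)\an_X$, the paper proceeds as follows: first, it shows that $(-)\an_X$ lands in $\mathrm{Nil}\an_{/X}$ directly from \cref{lem:relative_analytification}(\ref{lem:relative_analytification:reduced_part}) (no tower needed, since one only has to check the reduced part is preserved); second, it observes that $(-)\alg_X$ is left adjoint to $(-)\an_X$; third, it proves $(-)\alg_X$ is \emph{conservative} using \cref{lem:analytic_cotangent_complex_nil-isomorphism}(2) (if global sections agree then $\anL_{Y'/Y}\simeq 0$, hence $Y'\to Y$ is \'etale, hence an equivalence between nil-isomorphic objects). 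With conservativity in hand, it suffices to check that a single one of the unit/counit transformations is an equivalence, and this is handled by compatibility of both functors with Postnikov truncations, citing the flatness of analytification and \cite[Theorem 3.23]{Porta_Yu_Derived_non-archimedean_analytic_spaces}.

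The payoff of the paper's route is that the ``coherence obstacle'' you identify---matching the algebraic derivation classifying a square-zero extension with its image under $(-)\an_X$ functorially along the tower---simply never arises: conservativity plus the triangle identities does that bookkeeping for free. Your approach would work, but you would have to actually establish that $(\Spec(B\oplus M[1]))\an_X$ is the analytic split square-zero extension by the expected module, and that this identification is natural; this is not hard but is genuinely extra work. If you want to keep your structure, a cleaner fix is to import the adjunction and conservativity steps from the paper and drop the explicit derivation-tracking.
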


\begin{proof}
	Consider the relative analytification functor introduced in \cref{sec:relative_analytification}:
	\[ (-)\an_X \colon (\dSt_k)^{\mathrm{laft}}_{/\Spec(A)} \longrightarrow \dAnSt_{/X} . \]
		Let $\Spec(B) \to \Spec(A)$ be a nil-isomorphism of finite type.
	Set $Y \coloneqq \Spec(B)\an_X$.
	By the transitivity property for pullback squares (\cite[4.4.2.1]{Lurie_HTT}), given any $\Spec(B') \to \Spec(B)$, we have a canonical equivalence
	\[ \Spec(B')\an_Y \simeq \Spec(B')\an_X . \]
	In particular, \cref{lem:relative_analytification}(\ref{lem:relative_analytification:reduced_part}) implies that the functor $(-)\an_X$ restricts to a functor
	\[ (-)\an_X \colon \mathrm{Nil}_{/\Spec(A)} \longrightarrow \mathrm{Nil}\an_{/X} . \]
	
	It is then enough to see that $(-)\alg_X$ and $(-)\an_X$ form an equivalence of $\infty$-categories.
	First we observe that $(-)\alg_X$ is left adjoint to $(-)\an_X$.
	Moreover, $(-)\alg_X$ is conservative: indeed, let $Y' \to Y$ be a morphism in $\mathrm{Nil}\an_{/X}$ such that the morphism $B \to B'$ obtained by passing to global sections is an equivalence.
	By \cref{lem:analytic_cotangent_complex_nil-isomorphism}, we have
	\[ \Gamma(Y', \anL_{Y' / Y}) \simeq \anL_{B'/B} \simeq 0 . \]
	Using \cite[Theorem 3.4]{Porta_Yu_Derived_Hom_spaces}, we conclude that $\anL_{Y'/Y} \simeq 0$, and hence $Y' \to Y$ is étale.
	As $Y'_\red \to Y_\red$ is an isomorphism, we conclude that $Y' \to Y$ is an equivalence.
	
	At this point, we are left to prove that for any $\Spec(B) \in \mathrm{Nil}_{/\Spec(A)}$ the unit morphism
	\[ \Spec(B) \longrightarrow (\Spec(B)\an_X)\alg_X \]
	is an equivalence.
	This follows from the compatibility of $(-)\an_X$ and $(-)\alg_X$ with the Postnikov tower decomposition, which are respectively guaranteed by \cite[Proposition 4.17]{Porta_Yu_Representability_theorem} and \cite[Theorem 3.23]{Porta_Yu_Derived_non-archimedean_analytic_spaces}.
	\end{proof}

\begin{definition}
	Let $X \in \dAfd_k$ be a derived $k$-affinoid space.
	An \emph{analytic formal moduli problem over $X$} is a functor
	\[ F \colon (\mathrm{Nil}_{/X}\an)\op \longrightarrow \cS \]
	satisfying the following conditions:
	\begin{enumerate}
		\item The space $F(X_\red)$ is contractible;
		\item Let
		\[ \begin{tikzcd}
		S \arrow{r}{f} \arrow{d} & S' \arrow{d} \\
		T \arrow{r} & T'
		\end{tikzcd} \]
		be a pushout square in $\mathrm{Nil}_{/X}$ where $f \colon S \to S'$ is a $k$-analytic square-zero extension.
		Then the canonical map
		\[ F(T') \longrightarrow F(T) \times_{F(S)} F(S') \]
		is an equivalence.
	\end{enumerate}
	We denote by $\mathrm{FMP}\an_{/X}$ the $\infty$-category of analytic formal moduli problems over $X$.
\end{definition}

\begin{theorem} \label{thm:FMP}
	Let $X \in \dAfd_k$ be a derived $k$-affinoid space and let $A \coloneqq \Gamma(\cO_X\alg)$.
	There is a canonical equivalence
	\[ \mathrm{FMP}_{/\Spec(A)} \xrightarrow{\ \sim\ } \mathrm{FMP}\an_{/X} . \]
\end{theorem}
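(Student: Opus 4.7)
The plan is to deduce the theorem from the equivalence of $\infty$-categories $(-)\alg_X \colon \mathrm{Nil}\an_{/X} \xrightarrow{\sim} \mathrm{Nil}_{/\Spec(A)}$ established in \cref{prop:nil-isomorphisms_are_algebraic}. Precomposition with the quasi-inverse $(-)\an_X$ induces a canonical equivalence on presheaf $\infty$-categories
\[ \Fun((\mathrm{Nil}_{/\Spec(A)})\op, \cS) \xrightarrow{\ \sim\ } \Fun((\mathrm{Nil}\an_{/X})\op, \cS), \]
and the task reduces to checking that this equivalence restricts to the full subcategories of formal moduli problems on each side. All further verifications happen through the correspondence $F \leftrightarrow G$ under this equivalence.

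Next, I would check that the two defining conditions of a formal moduli problem are matched. For the first condition, \cref{lem:relative_analytification}(\ref{lem:relative_analytification:reduced_part}) shows that $X_\red$ is the relative analytification of $\Spec(A_\red)$, so $X_\red$ corresponds to $\Spec(A_\red)$ and the contractibility $F(X_\red) \simeq *$ translates verbatim into $G(\Spec(A_\red)) \simeq *$. For the second condition, since $(-)\alg_X$ is an equivalence of $\infty$-categories it automatically preserves all limits and colimits, and in particular sends pushout squares to pushout squares. It therefore suffices to show that $(-)\alg_X$ carries $k$-analytic square-zero extensions bijectively onto algebraic square-zero extensions in $\mathrm{Nil}_{/\Spec(A)}$.

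The main obstacle is precisely this identification of square-zero extensions. In both the analytic and algebraic settings, square-zero extensions of a derived (affinoid or affine) space $S$ by a coherent sheaf are classified up to equivalence by the mapping spaces $\Map_{\Coh^+(S)}(\anL_S, \cF[1])$ and $\Map_{\Coh^+(\Spec(A))}(\bbL_{\Spec(A)}, \cG[1])$ respectively. \cref{lem:analytic_cotangent_complex_nil-isomorphism}(2) provides the critical compatibility: for any nil-isomorphism $Y \to X$ of derived $k$-affinoids, the canonical map $\bbL_{B/A} \to \anL_{B/A}$ is an equivalence, where $A \coloneqq \Gamma(\cO_X\alg)$ and $B \coloneqq \Gamma(\cO_Y\alg)$. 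Combined with the t-exact equivalence between analytic and algebraic coherent sheaves on derived affinoids of \cite[Theorem 3.4]{Porta_Yu_Derived_Hom_spaces}, this yields a functorial bijection between analytic and algebraic square-zero extensions under $(-)\alg_X$ and $(-)\an_X$, completing the identification of the two FMP conditions and hence the proof.
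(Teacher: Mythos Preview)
Your approach is exactly the paper's: the published proof is a single sentence invoking \cref{prop:nil-isomorphisms_are_algebraic} together with \cite[Proposition 5.1.2.2(c)]{Gaitsgory_Study_II}, and you are unpacking what that citation provides---namely that the two defining conditions of a formal moduli problem transfer across the equivalence of $\mathrm{Nil}$-categories.

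There is, however, a slip in your treatment of the second condition. You write that square-zero extensions of $S$ are classified by $\Map_{\Coh^+(S)}(\anL_S,\cF[1])$ on the analytic side and $\Map_{\Coh^+(\Spec(A))}(\bbL_{\Spec(A)},\cG[1])$ on the algebraic side. Two issues arise. First, $\Spec(A)$ is the fixed base, not the object being extended---you presumably mean $\Spec(B)$ with $B=\Gamma(\cO_S\alg)$. Second and more seriously, these are the \emph{absolute} cotangent complexes, whereas the lemma you then invoke (\cref{lem:analytic_cotangent_complex_nil-isomorphism}(2)) compares the \emph{relative} ones $\bbL_{B/A}\simeq\anL_{B/A}$. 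The absolute analytic and algebraic cotangent complexes of an affinoid genuinely differ (already for a polydisk $\bD^n_k$, where $\anL$ is free of rank $n$ but $\bbL_{k\langle T_1,\dots,T_n\rangle/k}$ is not finitely generated), so as written the cited lemma does not bridge your classification statement. The repair is straightforward: a square-zero extension $S\hookrightarrow S'$ lying in $\mathrm{Nil}\an_{/X}$ comes equipped with a compatible structure map $S'\to X$, which forces the classifying derivation $\anL_S\to\cF[1]$ to factor through $\anL_{S/X}$. \Cref{lem:analytic_cotangent_complex_nil-isomorphism}(2), applied to the nil-isomorphism $S\to X$, then identifies $\Gamma(\anL_{S/X})$ with $\bbL_{B/A}$, and the remainder of your argument goes through.
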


\begin{proof}
	It follows from \cite[Proposition 5.1.2.2(c)]{Gaitsgory_Study_II} and \cref{prop:nil-isomorphisms_are_algebraic}.
\end{proof}

\subsection{The deformation to the normal bundle in the algebraic setting} \label{sec:deformation_algebraic}

Here we review the deformation to the normal bundle in the algebraic setting following Gaitsgory-Rozenblyum \cite{Gaitsgory_Study_II}.
Let
\[ \mathrm{Bifurc}^\bullet_{\mathrm{scaled}} \colon \mathbf \Delta\op \longrightarrow \mathrm{dSt}_{/\mathbb A^1_k} \]
be as in \cite[\S 9.2.2.6]{Gaitsgory_Study_II}, which we will denote by $\rB^\bullet$ to simplify the notation.
Recall that $\rB^n$ is obtained by gluing together $n+1$ copies of $\mathbb A^1_k$ along $0$.
Let $X \in \dStlaft_k$ be a derived algebraic stack locally almost of finite type over $k$, and $j \colon \trunc(X) \hookrightarrow X$ the inclusion of its truncation.
Consider the following fiber product in $\Fun( \mathbf \Delta\op, \dSt_{/\mathbb A^1_k})$:
\[ \begin{tikzcd}
\mathfrak D_X^\bullet \arrow{r} \arrow{d} & X \times \bbA^1_k \arrow{d} \\
\bfMap_{/\bbA^1_k}( \rB^\bullet, X_0 \times \bbA^1) \arrow{r} & \bfMap_{/\bbA^1_k}( \rB^\bullet, X \times \bbA^1_k ) ,
\end{tikzcd} \]
where $X \times \bbA^1_k$ is viewed as a constant simplicial object in $\dSt_{/\bbA^1_k}$.
It is proven in \cite[Lemma 9.2.3.4]{Gaitsgory_Study_II} that $\mathfrak D_X^\bullet$ is a groupoid object in $\dSt_{/\bbA^1_k}$.
In particular, Theorem 5.2.3.2 in loc.\ cit.\ (see also \cite[Theorem 3.9]{Calaque_Grivaux_FMP} and \cite{Nuiten_Koszul_duality_Lie_algebroids}) allows to associate to $\mathfrak D_X^\bullet$ a formal moduli problem $\mathfrak D_X$ under $X_0 \times \bbA^1_k$ and over $X \times \bbA^1_k$.
Observe that this construction is functorial in $X$.
The following proposition summarizes the main properties of $\mathfrak D_X$.

\begin{proposition} \label{prop:properties_algebraic_deformation}
	Let $X \in \dStlaft_k$ be a derived algebraic stack locally almost of finite type over $k$.
	The following hold:
	\begin{enumerate}
		\item \label{prop:properties_algebraic_deformation:geometricity} The total space $\mathfrak D_X$ is a derived algebraic stack locally almost of finite type over $X \times \bbA^1_k$.
		If $X$ is a derived \DM stack (resp.\ a derived scheme, a derived affine scheme), then so is $\mathfrak D_X$.
		
		\item \label{prop:properties_algebraic_deformation:truncation} There is a canonical equivalence
		\[ \trunc( \mathfrak D_X ) \simeq \trunc(X) \times \bbA^1_k . \]
		
		\item \label{prop:properties_algebraic_deformation:trivialization} There is a canonical equivalence
		\[ \mathfrak D_X \times_{\bbA^1_k} \bbG_m \simeq X \times \bbG_m . \]
		
		\item \label{prop:properties_algebraic_deformation:central_fiber} There is a canonical equivalence
		\[ \mathfrak D_X \times_{\bbA^1_k} 0 \simeq \Spec_{\trunc(X)}\big( \Sym_{\cO_{\trunc(X)}}( \bbL_{\trunc(X)/X}[-1])\big) . \]
		Moreover, given any morphism $f \colon X \to Y$, let $\mathfrak f \colon \mathfrak D_X \to \mathfrak D_Y$ denote its deformation.
		Then the fiber of $\ff$ at $0 \in \bbA^1_k$
		\[ \mathfrak f_0 \colon \mathfrak D_X \times_{\bbA^1_k} 0 \longrightarrow \mathfrak D_Y \times_{\bbA^1_k} 0 \]
		coincides with the morphism induced by the map
		\[ f^* \bbL_{\trunc(Y) / Y} \longrightarrow \bbL_{\trunc(X) / X} . \]
	\end{enumerate}
\end{proposition}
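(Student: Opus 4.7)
The plan is to deduce all four statements from the defining pullback
\[ \begin{tikzcd}
\mathfrak D_X^\bullet \arrow{r} \arrow{d} & X \times \bbA^1_k \arrow{d} \\
\bfMap_{/\bbA^1_k}( \rB^\bullet, X_0 \times \bbA^1_k) \arrow{r} & \bfMap_{/\bbA^1_k}( \rB^\bullet, X \times \bbA^1_k ) ,
\end{tikzcd} \]
combined with the equivalence of \cite[Theorem 5.2.3.2]{Gaitsgory_Study_II} between groupoid objects over $X \times \bbA^1_k$ and formal moduli problems under $X_0 \times \bbA^1_k$ and over $X \times \bbA^1_k$. All four statements are ultimately contained in \cite[\S 9.2]{Gaitsgory_Study_II}; the task is to verify they apply in our generality (derived algebraic stacks, locally almost of finite type).

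For (\ref{prop:properties_algebraic_deformation:geometricity}), we first observe that each $\rB^n$ is proper and flat over $\bbA^1_k$, being a nodal curve obtained by gluing $n+1$ copies of $\bbA^1_k$ along the origin. The relative mapping stack $\bfMap_{/\bbA^1_k}(\rB^n, Y \times \bbA^1_k)$ is then representable by a derived algebraic stack locally almost of finite type when $Y$ is, by standard representability for mapping stacks in derived algebraic geometry; moreover, it inherits being a derived \DM stack, scheme, or affine scheme. Consequently each $\mathfrak D_X^n$, being a fiber product of such objects, enjoys the same properties, and geometricity of $\mathfrak D_X$ follows from the fact that the realization functor from groupoid objects to formal moduli problems preserves the relevant geometricity classes by \cite[Chapter 5]{Gaitsgory_Study_II}.

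For (\ref{prop:properties_algebraic_deformation:truncation}), the key point is that passing to the truncation annihilates the formal structure: $\trunc(X_0 \to X)$ is an isomorphism, so on truncations the lower horizontal arrow of the defining square becomes an equivalence, forcing $\trunc(\mathfrak D_X^n) \simeq \trunc(X) \times \bbA^1_k$ termwise with degeneracies and faces acting trivially. For (\ref{prop:properties_algebraic_deformation:trivialization}), one verifies that $\rB^n \times_{\bbA^1_k} \bbG_m$ is simply a disjoint union of $n+1$ copies of $\bbG_m$ (the node at the origin disappears), so $\bfMap_{/\bbG_m}(\rB^n \times_{\bbA^1_k} \bbG_m, X \times \bbG_m) \simeq X^{n+1} \times \bbG_m$; the fiber product defining $\mathfrak D_X^n \times_{\bbA^1_k} \bbG_m$ then collapses to the \v{C}ech nerve of $\id \colon X \times \bbG_m \to X \times \bbG_m$, whose realization is $X \times \bbG_m$. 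The main obstacle is (\ref{prop:properties_algebraic_deformation:central_fiber}): one must identify the fiber at $0$ of the realization with the shifted tangent scheme of $j \colon \trunc(X) \hookrightarrow X$. For this we combine the computation of the tangent complex of $\mathfrak D_X$ at the identity section with the description of $\rB^n$ as an infinitesimal thickening of a point in the vertical direction, using Koszul duality between formal moduli problems and Lie algebroids (\cite[Chapter 5]{Gaitsgory_Study_II}); the resulting Lie algebroid is the shifted normal complex $\bbL_{\trunc(X)/X}[-1]$ regarded as an abelian Lie algebroid over $\trunc(X)$, whose corresponding formal moduli problem is exactly the relative spectrum of $\Sym(\bbL_{\trunc(X)/X}[-1])$. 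The functoriality statement then follows from the functoriality of the whole construction and from the naturality of the cotangent complex.
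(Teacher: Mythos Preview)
Your arguments for (\ref{prop:properties_algebraic_deformation:truncation}) and (\ref{prop:properties_algebraic_deformation:central_fiber}) are in line with the paper's (the paper simply cites \cite[Proposition 9.2.3.6]{Gaitsgory_Study_II} for the central fiber, so your Lie-algebroid sketch is more detailed but compatible).

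There are, however, genuine gaps in your treatments of (\ref{prop:properties_algebraic_deformation:geometricity}) and (\ref{prop:properties_algebraic_deformation:trivialization}).

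For (\ref{prop:properties_algebraic_deformation:trivialization}): your identification of $\mathfrak D_X^\bullet \times_{\bbA^1_k} \bbG_m$ is incorrect. After base change to $\bbG_m$ the defining pullback does \emph{not} collapse to the \v{C}ech nerve of the identity on $X \times \bbG_m$; it collapses to the \v{C}ech nerve of $\trunc(X) \times \bbG_m \to X \times \bbG_m$. (Trace through: the map $X \times \bbA^1_k \to \bfMap_{/\bbA^1_k}(\rB^n, X \times \bbA^1_k)$ is the diagonal, so the fiber product over $\bbG_m$ is $X \times_{X^m} X_0^m \simeq X_0 \times_X \cdots \times_X X_0$.) One then needs a further step: the formal moduli problem associated to this \v{C}ech nerve is the formal completion of $\trunc(X) \times \bbG_m$ in $X \times \bbG_m$, and this equals $X \times \bbG_m$ because $(\trunc(X))_{\mathrm{dR}} \to X_{\mathrm{dR}}$ is an equivalence. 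This is exactly the argument in the paper, and it is not bypassed by your shortcut.

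For (\ref{prop:properties_algebraic_deformation:geometricity}): first, $\rB^n$ is not a nodal curve over $\bbA^1_k$; it is finite over $\bbA^1_k$. More seriously, your claim that ``the realization functor from groupoid objects to formal moduli problems preserves the relevant geometricity classes'' is not something one can simply cite from \cite[Chapter 5]{Gaitsgory_Study_II}: the passage from the formal groupoid $\mathfrak D_X^\bullet$ to $\mathfrak D_X$ is a quotient in the world of formal moduli problems, and geometricity of the terms does not formally imply geometricity of the realization. The paper instead invokes \cite[\S 9.5.1 and Theorem 9.5.1.3]{Gaitsgory_Study_II} to build an explicit filtration $\fX^{(0)} \hookrightarrow \fX^{(1)} \hookrightarrow \cdots$ with $\fX^{(0)} = X_0 \times \bbA^1_k$, each $\fX^{(n+1)}$ a square-zero extension of $\fX^{(n)}$ by an object of $\Coh^{\ge n}(\fX^{(n)})$, and $\colim_n \fX^{(n)} \simeq \mathfrak D_X$. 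The increasing connectivity is what guarantees that the colimit stays geometric (and inherits being \DM, a scheme, or affine). This filtration is the actual content of the proof of (\ref{prop:properties_algebraic_deformation:geometricity}), and your outline does not supply it.
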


\begin{proof}
	Property (\ref{prop:properties_algebraic_deformation:truncation}) follows from the canonical identification
	\[ \trunc( \bfMap_{/\bbA^1_k}( \rB^\bullet, X \times \bbA^1_k) ) \simeq \bfHom_{/\bbA^1_k}( \rB^\bullet, \trunc(X) \times \bbA^1_k ) , \]
	which holds because $\rB^\bullet$ is flat over $\bbA^1_k$.
	Property (\ref{prop:properties_algebraic_deformation:central_fiber}) follows from \cite[Proposition 9.2.3.6]{Gaitsgory_Study_II}.
	
	Let us prove property (\ref{prop:properties_algebraic_deformation:geometricity}).
	Proceeding as in \S 9.5.1 in loc.\ cit.\ and using Theorem 9.5.1.3 as main input, we constructs sequence
	\[ \fX^{(0)} \hookrightarrow \fX^{(1)} \hookrightarrow \fX^{(2)} \hookrightarrow \cdots \]
	of derived stacks enjoying the following properties:
	\begin{enumerate}[(i)]
		\item We have $\fX^{(0)} = X_0 \times \bbA^1_k$, and $\fX^{(n+1)}$ is obtained as a square-zero extension of $\fX^{(n)}$ by an element in $\Coh^{\ge n}( \fX^{(n)} )$.
		In particular, each $\fX^{(n)}$ is geometric, and the connectivity assumption guarantees that the colimit $\colim_n \fX^{(n)}$ is also geometric.
		If $X$ is a \DM stack (resp.\ a derived scheme, a derived affine scheme), then the same goes for each $\fX^{(n)}$ and for the colimit.
		
		\item The above sequence is equipped with compatible maps $\fX^{(n)} \to \mathfrak D_X$, and the induced morphism
		\[ \colim_n \fX^{(n)} \longrightarrow \mathfrak D_X \]
		is an equivalence.
		This is the content of Proposition 5.2.2 in loc.\ cit.
			\end{enumerate}
	
	We are left to prove property (\ref{prop:properties_algebraic_deformation:trivialization}).
	Observe that $\rB^n \times_{\bbA^1_k} \bbG_m \simeq \bbG_m^{\sqcup n}$.
	Unraveling the definitions, we obtain
	\[ \mathfrak D_X^\bullet \times_{\bbA^1_k} \bbG_m \simeq \Cech( \trunc(X) \times \bbG_m \to X \times \bbG_m ) . \]
	Example 5.2.3.3 in loc.\ cit.\ shows that the formal moduli problem associated to this formal groupoid coincides with the formal completion of $\trunc(X) \times \bbG_m$ inside $X \times \bbG_m$.
	This formal completion can be written as
	\[ (\trunc(X) \times \bbG_m)_{\mathrm{dR}} \times_{(X \times \bbG_m)_{\mathrm{dR}}} ( X \times \bbG_m ) . \]
	As
	\[ (\trunc(X) \times \bbG_m)_{\mathrm{dR}} \longrightarrow (X \times \bbG_m)_{\mathrm{dR}} \]
	is an equivalence, the conclusion follows.
	\end{proof}

\begin{remark} \label{rem:extended_deformation}
	By \cref{prop:properties_algebraic_deformation}(\ref{prop:properties_algebraic_deformation:trivialization}) we can extend the deformation $\mathfrak D_X \to \bbA^1_k$ to a deformation
	\[ \widetilde{\mathfrak D}_X \to \bbP^1_k , \]
	which is trivial near infinity.
\end{remark}

\begin{remark} \label{rem:non_laft_deformation}
	The construction of $\widetilde{\mathfrak D}_X$ in \cref{rem:extended_deformation} induces a functor
	\[ \widetilde{\mathfrak D} \colon \dAff_k^{\mathrm{aft}} \longrightarrow (\dSt_k)_{/\bbP^1_k} \]
	sending every $X \in \dAff_k^{\mathrm{aft}}$ to $\widetilde{\mathfrak D}_X$.
	Right Kan extension along $\dAff_k^{\mathrm{aft}} \hookrightarrow \dAff_k$ provides a functor
	\[ \widetilde{\mathfrak D} \colon \dAff_k \longrightarrow (\dSt_k)_{/\bbP^1_k} . \]
	Moreover, for every $X \in \dAff_k$, the deformation
	\[ \mathfrak D_X \coloneqq \widetilde{\mathfrak D}_X \times_{\bbP^1_k} \bbA^1_k \]
	inherits all the properties of \cref{prop:properties_algebraic_deformation}.
\end{remark}

\subsection{Analytifying the deformation} \label{sec:deformation_analytic}

Let $X \in \dAnk$ be a derived \kanal space.
As in \cref{sec:relative_analytification}, we denote by $X\et^\afd$ the full subcategory of the small étale site $X\et$ of $X$ spanned by étale morphisms $U \to X$, where $U$ is derived $k$-affinoid.
For any $U \in X\et^\afd$, put $A_U \coloneqq \Gamma( \cO_U\alg )$ and $U\alg \coloneqq \Spec(A_U)$.

\begin{lemma} \label{lem:gluing_deformation}
	For any morphism $V \to U$ in $X\et^\afd$, the diagram
	\[ \begin{tikzcd}
		\widetilde{\mathfrak D}_{V\alg} \arrow{r} \arrow{d} & \widetilde{\mathfrak D}_{U\alg} \arrow{d} \\
		V\alg \arrow{r} & U\alg
	\end{tikzcd} \]
	is a pullback square, where $\widetilde{\mathfrak D}$ denotes the extended deformation introduced in \cref{rem:non_laft_deformation}.
\end{lemma}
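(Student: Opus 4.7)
The plan is to reduce to the case when $V\alg$ and $U\alg$ are both almost of finite type over $k$ via the right Kan extension formula of \cref{rem:non_laft_deformation}, and then to verify the finite-type case directly using the explicit construction of \cref{sec:deformation_algebraic}.

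For the finite-type case, the extended deformation $\widetilde{\mathfrak D}_X$ decomposes as a gluing of two pieces: (i) the trivial family $X \times (\bbP^1_k \setminus \{0\})$ on the complement of the central fiber, and (ii) the formal-moduli-problem $\mathfrak D_X$ near $0 \in \bbA^1_k$, glued along the canonical trivialization over $\bbG_m$ guaranteed by \cref{prop:properties_algebraic_deformation}(\ref{prop:properties_algebraic_deformation:trivialization}). Piece (i) is manifestly compatible with any base change, a fortiori with étale base change. For piece (ii), the groupoid $\mathfrak D_X^\bullet$ is a fiber product of the mapping stacks $\bfMap_{/\bbA^1_k}(\rB^\bullet, X \times \bbA^1_k)$ and $\bfMap_{/\bbA^1_k}(\rB^\bullet, X_0 \times \bbA^1_k)$, which commute with base change in the target because $\rB^\bullet$ is flat over $\bbA^1_k$; the formation of the truncation $X_0$ is likewise compatible with étale base change. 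This yields $\mathfrak D_{V\alg}^\bullet \simeq V\alg \times_{U\alg} \mathfrak D_{U\alg}^\bullet$ as simplicial objects in $\dSt_{/\bbA^1_k}$. Passing to the associated formal moduli problem via \cite[Theorem 5.2.3.2]{Gaitsgory_Study_II} preserves this pullback square, since étale morphisms preserve the Lie algebroid structure encoded by $\bbL_{\trunc(X)/X}[-1]$ up to pullback.

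For the general case, I would exploit the right Kan extension formula $\widetilde{\mathfrak D}_{X} \simeq \lim_{X \to X'} \widetilde{\mathfrak D}_{X'}$ where the limit ranges over morphisms to almost-of-finite-type derived affines. Given $V\alg \to U\alg$ étale, any morphism $U\alg \to U'$ with $U' \in \dAff_k^{\mathrm{aft}}$ produces a compatible $V\alg \to V' \coloneqq V\alg \times_{U\alg} U'$, and $V' \to U'$ is étale between finite-type objects. Since the limit defining $\widetilde{\mathfrak D}$ commutes with the fiber product along $V\alg \to U\alg$, the pullback square for $V\alg \to U\alg$ follows from the corresponding squares for $V' \to U'$ treated above. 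The main obstacle is verifying that the groupoid-to-formal-moduli-problem functor commutes with étale base change, which I expect to follow by combining the compatibility of the algebraic cotangent complex with étale morphisms and the characterization of formal moduli problems via their tangent Lie algebroid from \cite[\S 5]{Gaitsgory_Study_II}.
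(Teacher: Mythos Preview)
Your overall architecture is sensible and close in spirit to the paper's, but there is a real gap at the point you yourself flag as ``the main obstacle.'' You correctly observe that the formal groupoid $\mathfrak D_X^\bullet$ is compatible with \'etale base change in $X$: flatness of $\rB^\bullet$ over $\bbA^1_k$ and stability of truncations under \'etale pullback give $\mathfrak D_{V'}^\bullet \simeq V' \times_{U'} \mathfrak D_{U'}^\bullet$ termwise. The problem is the next step: you assert that applying the groupoid-to-formal-moduli-problem functor of \cite[Theorem 5.2.3.2]{Gaitsgory_Study_II} ``preserves this pullback square,'' justified only by a vague remark about Lie algebroids. That functor is a colimit-type construction (a formal quotient), and there is no a priori reason it should commute with the fiber product $V' \times_{U'} (-)$. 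Your appeal to the tangent Lie algebroid would at best match the two objects to first order; it does not give the required equivalence of derived stacks.

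The paper resolves exactly this difficulty, but by going in the opposite direction. Rather than pushing the base change through the groupoid-to-FMP functor, it first observes that both $\mathfrak D_{V\alg}$ and $V\alg \times_{U\alg} \mathfrak D_{U\alg}$ are derived affine schemes almost of finite presentation over $V\alg$, with the same truncation $\trunc(V\alg) \times \bbA^1_k$. Then it invokes \cite[Lemma 3.1.1]{Halpern-Leistner_Preygel_Categorical_properness}: a map of such derived affines is an equivalence if and only if it induces an equivalence on \v{C}ech nerves of the inclusion of the common truncation. These \v{C}ech nerves are then identified back with the simplicial objects $\mathfrak D^\bullet$, where the base change compatibility \emph{is} available and reduces (after unwinding the mapping-stack description) to the flatness of $A_U \to A_V$. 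In short, the paper converts the problem from ``does $B(-)$ commute with base change?'' into ``do the \v{C}ech nerves agree?'', which is exactly the groupoid-level statement you already have. You are missing this key reduction lemma; without it, or a genuine proof that the groupoid-to-FMP functor is compatible with \'etale base change, the argument does not close.

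A secondary point: your reduction to the finite-type case via cofinality of the system $\{V' = V\alg \times_{U\alg} U'\}$ inside all finite-type targets of $V\alg$ also needs justification, and the claim that ``the limit defining $\widetilde{\mathfrak D}$ commutes with the fiber product'' is dangerously close to assuming the conclusion. The paper avoids this by working directly with the limit presentation and applying the Halpern--Leistner--Preygel argument to the limit objects themselves.
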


\begin{proof}
	By construction, it suffices to prove the analogous statement for $\mathfrak D_{V\alg}$ and $\mathfrak D_{U\alg}$.
	Choose a cofiltered diagram $F \colon I \to (\dAff_k^{\mathrm{ft}})^{\Delta^1}$ whose limit $\lim_{i \in I} F(i)$ is equivalent to $V\alg \to U\alg$.
	We write
	\[ F(i) \colon U_i \longrightarrow V_i . \]
	Then, unraveling the definition, we find canonical equivalences
	\[ \mathfrak D_{U\alg} \simeq \lim_{i \in I} \mathfrak D_{U_i} , \qquad \mathfrak D_{V\alg} \simeq \lim_{i \in I} \mathfrak D_{V_i} . \]
	Applying \cref{prop:properties_algebraic_deformation}(\ref{prop:properties_algebraic_deformation:truncation}) to each $\mathfrak D_{U_i}$ and $\mathfrak D_{V_i}$, and passing to the limit, we see that
	\[ \trunc( \mathfrak D_{U\alg} ) \simeq \trunc(U\alg) \times \bbA^1_k, \qquad \trunc( \mathfrak D_{V\alg} ) \simeq \trunc(V\alg) \times \bbA^1_k . \]
	Therefore, the natural map
	\[ \mathfrak D_{V\alg} \longrightarrow V\alg \times_{U\alg} \mathfrak D_{U\alg} \]
	induces an equivalence on the truncation.
	By \cref{prop:properties_algebraic_deformation}(1), both $\mathfrak D_{V\alg}$ and $V\alg \times_{U\alg} \mathfrak D_{U\alg}$ are affine derived schemes, almost of finite presentation over $V\alg$.
	Write
	\[ \mathfrak D_{V\alg} \simeq \Spec(R), \qquad V\alg \times_{U\alg} \mathfrak D_{U\alg} \simeq \Spec(R') . \]
	By \cite[Theorem 1.3]{Porta_Yu_Derived_Hom_spaces}, $A_V$ is noetherian in the sense of \cite[Definition 7.2.4.30]{Lurie_Higher_algebra}.
	Since $R$ and $R'$ are $A_V$-algebras almost of finite presentation, Proposition 7.2.4.31 in loc.\ cit.\ implies that they are noetherian as well.
	Therefore applying the same result again, we see that the maps $R \to \pi_0(R)$ and $R' \to \pi_0(R)$ are almost of finite presentation.
	At this point, \cite[Lemma 3.1.1]{Halpern-Leistner_Preygel_Categorical_properness} implies that the map $R' \to R$ is an equivalence if and only if the induced map of \v{C}ech nerves
	\[ \Cech\big( \trunc(V\alg) \times \bbA^1_k \to \mathfrak D_{V\alg} \big) \longrightarrow \Cech\big( \trunc(V\alg) \times \bbA^1_k \to V\alg \times_{U\alg} \mathfrak D_{V\alg} \big) \]
	is an equivalence of simplicial objects.
	We have
	\[ \Cech\big( \trunc(V\alg) \times \bbA^1_k \to \mathfrak D_{V\alg} \big) \simeq \lim_{i \in I} \Cech\big( \trunc(V_i) \times \bbA^1_k \to \mathfrak D_{V_i\alg} \big) , \]
	and similarly for the other \v{C}ech nerve.
	It follows from the construction of $\mathfrak D_{V_i}$ (via \cite[Theorem 5.2.3.2]{Gaitsgory_Study_II}) that we can identify this \v{C}ech nerve with
	\[ \mathfrak D^\bullet_{V\alg} \coloneqq \lim_{i \in I} \mathfrak D^\bullet_{V_i} . \]
	Similarly, we can identify the other \v{C}ech nerve with
	\[ V\alg \times_{U\alg} \mathfrak D^\bullet_{U\alg} \coloneqq \lim_{i \in I} V_i \times_{U_i} \mathfrak D^\bullet_{U_i} . \]
	We are thus reduced to prove that for every $[n] \in \mathbf \Delta$, the diagram
	\[ \begin{tikzcd}
		\mathfrak D^n_{V\alg} \arrow{r} \arrow{d} & \mathfrak D^n_{U\alg} \arrow{d} \\
		V\alg \arrow{r} & U\alg
	\end{tikzcd} \]
	is a pullback square.
	Unwinding the definitions, we see that it is enough to prove that the diagram
	\[ \begin{tikzcd}
		\bfMap_{/\bbA^1_k}\big( \rB^n, \trunc(V\alg) \times \bbA^1_k \big) \arrow{r} \arrow{d} & \bfMap_{/\bbA^1_k}\big( \rB^n, V\alg \times \bbA^1_k \big) \arrow{d} \\
		\bfMap_{/\bbA^1_k}\big( \rB^n, \trunc(U\alg) \times \bbA^1_k \big) \arrow{r} & \bfMap_{/\bbA^1_k}\big( \rB^n, U\alg \times \bbA^1_k \big)
	\end{tikzcd} \]
	is a pullback square.
	For this, it is enough to verify that
	\[ \begin{tikzcd}
		\trunc(V\alg) \arrow{r} \arrow{d} & V\alg \arrow{d} \\
		\trunc(U\alg) \arrow{r} & U\alg
	\end{tikzcd} \]
	is a pullback.
	Since $V \to U$ is \'etale, \cref{lem:equivalent_formulation_flatness} implies that $A_U \to A_V$ is flat and therefore the previous square is indeed a pullback.
	The conclusion follows.
\end{proof}

Thanks to the above lemma, we can interpret the assignment $U \mapsto \widetilde{\mathfrak D}_{U\alg}$ as an element in the limit
\[ \lim_{U \in X\et^\afd} \dSt_{/U\alg} . \]
We can therefore apply the relative analytification functor
\[ (-)\an_X \colon \lim_{U \in X\et^\afd} \dSt_{/U\alg} \longrightarrow \dAnSt_{/X} , \]
and obtain an analytic deformation
\[ \widetilde{\mathfrak D}_X \longrightarrow X \times \mathbf P^1_k , \]
which enjoys the analytic version of all the properties given in \cref{prop:properties_algebraic_deformation}.

\section{Non-archimedean Gromov compactness} \label{sec:Gromov_compactness}

In this section, we review the compactness results from \cite{Yu_Gromov_compactness}.
Starting from this section until the end of the paper we will assume that the base field $k$ has discrete valuation and residue characteristic zero.

In order to obtain a proper moduli stack of stable maps, we need to equip the ambient space $X$ a Kähler structure.
Let us recall the definition of Kähler structure in non-archimedean geometry following \cite[\S 3]{Yu_Gromov_compactness}.

Let $X$ be a quasi-compact smooth rigid \kanal space, $\fX$ an snc formal model of $X$, and $\SX$ the associated Clemens polytope, i.e.\ the dual intersection complex of the special fiber $\fXs$.
Let $\{D_i\}_{i\in\IX}$ be the set of irreducible components of the reduction $(\fXs)_\red$ of $\fXs$, and $\mult_i$ the multiplicity of $D_i$ in $\fXs$ for every $i\in\IX$.
For every non-empty subset $I\subset\IX$, let $D_I\coloneqq\bigcap_{i\in I}D_i$.
Every $D_I$ is assumed to be either empty or geometrically irreducible.
Let $N^1(D_I,\R)$ denote the vector space of codimension-one cycles in $D_I$ with real coefficients modulo numerical equivalence (with respect to proper curves in $D_I$).

\begin{definition}
	A \emph{simple function} $\varphi$ on $\SX$ is a real valued function that is affine on every face of $\SX$.
	For $i\in\IX$, let $\varphi(i)$ denote the value of $\varphi$ at the vertex $i$ of $\SX$.
\end{definition}

\begin{definition}
	Let $\Delta^I$, $I\subset\IX$ be a face of $\SX$.
	For any simple function $\varphi$ on $\SX$, we define the \emph{derivative} of $\varphi$ with respect to $\Delta^I$ as
	\[\partial_I \varphi = \sum_{i\in\IX}\mult_i\cdot\varphi(i)\cdot [D_i]|_{D_I} \in N^1(D_I,\R) ,\]
	where $[D_i]$ denotes the divisor class of $D_i$.
	The function $\varphi$ is said to be \emph{linear} (resp.\ \emph{convex}, \emph{strictly convex}) along the open simplicial face $(\Delta^I)^\circ$ if $\partial_I \varphi$ is trivial (resp.\ nef, ample) in $N^1(D_I,\R)$.
\end{definition}

Let $\Lin_\fX$ (resp.\ $\Conv_\fX$, $\sConv_\fX$) denote the sheaf of linear (resp.\ convex, strictly convex) functions on $\SX$.

\begin{definition}
	A \emph{virtual line bundle} $L$ on the $k$-analytic space $X$ with respect to the formal model $\fX$ is a torsor over the sheaf $\Lin_\fX$.
	A \emph{strictly convex metrization} $\hL$ of a virtual line bundle $L$ is a global section of the sheaf $\sConv_\fX\otimes_{\Lin_\fX} L$, where $\Lin_\fX$ acts on $\sConv_\fX$ via additions $\psi\mapsto(\varphi\mapsto\varphi+\psi)$.
	Given $\hL$, we obtain a collection of numerical classes $\partial_i\varphi_i\in N^1(D_I,\R)$ for every $i\in\IX$, which we call the \emph{curvature} of $\hL$, and denote by $c(\hL)$.
\end{definition}

\begin{remark}
	It was pointed out by a referee that virtual line bundles may be related to logarithmic line bundles developed in \cite{Molcho_The_logarithmic_Picard_group}.
\end{remark}

\begin{definition} \label{def:kahler}
	A \emph{Kähler structure} $\hL$ on $X$ consists of a choice of an snc formal model $\fX$ of $X$, a virtual line bundle $L$ on $X$ with respect to $\fX$ equipped with a strictly convex metrization $\widehat L$.
\end{definition}

Now we can state the Gromov compactness theorem in non-archimedean geometry, one main result of \cite{Yu_Gromov_compactness}:

\begin{theorem} \label{thm:Gromov_compactness}
	Let $X$ be a proper smooth \kanal space equipped with a Kähler structure $\hL$.
	For any A-graph $(\tau,\beta)$, the (underived) stack $\oM(X,\tau,\beta)$ of $(\tau,\beta)$-marked stable maps into $X$ is a proper \kanal stack.
\end{theorem}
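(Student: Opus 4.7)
The plan is to verify the three standard conditions for properness of an analytic stack, namely separatedness, quasi-compactness, and the valuative criterion. By \cref{rem:RMXtaubeta} together with truncation one already knows that $\oM(X,\tau,\beta)$ is a rigid $k$-analytic stack, so the substantive content is properness.

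For separatedness I would invoke the uniqueness half of the valuative criterion: given two $(\tau,\beta)$-marked stable maps over a punctured analytic disk that agree on the generic fiber, the finiteness of automorphisms of stable maps combined with the separatedness of $\oM_\tau$ forces them to be canonically isomorphic. For quasi-compactness I would introduce an area functional $\vol_{\hL}(f)$ built by pairing $\beta$ against the curvature $c(\hL)$ on the Clemens polytope $\SX$; strict convexity of $\hL$ then ensures that every non-contracted irreducible component of $C$ contributes a bounded-below quantity, so that the total number of irreducible components in the domain of any map in class $\beta$ is a priori bounded. Combined with the fixed combinatorics of $\tau$, this bounds the dual graph of $C$ and stratifies $\oM(X,\tau,\beta)$ into finitely many locally closed pieces of finite type.

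The hard part is the existence half of the valuative criterion: extending a stable map $f^\circ\colon C^\circ\to X$ defined over $\Sp(k\llp t\rrp)$ to one over $\Sp(k\llb t\rrb)$, possibly after a finite base change. My strategy would be to work systematically with formal models: start from the snc formal model $\fX$ underlying $\hL$, pick a semistable formal model of $C^\circ$ via stable reduction for curves, and study the induced tropicalization into the Clemens polytope $\SX$. The strict convexity of the Kähler metrization controls this tropicalization, and one iteratively modifies both $\fX$ and the domain model by admissible blow-ups until the map extends to a morphism of formal schemes; taking analytic generic fibers and then applying the stabilization procedure of \cref{sec:stabilization_map} produces the desired extension $\bar f$.

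The main obstacle I expect is the detailed tropical and combinatorial bookkeeping required to show that the iterative blow-up procedure terminates and that the resulting formal morphism genuinely descends to a stable analytic map rather than only a morphism of formal schemes. This is precisely where strict convexity of the Kähler structure (as opposed to mere convexity) becomes indispensable, and it is the content established in \cite{Yu_Gromov_compactness}, so within the present paper the theorem can simply be cited.
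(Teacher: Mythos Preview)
Your final conclusion --- that within this paper the result should be cited from \cite{Yu_Gromov_compactness} --- matches what the paper actually does, and your sketch of the direct valuative-criterion argument is a reasonable summary of how that earlier paper proceeds. But the paper's proof is not literally ``simply cite'': the main theorem of \cite{Yu_Gromov_compactness} is stated for $\oM_{g,n}(X,A)$, the stack of $n$-pointed genus $g$ stable maps whose degree with respect to $\hL$ is bounded by $A$, not for the $(\tau,\beta)$-marked stack $\oM(X,\tau,\beta)$ indexed by a curve class in $A(X)$. A short reduction is needed, and this is the content of \cref{rem:degree} that the paper explicitly invokes: one first checks that the intersection pairing $(-)\cdot\hL$ descends from cycles to $A(X)$ (i.e.\ is invariant under analytic equivalence, which is verified via formal models and specialization), so that $\beta\cdot\hL \coloneqq \sum_{v\in V_\tau}\beta(v)\cdot\hL$ is a well-defined number; then one observes that $\oM(X,\tau,\beta)$ is a closed substack of the proper stack $\oM_{g,n}(X,\beta\cdot\hL)$, hence itself proper.

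So your proposal is correct in spirit but elides this bridge. The lengthy heuristic you give for separatedness, quasi-compactness, and the existence half of the valuative criterion is all work already done in \cite{Yu_Gromov_compactness} and need not be repeated here; what you should supply instead is the two-line reduction above.
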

\begin{proof}
	This follows from \cite[Theorem 1.3]{Yu_Gromov_compactness} and the remark below.
\end{proof}

\begin{remark} \label{rem:degree}
	For any closed 1-dimensional irreducible reduced analytic subspace $C\subset X$, we define $C\cdot\hL$ to be the degree of the pullback of $\hL$ to the normalization $\tC$ of $C$ (see \cite[\S 5]{Yu_Gromov_compactness}).
	This extends to any 1-dimensional cycle $Z$ of $X$ by linearity, and factors through analytic equivalence, so $\beta\cdot \hL$ is well-defined for any $\beta\in A(X)$.
	Indeed, for any connected smooth $k$-affinoid curve $T$, $t,t'\in T$ two rigid points, and any closed 2-dimensional irreducible reduced analytic subspace $V$ of $X\times T$ flat over $T$, we take a formal model $\fT$ of $T$ and a formal model $\fV$ of $V$ flat over $\fT$.
	Let $t_s,t'_s\in\fT_s$ be the specializations of $t$ and $t'$ respectively.
	Flatness implies that $[\fV_{s,t_s}]\cdot c(\hL)=[\fV_{s,t'_s}]\cdot c(\hL)$.
	By \cite[Propositions 4.3 and 5.7]{Yu_Gromov_compactness}, we have $[\fV_{s,t_s}]\cdot c(\hL)=V_t\cdot\hL$ and $[\fV_{s,t'_s}]\cdot c(\hL)=V_{t'}\cdot\hL$, hence we obtain $V_t\cdot\hL=V_{t'}\cdot\hL$.
	
	Let $(\tau,\beta)$ be an A-graph.
	Without loss of generality, we assume it to be connected.
	Let $g$ be the genus of $\tau$ and $n$ the number of tails of $\tau$.
	Recall $\beta$ is a map $V_\tau\to A(X)$; let $\beta\cdot\hL\coloneqq\sum_{v\in V_\tau}\beta(v)\cdot\hL$.
	Let $\oM_{g,n}(X,\beta\cdot\hL)$ denote the stack of $n$-pointed genus $g$ \kanal stable maps into $X$ whose degree with respect to $\hL$ is at most $\beta\cdot\hL$  (as in \cite{Yu_Gromov_compactness}).
	We conclude that $\oM(X,\tau,\beta)$ is a closed substack of $\oM_{g,n}(X,\beta\cdot\hL)$.
\end{remark}

\begin{lemma} \label{lem:boundedness}
	Let $\ff \colon \fX \to \fY$ be a proper morphism of formal \DM stacks locally finitely presented over $k^\circ$.
	Let $f \colon X \to Y$ denote the induced map between the generic fibers.
	Then the higher direct image functor $f_*$ sends $\Cohb(X)$ to $\Cohb(Y)$.
\end{lemma}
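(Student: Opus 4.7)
The plan is to leverage the formal model to deduce properness of $f$, and then apply a Kiehl-type proper mapping theorem together with a uniform bound on cohomological dimension.

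First, I would verify that $\ff$ itself is proper. Since $\fX$ is proper over $\Spf(k^\circ)$ and $\Spf(k^\circ)$ is separated, the graph $\Gamma_\ff \colon \fX \to \fX \times_{\Spf(k^\circ)} \fY$ is a closed immersion. Composing with the projection $\fX \times_{\Spf(k^\circ)} \fY \to \fY$, which is proper by base change of $\fX \to \Spf(k^\circ)$, exhibits $\ff$ as a composition of proper morphisms and hence proper. Taking generic fibers, which preserves properness of morphisms of formal \DM stacks, shows that $f \colon X \to Y$ is a proper morphism of rigid \kanal \DM stacks.

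Second, the statement is local on $\fY$, so I may assume $\fY$ is affine; then $Y$ is affinoid and $X$ is quasi-compact. The relative dimension of $\ff$ is bounded by $\dim(\fXs)$, which is finite as $\fX$ is quasi-compact and locally of finite presentation over $k^\circ$. Consequently $f$ has bounded relative dimension $d \in \bbN$. For any $\cG \in \Cohh(X)$, Kiehl's proper mapping theorem---extended to \DM stacks by étale descent on an atlas of $Y$ combined with a proper hypercover of $X$---guarantees that each $R^i f_* \cG$ is coherent on $Y$ and vanishes for $i > d$. A standard hypercohomology spectral sequence then upgrades this to the claim that $Rf_*$ preserves both boundedness and coherence, so sends $\Cohb(X)$ into $\Cohb(Y)$.

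The main obstacle is the extension of Kiehl's proper mapping theorem to the \DM stack setting, in particular the combination of coherence and uniform vanishing after étale descent. In the non-archimedean analytic framework for higher stacks this is treated in the authors' earlier foundational work (cf.\ \cite{Porta_Yu_Higher_analytic_stacks}). Alternatively, one can bypass it by descending to a formal model: produce a formal coherent representative of $\cG$ on $\fX$ (up to admissible blow-up), apply the noetherian formal proper mapping theorem on the special fiber $\ff_s \colon \fXs \to \fYs$ to control $R\ff_*$, and then transfer back via the flatness of $k^\circ \to k$ and proper base change.
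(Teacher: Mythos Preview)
Your outline is reasonable in spirit, but the step you flag as ``the main obstacle'' is exactly where the argument is incomplete, and the paper resolves it by a route you do not consider. You appeal to a Kiehl-type proper mapping theorem for rigid \kanal \DM stacks, obtained either by étale descent from an atlas or by citing \cite{Porta_Yu_Higher_analytic_stacks}. Neither is quite available: étale descent on the target is fine, but on the source side a proper hypercover does not obviously give a uniform bound on cohomological dimension, and the cited reference treats GAGA and representability rather than a direct analytic proper pushforward theorem for stacks. Your alternative via formal models is also sketchy, since producing formal coherent lifts of sheaves on a \DM stack (rather than a space) up to admissible blow-up is not standard Raynaud theory.

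The paper sidesteps the stacky Kiehl problem entirely by factoring through the coarse moduli space. After reducing to $\fY$ affine, one writes $\ff \colon \fX \to |\fX| \to \fY$ and correspondingly $f \colon X \xrightarrow{p} |\fX|_\eta \xrightarrow{\widebar f} Y$. The key point is that, by \cite{Abramovich_Tame_stacks}, étale locally over the coarse space $\fX_s$ is a finite group quotient; transporting this through the equivalence of étale sites to $|\fX|_\eta$ shows that $X \to |\fX|_\eta$ is étale locally a finite group quotient, and in residue characteristic zero this forces $p_*$ to have cohomological dimension zero. On the other hand $|\fX|_\eta$ is an honest proper rigid \kanal \emph{space} (via \cite{Conrad_Spreading-out}), so the classical Kiehl theorem applies to $\widebar f$. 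Composing gives the result. This is both more elementary and more robust than what you propose: it reduces everything to the classical Kiehl theorem for spaces and uses the characteristic-zero hypothesis in an essential way, whereas your argument never invokes that hypothesis.
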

\begin{proof}
	The question being étale local on the target, we can assume that $\fY$ is affine.
	Let $\abs{\fX_s}$ and $\abs{\fX}$ denote the coarse moduli spaces.
	Then the map $\ff\colon\fX\to\fY$ factors as
	$\fX \xrightarrow{\mathfrak p} \abs{\fX} \xrightarrow{\overline{\ff}} \fY$, so $f \colon X \to Y$ factors as $X \xrightarrow{p} \abs{\fX}_\eta \xrightarrow{\widebar{f}} Y$.
	By \cite[Proposition 3.6]{Abramovich_Tame_stacks}, étale locally over $\abs{\fX_s}$ the \DM stack $\fX_s$ is isomorphic to the quotient of an affine scheme by a finite group.
	Using the equivalence between the étale sites of $\abs{\fX_s} \simeq \abs{\fX}_s$ and of $\abs{\fX}$, we deduce that étale locally over $\abs{\fX}_\eta$, the \kanal \DM stack $X = \fX_\eta$ is isomorphic to the quotient of an affinoid space by a finite group.
	Since the base field is assumed to have characteristic zero, we deduce that the map $p \colon X \to \abs{\fX}_\eta$ has cohomological dimension zero.
		By \cite[\S B.2]{Conrad_Spreading-out}, the generic fiber $\abs{\fX}_\eta$ of the formal algebraic space $\abs{\fX}$ is a \kanal space.
	Since $\ff$ is proper, so are $\overline{\ff}$ and $\widebar f$.
	Therefore, by Kiehl's proper mapping theorem \cite{Kiehl_Der_Endlichkeitssatz}, the higher direct image functor $\widebar{f}_*$ sends $\Cohb(\abs{\fX_\eta})$ to $\Cohb(Y)$.
	As $f_* = \widebar{f}_* \circ p_*$, the conclusion follows.
\end{proof}

\begin{proposition} \label{prop:boundedness}
	Let $X$ be a proper smooth \kanal space equipped with a Kähler structure $\hL$.
	Let $(\tau,\beta)$ be an A-graph.
	Let $\st$ denote the composite map
	\[\st\colon\oM(X,\tau,\beta)\longrightarrow\oMpre_\tau\longrightarrow\oM_\tau,\]
	taking domain of stable map and then stabilizing the $\tau$-marked prestable curve.
	Then the higher direct image functor $\st_*$ sends $\Cohb(\oM(X,\tau,\beta))$ to $\Cohb(\oM_\tau)$.
\end{proposition}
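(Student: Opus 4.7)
By the non-archimedean Gromov compactness theorem (\cref{thm:Gromov_compactness}) together with \cref{rem:degree}, the stack $\oM(X,\tau,\beta)$ is a proper \kanal \DM stack; since $\oM_\tau$ is separated, the stabilization map $\st$ is automatically proper. The plan is to reduce the statement to \cref{lem:boundedness} by realising $\st$ as the generic fiber of a proper morphism of formal \DM stacks, to which \cref{lem:boundedness} directly applies.

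On the target side, the decomposition $\oM_\tau \simeq \prod_{v \in V_\tau} \oM_{g(v),\val(v)}$ from \cref{rem:product_decomposition_M_tau} together with the Knudsen--Mumford construction produces a smooth proper \DM stack defined over $\Spec \mathbb Z$; base-changing to $\Spec k^\circ$ and formally completing along the special fiber yields a smooth proper formal \DM model $\ofM_\tau$ of $\oM_\tau$ locally finitely presented over $k^\circ$. On the source side, I will invoke the formal models underlying the proof of \cref{thm:Gromov_compactness} from \cite{Yu_Gromov_compactness}: after fixing an snc formal model $\fX$ of $X$, every non-archimedean $(\tau,\beta)$-marked stable map into $X$ admits a formal model over $\fX$, and these formal models are parametrized by a proper formal \DM stack $\ofM(X,\tau,\beta)$ locally finitely presented over $k^\circ$ whose generic fiber is $\oM(X,\tau,\beta)$.

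Knudsen's stabilization procedure for $\tau$-marked prestable curves is defined over an arbitrary base (since contracting unstable rational components is a relative construction), so it lifts to the formal level and produces a morphism
\[ \bar\st \colon \ofM(X,\tau,\beta) \longrightarrow \ofM_\tau \]
of formal \DM stacks locally finitely presented over $k^\circ$ with proper source, whose associated morphism on generic fibers is precisely $\st$. \Cref{lem:boundedness} then immediately yields the desired conclusion that $\st_*$ sends $\Cohb(\oM(X,\tau,\beta))$ into $\Cohb(\oM_\tau)$.

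The main obstacle I anticipate is the rigorous verification that the moduli space of formal $(\tau,\beta)$-marked stable maps assembles into a proper formal \DM stack that is locally finitely presented over $k^\circ$, and that Knudsen stabilization commutes with the passage to formal models in a way compatible with the generic fiber formation. Both points are essentially encoded in \cite{Yu_Gromov_compactness}, but extracting the statements in the exact form required by \cref{lem:boundedness} (i.e.\ a bona fide morphism of formal \DM stacks with proper source) will require some bookkeeping, in particular to keep track of the snc formal model $\fX$ throughout.
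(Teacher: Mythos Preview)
Your overall strategy matches the paper's: realise $\st$ as the generic fiber of a morphism of formal \DM stacks with proper source and invoke \cref{lem:boundedness}. The difference lies in how the formal model of the source is obtained. You try to construct a formal \DM model of $\oM(X,\tau,\beta)$ directly, and you correctly flag this as the main obstacle, since \cite{Yu_Gromov_compactness} does not literally build a moduli stack of formal $(\tau,\beta)$-marked stable maps. The paper sidesteps this by first using \cref{rem:degree} to embed $\oM(X,\tau,\beta)$ as a closed substack of $\oM_{g,n}(X,A)$ with $A=\beta\cdot\hL$, and then reducing the statement to the analogous one for the stabilization map $\oM_{g,n}(X,A)\to\oM_{g,n}$. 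For that map the formal model $\oM_{g,n}(\fX,A)\to\ofM_{g,n}$ is already provided by \cite[Theorem 8.9]{Yu_Gromov_compactness}, so \cref{lem:boundedness} applies without further work.

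The reduction step is slightly terse in the paper but straightforward: the gluing map $d\colon\oM_\tau\to\oM_{g,n}$ is finite, hence $d_*$ is exact and conservative on coherent sheaves; combined with the closed immersion $\iota\colon\oM(X,\tau,\beta)\hookrightarrow\oM_{g,n}(X,A)$ and the commutativity $d\circ\st_\tau=\st_{g,n}\circ\iota$, boundedness for $\st_{g,n,*}$ implies boundedness for $\st_{\tau,*}$. So your approach would work once you supply a formal model for the $(\tau,\beta)$-marked stack (e.g.\ by taking closures inside $\oM_{g,n}(\fX,A)$ and a product of $\ofM_{g(v),\val(v)}$), but the paper's reduction to the $(g,n,A)$ case avoids that bookkeeping entirely and lets you cite the existing result verbatim.
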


\begin{proof}
	Let $g$ be the genus of $\tau$, $n$ the number of tails of $\tau$, and $A\coloneqq\beta\cdot\hL$.
	By \cref{rem:degree}, it suffices to prove the proposition for $\oM_{g,n}(X,A)$ instead of $\oM(X,\tau,\beta)$.
	By \cite[Theorem 8.9]{Yu_Gromov_compactness}, the moduli stack $\oM_{g,n}(\fX,A)$ of $n$-pointed genus $g$ formal stable maps into $\fX$ with degree bounded by $A$ is a formal model of the \kanal stack $\oM_{g,n}(X,A)$.
	Moreover, the moduli stack $\widebar{\fM}_{g,n}$ of $n$-pointed genus $g$ formal stable curves over $\kc$ is a formal model of $\oM_{g,n}$.
	Let
	\[ \mathfrak{st} \colon \oM_{g,n}(\fX,A) \longrightarrow \overline{\mathfrak M}_{g,n} \]
	be the map taking domain and then stabilizing.
	The generic fiber of $\mathfrak{st}$ is canonically equivalent to $\st$.
	Now the statement follows from \cite[Theorem 1.3]{Yu_Gromov_compactness} and \cref{lem:boundedness}.
\end{proof}

\section{Quantum K-invariants} \label{sec:quantum_K-invariants}

In this section, we introduce the quantum K-invariants in non-archimedean analytic geometry, and prove a list of natural geometric properties, namely, mapping to a point, products, cutting edges, forgetting tails and contracting edges.
They are analogous to the axioms in algebraic quantum K-theory in the work of Lee \cite{Lee_Quantum_K-theory_I}, and to the Behrend-Manin axioms \cite{Behrend_Stacks_of_stable_maps} for Gromov-Witten invariants.
The proof of the mapping to a point property relies on the theory of deformation to the  normal bundle that we studied in \cref{sec:deformation_to_the_normal_bundle}.
The remaining properties are consequences of the corresponding geometric relations of the derived moduli stacks established in \cref{sec:geometry_of_derived_stable_maps}.

Fix a proper smooth \kanal space $X$.
For any $A$-graph $(\tau,\beta)$, let $\R\oM(X,\tau,\beta)$ denote the derived stack of $(\tau,\beta)$-marked stable maps into $X$ as in \cref{rem:RMXtaubeta}.
It is a proper derived \kanal stack by \cref{thm:Gromov_compactness}.
Let $\st$ denote the composite map
\[\st\colon\R\oM(X,\tau,\beta)\longrightarrow\oMpre_\tau\longrightarrow\oM_\tau,\]
taking domain of stable map and then stabilizing the $\tau$-marked prestable curve.

For every tail $i\in T_\tau$, we have a section $s_i$ of the universal curve
\[\begin{tikzcd}
\R\oC(X,\tau,\beta) \arrow{r}{f} \arrow{d} & X\\
\R\oM(X,\tau,\beta). \arrow[bend left]{u}{s_i} & \\
\end{tikzcd}\]
Composing with the universal map $f$, we obtain the evaluation map of the $i$-th marked point
\[\ev_i\colon\R\oM(X,\tau,\beta)\longrightarrow X.\]

\begin{definition} \label{def:quantum_K-invariants}
	The \emph{quantum K-invariants} of $X$ are the collection of linear maps
	\begin{align*}
		K^X_{\tau,\beta}\colon K_0(X)^{\otimes \abs{T_\tau}} &\longrightarrow K_0(\oM_\tau)\\
		\bigotimes_{i\in T_\tau} a_i & \longmapsto \st_*\bigg(\bigotimes_{i\in T_\tau} \ev_i^*(a_i)\bigg),
	\end{align*}
	where $(\tau,\beta)$ is any A-graph.
	\Cref{prop:boundedness} ensures that the image of $\st_*$ lies in $K_0(\oM_\tau)$. 
\end{definition}

\subsection{Mapping to a point} \label{sec:mapping_to_a_point}

\begin{proposition} \label{prop:mapping_to_a_point}
	Let $(\tau,0)$ be any A-graph where $\beta$ is 0.
	Let $p_1, p_2$ be the projections
	\[ \begin{tikzcd}
	X\times \oM_\tau \arrow{r}{p_1} \arrow{d}{p_2} & X \\
	\oM_\tau & \phantom{X} .
	\end{tikzcd} \]
	For any $a_i\in K_0(X), i\in T_\tau$, we have
	\[K^X_{\tau,0}\big({\textstyle\bigotimes_i} a_i \big) = p_{2*} \Big(p_1^*({\textstyle\bigotimes_i} a_i) \otimes \lambda_{-1}\big( (\bbT\an_X\boxtimes\rR^1\pi_*\cO_{\oC_\tau})^\vee\big)\Big),\]
	where $\pi\colon\oC_\tau\to\oM_\tau$ and $\lambda_{-1}(F)\coloneqq\sum_{i}(-1)^i\wedge^i F$.
\end{proposition}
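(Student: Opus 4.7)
The strategy is to identify the derived moduli $\cM \coloneqq \R\oM(X,\tau,0)$ as a derived enhancement of $\oM_\tau \times X$ whose derived structure is controlled by the obstruction bundle $E \coloneqq \rR^1\pi_*\cO_{\oC_\tau} \boxtimes \bbT\an_X$, and then to use deformation to the normal bundle to reduce the K-theoretic computation to a Koszul resolution. I will treat the case $\tau$ connected; the disconnected case reduces to this via \cref{prop:products}.

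First I would construct the relevant structure on $\cM$. For any family of $(\tau,0)$-marked stable maps $(C\to T, (s_i), f)$, the map $f$ is fiberwise constant of common value given by $\ev_1 \colon T \to X$, so the pair $(\st, \ev_1)$ defines a natural derived retraction $r \colon \cM \to \oM_\tau \times X$ of the truncation inclusion $j \colon \oM_\tau \times X \hookrightarrow \cM$. Since all marked points of a connected $\tau$-curve determine the same common value, $\ev_i = p_2 \circ r$ for every $i \in T_\tau$ and $\st = p_1 \circ r$. Using \cite[Lemma 8.4]{Porta_Yu_Derived_Hom_spaces} together with the cotangent computation from the proof of \cref{thm:stack_of_stable_maps_derived_lci} and the transitivity fiber sequence for $\cM \xrightarrow{r} \oM_\tau \times X \to \oM_\tau$, I would identify
\[ \anL_{\cM/(\oM_\tau \times X)} \simeq E^\vee[1] , \]
where the degree-$0$ piece of $\anL_{\cM/\oMpre_\tau}$ is precisely $\ev_1^*\Omega\an_X$ and gets cancelled. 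A second application of the transitivity fiber sequence to the identity $rj = \id$ then gives $\anL_{(\oM_\tau \times X)/\cM}[-1] \simeq j^* E^\vee[1]$.

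The core step is to compute $[r_*\cO_\cM] \in K_0(\oM_\tau \times X)$. For this I would apply the deformation to the normal bundle of \cref{sec:deformation_to_the_normal_bundle} to the truncation inclusion $j$, producing an analytic family $\widetilde{\mathfrak D} \to \oM_\tau \times X \times \bP^1_k$ whose generic fibers are equivalent to $\cM$ and whose central fiber is $\Spec_{\oM_\tau \times X}(\Sym(E^\vee[1]))$ by \cref{prop:properties_algebraic_deformation}(\ref{prop:properties_algebraic_deformation:central_fiber}). Since $E^\vee[1]$ sits in odd homological degree, its derived symmetric algebra is the Koszul/exterior algebra $\Lambda^\bullet_{\cO_{\oM_\tau \times X}} E^\vee$, whose structure sheaf realizes the K-class $\lambda_{-1}(E^\vee)$. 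The main obstacle is then deducing that the two fibers have the same K-class: because $\bbA^1$-invariance of G-theory fails in rigid analytic geometry, this cannot be concluded directly from the family. Instead, using the relative derived analytification of \cref{sec:relative_analytification} and the GAGA theorem for stacks from \cite{Porta_Yu_Higher_analytic_stacks}, I would transport the computation to the algebraic setting, where $\bbA^1$-invariance of K-theory applies along the $\bP^1$-extension of \cref{rem:extended_deformation} and yields $[r_* \cO_\cM] = \lambda_{-1}(E^\vee)$.

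To conclude, the projection formula combined with $\st = p_1 \circ r$ and $\ev_i = p_2 \circ r$ gives
\[ I^X_{\tau,0}\big({\textstyle\bigotimes_i} a_i\big) = p_{1*} r_*\Big(r^* p_2^*\big({\textstyle\bigotimes_i} a_i\big)\Big) = p_{1*}\Big(p_2^*\big({\textstyle\bigotimes_i} a_i\big) \otimes \lambda_{-1}(E^\vee)\Big) , \]
which is the formula in the statement.
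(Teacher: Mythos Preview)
Your strategy is the paper's: identify $\anL_j$ via $E=\rR^1\pi_*\cO_{\oC_\tau}\boxtimes\bbT\an_X$ (your computation through the retraction $r=(\st,\ev_1)$ recovers \cref{lem:mapping_to_a_point}), then run deformation to the normal bundle plus GAGA plus algebraic $\bbA^1$-invariance. But there is a genuine gap in the GAGA step. You aim for $[r_*\cO_\cM]=\lambda_{-1}(E^\vee)$ in $K_0(\oM_\tau\times X)$ by comparing fibers of the deformation over $(\oM_\tau\times X)\times\bP^1_k$; this fails because $X$ is merely rigid analytic, so $(\oM_\tau\times X)\times\bP^1_k$ is not algebraic and the GAGA theorem of \cite{Porta_Yu_Higher_analytic_stacks} does not apply---you cannot import algebraic $\bbA^1$-invariance at that level. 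The paper instead pushes forward to $\oM_\tau\times\bP^1_k$ \emph{before} invoking GAGA: it deforms $\st$ and each $\ev_i$ to maps $\widetilde{\st},\widetilde{\ev}_i$ with underived targets, forms $\cG\coloneqq\widetilde{\st}_*\big(\bigotimes_i\widetilde{\ev}_i^*(q^*\cF_i)\big)\in\Perf(\oM_\tau\times\bP^1_k)$, and compares the fibers of $\cG$ at $0$ and $1$ over this now proper algebraic base. In your set-up this amounts to establishing the desired equality only after applying $p_{1*}\big(p_2^*(\bigotimes_i a_i)\otimes-\big)$, not the stronger (and unestablished) identity in $K_0(\oM_\tau\times X)$; your final projection-formula step then goes through.

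A secondary point: your justification ``$f$ is fiberwise constant'' for $\ev_i=p_2\circ r$ is valid only over underived test objects; the paper remarks right after diagram~\eqref{eq:mapping_to_a_point} that the universal map $\R\oC(X,\tau,0)\to X$ does \emph{not} factor through $\R\oM(X,\tau,0)$. The conclusion $\ev_i\simeq\ev_1$ is nonetheless correct---since $X$ is smooth, $\anL_X$ sits in degree~$0$, and the Postnikov tower of $\cM$ over its truncation consists of square-zero extensions by sheaves in homological degree $\ge 1$, so lifts of a fixed map $\trunc(\cM)\to X$ to $\cM$ are unique---but that argument has to be supplied. The paper bypasses the question entirely by deforming each $\ev_i$ separately and using only that they all factor through the projection on the \emph{central} fiber, where \cref{prop:properties_algebraic_deformation}(\ref{prop:properties_algebraic_deformation:central_fiber}) forces it.
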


Let us introduce some notations before the proof.
Consider the universal \emph{underived} $\tau$-marked stable map
\[ \begin{tikzcd}
	\oC(X, \tau, 0) \arrow{d} \arrow{r} & X \\
	\oM(X, \tau, 0) \arrow[dashed]{ur} & \phantom{X}.
\end{tikzcd} \]
Note $\beta=0$ implies that every stable map above is constant, so the dashed arrow uniquely exists.
Combining with $\st \colon \oM(X, \tau, 0) \to \oM_\tau$, we obtain an isomorphism
\[ \oM(X, \tau, 0) \xrightarrow{\ \sim\ } X\times\oM_\tau .\]
Similarly, we have $\oC(X, \tau, 0) \simeq X \times \oC_\tau$.
Now we consider their derived enhancements:
\begin{equation} \label{eq:mapping_to_a_point}
	\begin{tikzcd}
		\oC_\tau \arrow{d}{\pi} & X \times \oC_\tau \arrow[bend left = 16]{rr}{q_1} \arrow{l}[swap]{q_2} \arrow{d}{p} \arrow[hook]{r}{i} & \R \oC(X, \tau, 0) \arrow{r} & X \\
		\oM_\tau & X \times \oM_\tau \arrow{l}{p_2} \arrow[hook]{r}[swap]{j} \arrow[bend left = 0]{urr}[near start]{p_1} & \R \oM(X, \tau, 0) \arrow[leftarrow, crossing over]{u} & \phantom{X} .
	\end{tikzcd}
\end{equation}
Note that contrary to the underived situation, in general, there is no map from $\R\oM(X,\tau,0)$ to $X$ making the diagram commutative.

\begin{lemma} \label{lem:mapping_to_a_point}
	Let $j \colon \oM(X, \tau, 0) \hookrightarrow \R \oM(X, \tau, 0)$ be the canonical inclusion of the truncation.
	There is a canonical equivalence
	\[ \anL_j [-1] \simeq \big( \rR^1 \pi_* \cO_{\oC_\tau} \boxtimes \bbT\an_X \big)^\vee[1] . \]
\end{lemma}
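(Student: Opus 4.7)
The plan is to use the transitivity fiber sequence associated with the composition $\oM_\tau \times X \xrightarrow{j} \R \oM(X, \tau, 0) \xrightarrow{\st} \oM_\tau$, and thereby reduce computing $\anL_j$ to computing the relative cotangent complex $\anL_{\R\oM(X,\tau,0)/\oM_\tau}$ along $j$. A key preliminary observation is that when $\beta = 0$ every $(\tau,0)$-marked stable map is constant, so $\st$ is literally the functor ``take the domain''; no stabilization intervenes, and in particular each vertex $v$ of $\tau$ must already satisfy $2g(v)-2+\val(v)>0$, so that $\oM_\tau$ is well-defined and the morphism $\R\oM(X,\tau,0) \to \oM_\tau$ exhibits the source as (an open substack of) the relative mapping stack $\bfMap_{\oM_\tau}(\oC_\tau, X\times\oM_\tau)$.

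The first concrete step is then to invoke the cotangent-complex formula for this relative mapping stack (as already used in the proof of \cref{thm:stack_of_stable_maps_derived_lci}, via \cite[Lemma~8.4]{Porta_Yu_Derived_Hom_spaces}), which gives
\[ j^*\anL_{\R\oM(X,\tau,0)/\oM_\tau} \simeq \big(p_*(f^*\bbT\an_X)\big)^\vee, \]
with $p$ the universal curve and $f$ the universal map evaluated at the image of $j$. Along $j$ the universal curve becomes $\oC_\tau \times X \to \oM_\tau \times X$, which is the flat base change of $\pi \colon \oC_\tau \to \oM_\tau$, and the universal map becomes the projection $q_2 \colon \oC_\tau \times X \to X$. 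The projection formula together with flat base change then yields
\[ p_*(f^*\bbT\an_X) \simeq (\rR\pi_*\cO_{\oC_\tau}) \boxtimes \bbT\an_X. \]

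Since $\pi$ is a proper flat family of curves, $\rR\pi_*\cO_{\oC_\tau}$ is concentrated in homological degrees $0$ and $-1$, and the unit $\cO_{\oM_\tau} \to \rR\pi_*\cO_{\oC_\tau}$ is an isomorphism on $\pi_0$, giving a Postnikov fiber sequence
\[ \cO_{\oM_\tau} \longrightarrow \rR\pi_*\cO_{\oC_\tau} \longrightarrow (\rR^1\pi_*\cO_{\oC_\tau})[-1]. \]
Boxing with $\bbT\an_X$ and dualizing produces the fiber sequence
\[ (\rR^1\pi_*\cO_{\oC_\tau}\boxtimes\bbT\an_X)^\vee[1] \longrightarrow (p_*f^*\bbT\an_X)^\vee \longrightarrow p_2^*\Omega\an_X, \]
whose second arrow is the dual of the unit $p_2^*\bbT\an_X \to p_*p^*p_2^*\bbT\an_X = p_*f^*\bbT\an_X$ for the adjunction $p^* \dashv p_*$. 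The final step is to identify this arrow with the transitivity map $j^*\anL_{\R\oM(X,\tau,0)/\oM_\tau} \to \anL_{(\oM_\tau\times X)/\oM_\tau} \simeq p_2^*\Omega\an_X$; granted this identification, taking cofibers yields $\anL_j \simeq (\rR^1\pi_*\cO_{\oC_\tau}\boxtimes\bbT\an_X)^\vee[2]$, equivalently $\anL_j[-1] \simeq (\rR^1\pi_*\cO_{\oC_\tau}\boxtimes\bbT\an_X)^\vee[1]$, as required.

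The main obstacle is this final identification: matching the transitivity map coming from $\oM_\tau\times X\to\R\oM(X,\tau,0)\to\oM_\tau$ with the dual-of-unit map predicted by the Postnikov/projection-formula analysis. I expect this to follow from the functoriality of the cotangent-complex formula together with the explicit description of the universal map over $\oM_\tau\times X$ as the projection $q_2$: an infinitesimal deformation of $x_0\in X$ induces, by pulling back along $p$, the corresponding deformation of the constant map $f_{x_0}$, which is exactly the unit $p_2^*\bbT\an_X \to p_*f^*\bbT\an_X$. Promoting this from a statement on $\pi_0$ to a coherent map of complexes is the only piece that is not formal bookkeeping.
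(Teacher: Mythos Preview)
Your argument is correct and reaches the same endpoint as the paper, but via a different factorisation. The paper works with \emph{absolute} tangent complexes: it computes $j^*\bbT\an_{\R\oM(X,\tau,0)}$ and $\bbT\an_{\oM_\tau\times X}$ separately, observes that the map $\bbT\an_{\oC_\tau\times X/\oM_\tau\times X}(-\sum s_i)\to q_2^*\bbT\an_X$ is nullhomotopic (because the universal map is the projection $q_2$, whose differential kills the vertical part), uses this to split both tangent complexes as direct sums, and then cancels the common summand $p_1^*\pi_*\bbT\an_{\oC_\tau/\oM_\tau}(-\sum s_i)[1]$ to obtain $\bbT\an_j\simeq\fib(p_2^*\bbT\an_X\to p_*q_2^*\bbT\an_X)$. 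You instead work \emph{relative to $\oM_\tau$}, which eliminates the $\oM_\tau$-direction from the outset and so bypasses both the nullhomotopy and the matching of summands. Your route is arguably cleaner; the paper's route makes the splitting of $j^*\bbT\an_{\R\oM(X,\tau,0)}$ explicit, which has some independent interest.

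Your stated ``main obstacle'' is genuine but not serious, and the paper faces the identical issue when it tacitly identifies the map $p_2^*\bbT\an_X\to p_*q_2^*\bbT\an_X$ appearing in its fiber with the adjunction unit. The resolution is the functoriality of the tangent formula for mapping stacks: the map $\bbT\an_{(\oM_\tau\times X)/\oM_\tau}\to j^*\bbT\an_{\R\oM(X,\tau,0)/\oM_\tau}$ is, by construction, the adjoint of the relative differential $p^*p_2^*\bbT\an_X\to q_2^*\bbT\an_X$ of the universal map along $j$. Since that universal map is $q_2=p_2\circ p$, this relative differential is the identity on $q_2^*\bbT\an_X$, and its adjoint is therefore exactly the unit $p_2^*\bbT\an_X\to p_*p^*p_2^*\bbT\an_X$. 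This is a coherent identification, not merely one on $\pi_0$.
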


\begin{proof}
	Consider the fiber sequence
	\[ \bbT\an_j \longrightarrow \bbT\an_{\oM(X,\tau,0)} \longrightarrow j^* \bbT\an_{\R \oM(X,\tau, 0)} . \]
	As in \cref{thm:stack_of_stable_maps_derived_lci}, we have
	\[ j^* \bbT\an_{\R \oM(X, \tau, 0)} \simeq p_* \Big( \cofib \big( \bbT\an_{\oC(X,\tau,0)/\oM(X,\tau,0)}(-{\textstyle \sum} s_i) \to q_1^* \bbT\an_{X/S} \big) \Big) . \]
	Since $\oC(X, \tau, 0) \simeq X \times \oC_\tau $ and $\oM(X, \tau, 0) \simeq X \times \oM_\tau$, we obtain
	\[ p_* \big( \bbT\an_{\oC(X,\tau,0) / \oM(X,\tau, 0)}( -{\textstyle \sum} s_i ) \big) \simeq p_2^* \pi_* \bbT\an_{\oC_\tau / \oM_\tau}( - {\textstyle \sum} s_i ) . \]
	Moreover, the canonical map
	\[ p_2^* \pi_* \bbT\an_{\oC_\tau / \oM_\tau}( - {\textstyle \sum} s_i ) \longrightarrow p_* q_1^* \bbT\an_X \]
	is nullhomotopic,  and therefore
	\[ j^* \bbT\an_{\R \oM(X, \tau, 0)} \simeq  p_* q_1^* \bbT\an_X \oplus p_2^* \pi_* \bbT\an_{\oC_\tau / \oM_\tau}[1]. \]
	On the other hand, using the equivalence $\oM(X, \tau, 0) \simeq \oM_\tau \times X$, we find:
	\[ \bbT\an_{\oM(X,\tau,0)} \simeq p_1^* \bbT\an_X \oplus p_2^* \bbT\an_{\oM_\tau}\simeq p_1^* \bbT\an_X \oplus p_2^* \pi_* \bbT\an_{\oC_\tau / \oM_\tau}[1] . \]
	Using the projection formula, we obtain:
	\begin{align*}
		\bbT\an_j & \simeq \fib\big( p_1^* \bbT\an_X \to p_* q_1^* \bbT\an_X \big) \\
		& \simeq \fib\big( p_1^* \bbT\an_X \to p_2^* p_1^* \bbT\an_X \otimes \pi_* \cO_{\oC_\tau} \big) \\
		& \simeq \fib\big( p_1^* \bbT\an_X \to \bbT\an_X\boxtimes\pi_* \cO_{\oC_\tau}  \big) .
	\end{align*}
	The conclusion follows.
	\end{proof}

\begin{proof}[Proof of \cref{prop:mapping_to_a_point}]
	Notation as in diagram \eqref{eq:mapping_to_a_point}, we observe that
	\[ \begin{tikzcd}
		X \times \oM_\tau \arrow[hook]{r}{j} \arrow{d}[swap]{p_2} & \R \oM(X, \tau, 0) \arrow{dl}{\st} \\
		\oM_\tau
	\end{tikzcd} \]
	commutes.
	Let $\cF_i \in \Perf(X)$, $i \in T_\tau$ be representatives for the classes $a_i \in K_0(X)$.
	\cref{lem:mapping_to_a_point} implies the following equality in $K_0(X \times \oM_\tau)$:
	\[ \lambda_{-1}\big( (\bbT\an_ X \boxtimes \rR^1 \pi_* \cO_{\oC_\tau})^\vee \big) = \big[ \Sym_{\cO_{\oM_\tau \times X}} \big( \anL_j[-1] \big) \big] . \]
	It is therefore enough to check that
	\[ \Big[ p_{2*} \Big(p_1^* \big( {\textstyle \bigotimes_i} \cF_i \big)  \otimes \Sym_{\cO_{\oM_\tau \times X}}\big( \anL_j[-1] \big)\Big) \Big] = \big[ \st_*\big( {\textstyle \bigotimes_i} \ev_i^* \cF_i \big) \big] \]
	in $K_0(\oM_\tau)$.
		Let $\mathfrak M(X,\tau,0)$ be the $\bP^1_k$-deformation to the normal bundle associated to $j \colon X \times \oM_\tau \to \R\oM(X,\tau,0)$ we constructed in \cref{sec:deformation_analytic}.
	The functoriality of this construction allows to deform also the maps $\ev_i \colon \R\oM(X, \tau, 0) \to X$ and $\st \colon \R\oM(X, \tau, 0) \to \oM_\tau$.
	Since the targets of these maps are underived, their associated deformations are constant.
	Thus we obtain maps
	\[ \widetilde{\ev}_i \colon \mathfrak M(X, \tau, 0) \longrightarrow X\times \bP^1_k , \qquad \widetilde{\st} \colon \mathfrak M(X, \tau, 0) \longrightarrow \oM_\tau \times \bP^1_k . \]
	\cref{prop:properties_algebraic_deformation}(\ref{prop:properties_algebraic_deformation:trivialization}) implies that the fibers $\widetilde{\ev}_i |_1$ and $\widetilde{\st} |_1$ at $1 \in \bP^1_k$ of $\widetilde{\ev}_i$ and $\widetilde{\st}$ coincide with the maps $\ev_i$ and $\st$ we started from.
	On the other hand, \cref{prop:properties_algebraic_deformation}(\ref{prop:properties_algebraic_deformation:central_fiber}) shows that the fibers $\widetilde{\ev}_i |_0$ and $\widetilde{\st}|_0$ at $0 \in \bP^1_k$ of $\widetilde{\ev}_i$ and $\widetilde{\st}$ fit in the following commutative diagram:
	\begin{equation} \label{eq:mapping_to_a_point_central_fiber}
		\begin{tikzcd}
			{} & \Spec_{\oM_\tau \times X}( \Sym_{\cO_{\oM_\tau \times X}}( \anL_j[-1]) ) \arrow{d}{r} \arrow{dl}[swap]{\widetilde{\st}|_0} \arrow{dr}{\widetilde{\ev}_i |_0} \\
			\oM_\tau & X \times \oM_\tau \arrow{l}[swap]{p_2} \arrow{r}{p_1} & X ,
		\end{tikzcd}
	\end{equation}
	where the vertical map is the natural projection.
	Let $q \colon X \times \bP^1_k \to \bP^1_k$ be the projection and consider the perfect complex
	\[ \cG \coloneqq \widetilde{\st}_*\big( {\textstyle \bigotimes_i} \widetilde{\ev}_i^*( q^*(\cF_i) ) \big) \in \Perf(\oM_\tau \times \bP^1_k) . \]
	Since $\oM_\tau \times \bP^1_k$ is algebraic and proper, the GAGA theorem (\cite[Theorem 1.3]{Porta_Yu_Higher_analytic_stacks}) implies that $\cG$ is an algebraic perfect complex.
	In particular, its restriction to $\oM_\tau \times \bA^1_k$ is also algebraic; and therefore the $\mathbb A^1$-invariance for $K$-theory (see \cite[Theorem 6.13]{Weibel_The_K-book}) implies that
	\[ \big[ \cG |_{\oM_\tau \times 0} \big] = \big[ \cG |_{\oM_\tau \times 1} \big] \]
	in $K_0( \oM_\tau )$.
		By proper base change (\cite[Theorem 1.5]{Porta_Yu_Derived_Hom_spaces}), we have
	\[ \cG |_{\oM_\tau \times 1} \simeq \st_*\big( {\textstyle \bigotimes_i} \ev_i^*(\cF_i) \big) . \]
	On the other hand, diagram \eqref{eq:mapping_to_a_point_central_fiber} together with proper base change implies that
	\begin{align*}
		\cG |_{\oM_\tau \times 0} & \simeq p_{2*} r_* r^* p_1^* \big( {\textstyle \bigotimes_i} \cF_i \big) \\
		& \simeq p_{2*} \Big( p_1^* \big( {\textstyle \bigotimes_i} \cF_i \big) \otimes \Sym_{\cO_{\oM_\tau \times X}} \big( \anL_j[-1] \big)  \Big).
	\end{align*}
	Combining the isomorphisms and the equality above, we achieve the proof.
\end{proof}

\subsection{Products}

\begin{proposition} \label{prop:products}
	Let $(\tau_1, \beta_1)$ and $(\tau_2, \beta_2)$ be two A-graphs.
	Let $(\tau_1 \sqcup \tau_2, \beta_1 \sqcup \beta_2)$ be their disjoint union.
	For any $a_i\in K_0(X), i\in T_{\tau_1}$ and $b_j\in K_0(X), j\in T_{\tau_2}$, we have
	\[K^X_{\tau_1\sqcup\tau_2,\beta_1\sqcup\beta_2}\big(({\textstyle\bigotimes_i} a_i)\otimes({\textstyle\bigotimes_j} b_j)\big)=K^X_{\tau_1,\beta_1}\big({\textstyle\bigotimes_i} a_i\big)\boxtimes K^X_{\tau_2,\beta_2}\big({\textstyle\bigotimes_j} b_j\big).\]
\end{proposition}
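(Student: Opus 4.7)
The plan is to combine the geometric product equivalence of \cref{thm:products} with the Künneth/projection-formula identity for proper pushforwards of external tensor products. Throughout we take the base $S = \Sp(k)$, so products in \cref{thm:products} are absolute derived Cartesian products.

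First, I would use \cref{thm:products} to fix a canonical equivalence
\[ \Phi \colon \R\oM(X, \tau_1\sqcup\tau_2, \beta_1\sqcup\beta_2) \xrightarrow{\ \sim\ } \R\oM(X,\tau_1,\beta_1) \times \R\oM(X,\tau_2,\beta_2), \]
and denote by $\pi_1, \pi_2$ the two projections on the right. By \cref{rem:product_decomposition_M_tau}, we also have canonical equivalences
\[ \oMpre_{\tau_1\sqcup\tau_2} \simeq \oMpre_{\tau_1}\times\oMpre_{\tau_2}, \qquad \oM_{\tau_1\sqcup\tau_2} \simeq \oM_{\tau_1}\times\oM_{\tau_2}. \]
The key compatibilities to verify at this stage are: (i) under $\Phi$, the stabilization map $\st$ on the left corresponds to $\st_1\times\st_2$; (ii) for each tail $i\in T_{\tau_1}$ the evaluation $\ev_i$ factors as $\ev_i\circ\pi_1$, and analogously for $j\in T_{\tau_2}$. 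Both follow directly from the construction of $\Phi$ in the proof of \cref{thm:products} (which is built from the disjoint-union decomposition of the universal glued curve) and from the universal property of the stabilization morphism in \cref{prop:stabilization_morphism}.

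Second, using these compatibilities, the definition of $I^X_{\tau_1\sqcup\tau_2,\beta_1\sqcup\beta_2}$ becomes
\[ I^X_{\tau_1\sqcup\tau_2,\beta_1\sqcup\beta_2}\big(({\textstyle\bigotimes_i}a_i)\otimes({\textstyle\bigotimes_j}b_j)\big) = (\st_1\times\st_2)_*\big(\pi_1^*({\textstyle\bigotimes_i}\ev_i^*a_i)\otimes \pi_2^*({\textstyle\bigotimes_j}\ev_j^*b_j)\big). \]
It then suffices to establish the Künneth identity
\[ (\st_1\times\st_2)_*(\pi_1^*A\otimes\pi_2^*B) = \st_{1*}(A)\boxtimes\st_{2*}(B) \]
in $K_0(\oM_{\tau_1}\times\oM_{\tau_2})$, for $A\in\Cohb(\R\oM(X,\tau_1,\beta_1))$ and $B\in\Cohb(\R\oM(X,\tau_2,\beta_2))$. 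This is a standard consequence of derived base change (\cite[Theorem 1.5]{Porta_Yu_Derived_Hom_spaces}) applied to the Cartesian square obtained by projecting to $\oM_{\tau_1}$, combined with the projection formula (\cite[Theorem 1.4]{Porta_Yu_Derived_Hom_spaces}): first push forward along $\id\times\st_2$ using projection formula on the second factor, then push forward along $\st_1\times\id$ using base change to identify $(\st_1\times\id)_*\pi_1^*A\simeq p_1^*\st_{1*}A$, where $p_1$ is the projection $\oM_{\tau_1}\times\R\oM(X,\tau_2,\beta_2)\to\R\oM(X,\tau_1,\beta_1)$.

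The main obstacle is ensuring that the Künneth identity above makes sense at the K-theoretic level, i.e.\ that all coherent pushforwards involved preserve bounded coherent complexes so that classes in $K_0$ are well-defined. This is guaranteed by \cref{prop:boundedness}, which ensures $\st_{1*}$ and $\st_{2*}$ send $\Cohb$ to $\Cohb$; the same holds for $(\st_1\times\st_2)_*$ since the product of proper maps is proper. The rest of the argument is a formal manipulation using the base change and projection formulae recalled above.
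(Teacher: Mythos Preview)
Your proposal is correct and follows essentially the same approach as the paper's proof: both invoke \cref{thm:products} to identify $\st$ with $\st_1\times\st_2$ and to factor the evaluation maps through the projections, then reduce to the Künneth identity $(\st_1\times\st_2)_*(A\boxtimes B)=\st_{1*}A\boxtimes\st_{2*}B$. The paper simply asserts this last step, while you spell out its justification via base change and the projection formula; one small slip in your write-up is the description of $p_1$ (it should be the projection $\oM_{\tau_1}\times\oM_{\tau_2}\to\oM_{\tau_1}$, not a map to $\R\oM(X,\tau_1,\beta_1)$), but the intended argument is clear and correct.
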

\begin{proof}
	For $i\in T_{\tau_1}$, $j\in T_{\tau_2}$, we denote the evaluation maps by
	\begin{align*}
		\ev^1_i&\colon\R\oM(X,\tau_1,\beta_1)\longrightarrow X,\\
		\ev^2_j&\colon\R\oM(X,\tau_2,\beta_2)\longrightarrow X,\\
		\ev^{12}_i, \ev^{12}_j &\colon\R\oM(X,\tau_1\sqcup\tau_2,\beta_1\sqcup\beta_2)\longrightarrow X.
	\end{align*}
	By \cref{thm:products}, we have the following commutative diagram
	\[\begin{tikzcd}
		\R\oM(X,\tau_1,\beta_1)\times\R\oM(X,\tau_2,\beta_2) \rar{\sim}\dar{\st_1\times\st_2} & \R\oM(X,\tau_1\sqcup\tau_2,\beta_1\sqcup\beta_2) \rar{\ev^{12}_i,\ev^{12}_j} \dar{\st_{12}} & X\\
		\oM_{\tau_1}\times\oM_{\tau_2} \rar{\sim} & \oM_{\tau_1\sqcup\tau_2}.
	\end{tikzcd}\]
	Let
	\begin{align*}
		& p_1\colon\R\oM(X,\tau_1\sqcup\tau_2,\beta_1\sqcup\beta_2)\longrightarrow\R\oM(X,\tau_1,\beta_1),\\
		& p_2\colon\R\oM(X,\tau_1\sqcup\tau_2,\beta_1\sqcup\beta_2)\longrightarrow\R\oM(X,\tau_2,\beta_2)
	\end{align*}
	denote the projection maps.
	For any $i\in T_{\tau_1}$, $j\in T_{\tau_2}$, we have $\ev^{12}_i=\ev^1_i\circ p_1$, $\ev^{12}_j=\ev^2_j\circ p_2$.
	Hence we obtain
	\begin{align*}
		K^X_{\tau_1\sqcup\tau_2,\beta_1\sqcup\beta_2}\big(({\textstyle\bigotimes_i} a_i)\otimes({\textstyle\bigotimes_j} b_j)\big) &= \st_{12*}\big(({\textstyle\bigotimes_i} \ev^{12*}_i a_i)\otimes({\textstyle\bigotimes_j} \ev^{12*}_j b_j)\big) \\
		&= (\st_1\times\st_2)_*\big(({\textstyle\bigotimes_i} \ev^{1*}_i a_i)\boxtimes({\textstyle\bigotimes_j} \ev^{2*}_j b_j)\big)\\
		&= \st_{1*}\big({\textstyle\bigotimes_i} \ev^{1*}_i a_i\big)\boxtimes\st_{2*}\big({\textstyle\bigotimes_j} \ev^{2*}_j b_j\big)\\
		&= K^X_{\tau_1,\beta_1}\big({\textstyle\bigotimes_i} a_i\big)\boxtimes K^X_{\tau_2,\beta_2}\big({\textstyle\bigotimes_j} b_j\big),
	\end{align*}
	completing the proof.
\end{proof}

\subsection{Cutting edges}

\begin{proposition} \label{prop:cutting_edges}
	Let $(\sigma,\beta)$ be an A-graph obtained from $(\tau,\beta)$ by cutting an edge $e$ of $\tau$.
	Let $v,w$ be the two tails of $\sigma$ created by the cut.
	Consider the following commutative diagram
	\[ \begin{tikzcd}
	\oM_\tau \rar{d} & \oM_\sigma\\
	\R \oM(X, \tau, \beta) \uar{\st_\tau} \arrow{r}{c} \arrow{d}{\ev_e} & \R \oM(X, \sigma, \beta) \uar{\st_\sigma} \arrow{d}{\ev_v \times \ev_w} \\
	X \arrow{r}{\Delta} & X \times_S X.
	\end{tikzcd} \]
	For any $a_i\in K_0(X), i\in T_\tau$, we have
	\[d_* K^X_{\tau,\beta}\big({\textstyle\bigotimes_i} a_i\big)=K^X_{\sigma,\beta}\big(({\textstyle\bigotimes_i} a_i)\otimes\Delta_*\cO_X\big).\]
	\end{proposition}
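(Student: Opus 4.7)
The plan is to deduce the identity directly from the derived pullback square of \cref{thm:cutting_edges_derived_pullback} via the projection formula and derived base change, together with the commutativity $d\circ\st_\tau=\st_\sigma\circ c$ of the upper square. Set $\cF\coloneqq\bigotimes_{i\in T_\tau}\ev_i^{*}a_i$, viewed either on $\R\oM(X,\tau,\beta)$ or, via the canonical identifications $\ev_i^{(\tau)}=\ev_i^{(\sigma)}\circ c$ (which hold because cutting an edge does not disturb the tails of $\tau$), as the pullback $c^{*}\cG$ of $\cG\coloneqq\bigotimes_{i\in T_\tau}\ev_i^{(\sigma)*}a_i$ on $\R\oM(X,\sigma,\beta)$.

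First I would rewrite the left-hand side: using $d\circ\st_\tau=\st_\sigma\circ c$, push-forward functoriality gives
\[ d_{*}I^{X}_{\tau,\beta}({\textstyle\bigotimes}_i a_i) = d_{*}\st_{\tau*}(\cF) = \st_{\sigma*}\,c_{*}(c^{*}\cG) . \]
Applying the projection formula (which is available at the K-theoretic level because $c$ is proper between derived $k$-analytic stacks and $\cG$ is perfect), this becomes
\[ \st_{\sigma*}\bigl(\cG \otimes c_{*}\cO_{\R\oM(X,\tau,\beta)}\bigr) . \]

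The second step is to evaluate $c_{*}\cO_{\R\oM(X,\tau,\beta)}$ using \cref{thm:cutting_edges_derived_pullback}. Since that square is a derived pullback of derived \kanal stacks and $\Delta\colon X\to X\times_{S}X$ is proper, derived base change (\cite[Theorem 1.5]{Porta_Yu_Derived_Hom_spaces}) yields a canonical equivalence
\[ c_{*}\cO_{\R\oM(X,\tau,\beta)} \;\simeq\; c_{*}\ev_{e}^{*}\cO_X \;\simeq\; (\ev_v\times\ev_w)^{*}\Delta_{*}\cO_X . \]
Substituting, the left-hand side becomes
\[ \st_{\sigma*}\Bigl(\cG \otimes (\ev_v\times\ev_w)^{*}\Delta_{*}\cO_X\Bigr) . \]

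Finally I would identify this with the right-hand side. By the Künneth/external-product description of $K_{0}$, the class $\Delta_{*}\cO_X\in K_{0}(X\times_S X)$ is precisely the pairing placed on the two tails $v,w\in T_\sigma$; thus unfolding the definition of $I^{X}_{\sigma,\beta}$ gives
\[ I^{X}_{\sigma,\beta}\bigl(({\textstyle\bigotimes}_i a_i)\otimes \Delta_{*}\cO_X\bigr) = \st_{\sigma*}\Bigl(\cG\otimes (\ev_v\times\ev_w)^{*}\Delta_{*}\cO_X\Bigr), \]
matching the expression above. The only genuinely non-formal input is \cref{thm:cutting_edges_derived_pullback} together with derived base change in the \kanal setting; the main thing to be careful about is the compatibility of the classes $\ev_i^{(\tau)*}a_i$ with $c$ for tails $i\in T_\tau$, and the identification $(\ev_v\times\ev_w)^{*}\Delta_{*}\cO_X$ with the image in $K_0$ of $\Delta_*\cO_X$ acting on the $v,w$-factors, both of which are routine once the derived pullback square is in place.
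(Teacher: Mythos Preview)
Your proof is correct and follows essentially the same route as the paper: use $\ev_i^\tau=\ev_i^\sigma\circ c$ and $d\circ\st_\tau=\st_\sigma\circ c$, apply the projection formula to pull out $\cG$, and then invoke derived base change along the pullback square of \cref{thm:cutting_edges_derived_pullback} to identify $c_*\cO_{\R\oM(X,\tau,\beta)}\simeq c_*\ev_e^*\cO_X\simeq(\ev_v\times\ev_w)^*\Delta_*\cO_X$. The only cosmetic difference is that the paper inserts the tautology $\otimes\,\ev_e^*\cO_X$ before applying the projection formula, whereas you phrase it directly in terms of $c_*\cO$.
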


\begin{proof}
	For each $i\in T_\tau$, we denote the evaluation maps by
	\begin{align*}
	\ev^\tau_i&\colon\R\oM(X,\tau,\beta)\longrightarrow X,\\
	\ev^\sigma_i&\colon\R\oM(X,\sigma,\beta)\longrightarrow X.
	\end{align*}
	We have $\ev^\tau_i=\ev^\sigma_i\circ c$.
	By \cref{thm:cutting_edges_derived_pullback}, the lower square of the commutative diagram in the statement is a derived pullback.
	Hence we obtain
	\begin{align*}
		d_* K^X_{\tau,\beta}\big({\textstyle\bigotimes_i} a_i\big) &= d_*\st_{\tau*} \big({\textstyle\bigotimes_i}\ev^{\tau*}_i a_i\big)\\
		&= d_*\st_{\tau*} \big(({\textstyle\bigotimes_i}\ev^{\tau*}_i a_i)\otimes\ev^*_e\cO_X\big)\\
		&= \st_{\sigma*} c_* \big(c^*({\textstyle\bigotimes_i}\ev^{\sigma*}_i a_i)\otimes\ev^*_e\cO_X\big)\\
		&= \st_{\sigma*} \big(({\textstyle\bigotimes_i}\ev^{\sigma*}_i a_i)\otimes c_*\ev^*_e\cO_X\big)\\
		&= \st_{\sigma*} \big(({\textstyle\bigotimes_i}\ev^{\sigma*}_i a_i)\otimes (\ev_v\times\ev_w)^*\Delta_*\cO_X\big)\\
		&= K^X_{\sigma,\beta}\big(({\textstyle\bigotimes_i} a_i)\otimes\Delta_*\cO_X\big),
	\end{align*}
	where the fourth equality follows from the projection formula \cite[Theorem 1.4]{Porta_Yu_Derived_Hom_spaces}, and the fifth equality follows from proper base change \cite[Theorem 1.5]{Porta_Yu_Derived_Hom_spaces}.
\end{proof}

\subsection{Forgetting tails}

\begin{proposition} \label{prop:forgetting_tails}
	Let $(\sigma,\beta)$ be an A-graph obtained from $(\tau,\beta)$ by forgetting a tail.
	Let $\Phi\colon\oM_\tau\to\oM_\sigma$ be the forgetful map from $\tau$-marked stable curves to $\sigma$-marked stable curves.
	For any $a_i\in K_0(X), i\in T_\sigma$, we have
	\[K^X_{\tau,\beta}\big(({\textstyle\bigotimes_i} a_i)\otimes \cO_X\big)=\Phi^*K^X_{\sigma,\beta}\big({\textstyle\bigotimes_i} a_i\big).\]
\end{proposition}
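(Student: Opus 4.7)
The plan is to reduce the proposition directly to \cref{thm:forgetting_tails}. Consider the commutative diagram from \cref{sec:forgetting_tails}
\[ \begin{tikzcd}
\R\oM(X,\tau,\beta) \arrow{r}{\Psi} \arrow{d}{\st_\tau} & \oM_\tau \times_{\oM_\sigma} \R\oM(X,\sigma,\beta) \arrow{r}{p} \arrow{d}{q} & \R\oM(X,\sigma,\beta) \arrow{d}{\st_\sigma} \\
\oM_\tau \arrow[equal]{r} & \oM_\tau \arrow{r}{\Phi} & \oM_\sigma,
\end{tikzcd} \]
where the right square is a derived pullback by construction of the fiber product and the left triangle commutes by the very definition of $\Psi$. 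Let $\pi\colon \R\oM(X,\tau,\beta)\to\R\oM(X,\sigma,\beta)$ be the forgetful map of \cref{sec:universal_curve}; then $\pi = p\circ\Psi$. Since the evaluation maps associated to the tails shared by $\tau$ and $\sigma$ commute with $\pi$ up to the canonical isomorphism of the $\sigma$-marked structure, for every $i\in T_\sigma$ we have $\ev^\tau_i = \ev^\sigma_i\circ p\circ\Psi$. The forgotten tail contributes the class $\cO_X$, whose pullback under its evaluation map is the structure sheaf and therefore drops out of the tensor product.

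Combining these observations, we compute:
\begin{align*}
I^X_{\tau,\beta}\big(({\textstyle\bigotimes_i} a_i)\otimes\cO_X\big)
&= \st_{\tau*}\Psi^*p^*\big({\textstyle\bigotimes_i}\ev^{\sigma*}_i a_i\big)\\
&= q_*\Psi_*\Psi^*p^*\big({\textstyle\bigotimes_i}\ev^{\sigma*}_i a_i\big)\\
&= q_*\big(\Psi_*\cO_{\R\oM(X,\tau,\beta)}\otimes p^*\big({\textstyle\bigotimes_i}\ev^{\sigma*}_i a_i\big)\big)\\
&= q_*p^*\big({\textstyle\bigotimes_i}\ev^{\sigma*}_i a_i\big),
\end{align*}
where the second equality uses $\st_\tau = q\circ\Psi$, the third is the projection formula (\cite[Theorem 1.4]{Porta_Yu_Derived_Hom_spaces}), and the final step uses the equivalence $\Psi_*\cO_{\R\oM(X,\tau,\beta)}\simeq\cO$ supplied by \cref{thm:forgetting_tails}.

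To finish, we apply derived base change (\cite[Theorem 1.5]{Porta_Yu_Derived_Hom_spaces}) to the derived pullback square formed by $p,q,\Phi,\st_\sigma$, obtaining $q_*p^* \simeq \Phi^*\st_{\sigma*}$. Hence
\[q_*p^*\big({\textstyle\bigotimes_i}\ev^{\sigma*}_i a_i\big) = \Phi^*\st_{\sigma*}\big({\textstyle\bigotimes_i}\ev^{\sigma*}_i a_i\big) = \Phi^*I^X_{\sigma,\beta}\big({\textstyle\bigotimes_i} a_i\big),\]
which concludes the proof. The argument is essentially bookkeeping once \cref{thm:forgetting_tails} is in hand; there is no real obstacle here, since the two genuine inputs (derived base change, and $\Psi_*\cO \simeq \cO$, which geometrically reflects that $\Psi$ contracts only trees of rational curves) have already been established.
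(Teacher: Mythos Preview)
Your proof is correct and follows essentially the same approach as the paper's: both use the factorization $\st_\tau = q\circ\Psi$, apply the projection formula together with the equivalence $\Psi_*\cO\simeq\cO$ from \cref{thm:forgetting_tails}, and finish with derived base change. The paper routes the base-change step through the intermediate stack $\oM_\tau\times_{\oM_\sigma}\oMpre_\sigma$, but this is merely a finer decomposition of the same pullback square you use directly.
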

\begin{proof}
	Consider the commutative diagram as in the proof of \cref{thm:forgetting_tails}:
	\[\begin{tikzcd}
	\R \oM(X, \tau, \beta) \arrow{r}{\Psi} \arrow{d}{\lambda} \arrow[bend left=15]{rr}{\pi} \arrow{rdd}[near end, swap]{\st_\tau} & \oM_\tau \times_{\oM_\sigma}  \R \oM(X, \sigma, \beta) \arrow{r}{\eta} \arrow{d}{\mu} & \R \oM(X, \sigma, \beta) \arrow{d}{\kappa} \arrow[bend left=40]{dd}{\st_\sigma} \\
	\oCpre_w \arrow[crossing over]{r}[near start]{\rho} & \oM_\tau \times_{\oM_\sigma} \oMpre_\sigma \arrow{d}{\xi} \arrow{r} & \oMpre_\sigma \arrow{d}{\theta} \\
	{} & \oM_\tau \arrow{r}{\Phi} & \oM_\sigma .
	\end{tikzcd} \]
	For each $i\in T_\sigma$, we denote the evaluation maps by
	\begin{align*}
	\ev^\tau_i&\colon\R\oM(X,\tau,\beta)\longrightarrow X,\\
	\ev^\sigma_i&\colon\R\oM(X,\sigma,\beta)\longrightarrow X.
	\end{align*}
	We have $\ev^\tau_i=\ev^\sigma_i\circ\pi$.
	By \cref{thm:forgetting_tails}, all the rectangles in the commutative diagram above are pullbacks; moreover, we have an equivalence
	\begin{equation} \label{eq:forgetting_tail_equivalence}
		\cO\alg_{\oM_\tau \times_{\oM_\sigma}  \R \oM(X, \sigma, \beta)}\xrightarrow{\ \sim\ }\Psi_*\big(\cO\alg_{\R\oM(X,\tau,\beta)}\big).
	\end{equation}
	Let $t\in T_\tau$ denote the forgotten tail.
	We obtain
	\begin{align*}
		K^X_{\tau,\beta}\big(({\textstyle\bigotimes_i} a_i)\otimes \cO_X\big) 
		&= \st_{\tau*} \big(({\textstyle\bigotimes_i}\ev^{\tau*}_i a_i)\otimes\ev_t^*\cO_X\big)\\
		&= \xi_*\mu_*\Psi_*\big(\Psi^*\eta^*({\textstyle\bigotimes_i}\ev^{\sigma*}_i a_i)\otimes\cO\alg_{\R\oM(X,\tau,\beta)}\big)\\
		&= \xi_*\mu_*\big(\eta^*({\textstyle\bigotimes_i}\ev^{\sigma*}_i a_i)\otimes\Psi_*\big(\cO\alg_{\R\oM(X,\tau,\beta)}\big)\big)\\
		&= \xi_*\mu_*\eta^*\big({\textstyle\bigotimes_i}\ev^{\sigma*}_i a_i\big)\\
		&= \Phi^*\theta_*\kappa_*\big({\textstyle\bigotimes_i}\ev^{\sigma*}_i a_i\big)\\
		&= \Phi^*\st_{\sigma*}\big({\textstyle\bigotimes_i}\ev^{\sigma*}_i a_i\big)\\
		&= \Phi^*K^X_{\sigma,\beta}\big({\textstyle\bigotimes_i} a_i\big),
	\end{align*}
		where the third equality follows from the projection formula \cite[Theorem 1.4]{Porta_Yu_Derived_Hom_spaces}, the fourth equality follows from \eqref{eq:forgetting_tail_equivalence}, and the fifth equality follows from proper base change \cite[Theorem 1.5]{Porta_Yu_Derived_Hom_spaces}.
\end{proof}

\begin{remark} \label{rem:forgetting_tails}
	This property is called ``fundamental class'' in \cite[\S 3.5, \S 4.3]{Lee_Quantum_K-theory_I} because the name originates from \cite[\S 2.2.3]{Kontsevich_Gromov-Witten_classes}.
	We chose to call it ``forgetting tails'' because it corresponds to the operation of forgetting a tail.
\end{remark}

\subsection{Contracting edges}

\begin{proposition} \label{prop:contracting_edges}
	Let $(\sigma,\beta)$ be an A-graph where $\sigma$ is obtained from a modular graph $\tau$ by contracting an edge (possibly a loop) $e$.
	We follow the notations of \cref{sec:contracting_edges}.
	Let $\Phi\colon\oM_\tau\to\oM_\sigma$, $\Psi_l^i\colon\oM_{\tau_l^i}\to\oM_\tau$ and $\Omega\colon\oM_{\tsigma}\to\oM_\sigma$ be the induced maps on the moduli stacks of stable curves.
	For any $a_v\in K_0(X), v\in T_{\tsigma}$, we have
	\[\Phi^*\Omega_* K^X_{\tsigma,\beta}\big({\textstyle\bigotimes_v} a_v\big) = \sum_l (-1)^{l+1} \sum_{i,j} \Psi_{l*}^i K^X_{\tau^i_l,\beta^{i_j}_l}\big({\textstyle\bigotimes_v} a_v\big).\]
\end{proposition}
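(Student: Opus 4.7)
The plan is to deduce this K-theoretic identity from the two parts of \cref{thm:contracting_edges} by combining proper base change with the projection formula, mimicking the pattern used in \cref{prop:cutting_edges,prop:forgetting_tails}. First I would set $\phi \coloneqq \Omega \circ \st_{\tsigma} \colon \R\oM(X,\tsigma,\beta) \to \oM_\sigma$ and consider the derived pullback square
\[ \begin{tikzcd}
\oM_\tau \times_{\oM_\sigma} \R\oM(X,\tsigma,\beta) \arrow{r}{\eta} \arrow{d}{\mu} & \R\oM(X,\tsigma,\beta) \arrow{d}{\phi} \\
\oM_\tau \arrow{r}{\Phi} & \oM_\sigma.
\end{tikzcd} \]
By derived proper base change \cite[Theorem 1.5]{Porta_Yu_Derived_Hom_spaces},
\[ \Phi^* \Omega_* I^X_{\tsigma,\beta}\big({\textstyle\bigotimes_v} a_v\big) = \Phi^* \phi_*\big({\textstyle\bigotimes_v} \ev_v^{\tsigma*} a_v\big) = \mu_* \eta^*\big({\textstyle\bigotimes_v} \ev_v^{\tsigma*} a_v\big). \]

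Next I would invoke the K-theoretic identity of \cref{thm:contracting_edges},
\[ \big[\cO_{\oM_\tau\times_{\oM_\sigma}\R\oM(X,\tsigma,\beta)}\big] = \sum_l (-1)^{l+1} \sum_{i,j} p_{l*}^{i_j}\big[\cO_{\R\oM(X,\tau_l^i,\beta_l^{i_j})}\big], \]
multiply both sides by $\eta^*({\textstyle\bigotimes_v} \ev_v^{\tsigma*} a_v)$, and push forward by $\mu_*$. By the projection formula \cite[Theorem 1.4]{Porta_Yu_Derived_Hom_spaces},
\[ \mu_* \eta^*\big({\textstyle\bigotimes_v} \ev_v^{\tsigma*} a_v\big) = \sum_l (-1)^{l+1} \sum_{i,j} \mu_* p_{l*}^{i_j}\Big( (\eta \circ p_l^{i_j})^*\big({\textstyle\bigotimes_v} \ev_v^{\tsigma*} a_v\big)\Big). \]

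Finally I would identify the compositions appearing on the right-hand side. The construction of $p_l^{i_j}$ in \cref{sec:contracting_edges} together with the functoriality of the forgetful/stabilization maps implies the commutativities
\[ \ev_v^{\tsigma} \circ \eta \circ p_l^{i_j} = \ev_v^{\tau_l^i} \quad (v \in T_{\tsigma}), \qquad \mu \circ p_l^{i_j} = \Psi_l^i \circ \st_{\tau_l^i}, \]
since contracting the chain $\gamma_l$ to $e$ (and then to $v_e$) does not change the underlying rigid \kanal map at the marked tails, and the tails $T_{\tsigma}$ are in canonical bijection with those of $\tau_l^i$. Substituting these equalities and collecting terms gives
\[ \sum_l (-1)^{l+1} \sum_{i,j} \Psi_{l*}^i \st_{\tau_l^i *}\big({\textstyle\bigotimes_v} \ev_v^{\tau_l^i*} a_v\big) = \sum_l (-1)^{l+1} \sum_{i,j} \Psi_{l*}^i I^X_{\tau_l^i,\beta_l^{i_j}}\big({\textstyle\bigotimes_v} a_v\big), \]
which is the desired identity. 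The only real obstacle is the bookkeeping of the last step—verifying that the maps $\eta \circ p_l^{i_j}$ and $\mu \circ p_l^{i_j}$ agree with the geometrically natural evaluation and stabilization maps—which follows cleanly from the universal properties used to define $p_l^{i_j}$.
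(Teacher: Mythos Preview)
Your proof is correct and follows essentially the same approach as the paper: both set up the fiber product $\oM_\tau \times_{\oM_\sigma} \R\oM(X,\tsigma,\beta)$, apply proper base change along $\Phi$ to rewrite $\Phi^*\Omega_*\st_{\tsigma*}$ as $\mu_*\eta^*$, insert the K-theoretic identity from \cref{thm:contracting_edges}, apply the projection formula for each $p_l^{i_j}$, and then identify $\eta\circ p_l^{i_j}$ with the natural map $q_l^{i_j}$ (so that $\ev_v^{\tsigma}\circ q_l^{i_j}=\ev_v^{l,i_j}$) and $\mu\circ p_l^{i_j}$ with $\Psi_l^i\circ\st_l^{i_j}$ via the commutative diagram built from \eqref{eq:Lambda_beta}, \eqref{eq:trick_avoiding_full_functoriality}, and \eqref{eq:commutativity_between_stabilization_and_contracting_edges}. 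The paper's proof records these commutativities explicitly in a single diagram at the outset, which is precisely the ``bookkeeping'' you flag at the end.
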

\begin{proof}
	For every $l,i,j$ consider the following commutative diagram
	\[\begin{tikzcd}[row sep=small]
		\R\oM(X,\tau_l^i,\beta_l^{i_j}) \arrow{dd}{\st_l^{i_j}} \arrow[bend left=25]{rr}[swap]{q_l^{i_j}} \rar{p_l^{i_j}} & \R\oM(X,\tsigma,\beta)\times_{\oM_\sigma}\oM_\tau \rar{\lambda} \arrow{dd}{\mu} & \R\oM(X,\tsigma,\beta) \dar{\st_{\tsigma}} \\
		& & \oM_{\tsigma} \dar{\Omega} \\
		\oM_{\tau_l^i} \rar{\Psi_l^i} & \oM_\tau \rar{\Phi} & \oM_\sigma.
	\end{tikzcd}\]
	For each $v\in T_{\tsigma}$, we denote by $v$ again the corresponding tail of $\tau_l^i$, and we denote the evaluation maps by
	\begin{align*}
	\ev^{l,i_j}_v&\colon\R\oM(X,\tau_l^i,\beta_l^{i_j})\longrightarrow X,\\
	\ev^{\tsigma}_v&\colon\R\oM(X,\tsigma,\beta)\longrightarrow X.
	\end{align*}
	We have $\ev^{l,i_j}_v=\ev^{\tsigma}_v\circ q_l^{i_j}$.
	By \cref{thm:contracting_edges}, we have
	\begin{equation} \label{eq:contracting_edges_colimit}
		\sum_l (-1)^{l+1} \sum_{i,j} p_{l*}^{i_j}\big[\cO_{\R\oM(X,\tau^i_l,\beta^{i_j}_l)}\big]=\big[\cO_{\oM_\tau\times_{\oM_\sigma}\R\oM(X,\tsigma,\beta)}\big]
	\end{equation}
	in $K_0\big(\Cohb\big( \oM_\tau\times_{\oM_\sigma}\R\oM(X,\tsigma,\beta) \big) \big)$.
	We obtain
	\begin{align*}
		\Phi^*\Omega_* K^X_{\tsigma,\beta}\big({\textstyle\bigotimes_v} a_v\big)
		&= \Phi^*\Omega_* \st_{\tsigma*}\big({\textstyle\bigotimes_v} \ev_v^{\tsigma*} a_v\big) \\
		&= \mu_* \lambda^*\big({\textstyle\bigotimes_v} \ev_v^{\tsigma*} a_v\big) \\
		&= \mu_*\Big(\lambda^*\big({\textstyle\bigotimes_v} \ev_v^{\tsigma*} a_v\big) \otimes \sum_l (-1)^{l+1} \sum_{i,j} p_{l*}^{i_j}\big[\cO_{\R\oM(X,\tau^i_l,\beta^{i_j}_l)}\big]\Big) \\
		&= \sum_l(-1)^{l+1} \sum_{i,j}\mu_*\Big(\lambda^*\big({\textstyle\bigotimes_v} \ev_v^{\tsigma*} a_v\big) \otimes p_{l*}^{i_j}\big[\cO_{\R\oM(X,\tau^i_l,\beta^{i_j}_l)}\big]\Big) \\
		&= \sum_l(-1)^{l+1} \sum_{i,j}\mu_*p_{l*}^{i_j}\Big(p_l^{i_j*}\lambda^*\big({\textstyle\bigotimes_v} \ev_v^{\tsigma*} a_v\big) \otimes \big[\cO_{\R\oM(X,\tau^i_l,\beta^{i_j}_l)}\big]\Big) \\
		&= \sum_l(-1)^{l+1} \sum_{i,j}\Psi_{l*}^i\st_{l*}^{i_j}q_l^{i_j*}\big({\textstyle\bigotimes_v} \ev_v^{\tsigma*} a_v\big) \\
		&= \sum_l(-1)^{l+1} \sum_{i,j}\Psi_{l*}^i\st_{l*}^{i_j}\big({\textstyle\bigotimes_v} \ev_v^{l,i_j*} a_v\big) \\
		&=\sum_l(-1)^{l+1} \sum_{i,j}\Psi_{l*}^i K^X_{\tau^i_l,\beta^{i_j}_l}\big({\textstyle\bigotimes_v} a_v\big),
	\end{align*}
	where the second equality follows from proper base change \cite[Theorem 1.5]{Porta_Yu_Derived_Hom_spaces}, the third equality follows from \eqref{eq:contracting_edges_colimit}, and the fifth equality follows from the projection formula \cite[Theorem 1.4]{Porta_Yu_Derived_Hom_spaces}.
\end{proof}

\section{Quantum K-invariants with multiplicities} \label{sec:multiplicities}

The flexibility of our derived approach to quantum K-invariants allows us to impose not only simple incidence conditions for marked points as in the last section, but also incidence conditions with multiplicities.
This leads to a new set of enumerative invariants, which is not yet considered in the literature, even in algebraic geometry, to the best of our knowledge.

Fix a proper smooth \kanal space $X$ and an $A$-graph $(\tau,\beta)$.
Let $i\in T_\tau$ be a tail vertex and $m_i$ a positive integer.
Let $s_i\colon\oMpre_\tau\to\oCpre_\tau$ be the $i$-th section of the universal curve, $\cI_i$ the ideal sheaf on $\oCpre_\tau$ cutting out the image of $s_i$, and $(\oCpre_\tau)_{(s_i^{m_i})}$ the closed substack given by the $m_i$-th power of the ideal $\cI_i$.
Let $X_{i,\tau}^{m_i}$ denote the mapping stack
\[\bfMap_{\oMpre_\tau}\Bigl((\oCpre_\tau)_{(s_i^{m_i})}, X\times \oMpre_\tau\Bigr).\]
Let $\ev_i^{m_i}$ denote the composition of the inclusion
\[\R\oM(X,\tau,\beta) \longrightarrow \bfMap_{\oMpre_\tau}\bigl(\oCpre_\tau, X\times\oMpre_\tau\bigr)\]
and the restriction
\[\bfMap_{\oMpre_\tau}\bigl(\oCpre_\tau, X\times\oMpre_\tau\bigr)\longrightarrow X_{i,\tau}^{m_i}.\]
We call
\[\ev_i^{m_i}\colon\R\oM(X,\tau,\beta)\longrightarrow X_{i,\tau}^{m_i}\]
the evaluation map of the $i$-th marked point of order $m_i$.

Note that when $m_i=1$, we have
\[ (\oCpre_\tau)_{(s_i^{m_i})}\simeq\oMpre_\tau \quad\text{and}\quad X_{i,\tau}^{m_i}\simeq X\times\oMpre_\tau, \]
hence $\ev^1_i \simeq \ev_i\times\dom$, where $\dom\colon\R\oM(X,\tau,\beta)\to\oMpre_\tau$ denotes the map taking domains of stable maps.

Given any closed analytic subspace $Z_i\subset X$ such that the inclusion is lci, let
\[Z_{i,\tau}^{m_i}\coloneqq\bfMap_{\oMpre_\tau}\Bigl((\oCpre_\tau)_{(s_i^{m_i})}, Z_i\times \oMpre_\tau\Bigr).\]

\begin{lemma} \label{lem:Zi}
	The derived \kanal stack $X_{i,\tau}^{m_i}$ is smooth over $\oMpre_\tau$, in particular underived.
	The map $\iota\colon Z_{i,\tau}^{m_i}\to X_{i,\tau}^{m_i}$ is derived lci.
	The pushforward $\iota_* \cO_{Z_{i,\tau}^{m_i}}$ is a perfect complex on $X_{i,\tau}^{m_i}$.
\end{lemma}
\begin{proof}
	Note that all the statements are local over $\oMpre_\tau$.
	Let $J^{m_i}\coloneqq\Sp k\langle t\rangle/(t^{m_i})$.
	Locally over $\oMpre_\tau$, $(\oCpre_\tau)_{(s_i^{m_i})}$ is of the form $J^{m_i}\times S$ for some $S$ smooth over $\oMpre_\tau$.
	Then the map $\iota\colon Z_{i,\tau}^{m_i}\to X_{i,\tau}^{m_i}$ is of the form
	\[\bfMap(J^{m_i}, Z_i)\times S \longrightarrow \bfMap(J^{m_i}, X)\times S.\]
	Consider the commutative diagram
	\[ \begin{tikzcd}
		\bfMap(J^{m_i}, Z_i) \arrow{d}{\theta} & J^{m_i} \times \bfMap(J^{m_i}, Z_i) \arrow{r}{e'} \arrow{l}[swap]{q'} \arrow{d} & Z_i \arrow{d} \\
		\bfMap(J^{m_i}, X) & J^{m_i} \times \bfMap(J^{m_i}, X) \arrow{r}{e} \arrow{l}[swap]{q} & X,
	\end{tikzcd} \]
	where $e$ and $e'$ are the evaluation maps.
	It follows from \cite[Lemma 8.4]{Porta_Yu_Derived_Hom_spaces} that
	\[\bbT\an_{\bfMap(J^{m_i}, X)} \simeq q_* e^* \bbT\an_X.\]
	Since $X$ is smooth, $\bbT\an_X$ is perfect and has tor-amplitude 0, and the same goes for $e^*\bbT\an_X$.
	Note that $q$ has relative dimension $0$.
	Furthermore, $q$ is flat by \cref{cor:stability_flatness}.
	Therefore, the functor $q_*$ has cohomological dimension $0$ and preserves the tor-amplitude.
	As a result, $q_* e^* \bbT\an_X$ is perfect and has tor-amplitude 0.
	Since $X$ is smooth, the truncation $\trunc\bfMap(J^{m_i}, X)$ is also smooth.
	It follows from \cite[Proposition 5.50]{Porta_Yu_Representability_theorem} that $\bfMap(J^{m_i}, X)$ is smooth.
	Therefore, $X_{i,\tau}^{m_i}$ is smooth over $\oMpre_\tau$, in particular underived.
	
	Similarly we have
	\[ \bbT\an_\theta \simeq q'_\ast e'^\ast \bbT\an_{Z_i / X} . \]
	Since $Z_i \to X$ is lci, $\mathbb T\an_{Z_i/X}$ is perfect and has tor-amplitude $[-1,0]$, and thus the same goes for $e^* \mathbb T\an_{Z_i/X}$.
	Similar to $q$, the map $q'$ has cohomological dimension $0$ and preserves the tor-amplitude.
	We conclude that $\mathbb T\an_\theta$ is perfect and has tor-amplitude $[-1, 0]$, and therefore $\theta$ is derived lci.
	Since being derived lci is stable under products, we deduce that the map $\iota\colon Z_{i,\tau}^{m_i}\to X_{i,\tau}^{m_i}$ is derived lci.
	Combining \cref{cor:derived_lci_finite_tor-amplitude} and \cite[Proposition 7.8]{Porta_Yu_Derived_Hom_spaces}, we conclude that $\iota_* \cO_{Z_{i,\tau}^{m_i}}$ is a perfect complex on $X_{i,\tau}^{m_i}$.
\end{proof}

\begin{definition} \label{def:quantum_K-invariants_with_multiplicities}
	Given any A-graph $(\tau,\beta)$, $\mathbf m_i=(m_i)_{i\in T_\tau}$ with $m_i\in\bbN_{>0}$, and $\bZ=(Z_i)_{i\in T_\tau}$ with $Z_i\subset X$ closed lci, we define the associated quantum K-invariants of $X$ with multiplicities
	\[
	K^X_{\tau,\beta,\mathbf m}(\bZ)\coloneqq \st_*\bigg(\bigotimes_{i\in T_\tau}(\ev_i^{m_i})^*\cO_{Z_{i,\tau}^{m_i}}\bigg)\in K_0(\oM_\tau),\]
	where the pullback $(\ev_i^{m_i})^*$ is well-defined by \cref{lem:Zi}, and \Cref{prop:boundedness} ensures that the image of $\st_*$ lies in $\Cohb(\oM_\tau)$, thus gives an element in $K_0(\oM_\tau)$. 
\end{definition}

The quantum K-invariants with multiplicities satisfy the following list of natural geometric properties exactly parallel to the previous section.
As the proofs are also parallel to the previous section, we do not repeat them here.

\subsection{Mapping to a point}

\begin{proposition} \label{prop:mapping_to_a_point_with_multiplicity}
	Let $(\tau,0)$ be any A-graph where $\beta$ is 0.
	Let $p_1, p_2$ be projections
	\[\begin{tikzcd}
	X\times \oM_\tau \rar{p_1} \dar{p_2} & X\\
	\oM_\tau & \phantom{X}.
	\end{tikzcd}\]
	Let $q$ be the composition
	\[X\times\oM_\tau\hookrightarrow X\times\oMpre_\tau\hookrightarrow\bfMap_{\oMpre_\tau}\Bigl((\oCpre_\tau)_{(s_i^{m_i})}, X\times \oMpre_\tau\Bigr)\simeq X_{i,\tau}^{m_i}.\]
	Given any $\mathbf m=(m_i)_{i\in T_\tau}$ and $\bZ=(Z_i)_{i\in T_\tau}$ as in \cref{def:quantum_K-invariants_with_multiplicities}, for each $i\in T_\tau$, we have
	\[K^X_{\tau,\beta,\mathbf m}(\bZ) = p_{2*}\Big(\big({\textstyle\bigotimes _{i\in T_\tau}} q^*\cO_{Z_{i,\tau}^{m_i}}\big)\otimes p_1^*\lambda_{-1}\big((\bbT\an_X \boxtimes R^1\pi_*\cO_{\oC_\tau})^\vee\big) \Big),\]
	where $\lambda_{-1}(F)\coloneqq\sum_{i}(-1)^i\wedge^i F$.
\end{proposition}

\subsection{Products}

\begin{proposition}
	Let $(\tau_1, \beta_1)$ and $(\tau_2, \beta_2)$ be two A-graphs.
	Let $(\tau_1 \sqcup \tau_2, \beta_1 \sqcup \beta_2)$ be their disjoint union.
	Given any $\mathbf m=(m_i)_{i\in T_{\tau_1}}$, $\bZ=(Z_i)_{i\in T_{\tau_1}}$, $\mathbf n=(n_j)_{j\in T_{\tau_2}}$, $\bW=(W_j)_{j\in T_{\tau_2}}$ as in \cref{def:quantum_K-invariants_with_multiplicities}, we have
	\[K^X_{\tau_1\sqcup\tau_2,\beta_1\sqcup\beta_2,\mathbf m\sqcup\mathbf n}(\bZ\sqcup\bW)=K^X_{\tau_1,\beta_1,\mathbf m}(\bZ)\boxtimes K^X_{\tau_2,\beta_2,\mathbf n}(\bW).\]
\end{proposition}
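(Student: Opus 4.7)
The plan is to mirror the proof of \cref{prop:products}, building on the product decomposition
\[\R\oM(X,\tau_1\sqcup\tau_2,\beta_1\sqcup\beta_2)\xrightarrow{\ \sim\ }\R\oM(X,\tau_1,\beta_1)\times\R\oM(X,\tau_2,\beta_2)\]
provided by \cref{thm:products}. Write $\R\oM_{12}$, $\R\oM_1$, $\R\oM_2$ for the three stacks above and let $q_1,q_2$ denote the two projections. The first step is to unpack the decomposition of the universal curve already used in the proof of \cref{thm:products}: the derived universal glued curve over $\R\oM_{12}$ splits canonically as a disjoint union $q_1^*\R\oC(X,\tau_1,\beta_1)\amalg q_2^*\R\oC(X,\tau_2,\beta_2)$, and for every tail $i\in T_{\tau_1}$ the section $s_i$ lives on the first summand and coincides with the pullback of the corresponding section over $\R\oM_1$ along $q_1$.

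Next I would verify that this decomposition is compatible with the $m_i$-th infinitesimal thickening: for $i\in T_{\tau_1}$ one has a canonical equivalence
\[\R\oC(X,\tau_1\sqcup\tau_2,\beta_1\sqcup\beta_2)_{(s_i^{m_i})}\xrightarrow{\ \sim\ }q_1^*\,\R\oC(X,\tau_1,\beta_1)_{(s_i^{m_i})},\]
since the ideal sheaf defining $s_i$ lives on the $\tau_1$-component and the projection $q_1$ is flat, so taking the $m_i$-th power of this ideal commutes with pullback along $q_1$. Feeding this into the universal property of mapping stacks, one obtains canonical equivalences of derived stacks over $\R\oM_{12}$
\[X_i^{m_i}(\tau_1\sqcup\tau_2,\beta_1\sqcup\beta_2)\simeq X_i^{m_i}(\tau_1,\beta_1)\times\R\oM_2,\]
and symmetrically for $j\in T_{\tau_2}$, under which the evaluation $\ev_i^{m_i}$ identifies with $(\ev_i^{m_i})_{\tau_1}\times\mathrm{id}$. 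Pulling back the lci closed immersion $Z_i\hookrightarrow X$ along the corresponding maps of mapping stacks yields the analogous decomposition
\[Z_i^{m_i}(\tau_1\sqcup\tau_2,\beta_1\sqcup\beta_2)\simeq Z_i^{m_i}(\tau_1,\beta_1)\times\R\oM_2,\]
and the same statement for each $W_j$ and $\tau_2$.

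Finally I would assemble the computation exactly as in the proof of \cref{prop:products}. Under the identifications above, the stabilization map splits as $\st_{\tau_1}\times\st_{\tau_2}$, and for each $i\in T_{\tau_1}$ one has
\[(\ev_i^{m_i})^*\cO_{Z_i^{m_i}(\tau_1\sqcup\tau_2,\beta_1\sqcup\beta_2)}\simeq q_1^*(\ev_i^{m_i})^*\cO_{Z_i^{m_i}(\tau_1,\beta_1)},\]
with the symmetric statement for $j\in T_{\tau_2}$ via $q_2$. Hence the tensor product inside $Q^X_{\tau_1\sqcup\tau_2,\beta_1\sqcup\beta_2,\mathbf m\sqcup\mathbf n}(\bZ\sqcup\bW)$ decomposes as an external tensor product on $\R\oM_1\times\R\oM_2$. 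Pushing forward along $\st_{\tau_1}\times\st_{\tau_2}$ and using the projection formula together with proper base change \cite[Theorems 1.4 and 1.5]{Porta_Yu_Derived_Hom_spaces} splits this pushforward into the external product $Q^X_{\tau_1,\beta_1,\mathbf m}(\bZ)\boxtimes Q^X_{\tau_2,\beta_2,\mathbf n}(\bW)$, proving the identity.

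The main obstacle will be the careful verification that the $m_i$-th infinitesimal thickening commutes with pullback along $q_1$, and that the resulting equivalence of $X_i^{m_i}$ (and consequently of $Z_i^{m_i}$) over $\R\oM_{12}$ really is compatible with the evaluation maps. Once this geometric compatibility is in place, the rest of the argument is purely formal and proceeds exactly as in \cref{prop:products}.
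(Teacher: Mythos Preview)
Your proposal is correct and follows exactly the approach the paper indicates: the paper does not write out a proof for this proposition, stating only that the proofs in \cref{sec:multiplicities} are ``parallel to the previous section'' and omitting them. Your argument is precisely the expected parallel of \cref{prop:products}, with the additional (and correctly identified) verification that the infinitesimal thickenings $\R\oC(X,\tau,\beta)_{(s_i^{m_i})}$ and the associated mapping stacks $X_i^{m_i}$, $Z_i^{m_i}$ are compatible with the product decomposition of \cref{thm:products}.
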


\subsection{Cutting edges}

\begin{proposition}
	Let $(\sigma,\beta)$ be an A-graph obtained from $(\tau,\beta)$ by cutting an edge $e$ of $\tau$.
	Let $v,w$ be the two tails of $\sigma$ created by the cut.
	Consider the following commutative diagram
	\[ \begin{tikzcd}
	\oM_\tau \rar{d} & \oM_\sigma\\
	\R \oM(X, \tau, \beta) \uar{\st_\tau} \arrow{r}{c} \arrow{d}{\ev_e} & \R \oM(X, \sigma, \beta) \uar{\st_\sigma} \arrow{d}{\ev_v \times \ev_w} \\
	X \arrow{r}{\Delta} & X \times_S X.
	\end{tikzcd} \]
	Given any $\mathbf m=(m_i)_{i\in T_\tau}$ and $\bZ=(Z_i)_{i\in T_\tau}$ as in \cref{def:quantum_K-invariants_with_multiplicities}, let
	\[\mathbf m'\coloneqq\mathbf m\sqcup(m_v=1, m_w=1).\]
	We have
	\[d_*K^X_{\tau,\beta,\mathbf m}(\bZ)=K^X_{\sigma,\beta,\mathbf m'}(\bZ\otimes\Delta_*\cO_X),\]
	where the right hand side means
	\[\st_*\bigg(\Big({\textstyle\bigotimes}_{i\in T_\tau}(\ev_i^{m_i})^*\cO_{Z_{i,\sigma}^{m_i}}\Big)\otimes(\ev_v\times\ev_w)^*\Delta_*\cO_X\bigg)\in K_0(\oM_\sigma).\]
\end{proposition}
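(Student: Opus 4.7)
The proof will follow closely the template of \cref{prop:cutting_edges}, enhanced with an auxiliary base change compatibility for the higher-order evaluation stacks $X_i^{m_i}$ and $Z_i^{m_i}$. The input is the derived pullback square of \cref{thm:cutting_edges_derived_pullback}, which in particular gives $\ev_e^*\cO_X \simeq \cO_{\R\oM(X,\tau,\beta)}$ and, via derived base change \cite[Theorem 1.5]{Porta_Yu_Derived_Hom_spaces}, a canonical equivalence $c_*\cO_{\R\oM(X,\tau,\beta)} \simeq (\ev_v\times\ev_w)^*\Delta_*\cO_X$.

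The first real step, which I expect to be the main obstacle, is to establish for every $i\in T_\tau\subset T_\sigma$ a canonical equivalence
\[ X_i^{m_i}(\tau,\beta) \simeq X_i^{m_i}(\sigma,\beta) \times_{\R\oM(X,\sigma,\beta)} \R\oM(X,\tau,\beta), \]
and the analogous one with $Z_i$ in place of $X$. The key observation is that cutting the edge $e$ only affects the sections $s_v,s_w$ of the universal curve: under the natural identification $\oMpre_\tau\simeq\oMpre_\sigma$, the universal $\tau$-curve is obtained from the universal $\sigma$-curve by gluing $s_v$ and $s_w$, while for every tail $i\in T_\tau$ the section $s_i$ and its ideal $\cI_i$ are strictly preserved. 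Consequently the closed substack $(\oCpre_\tau)_{(s_i^{m_i})}$ is the pullback of $(\oCpre_\sigma)_{(s_i^{m_i})}$ under the obvious map, and pulling further back via the map to $\R\oM(X,\tau,\beta)$ uses the derived pullback square of \cref{thm:cutting_edges_derived_pullback} together with the fact that mapping stacks commute with base change in the source. This gives the desired base-change identification; combined with \cref{lem:Zi} it follows that
\[ c^*(\ev_i^{m_i,\sigma})^*\cO_{Z_i^{m_i}(\sigma,\beta)} \simeq (\ev_i^{m_i,\tau})^*\cO_{Z_i^{m_i}(\tau,\beta)}. \]

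With these two ingredients in hand, the rest is a direct computation imitating the proof of \cref{prop:cutting_edges}. Writing $\cF_\tau \coloneqq \bigotimes_{i\in T_\tau}(\ev_i^{m_i,\tau})^*\cO_{Z_i^{m_i}(\tau,\beta)}$ and $\cF_\sigma \coloneqq \bigotimes_{i\in T_\tau}(\ev_i^{m_i,\sigma})^*\cO_{Z_i^{m_i}(\sigma,\beta)}$, one inserts the trivial factor $\ev_e^*\cO_X \simeq \cO$, uses $\st_\tau = \st_\sigma\circ c$, then applies the projection formula \cite[Theorem 1.4]{Porta_Yu_Derived_Hom_spaces} and the base change identification above to obtain
\begin{align*}
d_*Q^X_{\tau,\beta,\mathbf m}(\bZ) &= d_*\st_{\tau*}\bigl(\cF_\tau\otimes\ev_e^*\cO_X\bigr) = \st_{\sigma*}\,c_*\bigl(c^*\cF_\sigma\otimes\ev_e^*\cO_X\bigr) \\
&= \st_{\sigma*}\bigl(\cF_\sigma\otimes c_*\ev_e^*\cO_X\bigr) = \st_{\sigma*}\bigl(\cF_\sigma\otimes(\ev_v\times\ev_w)^*\Delta_*\cO_X\bigr),
\end{align*}
which is precisely the right-hand side $Q^X_{\sigma,\beta,\mathbf m'}(\bZ\otimes\Delta_*\cO_X)$ after unwinding the definition with $m_v=m_w=1$. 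All manipulations are valid in $K_0(\oM_\sigma)$ thanks to \cref{prop:boundedness}, which ensures that the relevant pushforwards preserve bounded coherence.
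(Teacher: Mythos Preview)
Your proposal is correct and matches the paper's intended approach: the paper omits the proof here, remarking only that all the arguments in this section are ``parallel to the previous section'' (i.e.\ to \cref{prop:cutting_edges}), and you have carried out exactly that parallel argument. You also correctly isolate and justify the one extra ingredient needed in the multiplicity setting, namely the base-change compatibility $X_i^{m_i}(\tau,\beta)\simeq c^*X_i^{m_i}(\sigma,\beta)$ (and likewise for $Z_i$), which holds because the thickened sections $(s_i^{m_i})$ for $i\in T_\tau$ live away from the node $s_e$ and are unaffected by the gluing.
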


\subsection{Forgetting tails}

\begin{proposition}
	Let $(\sigma,\beta)$ be an A-graph obtained from $(\tau,\beta)$ by forgetting a tail $t$.
	Let $\Phi\colon\oM_\tau\to\oM_\sigma$ be the forgetful map from $\tau$-marked stable curves to $\sigma$-marked stable curves.
	Given any $\mathbf m=(m_i)_{i\in T_\sigma}$ and $\bZ=(Z_i)_{i\in T_\sigma}$ as in \cref{def:quantum_K-invariants_with_multiplicities}, and any positive integer $m_t$, let
	\[\mathbf m'\coloneqq\mathbf m\sqcup(m_t).\]
	We have	
	\[K^X_{\tau,\beta,\mathbf m'}(\bZ\sqcup(X))=\Phi^*K^X_{\sigma,\beta,\mathbf m}(\bZ).\]
	\end{proposition}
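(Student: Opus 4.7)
The plan is to follow the template of the proof of \cref{prop:forgetting_tails}, with a small preparatory step to handle the higher-order evaluation maps. All the diagrammatic ingredients come from \cref{thm:forgetting_tails}; the only genuinely new input is a compatibility of the $m_i$-th order evaluation maps along the forgetful--stabilization morphism $\pi\colon\R\oM(X,\tau,\beta)\to\R\oM(X,\sigma,\beta)$.

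First I would prove that for every $i\in T_\sigma$ the natural square
\[\begin{tikzcd}
\R\oM(X,\tau,\beta) \arrow{r}{\ev_i^{m_i}} \arrow{d}{\pi} & X_i^{m_i}(\tau,\beta) \arrow{d} \\
\R\oM(X,\sigma,\beta) \arrow{r}{\ev_i^{m_i}} & X_i^{m_i}(\sigma,\beta)
\end{tikzcd}\]
is cartesian. Unraveling the definition of the mapping stacks, this reduces to showing that the stabilizing contraction $c$ between the universal $\tau$- and $\sigma$-curves over $\R\oM(X,\tau,\beta)$ induces an equivalence $\R\oC(X,\tau,\beta)_{(s_i^{m_i})}\xrightarrow{\ \sim\ }\pi^*\R\oC(X,\sigma,\beta)_{(s_i^{m_i})}$. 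Since the tail $i\in T_\sigma$ is a stable marked point on the $\tau$-domain, the contraction $c$ is a local isomorphism in an open neighborhood of the section $s_i$, so it identifies the ideal sheaves $\cI_i$ and hence all their powers $\cI_i^{m_i}$. One checks this étale-locally on $\oMpre_\sigma$ using the explicit construction of stabilization in \cref{sec:construction_of_stabilization} together with the flatness of the stabilization morphism (\cref{lem:stab_is_smooth}), which ensures the formation of $m_i$-th infinitesimal neighborhoods is compatible with arbitrary derived base change. Pulling back structure sheaves yields
\[(\ev_i^{m_i})^*\cO_{Z_i^{m_i}(\tau,\beta)}\simeq \pi^*(\ev_i^{m_i})^*\cO_{Z_i^{m_i}(\sigma,\beta)}.\]

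The rest of the argument is a transcription of \cref{prop:forgetting_tails}. Since $Z_t=X$, the forgotten tail contributes trivially: $(\ev_t^{m_t})^*\cO_{X_t^{m_t}(\tau,\beta)}\simeq\cO_{\R\oM(X,\tau,\beta)}$. Substituting the identifications above into the definition of $Q^X_{\tau,\beta,\mathbf m'}(\bZ\sqcup(X))$, I decompose $\st_\tau=\xi\circ\mu\circ\Psi$ and $\st_\sigma=\theta\circ\kappa$ via the diagram of \cref{thm:forgetting_tails}, apply the projection formula \cite[Theorem 1.4]{Porta_Yu_Derived_Hom_spaces} together with the equivalence $\Psi_*\cO_{\R\oM(X,\tau,\beta)}\simeq\cO_{\oM_\tau\times_{\oM_\sigma}\R\oM(X,\sigma,\beta)}$ from \cref{thm:forgetting_tails} to collapse $\Psi_*\Psi^*$, and then invoke proper base change \cite[Theorem 1.5]{Porta_Yu_Derived_Hom_spaces} to commute $\eta^*$ past the vertical pushforwards, arriving at $\Phi^*\st_{\sigma*}\bigl(\bigotimes_{i\in T_\sigma}(\ev_i^{m_i})^*\cO_{Z_i^{m_i}(\sigma,\beta)}\bigr)=\Phi^*Q^X_{\sigma,\beta,\mathbf m}(\bZ)$. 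The main obstacle is thus concentrated in the preparatory cartesian-square step: once the $m_i$-th order evaluation maps are known to be compatible with $\pi$, everything else is purely formal and parallels \cref{prop:forgetting_tails} line by line.
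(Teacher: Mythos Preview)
Your overall template is exactly what the paper intends: it explicitly says the proofs in \cref{sec:multiplicities} are parallel to those in \cref{sec:quantum_K-invariants} and does not spell them out, so the decomposition $\st_\tau=\xi\circ\mu\circ\Psi$, the projection formula, the equivalence $\Psi_*\cO\simeq\cO$ from \cref{thm:forgetting_tails}, and base change are the right tools, and the observation that $Z_t=X$ makes the forgotten tail's contribution trivial is correct.

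The gap is in your preparatory step. The claim that the stabilizing contraction $c$ is a local isomorphism in a neighborhood of the section $s_i$ is not true in general. Consider the boundary stratum of $\R\oM(X,\tau,\beta)$ where the domain contains a genus-$0$ component $E$ carrying exactly the three special points $s_i$, $s_t$, and one node, with $f|_E$ constant. This is a legitimate $\tau$-stable map. After forgetting $t$, the component $E$ has only two special points and maps to a point, so the stabilization $c$ contracts all of $E$ to the node. In particular the entire $m_i$-th infinitesimal neighborhood $(C)_{(s_i^{m_i})}$, which sits inside $E$, is collapsed to the reduced point $s'_i$; the induced map $(C)_{(s_i^{m_i})}\to (C')_{(s_i^{\prime m_i})}$ is not an isomorphism but factors through the closed point. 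Consequently $\ev_i^{m_i,\tau}$ on this stratum records the \emph{constant} $m_i$-jet (since $f|_E$ is constant), whereas $\pi^*\ev_i^{m_i,\sigma}$ records the $m_i$-jet of $f'$ along the \emph{adjacent} component at the former node, which is typically nonconstant. So your asserted cartesian square fails over this stratum, and the pointwise identity $(\ev_i^{m_i})^*\cO_{Z_i^{m_i}(\tau,\beta)}\simeq \pi^*(\ev_i^{m_i})^*\cO_{Z_i^{m_i}(\sigma,\beta)}$ does not hold there for $m_i>1$.

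Since the paper gives no proof beyond the word ``parallel'', there is nothing further to compare against; but as written your argument needs either a genuinely different treatment of this boundary stratum, or a direct argument that the discrepancy between the two higher-order evaluations washes out after applying $\st_{\tau*}$ (which is what the formula ultimately asserts, not a pointwise identity).
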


\subsection{Contracting edges}

\begin{proposition}
	Let $(\sigma,\beta)$ be an A-graph where $\sigma$ is obtained from a modular graph $\tau$ by contracting an edge (possibly a loop) $e$.
	We follow the notations of \cref{sec:contracting_edges}.
	Let $\Phi\colon\oM_\tau\to\oM_\sigma$, $\Psi_l^i\colon\oM_{\tau_l^i}\to\oM_\tau$ and $\Omega\colon\oM_{\tsigma}\to\oM_\sigma$ be the induced maps on the moduli stacks of stable curves.
	Given any $\mathbf m=(m_v)_{v\in T_{\tsigma}}$ and $\bZ=(Z_v)_{v\in T_{\tsigma}}$ as in \cref{def:quantum_K-invariants_with_multiplicities}, we have
	\[\Phi^*\Omega_* K^X_{\tsigma,\beta,\mathbf m}(\bZ) = \sum_l (-1)^{l+1} \sum_{i,j} \Psi_{l*}^i K^X_{\tau^i_l,\beta^{i_j}_l,\mathbf m}(\bZ).\]
\end{proposition}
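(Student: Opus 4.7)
The plan is to imitate the proof of \cref{prop:contracting_edges}, with the only new ingredient being the compatibility of the multiplicity-enhanced evaluation maps with the geometric relation of \cref{thm:contracting_edges}. Concretely, I will set up the same commutative diagram
\[\begin{tikzcd}[row sep=small]
\R\oM(X,\tau_l^i,\beta_l^{i_j}) \dar{\st_l^{i_j}} \arrow[bend left=25]{rr}[swap]{q_l^{i_j}} \rar{p_l^{i_j}} & \R\oM(X,\tsigma,\beta)\times_{\oM_\sigma}\oM_\tau \rar{\lambda} \dar{\mu} & \R\oM(X,\tsigma,\beta) \dar{\st_{\tsigma}} \\
\oM_{\tau_l^i} \rar{\Psi_l^i} & \oM_\tau \rar{\Phi} & \oM_\sigma ,
\end{tikzcd}\]
and then rewrite $\Phi^*\Omega_*\st_{\tsigma*}$ as $\mu_*\lambda^*$ via proper base change (\cite[Theorem 1.5]{Porta_Yu_Derived_Hom_spaces}), insert the K-theoretic identity from \cref{thm:contracting_edges}, and conclude by the projection formula.

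For this strategy to go through for the classes $(\ev_v^{m_v})^*\cO_{Z_v^{m_v}(\tsigma,\beta)}$, the key step is to verify the compatibility
\[(q_l^{i_j})^*(\ev_v^{m_v})^*\cO_{Z_v^{m_v}(\tsigma,\beta)} \simeq (\ev_v^{m_v})^*\cO_{Z_v^{m_v}(\tau_l^i,\beta_l^{i_j})}\]
along the morphism $q_l^{i_j}\colon\R\oM(X,\tau_l^i,\beta_l^{i_j})\to\R\oM(X,\tsigma,\beta)$. This should follow from two observations: first, the forgetful maps $\R\oC(X,\tau_l^i,\beta_l^{i_j})\to\R\oC(X,\tsigma,\beta)$ covering $q_l^{i_j}$ carry the section $s_v$ on the source to the section $s_v$ on the target (tails are preserved under contracting an edge and adding unchanged tails); second, the infinitesimal thickenings $(\oCpre)_{(s_v^{m_v})}$ are defined by the ideal sheaf $\cI_v^{m_v}$, which pulls back to the analogous ideal sheaf because the universal curves pull back to each other along the domain map. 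This in turn induces the equivalence $X_v^{m_v}(\tau_l^i,\beta_l^{i_j})\simeq q_l^{i_j,*}X_v^{m_v}(\tsigma,\beta)$ and its closed subspace $Z_v^{m_v}$, from which the displayed equivalence follows by the definition of $\ev_v^{m_v}$.

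Granting this compatibility, the computation is formal: writing $a_v \coloneqq (\ev_v^{m_v})^*\cO_{Z_v^{m_v}(\tsigma,\beta)}\in\Perf(\R\oM(X,\tsigma,\beta))$, we get
\begin{align*}
\Phi^*\Omega_* Q^X_{\tsigma,\beta,\mathbf m}(\bZ) &= \mu_*\lambda^*\big({\textstyle\bigotimes_v} a_v\big)\\
&= \mu_*\Big(\lambda^*\big({\textstyle\bigotimes_v} a_v\big)\otimes \sum_l(-1)^{l+1}\sum_{i,j} p_{l*}^{i_j}[\cO_{\R\oM(X,\tau^i_l,\beta^{i_j}_l)}]\Big)\\
&= \sum_l(-1)^{l+1}\sum_{i,j}\Psi_{l*}^i\st_{l*}^{i_j}\big((q_l^{i_j})^*{\textstyle\bigotimes_v} a_v\big),
\end{align*}
the first equality by proper base change applied to the right square, the second by \cref{thm:contracting_edges}, and the third by the projection formula \cite[Theorem 1.4]{Porta_Yu_Derived_Hom_spaces} together with proper base change on the left square. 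The compatibility above then rewrites the inner expression as $Q^X_{\tau^i_l,\beta^{i_j}_l,\mathbf m}(\bZ)$, proving the identity.

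The main obstacle I anticipate is the compatibility of the multiplicity structures under pullback. In particular, one must check that pullback of the ideal sheaf $\cI_v\subset\cO_{\R\oC(X,\tsigma,\beta)}$ cutting out $s_v$ gives the corresponding ideal sheaf on $\R\oC(X,\tau_l^i,\beta_l^{i_j})$ at the level of derived analytic spaces (not just on truncations); since the maps between universal curves are flat over the bases and the sections are derived lci closed immersions (by \cref{lem:derived_prestable_lci}), this reduces to the standard fact that flat pullback preserves the powers of an ideal defining a derived lci closed immersion, which is recorded in the relative analytification machinery developed in \cref{sec:relative_analytification} together with \cite[Corollary 5.33]{Porta_Yu_Representability_theorem}.
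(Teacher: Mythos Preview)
Your approach is correct and matches the paper's, which simply states that the proofs in \cref{sec:multiplicities} are parallel to those in \cref{sec:quantum_K-invariants} and does not repeat them. You have correctly identified that the only new ingredient beyond the proof of \cref{prop:contracting_edges} is the compatibility of the multiplicity-enhanced evaluation maps along $q_l^{i_j}$, and your formal computation is identical to the one there.

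One small comment: your final paragraph over-engineers the justification for the pullback compatibility of the thickenings. The references to \cref{sec:relative_analytification} and \cite[Corollary 5.33]{Porta_Yu_Representability_theorem} are not the relevant inputs. The point is much simpler: the thickening $(\oCpre_{\tsigma})_{(s_v^{m_v})} \to \oMpre_{\tsigma}$ is \emph{flat} (this is noted in the proof of \cref{lem:Zi}), so it base-changes correctly along $\oMpre_{\tau_l^i} \to \oMpre_{\tsigma}$, yielding $(\oCpre_{\tau_l^i})_{(s_v^{m_v})}$. Combined with \cref{lem:contracting_edges_pullback}, this gives the required identification of $\R\oC(X,\tau_l^i,\beta_l^{i_j})_{(s_v^{m_v})}$ as the pullback of $\R\oC(X,\tsigma,\beta)_{(s_v^{m_v})}$, and hence the base-change identification of the mapping stacks $X_v^{m_v}$ and $Z_v^{m_v}$. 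No derived-lci or analytification input is needed here.
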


\section{Appendix: Flatness in derived \texorpdfstring{$k$}{k}-analytic geometry} \label{sec:flatness}

\begin{definition}
	Let $f \colon X \to Y$ be a morphism of derived \kanal spaces and let $\cF \in \Coh^+(X)$.
	We say that $\cF$ is \emph{flat relative to $Y$} if $\cF$ is flat as $f\inv \cO_Y\alg$-module.
\end{definition}

\begin{lemma} \label{lem:equivalent_formulation_flatness}
	Let $f \colon X \to Y$ be a morphism of derived \kanal spaces.
	Let $\cF \in \Coh^+(X)$.
	The following are equivalent:
	\begin{enumerate}
		\item $\cF$ is flat relative to $Y$.
		\item $\cF$ has tor-amplitude $[0,0]$ relative to $Y$ in the sense of \cite[Definition 7.1]{Porta_Yu_Derived_Hom_spaces}.
		\item For every $\cG \in \Cohh(Y)$, $\cF \otimes_{\cO_X} f^*(\cG)$ belongs to $\Cohh(X)$.
		\item Consider the pullback square
		\[ \begin{tikzcd}
		X_0 \arrow{r}{i} \arrow{d}{f_0} & X \arrow{d}{f} \\
		\trunc(Y) \arrow[hook]{r}{j} & Y ,
		\end{tikzcd} \]
		where $j$ is the inclusion of the truncation.
		Then $i^*(\cF)$ is flat relative to $\trunc(Y)$.
	\end{enumerate}
\end{lemma}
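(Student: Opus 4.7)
The plan is to reduce to the derived $k$-affinoid setting, translate the four conditions into statements about modules over simplicial commutative rings, and then apply standard flatness criteria from \cite{Lurie_Higher_algebra}.

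First, since all four conditions are local on both $X$ and $Y$, I would assume $X = \Sp(B)$ and $Y = \Sp(A)$ are derived $k$-affinoid. By \cite[Theorem 3.4]{Porta_Yu_Derived_Hom_spaces}, the coherent sheaf $\cF$ corresponds to an almost perfect $B$-module $M$, and relative tensor products with pullbacks from $Y$ are computed on global sections as derived tensor products over $A$. Moreover, the pullback $X_0 = X \times_Y \trunc(Y)$ corresponds to $\Sp(B \otimes_A^\rL \pi_0(A))$, so $i^*(\cF)$ corresponds to $M \otimes_A^\rL \pi_0(A)$ as a $\pi_0(A)$-module. Under this dictionary, the four conditions become: (1) $M$ is flat as an $A$-module; (2) $M$ has tor-amplitude $[0,0]$ over $A$ in the sense of \cite[Definition 7.1]{Porta_Yu_Derived_Hom_spaces}; (3) $M \otimes_A^\rL N$ is concentrated in degree zero for every $N \in \Cohh(A)$; and (4) $M \otimes_A^\rL \pi_0(A)$ is concentrated in degree zero and flat over $\pi_0(A)$.

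The equivalences (1) $\Leftrightarrow$ (2) and (2) $\Rightarrow$ (3) are immediate from the definitions: flatness of $M$ is by definition tor-amplitude $[0,0]$, and a module of tor-amplitude $[0,0]$ preserves the heart under tensor product with any discrete object. For (3) $\Rightarrow$ (2), I would use that $\pi_0(A)$ is noetherian (since $A$ is an affinoid algebra), so every discrete $A$-module is a filtered colimit of coherent discrete modules; since derived tensor products commute with filtered colimits, (3) upgrades to the statement that $M \otimes_A^\rL (-)$ preserves all of $A\Mod^\heartsuit$, which combined with the non-negativity of $M$ yields tor-amplitude $[0,0]$.

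The main content is (1) $\Leftrightarrow$ (4). The direction (1) $\Rightarrow$ (4) is immediate by base change of flat modules. For (4) $\Rightarrow$ (1), I would invoke the flatness criterion of \cite[Proposition 7.2.2.13]{Lurie_Higher_algebra}, in the simplicial commutative variant: $M$ is $A$-flat if and only if $\pi_0(M)$ is $\pi_0(A)$-flat and the canonical maps $\pi_0(M) \otimes_{\pi_0(A)} \pi_n(A) \to \pi_n(M)$ are isomorphisms for every $n \geq 0$. Given (4), the flatness of $\pi_0(M) \simeq \pi_0(M \otimes_A^\rL \pi_0(A))$ over $\pi_0(A)$ is built in, and the comparison isomorphisms follow by analyzing the Tor spectral sequence converging to $\pi_*(M \otimes_A^\rL \pi_0(A))$, where the discreteness hypothesis of (4) forces the required vanishings on the $E_2$-page. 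The main subtlety I anticipate lies in verifying that this purely algebraic criterion transports faithfully through the analytic/algebraic correspondence for derived $k$-affinoids; this should however follow from the principle that both sides of the Theorem 3.4 equivalence are controlled by the underlying simplicial commutative $k$-algebra.
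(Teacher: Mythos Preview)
Your reduction to the affinoid setting and the algebraic arguments for (2) $\Leftrightarrow$ (3) and (2) $\Leftrightarrow$ (4) are fine and match the paper's treatment closely. The gap is in your very first translation step: you assert that condition (1), namely ``$\cF$ is flat as $f^{-1}\cO_Y\alg$-module'', becomes ``$M$ is flat as an $A$-module'' after passing to global sections, and then declare (1) $\Leftrightarrow$ (2) immediate. But (1) is a sheaf-theoretic condition on modules over the sheaf of rings $f^{-1}\cO_Y\alg$ in the $\infty$-topos $\cX$, and Theorem 3.4 of \cite{Porta_Yu_Derived_Hom_spaces} only identifies $\cO_X\alg$-modules with $B$-modules; it says nothing directly about $f^{-1}\cO_Y\alg$-modules. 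So the passage from (1) to your (1$'$) is exactly the content of the lemma, not a preliminary dictionary.

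The paper circumvents this by proving (1) $\Leftrightarrow$ (4) entirely at the sheaf level, before any reduction to affinoids: using that $j$ is a closed immersion, one has a pushout square of sheaves of rings on $\cX$ with corners $f^{-1}\cO_Y\alg$, $\cO_X\alg$, $f^{-1}\pi_0(\cO_Y\alg)$, $\cO_{X_0}\alg$, and then the same Lurie-type criterion you invoke (flatness is detected after base change to $\pi_0$) is applied to the sheaf of rings $f^{-1}\cO_Y\alg$ rather than to the global ring $A$. This yields (1) $\Leftrightarrow$ (4) directly. Only afterwards does the paper reduce to affinoids, and at that point condition (4) involves only underived spaces, where sheaf-level flatness is classical and genuinely equivalent to flatness of the global-sections module. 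In short, you and the paper use the same key input, but the paper applies it in the $\infty$-topos where condition (1) lives; your version applies it to $A$-modules, which presupposes the identification you have not yet earned.
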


\begin{proof}
	We start by proving the equivalence (1) $\Leftrightarrow$ (4).
	Write $X = (\cX, \cO_X)$ and $Y = (\cY, \cO_Y)$.
	Since $j$ is a closed immersion, \cite[Propositions 3.17(iii) and 6.2(iv)]{Porta_Yu_Derived_non-archimedean_analytic_spaces} imply that
	\[ \begin{tikzcd}
	f\inv \cO_Y\alg \arrow{r} \arrow{d} & \cO_X\alg \arrow{d} \\
	f\inv( \pi_0( \cO_Y\alg ) ) \arrow{r} & \cO_{X_0}\alg
	\end{tikzcd} \]
	is a pushout square in $\CAlg_k(\cX)$.
	Then $\cF$ is flat as $f\inv \cO_Y\alg$-module if and only if the base change $\cF \otimes_{f\inv \cO_Y\alg} f\inv( \pi_0( \cO_Y\alg ) )$ is flat as $f\inv( \pi_0(\cO_Y\alg) )$-module.
	The derived base change implies that
	\[ \cF \otimes_{f\inv \cO_Y\alg} f\inv( \pi_0( \cO_Y\alg ) ) \simeq \cF \otimes_{\cO_X\alg} \cO_{X_0}\alg . \]
	Therefore (1) is equivalent to (4).
	
	Next, we prove the equivalence (4) $\Leftrightarrow$ (2).	
	Since (4) is local on both $X$ and $Y$, we may assume that $X$ and $Y$ are derived $k$-affinoid spaces.
	Write
	\[ A \coloneqq \Gamma(X, \cO_X\alg) , \quad A_0 \coloneqq \Gamma(X_0, \cO_{X_0}\alg), \quad B \coloneqq \Gamma(Y, \cO_Y\alg) , \quad M \coloneqq \Gamma(X, \cF) . \]
	Note that $i^*(\cF)$ is flat relative to $\trunc(Y)$ if and only if $i^*(\cF) \in \Cohh(X_0) \simeq \Cohh(\trunc(X))$ and as a coherent sheaf on $\trunc(X)$ it is flat relative to $\trunc(Y)$ in the underived sense.
	Using \cite[Theorem 3.4]{Porta_Yu_Derived_Hom_spaces}, we conclude that $i^*(\cF)$ is flat relative to $\trunc(Y)$ if and only if the module
	\[ \Gamma(X_0, i^*(\cF)) \simeq M \otimes_A A_0 \]
	is flat as $\pi_0(B)$-module.
	By \cite[Proposition 4.2]{Porta_Yu_Derived_Hom_spaces}, the diagram
	\[ \begin{tikzcd}
	B \arrow{d} \arrow{r}{\phi} & A \arrow{d} \\
	\pi_0(B) \arrow{r} & A_0
	\end{tikzcd} \]
	is a pushout diagram in $\CAlg_k$.
	This implies that
	\[ M \otimes_A A_0 \simeq M \otimes_B \pi_0(B) . \]
	If (4) is satisfied, then $M \otimes_B \pi_0(B)$ is flat as $\pi_0(B)$-module.
	Hence, it has tor-amplitude $[0,0]$ as $\pi_0(B)$-module, which implies that $M$ has tor-amplitude $[0,0]$ as $B$-module, and hence $M$ is flat as $B$-module.
	Thus (2) holds.
	Conversely, if (2) is satisfied, then $M$ is flat as $B$-module, hence $M \otimes_A A_0$ is flat as $\pi_0(B)$-module, so (4) holds.
	
	We finally prove the equivalence (2) $\Leftrightarrow$ (3).
	Once again, we can assume $X$ and $Y$ to be derived $k$-affinoid spaces.
	Keeping the same notations as above, let $\cG \in \Cohh(Y)$ and let $N \coloneqq \Gamma(Y, \cG)$.
	By \cite[Theorem 3.1]{Porta_Yu_Derived_Hom_spaces}, $\cF \otimes_{\cO_X} f^*( \cG )$ is discrete if and only if $M \otimes_A \phi^*(N)$ is discrete.
	On the other hand, $M \otimes_A \phi^*(N)$ is discrete if and only if
	\[ \phi_*(M \otimes_A \phi^*(N)) \simeq \phi_*(M) \otimes_B N \]
	is discrete.
	This completes the proof.
\end{proof}

\begin{proposition} \label{prop:stability_flatness}
	Let
	\[ \begin{tikzcd}
	X' \arrow{d}{q} \arrow{r}{g} & X \arrow{d}{p} \\
	Y' \arrow{r}{f} & Y
	\end{tikzcd} \]
	be a pullback square of derived \kanal spaces.
	Let $\cF \in \Coh^+(X)$.
	If $\cF$ is flat relative to $Y$, then $g^*(\cF)$ is flat relative to $Y'$.
\end{proposition}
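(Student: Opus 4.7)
The plan is to reduce the statement to the derived $k$-affinoid case and then invoke the classical fact that flatness of modules is stable under base change of rings, translated into the derived commutative algebra setting via the Porta--Yu global sections equivalence.

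First, I would observe that flatness relative to $Y'$ is a local condition on both $X'$ and $Y'$, by definition, and the formation of $g^*(\cF)$ is local as well. Therefore I may choose derived $k$-affinoid atlases for $Y$, $Y'$ and $X$, and reduce to the case where all four spaces $X, Y, X', Y'$ are derived $k$-affinoid. Write
\[ A \coloneqq \Gamma(X, \cO_X\alg) , \quad B \coloneqq \Gamma(Y, \cO_Y\alg) , \quad B' \coloneqq \Gamma(Y', \cO_{Y'}\alg) , \quad A' \coloneqq \Gamma(X', \cO_{X'}\alg) , \]
and set $M \coloneqq \Gamma(X, \cF) \in A\Mod$.

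Next, by \cite[Proposition 4.2]{Porta_Yu_Derived_Hom_spaces}, the pullback square of derived $k$-affinoid spaces becomes a pushout square in $\CAlg_k$, giving a canonical equivalence $A' \simeq A \otimes_B B'$. Using \cite[Theorem 3.4]{Porta_Yu_Derived_Hom_spaces}, the global sections functor identifies $\Coh^+(X)$ with the $\infty$-category of almost perfect $A$-modules, and analogously for $X'$; moreover this equivalence is compatible with pullback, so
\[ \Gamma(X', g^*(\cF)) \simeq M \otimes_A A' \simeq M \otimes_B B' , \]
where the second identification comes from the pushout structure on $A'$. By hypothesis and the equivalence (1) $\Leftrightarrow$ (2) of \cref{lem:equivalent_formulation_flatness}, $M$ is flat as a $B$-module; hence $M \otimes_B B'$ is flat as a $B'$-module by the classical stability of flatness under base change in simplicial commutative algebras. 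Applying \cref{lem:equivalent_formulation_flatness} once more to $B' \to A'$, we conclude that $g^*(\cF)$ is flat relative to $Y'$.

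There is no serious obstacle here beyond bookkeeping: the substantive content has already been packaged into the equivalence (1) $\Leftrightarrow$ (2) of \cref{lem:equivalent_formulation_flatness} and the global sections equivalence of \cite{Porta_Yu_Derived_Hom_spaces}, and once the pushout identification $A' \simeq A \otimes_B B'$ is invoked, the proof reduces to the well-known algebraic statement that flatness is preserved under derived base change. The only step requiring a small amount of care is the compatibility of $g^*$ with the global sections equivalence, which is immediate from the construction of pullback in derived $k$-analytic geometry.
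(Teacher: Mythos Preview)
Your argument has a genuine gap at the step where you claim $A' \simeq A \otimes_B B'$. In derived $k$-analytic geometry, the fiber product of derived $k$-affinoid spaces is \emph{not} computed by the pushout of the underlying derived commutative rings in general: the analytic fiber product involves a completed tensor product. Already in the underived case, $\Sp(k\langle T\rangle)\times_{\Sp(k)}\Sp(k\langle S\rangle)=\Sp(k\langle T,S\rangle)$, whereas $k\langle T\rangle\otimes_k k\langle S\rangle$ is not $k\langle T,S\rangle$. The reference \cite[Proposition 4.2]{Porta_Yu_Derived_Hom_spaces} does not give what you claim for arbitrary morphisms; in both places the present paper invokes it (the proofs of \cref{lem:equivalent_formulation_flatness} and \cref{prop:stack_derived_analytic_spaces_cartesian}), one leg of the square is a closed immersion, and in that situation the algebraic and analytic tensor products agree. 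Without that hypothesis, your identification $M\otimes_A A'\simeq M\otimes_B B'$ collapses, and with it the reduction to ordinary base-change of flat modules.

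The paper's proof is designed precisely to avoid this obstacle. It uses the equivalence (1)$\Leftrightarrow$(4) of \cref{lem:equivalent_formulation_flatness}---rather than (1)$\Leftrightarrow$(2)---to pull back along the closed immersion $\trunc(Y')\hookrightarrow Y'$ and factor through $\trunc(Y)$, thereby reducing to the case where $X$, $Y$, $Y'$ are underived. At that point the statement is about classical rigid spaces, and flatness of $\pi_0(g^*\cF)$ is handled via formal models \cite{Bosch_Formal_and_rigid_geometry_II}; the remaining discreteness check is done by factoring $Y'\to Y$ as a closed immersion into a relative polydisk followed by the projection, using that the latter is flat (so $\pi_X^*$ is $t$-exact) and that the former is again a closed immersion (so the algebraic computation applies). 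Your strategy of staying entirely in the module-theoretic world via condition (2) cannot be made to work without an additional argument controlling the completed tensor product.
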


\begin{proof}
	Consider the extended diagram
	\[ \begin{tikzcd}
	X'_0 \arrow{d}{q_0} \arrow{r}{i} & X' \arrow{d}{q} \arrow{r}{g} & X \arrow{d}{p} \\
	\trunc(Y') \arrow{r}{j} & Y' \arrow{r}{f} & Y .
	\end{tikzcd} \]
	By \cref{lem:equivalent_formulation_flatness}, it is enough to check that $i^*(\cF)$ is flat.
	We factor $\trunc(Y') \to Y$ through $\trunc(Y)$, which yields the following diagram
	\[ \begin{tikzcd}
	X'_0 \arrow{d}{q_0} \arrow{r}{i'} & X_0 \arrow{r}{i''} \arrow{d}{p_0} & X \arrow{d}{p} \\
	\trunc(Y') \arrow{r} & \trunc(Y) \arrow{r} & Y .
	\end{tikzcd} \]
	Since $\cF$ is flat relative to $Y$, \cref{lem:equivalent_formulation_flatness} shows that $(i'')^*(\cF)$ is flat relative to $\trunc(Y)$.
	We are therefore left to prove the proposition assuming that $X$, $Y$ and $Y'$ are underived.
	Using the theory of formal models (see \cite{Bosch_Formal_and_rigid_geometry_II}), we see that $\pi_0(g^*(\cF))$ is flat relative to $Y'$.
	Therefore, we are left to check that $g^*(\cF) \in \Cohh(X')$.
	Working locally, we can assume that $Y$ and $Y'$ are affinoid and that the map $Y' \to Y$ factors as
	\[ \begin{tikzcd}
	Y' \arrow[hook]{r}{i} & \bD^n_Y \arrow{r}{\pi} & Y ,
	\end{tikzcd} \]
	where $i$ is a closed immersion and $\pi$ is the canonical projection.
	We can therefore decompose the pullback square as
	\[ \begin{tikzcd}
	X' \arrow{r} \arrow{d}{q} & \bD^n_X \arrow{r}{\pi_X} \arrow{d}{p'} & X \arrow{d}{p} \\
	Y' \arrow[hook]{r}{i} & \bD^n_Y \arrow{r}{\pi_Y} & Y .
	\end{tikzcd} \]
	As $i$ is a closed immersion, \cite[Proposition 3.17(iii) and 6.2(iv)]{Porta_Yu_Derived_non-archimedean_analytic_spaces} imply that it is enough to check that $\pi_X^*(\cF)$ is flat relative to $\bD^n_Y$.
	For this, it is enough to check that $\pi_X^*(\cF)$ is discrete.
	By \cite[Proposition 4.10]{Porta_Yu_Derived_Hom_spaces}, we see that $\pi_X$ is flat, and therefore $\pi_X^*$ is $t$-exact.
	The conclusion follows.
\end{proof}

\begin{corollary} \label{cor:stability_flatness}
	Let
	\[ \begin{tikzcd}
	X' \arrow{r} \arrow{d}{q} & X \arrow{d}{p} \\
	Y' \arrow{r} & Y
	\end{tikzcd} \]
	be a pullback square of derived \kanal stacks.
	If $p$ is flat, then so is $q$.
\end{corollary}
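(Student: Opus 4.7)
The plan is to deduce this from Proposition \ref{prop:stability_flatness} by taking $\cF = \cO_X\alg$, after reducing from stacks to spaces via smooth atlases. First, I will recall that for a morphism $p \colon X \to Y$ of derived \kanal spaces, flatness is defined as the condition that the structure sheaf $\cO_X\alg$, viewed as a $p\inv \cO_Y\alg$-module, is flat, i.e.\ $\cO_X\alg \in \Coh^+(X)$ is flat relative to $Y$ in the sense of \cref{lem:equivalent_formulation_flatness}. For derived \kanal stacks, flatness of a morphism $p$ is defined locally: $p$ is flat if and only if, given a smooth atlas $V \to Y$ and a smooth atlas $U \to X \times_Y V$ by derived \kanal spaces, the composite map $U \to V$ is a flat morphism of derived \kanal spaces.

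Next, I would reduce to the spatial statement. Given a pullback square of derived \kanal stacks as in the statement, fix smooth atlases $V \to Y$ by a derived \kanal space, then $U \to X \times_Y V$ by a derived \kanal space. Since fibre products in derived \kanal stacks are computed in the ambient $\infty$-topos, we obtain an induced pullback square of derived \kanal spaces
\[ \begin{tikzcd}
U' \arrow{r}{g} \arrow{d}{q_0} & U \arrow{d}{p_0} \\
V' \arrow{r}{f} & V
\end{tikzcd} \]
with $U' \coloneqq U \times_V V'$, where $V' \coloneqq Y' \times_Y V$ is a smooth atlas of $Y'$ and $U'$ is a smooth atlas of $X' \times_{Y'} V' \simeq X \times_Y V'$. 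By hypothesis $p_0$ is flat, and to show $q$ is flat it suffices to show $q_0$ is flat.

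The key step is then a direct application of \cref{prop:stability_flatness} to the coherent sheaf $\cF = \cO_U\alg \in \Coh^+(U)$, which is flat relative to $V$ because $p_0$ is flat. The proposition gives that $g^*(\cO_U\alg)$ is flat relative to $V'$. Since the pullback along the morphism of $\cTank$-structured $\infty$-topoi $g \colon U' \to U$ preserves structure sheaves, we have a canonical equivalence $g^*(\cO_U\alg) \simeq \cO_{U'}\alg$, so $\cO_{U'}\alg$ is flat relative to $V'$, meaning $q_0 \colon U' \to V'$ is flat, and hence $q$ is flat.

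The only subtle point I anticipate is the bookkeeping to check that the notion of flatness for morphisms of stacks used here truly reduces to flatness of the corresponding map of atlases, and that the pullback of the structure sheaf along the derived analytic base change $g$ is canonically the structure sheaf; both are standard but worth verifying by invoking \cite[Propositions 3.17 and 6.2]{Porta_Yu_Derived_non-archimedean_analytic_spaces}. Once these compatibilities are in place, the corollary follows from \cref{prop:stability_flatness} with no further work.
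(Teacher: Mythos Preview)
Your approach is correct and is precisely the intended deduction: the paper states the corollary without proof, as it follows immediately from \cref{prop:stability_flatness} applied to the structure sheaf after reducing to atlases. One small bookkeeping point: you write $V' \coloneqq Y' \times_Y V$ and call it a smooth atlas of $Y'$ by derived \kanal spaces, but a priori $V'$ is only a derived \kanal stack (the pullback of a space along a map of stacks need not be a space); you should further choose a smooth atlas $W' \to V'$ by a derived \kanal space and run the argument with the pullback square over $W' \to V$ instead, which changes nothing in the rest of your proof.
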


\bibliographystyle{plain}
\bibliography{dahema}

\end{document}